\newcommand{\fclass}[1]{{\llbracket #1 \rrbracket}}
\newcommand{\CLip}{C^\mathrm{Lip}}
\newcommand{\Ccell}{C^\mathrm{cell}}
\newcommand{\cF}{\mathcal{F}}
\newcommand{\cR}{\mathcal{R}}
\newcommand{\cT}{\mathcal{T}}
\newcommand{\cU}{\mathcal{U}}
\newcommand{\ba}{\mathbf{a}}
\newcommand{\bb}{\mathbf{b}}
\newcommand{\bc}{\mathbf{c}}
\newcommand{\bd}{\mathbf{d}}
\newcommand{\bD}{\mathbf{D}}
\newcommand{\be}{\mathbf{e}}
\newcommand{\bE}{\mathbf{E}}
\newcommand{\br}{\mathbf{r}}
\newcommand{\bR}{\mathbf{R}}
\newcommand{\bz}{\mathbf{z}}
\newcommand{\frakg}{\mathfrak{g}}
\newcommand{\frakh}{\mathfrak{h}}
\newcommand{\frakn}{\mathfrak{n}}
\newcommand{\frakl}{\mathfrak{l}}
\newcommand{\sansA}{\mathsf{A}}
\newcommand{\sansB}{\mathsf{B}}
\newcommand{\sansC}{\mathsf{C}}
\newcommand{\sansD}{\mathsf{D}}
\newcommand{\sansP}{\mathsf{P}}
\newcommand{\one}{\mathbf{1}}
\newcommand{\R}{\mathbb{R}}
\newcommand{\Z}{\mathbb{Z}}
\newcommand{\N}{\mathbb{N}}
\newcommand{\Q}{\mathbb{Q}}
\newcommand{\Prob}{\mathbb{P}}
\newcommand{\EE}{\mathbb{E}}
\newcommand{\from}{\colon}
\newcommand{\cSc}{\ensuremath{\mathcal{S}}}
\newcommand{\FD}{X_0}
\newcommand{\chxop}{{\mathbf{X}_\infty^{\mathsf{op}}}}
\newcommand{\chx}{{\mathbf{X}_{\infty}}}
\newcommand{\blog}{\operatorname{\overline{log}}}
\newcommand{\dimAN}{\dim_{\mathrm{AN}}}
\newcommand{\dGamma}{d_{[\Gamma]}}
\newcommand{\Td}{\mathrm{T}d}
\DeclareMathOperator{\area}{area}
\DeclareMathOperator{\Ad}{Ad}
\DeclareMathOperator{\Env}{Env}
\DeclareMathOperator{\rank}{rank}
\DeclareMathOperator{\Rrank}{\mathbb{R}--rank}
\DeclareMathOperator{\nbhd}{nbhd}
\DeclareMathOperator{\FV}{FV}
\DeclareMathOperator{\Lip}{Lip}
\DeclareMathOperator{\SL}{SL}
\DeclareMathOperator{\Stab}{Stab}
\DeclareMathOperator{\diam}{diam}
\DeclareMathOperator{\id}{id}
\DeclareMathOperator{\inter}{int}
\DeclareMathOperator{\mass}{mass}
\DeclareMathOperator{\supp}{supp}
\DeclareMathOperator{\vol}{vol}
\newtheorem{thm}{Theorem}[section]
\newtheorem{lemma}[thm]{Lemma}
\newtheorem{prop}[thm]{Proposition}
\newtheorem{cor}[thm]{Corollary}
\theoremstyle{remark}
\newtheorem{defn}[thm]{Definition}
\newtheorem*{remark}{Remark}
\newtheorem*{ack}{Acknowledgments}
\begin{document}
\title{Filling functions of arithmetic groups}
\author{Enrico Leuzinger\and Robert Young}
\begin{abstract}
  The Dehn function and its higher-dimensional generalizations measure the difficulty of filling a sphere in a space by a ball.  In nonpositively curved spaces, one can construct fillings using geodesics, but fillings become more complicated in subsets of nonpositively curved spaces, such as lattices in symmetric spaces.  In this paper, we prove sharp filling inequalities for (arithmetic) lattices in higher rank semisimple Lie groups.  When $n$ is less than the rank of the associated symmetric space, we show that the $n$--dimensional filling volume function of the lattice grows at the same rate as that of the associated symmetric space, and when $n$ is equal to the rank, we show that the $n$--dimensional filling volume function grows exponentially.  This broadly generalizes a theorem of Lubotzky--Mozes--Raghunathan on length distortion in lattices and confirms conjectures of Thurston, Gromov, and Bux--Wortman.
\end{abstract}

\maketitle

\tableofcontents


\section{Introduction and main results} 
The Dehn function $\delta_G$ of a finitely presented group $G$ measures the complexity of the word problem in $G$.  This can be interpreted combinatorially or geometrically.  Combinatorially, when $G$ is a group equipped with a finite presentation, we define $\delta_G(\ell)$ to be the maximum number of applications of relators necessary to reduce a word of length $\ell$ that represents the identity to the trivial word.  Geometrically, when $X$ is a simply-connected manifold or simplicial complex, we define $\delta_X(\ell)$ to be the maximum area necessary to fill a closed curve of length $\ell$ by a disc.  If $G$ acts geometrically (cocompactly, properly discontinuously, and by isometries) on $X$, then the combinatorial Dehn function $\delta_G$ of $G$ and the geometric Dehn function $\delta_X$ of $X$ have the same asymptotic growth rate.  (See Section~\ref{sec:fillingDefs} for precise definitions.) 

Filling volume functions generalize the Dehn function to higher dimensions.  If $X$ is an $(n-1)$--connected metric space, the $n$--dimensional filling volume function $\FV_X^n$ measures the difficulty of filling $(n-1)$--cycles in $X$ by $n$--chains.  This is harder to interpret in terms of group theory, but it yields a quasi-isometry invariant in the sense that if $X$ and $Y$ are quasi-isometric, highly connected, and have bounded geometry (for instance, if they support a cocompact group action), then $\FV^n_X$ and $\FV^n_Y$ have the same asymptotic growth rate.  Consequently, it gives rise to a group invariant; when a group $G$ acts geometrically on $X$, we let $\FV_G^n=\FV_X^n$.  This depends on the choice of $X$, but its asymptotic growth rate is well-defined.

In this paper, we will compute sharp bounds on the Dehn function and higher-dimensional filling volume functions of (irreducible) lattices in higher rank semisimple Lie groups. Note that according to the arithmeticity theorem of Margulis such lattices are arithmetic.

In the case that the lattice $\Gamma\subset G$ is uniform in $G$, the filling functions are known.  In fact a uniform lattice acts geometrically on the corresponding symmetric space $X=G/K$ of noncompact type, and the filling functions of such spaces were computed in \cite{LeuISO}; they are euclidean up to the rank: $\FV_X^n(V)\approx V^{\frac{n}{n-1}}$ for $n\leq \textup{rank} X$. More generally, for nonpositively curved spaces one has $\delta_X(L)\lesssim L^2$ and $\FV_X^n(V)\lesssim V^{\frac{n}{n-1}}$ (for all $n$)\cite{GroFRM, WengerShort}.

If the lattice $\Gamma$ is nonuniform, such bounds are more difficult to prove.  One reason is that when $\Gamma$ is a nonuniform lattice, the quotient $Y:=\Gamma\backslash X$ is noncompact and can be partitioned into a compact \emph{thick part} and a \emph{thin part} (a set of ``cusps'').  Reduction theory for arithmetic groups shows that the thick part of $Y$ lifts to a contractible invariant subset $X_0\subset X$ which is the complement of a union of horoballs.  This set is quasi-isometric to $\Gamma$, so the filling volume functions of $\Gamma$ measure the difficulty of filling cycles in $X_0$ by chains in $X_0$, that is, chains that avoid these horoballs. 

Thurston conjectured that the Dehn function of $\Gamma=\SL_{k+1}(\Z)$ for $k\ge 3$ is quadratic \cite{GerstenSurv}.  This has been proved for $k\ge 4$ \cite{YoungQuad}.  Gromov extended this conjecture to higher-dimensional filling problems and arbitrary arithmetic lattices \cite{GroAII}. Our first main results confirm these conjectures.
 
\begin{thm}\label{thm:mainThmDehn}
  Let $\Gamma$ be an irreducible nonuniform lattice in a connected, center-free semisimple Lie group $G$ without compact factors.  Let $k=\Rrank(G)$ and suppose that $k\geq 3$.  Then the Dehn function of $\Gamma$ is quadratic: $\delta_{\Gamma}(L)\approx L^2$ for all $L\ge 1$.
\end{thm}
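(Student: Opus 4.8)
The plan is to establish $\delta_\Gamma(L)\gtrsim L^2$ and $\delta_\Gamma(L)\lesssim L^2$ separately; the lower bound is soft, and essentially all of the work goes into the upper bound, which we view as a two--dimensional, and considerably more robust, analogue of the theorem of Lubotzky--Mozes--Raghunathan. For the lower bound: since $k=\Rrank(G)\ge 2$, the lattice $\Gamma$ is not Gromov hyperbolic --- it contains a copy of $\Z^2$, for instance inside $\Gamma\cap H$ for a closed subgroup $H\le G$ isogenous to $\SL_2(\R)\times\SL_2(\R)$ or $\SL_3(\R)$, or inside $\Gamma\cap N$ for the unipotent radical of a proper $\Q$--parabolic --- and a finitely presented group that is not hyperbolic has Dehn function at least quadratic by the gap theorem, so $\delta_\Gamma(L)\gtrsim L^2$.

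For the upper bound, reduction theory (as recalled above) exhibits $X_0\subset X=G/K$ as a contractible, $\Gamma$--cocompact set obtained by deleting a $\Gamma$--invariant, locally finite family $\{B_\alpha\}$ of pairwise disjoint open horoballs centred at rational points of $\partial_\infty X$, with $\Gamma$ quasi--isometric to $X_0$; it therefore suffices to fill every Lipschitz loop $c\colon S^1\to X_0$ of length $L$ by a Lipschitz disc \emph{in $X_0$} of area $\lesssim L^2$ (we may take $c$ transverse to all the horospheres $\partial B_\alpha$ and, after filling a thin collar, disjoint from them). The naive strategy --- fill $c$ in the $\mathrm{CAT}(0)$ space $X$ with a disc of area $\lesssim L^2$ (by \cite{LeuISO,GroFRM,WengerShort}) and push the disc out of the horoballs --- fails, because the horospheres contract exponentially as one descends into a horoball, so a curve on $\partial B_\alpha$ has very cheap fillings that dive deep into $B_\alpha$, and a minimal disc in $X$ bears no relation to one in $X_0$. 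Instead the filling must be built while staying in $X_0$, using reduction theory to describe the geometry near each horoball: within bounded distance of $\partial B_Q$, the set $X_0$ is uniformly quasi--isometric to a twisted product of the thick part of the symmetric space of the semisimple part of the Levi $L_Q$, the unipotent radical $N_Q$ (with a left--invariant metric that is contracted toward the cusp), and a one--dimensional height factor.

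The filling is then assembled from the outside in. The horoballs that $c$ can interact with all lie within distance $\lesssim L$ of $c$; one organizes them into a hierarchy (roughly, by nesting of their shadows along $c$) and processes them in an order --- innermost first --- for which the total length of all the loops filled in the local steps remains $O(L)$; this bookkeeping is delicate, since the natural hierarchy of horoballs can be deep and the horoballs arbitrarily closely spaced. At each step, where the loop --- together with the portion of the filling already built --- runs close to some $\partial B_\alpha$, one replaces that portion by a filling supported in the bounded neighbourhood of $\partial B_\alpha$, built from the product description above: the Levi and height directions are handled by the $\mathrm{CAT}(0)$ bound, while the $N_Q$ direction is filled at cost at most quadratic relative to the ambient geometry. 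This last estimate is where the hypothesis $k\ge 3$ is essential: when $2<k$ there are enough extra rank directions to push a two--dimensional filling past a cusp at quadratic cost --- exactly as, in Lubotzky--Mozes--Raghunathan, the extra rank directions allow one--dimensional paths to cross a cusp efficiently --- and this breaks down at $k=2$, where the Dehn function jumps to exponential. Each local step fills a loop of some length $\ell_\alpha$ at area cost $\lesssim \ell_\alpha^2$; after all of them, the remaining loop lies uniformly far from every cusp, in a region quasi--isometric to $X$, and is filled quadratically. The total area is therefore $\lesssim \sum_\alpha \ell_\alpha^2 + L^2 \lesssim \bigl(\sum_\alpha \ell_\alpha\bigr)^2 + L^2 \lesssim L^2$.

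The main obstacle is the ``near a cusp'' step. One must fill two--dimensional loops in the twisted product of a lower--rank thick part --- whose own Dehn function may be far larger than quadratic --- and a distorted nilpotent group, at quadratic cost and uniformly in $G$, and do so compatibly on the overlaps of the neighbourhoods of distinct horoballs, which are themselves pieces of distorted nilpotent groups. The quadratic control available there is precisely what the hypothesis $2<k$ provides, and the construction must be arranged so that it never needs to fill a full two--cycle inside a Levi thick part, where it could not control the area; the bad Dehn functions of the Levi factors are reached only through controlled slices. The same scheme --- with $(n-1)$--cycles and $n$--chains and euclidean estimates in place of loops, discs, and quadratic estimates --- yields the higher--dimensional bounds $\FV^n_\Gamma(V)\approx V^{n/(n-1)}$ for $n<k$ announced in the introduction.
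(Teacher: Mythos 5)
Your proposal takes a genuinely different route from the paper, and one key judgment call goes the wrong way. You dismiss fill-in-$X$-then-retract as doomed, but the paper in fact runs a sophisticated version of exactly that strategy. The filling in $X$ is built from pieces of \emph{random} flats (the map $\Omega_\bR$ of Sections 5--7), so that by Kleinbock--Margulis equidistribution (Corollary~\ref{cor:expMomentsChambers}, Proposition~\ref{prop:OmegaBounds}) the filling lies within distance $O(\log L)$ of $\FD$ with exponential tails. The retraction is then made cheap by bootstrapping: feeding the exponentially Lipschitz retraction of Theorem~\ref{thm:fundDomain} a filling that is only logarithmically far away yields a \emph{polynomial} Lipschitz extension theorem (Corollary~\ref{cor:polyLipExtend}), and applying that extension theorem cell-by-cell to a triangulation of the random filling (Lemma~\ref{lem:cuspReplacement}) produces a filling in $\FD$ whose area is an integral of a fixed power of $1+\dGamma$, which the exponential tail bound \eqref{eq:LipExpMoment} makes $O(L^2)$. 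No hierarchy of horoballs, no reduction-theoretic decomposition of cusp neighbourhoods, and no analysis of the geometry of Levi factors or unipotent radicals appears anywhere. Your sketch is instead in the spirit of the explicit constructions of \cite{YoungQuad,DrutuFilling,LeuzPitRk2}, the very approach the introduction contrasts with the present paper's probabilistic method.

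As a standalone argument your proposal has two genuine gaps, and they carry all of the weight. First, the assertion that for $k\ge3$ the $N_Q$ direction near a cusp can be filled at quadratic cost ``because the extra rank directions allow a two-dimensional filling to cross efficiently'' is the theorem restated, not an argument; the analogy with Lubotzky--Mozes--Raghunathan is suggestive, but going from $1$-cycles to $2$-cycles in the distorted nilpotent geometry is precisely where the difficulty lives, and this has only been executed by explicit construction for $\SL_n(\Z)$ with $n\ge 5$ and a few related families, by delicate and case-specific analysis. Making it uniform over all irreducible nonuniform lattices---over all $\Q$-relative root systems, unipotent radicals, and the distorted metrics they carry---is exactly what no one has done by this route. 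Second, the estimate $\sum_\alpha \ell_\alpha\lesssim L$ for the total boundary length generated by the hierarchy is not justified: each local replacement near a horoball can create new curve segments near other, more deeply nested horoballs, and without an explicit accounting scheme these cascading contributions have no a priori bound. You flag this yourself as ``delicate bookkeeping,'' which is honest, but it is a missing argument rather than a detail. The probabilistic method of the paper was adopted precisely to sidestep both obstacles, replacing case-by-case cusp geometry with equidistribution and replacing hierarchy bookkeeping with Fubini and Chebyshev.
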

\begin{thm}\label{thm:mainThmUpper}
  With $\Gamma$ and $k$ as in Theorem~\ref{thm:mainThmDehn}, for all $2\le n<k$ and all $V\ge 1$, we have
  \begin{equation}\label{eq:mainThmUpperBound}
    \FV^n_{\Gamma}(V)\approx V^{\frac{n}{n-1}}.
  \end{equation}
\end{thm}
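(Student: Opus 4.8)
The plan is to prove the two inequalities $\FV^n_\Gamma(V)\gtrsim V^{\frac{n}{n-1}}$ and $\FV^n_\Gamma(V)\lesssim V^{\frac{n}{n-1}}$ separately, with essentially all of the difficulty in the upper bound. Throughout we use reduction theory to replace $\Gamma$ by the quasi-isometric thick part $X_0\subset X=G/K$, a contractible invariant set that is the complement of a $\Gamma$-periodic family of boundedly-overlapping horoballs $B_P$ indexed by maximal $\Q$-parabolic subgroups $P=M_PA_PN_P$; thus $\FV^n_\Gamma\approx\FV^n_{X_0}$ and, after a Federer--Fleming deformation, it suffices to fill cellular $(n-1)$-cycles in a fixed $\Gamma$-cocompact cell structure on $X_0$ by cellular $n$-chains in $X_0$, with uniform constants. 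For the lower bound, observe that a chain in $X_0$ is in particular a chain in $X$, so $\FV^n_{X_0}(z)\ge \FV^n_X(z)$ for every cycle $z$ supported in $X_0$. Since $n<k=\rank X$, reduction theory lets one place round $(n-1)$-spheres $z$ of arbitrarily large radius $R$ inside flats contained in $X_0$ (choosing flats that stay far from the rational directions defining the horoballs); then $\mass(z)\asymp R^{n-1}$, and because nearest-point projection of the $\mathrm{CAT}(0)$ space $X$ onto the convex flat is $1$-Lipschitz, hence mass-nonincreasing on chains, any filling of $z$ in $X$ projects to a filling inside the flat, of mass $\gtrsim R^n$. This gives $\FV^n_{X_0}(V)\gtrsim V^{\frac{n}{n-1}}$; equivalently one quotes the lower bound for $\FV^n_X$ from \cite{LeuISO} together with this placement argument.

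For the upper bound, let $z\subset X_0$ be a cellular $(n-1)$-cycle with $\mass(z)=V$. First fill $z$ efficiently in the ambient symmetric space: since $X$ is nonpositively curved and $n-1<k$, the results of \cite{LeuISO} (see also \cite{GroFRM,WengerShort}) produce an $n$-chain $a$ in $X$ with $\partial a=z$ and $\mass(a)\lesssim V^{\frac{n}{n-1}}$, and one arranges (via coning, a Federer--Fleming argument, or the construction in \cite{LeuISO}) that $a$ has controlled support, controlled combinatorial complexity, and is transverse to the horospheres $\partial B_P$. The remaining, and essential, task is to replace $a$ by an $n$-chain $a'$ with $\partial a'=z$ supported in $X_0$, at the cost of only a bounded multiplicative factor in mass. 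This is carried out one horoball at a time, along a well-founded ordering of the $B_P$ compatible with the modification (so that clearing one horoball does not re-pollute those already cleared), using a \emph{pushing lemma}: an $n$-chain $c$ supported in a closed horoball $\overline{B_P}$ with $\partial c$ lying on $\partial B_P$ can be replaced by an $n$-chain $c'$ supported in $X\setminus B_P$ with $\partial c'=\partial c$ and $\mass(c')\lesssim\mass(c)+\mass(\partial c)$, with constants uniform over $P$ and $c$.

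The pushing lemma is the heart of the argument and the step I expect to be the main obstacle. The naive device --- retracting $\overline{B_P}$ onto $\partial B_P$ along the $A_P$-geodesics --- fails badly, since that map distorts $n$-dimensional volume by a factor exponential in the depth. Instead one exploits the fact that the full Cartan subspace has dimension $k$, strictly larger than $n$: deep inside $B_P$ the chain is pushed outward not along $A_P$ alone but along directions combining $A_P$ with the remaining flat directions and with the $N_P$-coordinates, so that the re-routing happens in the ``wide'' horospherical cross-sections of $B_P$ (and in the corner regions where $B_P$ meets neighbouring horoballs), where the available flat room suffices to perform the necessary slides with only bounded volume distortion precisely because $n<k$. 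This is the higher-dimensional analogue of the Lubotzky--Mozes--Raghunathan length-distortion argument (the case $n=1$), and it is exactly here that the hypothesis $n<k$ is indispensable: when $n=k$ the cross-sections are effectively top-dimensional and such slides cost exponentially, which is the source of the exponential bound in the rank case. Making this precise requires a careful analysis of the metric geometry of horoballs via the Langlands decomposition of $P$ and of how distinct horoballs intersect, and securing a genuinely uniform constant in $\mass(c')\lesssim\mass(c)+\mass(\partial c)$ is the delicate point.

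Finally one assembles the pieces: iterating the pushing lemma over the ordered horoballs yields $a'$ supported in $X_0$ with $\partial a'=z$, and summing the estimates gives
\[
  \mass(a')\ \lesssim\ \mass(a)+\sum_P\bigl(\mass(a\cap B_P)+\mass(\partial(a\cap B_P))\bigr)\ \lesssim\ \mass(a)\ \lesssim\ V^{\frac{n}{n-1}},
\]
where the middle step uses the bounded overlap of the $B_P$ and the control on the horospherical pieces $\partial(a\cap B_P)$ in terms of $\mass(a)$. Combined with the lower bound, this yields $\FV^n_\Gamma(V)\approx V^{\frac{n}{n-1}}$ for all $2\le n<k$, as asserted; the $n=1$ specialization of the pushing lemma recovers the Lubotzky--Mozes--Raghunathan undistortion statement, consistent with the framing in the introduction.
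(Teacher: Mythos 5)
Your lower bound argument matches the paper's (place round $(n-1)$-spheres in flats inside $X_0$ and use $1$-Lipschitz projection onto the flat; cf.\ Theorem~10.1), and your general reductions (quasi-isometry $\Gamma\simeq X_0$, Federer--Fleming) are fine. But the upper bound proposal has a genuine gap: the ``pushing lemma'' is stated, not proved, and you yourself flag it as ``the heart of the argument and the step I expect to be the main obstacle.'' That lemma essentially \emph{is} the content of the theorem. A uniform bound $\mass(c')\lesssim\mass(c)+\mass(\partial c)$ for pushing chains out of a horoball, applied and iterated across all horoballs, is equivalent to the undistortedness statement of Theorem~\ref{thm:disdim}, which is what must be established. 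Asserting it, with a heuristic that the extra flat directions when $n<k$ ``give enough room,'' does not constitute a proof; indeed the $n=1$ case ``recovers LMR'' in the sense that it assumes Lubotzky--Mozes--Raghunathan rather than proving it. You also elide the ordering/re-pollution problem: horoballs bounding $X_0$ meet in complicated corner configurations, and the claim that clearing them in a well-founded order avoids re-entering already-cleared horoballs would need a real argument.

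Beyond the gap, the route is fundamentally different from the paper's, and the paper explicitly rejects the kind of explicit horoball-by-horoball construction you propose. The paper's introduction observes that all earlier bounds of this type relied on explicit constructions and worked only for special families of lattices; the contribution here is a probabilistic method. Concretely, the paper builds a family of maps $\Omega_\bD\from S\to X$ (Sections~\ref{sec:parametrizedCones}--\ref{sec:randomMapsX}) parametrized by tuples $\bD$ of chambers at infinity, assembled from pieces of flats $E_{\bb,\bc}$; replaces $\bD$ by a random variable $\bR$; and uses the Kleinbock--Margulis logarithm law (Section~\ref{sec:equidistribution}) to show that $\EE[\exp(b\dGamma(\Omega_\bR(s)))]\lesssim 1$ pointwise (Proposition~\ref{prop:OmegaBounds}). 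This produces fillings whose deviation from $X_0$ has exponentially decaying tails. One then composes with the exponentially Lipschitz closest-point retraction of Theorem~\ref{thm:fundDomain} to get a polynomial Lipschitz extension theorem (Proposition~\ref{prop:lipExtend}, Corollary~\ref{cor:polyLipExtend}), and feeds that polynomial bound back in --- the bootstrapping argument --- via Lemma~\ref{lem:cuspReplacement} and Propositions~\ref{prop:expMomentDistortion}--\ref{prop:bootstrapping}, using the Federer--Fleming and Assouad--Nagata machinery of Section~\ref{sec:GMT tools}. Nowhere is a single horoball ``cleared''; the control is statistical and global. Your outline is a plausible wish for how such a theorem \emph{might} go by elementary CAT(0) combing, but the delicate uniformity you would need is precisely the obstruction that motivated the probabilistic method.
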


Note that Theorem~\ref{thm:mainThmUpper} holds only in dimensions below the rank of $\Gamma$.  It has been conjectured that the rank is a critical dimension for the isoperimetric behavior and that filling functions in the dimension of the rank grow exponentially \cite{LeuzPitRk2}; this has been shown for $\SL_{k+1}(\Z)$ \cite{ECHLPT}, for nonuniform lattices in semisimple groups of $\mathbb R$-rank $2$ \cite{LeuzPitRk2}, and for lattices of relative $\Q$--type $A_n, B_n, C_n, D_n, E_6$, or $E_7$ \cite{WortExpLower}.  Conversely, Gromov showed that filling functions of lattices in linear groups are at most exponential \cite{GroAII}.  Our next result confirms the conjecture in general.

\begin{thm}\label{thm:mainThmLower}
  With $\Gamma$ and $k$ as in Theorem~\ref{thm:mainThmDehn}, there is a $c>0$ such that for $V\ge 1$, 
  \begin{equation}\label{eq:mainThmLowerBound}
    \FV^{k}_{\Gamma}(V)\gtrsim \exp(cV^{\frac{1}{k-1}}).
  \end{equation}
\end{thm}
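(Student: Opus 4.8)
The plan is to reduce, using the quasi-isometry $\Gamma\sim X_0$ and the quasi-isometry invariance of higher filling volume functions for highly connected spaces of bounded geometry, to the following statement about the thick part $X_0$: there is a constant $c>0$ so that for every $t\ge 1$ one can find a Lipschitz $(k-1)$--cycle $z_t$ in $X_0$ with $\mass(z_t)\lesssim t^{k-1}$ such that every $k$--chain $y$ in $X_0$ with $\partial y=z_t$ satisfies $\mass(y)\ge\exp(ct)$. Writing $V\asymp t^{k-1}$ then gives $\FV^k_\Gamma(V)\gtrsim\exp(cV^{1/(k-1)})$, which is \eqref{eq:mainThmLowerBound}.

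The cycles $z_t$ come from reduction theory. Fix a minimal rational parabolic $P_0$ of $G$ with point at infinity $\xi$ and nilpotent radical $N_0$, and recall that, up to a bounded neighborhood, $X_0$ is $X$ with a $\Gamma$--invariant family of pairwise disjoint horoballs removed; let $B$ be one of these, centered at $\xi$ at some large depth, with bounding horosphere $\mathcal H=\partial B$, and let $F\subset\mathcal H$ be a $(k-1)$--dimensional flat through the basepoint. The deep part of the cusp of $P_0$ --- equivalently, the relevant portion of $\mathcal H$ --- is coarsely modelled by $N_0\rtimes F$, where $F$ acts on $N_0$ through the restricted roots; this is a solvable group of exponential volume growth (literally so when $G$ is $\mathbb Q$--split, and quasi-isometrically so in general). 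Inside a bounded region of $\mathcal H$ I would build $z_t$ as the $(k-1)$--dimensional analogue of the loop underlying the exponential Dehn function of $\mathrm{Sol}$: a union of boundedly many $(k-1)$--cubes assembled from the flat directions of $F$ and a fixed root direction of $N_0$, arranged around the corners of a $k$--cube of side $t$ and pushed to different heights along $F$, so that $z_t$ is null-homologous, has $\mass(z_t)\lesssim t^{k-1}$, and yet can only be spanned after sweeping across a region on which the chosen root has expanded $N_0$ by a factor $\sim e^{t}$.

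To bound fillings in $X_0$ from below, the plan is first to push a filling onto $\mathcal H$ and then to prove an exponential lower bound there. Flowing each point of $X$ toward $\xi$ until it meets $\mathcal H$ defines a retraction $\pi_\xi\colon X\setminus B\to\mathcal H$ which equals nearest-point projection onto the convex horoball $B$ and is therefore $1$--Lipschitz; since $X_0\subset X\setminus B$, it sends any $k$--chain $y$ with $\partial y=z_t$ to a $k$--chain $\pi_\xi(y)$ in $\mathcal H$ with $\partial\pi_\xi(y)=z_t$ and $\mass(\pi_\xi(y))\le\mass(y)$. It then suffices to bound from below the mass of any $k$--chain in $\mathcal H$ that fills $z_t$. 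For this I would use a coarea argument in horospherical coordinates: slicing such a chain by the level sets of the linear functional on $F$ dual to the chosen root yields a family of $(k-1)$--cycles interpolating between $z_t$ and a copy of the root direction that has been expanded to size $\sim e^{t}$, and, because the induced metric on the level set at height $s$ is expanded by $e^{s}$ along that functional, integrating the coarea inequality from height $0$ to height $t$ forces total mass $\gtrsim\int_0^{t}e^{s}\,ds\gtrsim e^{t}$. The hypothesis $k\ge 3$, together with Theorem~\ref{thm:mainThmUpper} (below the rank the lower-dimensional fillings are only polynomial), prevents a cheaper filling that drops into lower-dimensional strata.

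The main obstacle is this exponential lower bound, and within it two points need care. First, $\mathcal H$ is not actually contained in $X_0$: infinitely many $\Gamma$--translates of the other horoballs cut into $\mathcal H$ and accumulate toward $\xi$, so one cannot literally project onto $\mathcal H$. I would handle this by confining the whole argument to a sufficiently deep portion of the cusp of $P_0$, on which reduction theory (disjointness and separation of Siegel sets) guarantees that $\mathcal H\cap X_0$ is clean, or else by projecting onto the truncated horosphere and controlling the loss. Second, the slicing must be arranged so that the coarea slices genuinely separate $z_t$ from the exponentially expanded root direction --- so that each slice has mass bounded below --- and so that no filling can evade the estimate by travelling far from the cusp; the latter is precisely what the $1$--Lipschitz retraction $\pi_\xi$ rules out. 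The remaining ingredients --- the quasi-isometry reduction, the elementary estimate $\mass(z_t)\lesssim t^{k-1}$, and the slicing/coarea formalism for rectifiable chains --- are routine.
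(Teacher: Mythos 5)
Your approach is genuinely different from the paper's, and there are gaps that would need to be filled before it could replace the paper's argument.

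The paper's proof (Lemma~\ref{lem:polyLower} combined with Theorem~\ref{prop:sharpLowerBounds}) does not build an explicit cycle in a horosphere at all. Instead it takes a round $(k-1)$--sphere of radius $\approx L$ lying in a \emph{random} maximal flat, centered at a point $v$ with $\dGamma(v)\approx L$. The exponential lower bound on its filling volume comes for free from the divergence estimate for symmetric spaces of rank $k$ (\cite{LeuCorank}, quoted as \eqref{eq:leuzinger divergence bound}): any $k$--chain filling $S(v,r)$ inside $X\setminus B_v(r/2)$ has mass $\gtrsim e^{\omega r}$, and $X_0\subset X\setminus B_v(L/2)$ once $L$ is large. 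The sphere lies only in $X(\eta\log L)$ rather than in $X_0$, which is why the bootstrapping machinery of Lemma~\ref{lem:cuspReplacement} (polynomial-Lipschitz extension via \eqref{eq:LipExpMoment}) is needed: it replaces the sphere by a sphere $\kappa$ in $\FD$ with $\vol^{k-1}\kappa\lesssim L^{k-1}$ while changing the filling volume by a subexponential amount. You avoid all of this by constructing $z_t$ directly near $X_0$, but you pay for it by having to re-prove the exponential isoperimetric lower bound from scratch in the horosphere, which is precisely the hard part.

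That is where the main gap sits. Your coarea argument asserts that because the metric on the level set at height $s$ of the chosen root functional is expanded by $e^s$, the slice of the filling at height $s$ has mass $\gtrsim e^s$. Metric expansion by itself does not give a lower bound on the mass of a slice: a slice can be supported in a tiny region of the (large) level set and have small mass. You need a topological or homological ingredient showing each slice is forced to be \emph{nontrivial} in a quantitative sense (a linking number with a transverse cycle, a separation property, or a degree argument), and this is exactly the content that has to be supplied before the integral $\int_0^t e^s\,ds$ means anything. In the paper, this role is played entirely by the divergence estimate of \cite{LeuCorank}, which is a ready-made input; in your sketch it is asserted but not proved. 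Relatedly, the phrase ``interpolating between $z_t$ and a copy of the root direction expanded to size $e^t$'' is not what slicing gives you --- slices of $y$ at height $s>0$ are cycles because $z_t$ is supported at height $\le 0$, but they do not a priori interpolate between anything; their nontriviality must be established.

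A second concern is generality. What you describe --- an explicit Sol-like $(k-1)$--cycle built from root directions in a rational horosphere, filled by a coarea/linking argument --- is essentially the strategy of \cite{ECHLPT}, \cite{LeuzPitRk2}, and \cite{WortExpLower}, which the paper explicitly notes covered only $\R$--rank $2$, $\SL_{k+1}(\Z)$, and lattices of relative $\Q$--type $A_n,B_n,C_n,D_n,E_6,E_7$. Your model of the cusp as $N_0\rtimes F$ with $F$ a full $(k-1)$--flat is accurate up to compact factors only when the $\Q$--rank equals $k$; when $\Q$--rank $< k$ the rational horosphere has a more complicated reductive block $M_0$, and the combinatorics of the root construction become type-dependent. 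The point of the paper's probabilistic construction is precisely to sidestep this case analysis. So even if the coarea gap were repaired, you would need a separate argument that the explicit construction works for the types the earlier papers did not reach, which is nontrivial.

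Two smaller points: the remark that $k\ge 3$ and Theorem~\ref{thm:mainThmUpper} ``prevent a cheaper filling that drops into lower-dimensional strata'' is not needed and not quite meaningful --- the exponential lower bound at $n=k$ already holds at $k=2$ by \cite{LeuzPitRk2}, and the paper's restriction $k\ge 3$ comes from the quadratic upper bound in Theorem~\ref{thm:mainThmDehn}, not from the lower bound. And the $1$--Lipschitz projection $\pi_\xi$ onto a single rational horoball $B$ is fine, but controlling how $\pi_\xi(y)$ interacts with the \emph{other} removed horoballs (which accumulate in $\mathcal H$, as you note) is nontrivial when $\Q$--rank $\ge 2$; ``confining the argument to a deep portion of the cusp'' needs to be reconciled with the requirement that $z_t$ be fillable in $X_0$ at all.
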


A broader conjecture along these lines was proposed by Bux and Wortman, based on the distortion of filling volumes.  We say that $X_0\subset X$ is \emph{undistorted up to dimension $n$} if there is a $C>0$ such that for any $m<n$ and any $m$--cycle $\alpha$ in $X_0$, we have 
$$\FV_{X_0}(\alpha)\le C \FV_{X}(\alpha)+C\mass \alpha+C.$$ 
Uniform lattices are undistorted in all dimensions.  
Bux and Wortman conjectured that $S$-arithmetic groups (defined over number fields or function fields) acting on products of symmetric spaces and Euclidean  buildings  are undistorted in dimensions below the  geometric (or total) rank \cite{BuxWortFiniteness}. 
This is a strong generalization of a theorem of Lubotzky, Mozes, and Raghunathan on the distance distortion of lattices \cite{LMR}.  
Finiteness properties and filling invariants of $S$-arithmetic groups have been studied in papers including \cite{BuxWortFiniteness, BuxKohlWitzel, YoungHigherSol, BestvinaEskinWortman}.
   The following theorem confirms the Bux-Wortman conjecture in the case of 
nonuniform arithmetic groups defined over number fields.

\begin{thm}\label{thm:disdim}
  If $X_0$ is as above, then $X_0$ is undistorted up to dimension $k-1$, but not up to dimension $k$.
\end{thm}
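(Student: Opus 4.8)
The plan is to handle the two halves of the statement by quite different means. The positive half (undistortion up to dimension $k-1$) should come out as a \emph{pointwise} sharpening of the arguments behind Theorems~\ref{thm:mainThmDehn} and~\ref{thm:mainThmUpper} and, in dimension $0$, of the Lubotzky--Mozes--Raghunathan theorem \cite{LMR}; the negative half (failure at dimension $k$) should follow almost formally from Theorem~\ref{thm:mainThmLower} together with the fact, due to \cite{LeuISO}, that $X=G/K$ has Euclidean filling functions up to its rank.

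\textbf{The negative direction.} I would produce $(k-1)$--cycles that are cheap to fill in $X$ but expensive to fill in $X_0$. Since $\rank X=k=\Rrank(G)$, \cite{LeuISO} gives $\FV_X^k(V)\approx V^{k/(k-1)}$, so every $(k-1)$--cycle $\alpha$ in $X$ with $\mass\alpha\le V$ satisfies $\FV_X(\alpha)\le C'V^{k/(k-1)}$. On the other hand, Theorem~\ref{thm:mainThmLower} (using $\FV_\Gamma=\FV_{X_0}$) supplies, for each large $V$, a $(k-1)$--cycle $\alpha_V$ in $X_0$ with $\mass\alpha_V\le V$ and $\FV_{X_0}(\alpha_V)\gtrsim\exp(cV^{1/(k-1)})$. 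Taking $V=V_j\to\infty$ and $\alpha_j=\alpha_{V_j}$, the quotient $\FV_{X_0}(\alpha_j)/(\FV_X(\alpha_j)+\mass\alpha_j+1)$ is bounded below by a positive multiple of $\exp(cV_j^{1/(k-1)})/(V_j^{k/(k-1)}+V_j+1)$, which tends to $\infty$; hence no constant $C$ satisfies the defining inequality for all $j$. Since $k-1<k$, this shows $X_0$ is not undistorted up to dimension $k$.

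\textbf{The positive direction.} Fix $m$ with $m<k-1$ and an $m$--cycle $\alpha$ in $X_0$. The case $m=0$ is precisely the Lubotzky--Mozes--Raghunathan theorem \cite{LMR} (applicable since $\Rrank(G)\ge 3$): it bounds the distance in $X_0$ between two points of $X_0$ by a bounded multiple of their distance in $X$, up to an additive constant, and the filling volume of a $0$--cycle is a matching cost assembled from such distances. So assume $1\le m<k-1$. As $X$ is contractible, $\alpha$ bounds in $X$, and by definition of $\FV_X(\alpha)$ there is an $(m+1)$--chain $\beta$ in $X$ with $\partial\beta=\alpha$ and $\mass\beta\le\FV_X(\alpha)+1$. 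The heart of the matter is to push $\beta$ out of the horoballs: I would invoke the construction behind Theorems~\ref{thm:mainThmDehn} and~\ref{thm:mainThmUpper}, which---crucially because $m+1\le k-1<k=\rank X$---turns $\beta$ into an $(m+1)$--chain $\beta'$ supported in $X_0$ with $\partial\beta'=\partial\beta$ and $\mass\beta'\le C(\mass\beta+\mass\partial\beta)$ for a constant $C$ depending only on $G$, $\Gamma$, and $m$. Since $\partial\beta=\alpha$ already lies in $X_0$, the chain $\beta'$ is an $X_0$--filling of $\alpha$, so $\FV_{X_0}(\alpha)\le\mass\beta'\le C(\FV_X(\alpha)+1+\mass\alpha)\le C\FV_X(\alpha)+C\mass\alpha+C$. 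Taking the maximum of $C$ over the finitely many relevant $m$ (and the $m=0$ constant) yields a single constant, hence undistortion up to dimension $k-1$.

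\textbf{The main obstacle.} Everything hard lies in the positive direction: one must verify that the filling construction of Theorem~\ref{thm:mainThmUpper} is genuinely pointwise, i.e.\ that it bounds the $X_0$--filling volume of a single cycle in terms of its $X$--filling volume plus its mass, and not merely the supremum over cycles of a given mass. The literal statement of Theorem~\ref{thm:mainThmUpper} is only the latter and does not by itself give undistortion: a cycle whose optimal $X$--filling is far cheaper than $\mass(\alpha)^{(m+1)/m}$ must still be fillable in $X_0$ at a comparably low cost. Concretely, the work is to carry out the push-out-of-horoballs step uniformly over all cusps of $\Gamma\backslash X$ and over the full range of depths to which $\beta$ may descend into a given horoball, charging the volume added inside each horoball to the volume and boundary-mass of the portion of $\beta$ entering it; this is where reduction theory for $\Gamma$ and the inequality $m+1<k$ enter. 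The remaining ingredients---selecting a near-optimal $X$--filling, moving between cycle models, and absorbing additive constants---are routine.
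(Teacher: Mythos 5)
Your negative half is correct and is exactly what the paper means when it calls distortion in dimension $k$ a direct consequence of Theorem~\ref{thm:mainThmLower}: the spheres of Proposition~\ref{prop:sharpLowerBounds} have mass $\approx V$, Euclidean $X$--filling volume $\lesssim V^{k/(k-1)}$, but $X_0$--filling volume $\gtrsim e^{cV^{1/(k-1)}}$, so no constant $C$ can work. Your positive half has the right shape: you correctly diagnose that Theorem~\ref{thm:mainThmUpper} by itself does not give undistortion and that what is needed is the pointwise bound $\FV_{X_0}(\alpha)\lesssim\FV_X(\alpha)+\mass\alpha+1$. But the step you describe as ``I would invoke the construction behind Theorems~\ref{thm:mainThmDehn} and~\ref{thm:mainThmUpper}, which \dots turns $\beta$ into an $(m+1)$--chain $\beta'$ supported in $X_0$ with $\mass\beta'\le C(\mass\beta+\mass\partial\beta)$'' is the \emph{conclusion} of the proof, not a step in it; stating it is not proving it. In the paper that conclusion is the final theorem of Section~\ref{sec:higherDims}, $\FV_{\FD}(\alpha)\lesssim\mass\alpha+\FV_X(\alpha)$, and the work behind it occupies Sections~\ref{sec:parametrizedCones}--\ref{sec:higherDims}: Proposition~\ref{prop:expMomentDistortion} uses the random chain $\Omega_\bR$ built from random flats (via the Kleinbock--Margulis equidistribution of Section~\ref{sec:equidistribution}) to replace a near-optimal $X$--filling $\beta$ by a filling $\gamma$ with the same boundary, $\mass\gamma\lesssim\mass\beta+\mass\partial\beta$, and uniformly bounded exponential weight $\int_\gamma e^{b\dGamma(x)}\,dx$; Proposition~\ref{prop:bootstrapping} then retracts $\gamma$ into $\FD$ by applying the polynomial Lipschitz extension of Corollary~\ref{cor:polyLipExtend} cell by cell, so that the polynomial blowup is dominated by the exponential weight. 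Without these two propositions the argument has no content.

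One smaller deviation: you take the $0$--cycle case as an external input by citing Lubotzky--Mozes--Raghunathan, whereas the paper runs the same machinery at $n=1$ (Propositions~\ref{prop:expMomentDistortion} and~\ref{prop:bootstrapping} are stated for all $n\le k-1$) and thereby \emph{reproves} LMR rather than assume it; indeed Theorem~\ref{thm:disdim} is advertised as giving a new proof of LMR. Your shortcut is logically admissible but gives away that feature.
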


Note that undistortedness up to dimension $1$ is equivalent to $X_0$ being quasi-isometrically embedded in $X$, so Theorem~\ref{thm:disdim} gives a new proof of the theorem of Lubotzky--Mozes--Raghunathan.

Most previous bounds on filling invariants of arithmetic lattices involve explicit constructions of cycles and chains in some thick part of $X$.  The first such bound used an arithmetic construction to produce subgroups $\Z^{n-2}\subset \SL_n(\Z)$ that act on flats contained in the thick part \cite{ECHLPT}.  Pieces of these flats can be glued together to produce $(n-2)$--cycles with exponentially large filling volume.  Similar constructions were used in \cite{LeuzPitRk2} and \cite{WortExpLower} to find exponential lower bounds in other groups.  Upper bounds on filling invariants of arithmetic lattices and solvable groups have been found in \cite{DrutuFilling, YoungQuad, CohenSP, LeYoRank1, BestvinaEskinWortman}, and \cite{CorTessSolv}. These bounds typically combine explicit constructions of chains that fill cycles of a particular form with ways to decompose arbitrary cycles into pieces of that form. 

The bounds in this paper are based on the probabilistic method rather than explicit constructions.  Instead of constructing a single filling, we show that when $n<k$ and $\alpha$ is an $(n-1)$--cycle in $X_0$ of mass $V$, there is a large family of $n$--chains in $X$ with boundary $\alpha$ and mass at most $V^{\frac{n}{n-1}}$.  We prove our bounds by considering a random chain $\beta$ drawn from this family.

Each chain in this family is made of pieces of flats, so the geometry of a random chain $\beta$ depends on the behavior of random flats.  Kleinbock and Margulis \cite{KMLog} showed that random flats in $Y$ typically spend most of their time in the thick part of $Y$ (see Figure~\ref{fig:thickHoroballs}); indeed, the thin part of $Y$ has exponentially small volume, so its intersection with a ``typical'' flat $E$ is exponentially small.  Similar equidistribution results hold even when $E$ is drawn from a narrower distribution, for instance, a random flat $E_y$ that passes through a given point $y\in Y$.  In this case, if $y$ lies deep in the thin part, then $E_y$ must intersect the thin part, but we will see that except for a ball around $y$, most of $E_y$ typically lies in the thick part of $Y$ (Figure~\ref{fig:thinHoroballs}).  Consequently, $\beta$ typically does not lie in $X_0$, but with high probability, it lies close to $X_0$.  With some additional work, we can retract it to $X_0$ and obtain the desired bounds.

\begin{figure}
  \begin{minipage}[t]{.475\linewidth}
    \centering
    \includegraphics[width=.9\textwidth]{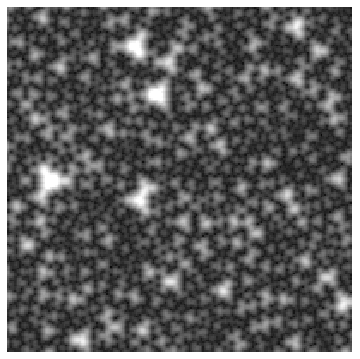} 
    \subcaption{A flat centered in the thick part.  Note that the flat lies close to the thick part except for a few small pieces.}\label{fig:thickHoroballs}
  \end{minipage}\hfill%
  \begin{minipage}[t]{.475\linewidth}
    \centering
    \includegraphics[width=.9\textwidth]{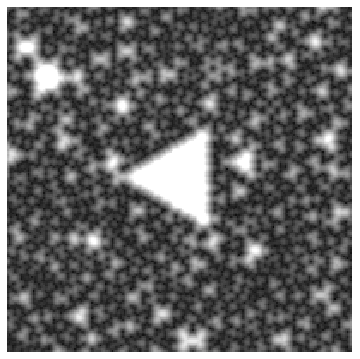} 
    \subcaption{A flat centered in the cusp.  The triangular symmetry of the root system can be seen in the shape of the intersection with the cusp. \label{fig:thinHoroballs}}
  \end{minipage}
  \caption{Generic flats in $\SL_3(\Z)$.  Dark parts of the flat lie in the thick part of $\SL_3(\Z)\backslash \SL_3(\R)$, and light parts lie in the cusp.}\label{fig:1}
\end{figure}

In \cite{athreyaULTRA} a $p$-adic version of the logarithmic law of Kleinbock and Margulis is established.  We speculate that our methods can also be used to prove analogues of Theorem~\ref{thm:mainThmDehn} and Theorem~\ref{thm:mainThmUpper} for $S$--arithmetic groups (at least in characteristic zero), but this paper will only address the arithmetic case.

\begin{ack}
  This material is based upon work supported by the National Science Foundation under Grant No.\ DMS 1612061.  R.Y.\ was supported by a Sloan Fellowship and the Fall 2016 program on Geometric Group Theory at the Mathematical Sciences Research Institute.  This project began during a visit to ETH Zürich, and we would like to thank ETH Zürich and Urs Lang for their hospitality during our visit.  
\end{ack}

\subsection{Sketch of proof}\label{sec:sketch}

The constructions in this paper follow the same broad outline as the constructions in \cite{LeYoRank1}.  As in that paper, we build fillings of cycles by gluing together large simplices.  These simplices are constructed as part of a continuous map $\Omega_\bR\from S\to X$, where $S=\Delta_{\FD}^{(k-1)}$ is the $(k-1)$--skeleton of the infinite-dimensional simplex with vertex set equal to the thick part $\FD$.  Any set $V\subset\FD$ of at most $k$ points determines a simplex $\langle V\rangle$ of $S$ and a simplex $\Omega_\bR|_{\langle V\rangle}$ in $X$.  If $V,V'$ are two such sets, then $\Omega_\bR|_{\langle V\rangle}$ and $\Omega_\bR|_{\langle V'\rangle}$ agree on the intersection $\langle V\cap V'\rangle$.  This makes it possible to build complicated fillings out of these simplices.

There are two main differences between the constructions in this paper and those in \cite{LeYoRank1}.  First, instead of constructing a single map $\Omega$, we construct a family of maps $\Omega_\bD$, parametrized by a certain tuple $\bD$ of chambers in the geodesic boundary $X_\infty$, then construct a random variable $\bR$ which takes values in the set of such tuples.  Then $\Omega_\bR$ is a random map, and we can analyze it using dynamical results of Kleinbock and Margulis.  These results show that $\Omega_\bR$ is typically a logarithmic distance from $\FD$, so we can use $\Omega_\bR$ to produce fillings that lie in a logarithmic neighborhood of $X_0$.

Second, since these fillings are logarithmically far from $\FD$, we need a new argument to retract them to $\FD$.  This is a two-step process.  First, we apply a retraction $X\to \FD$ with an exponentially growing Lipschitz constant to produce fillings in $\FD$; these fillings have polynomial volume, so $X_0$ has polynomial filling functions below the rank.  Second, we use the polynomial bound on the filling functions to construct a new retraction from $X\to X_0$.  This new retraction satisfies better bounds than the exponential retraction, and when we apply it to the fillings produced from $\Omega_\bR$, we get sharp bounds on $\FV^n_{\Gamma}$.  We call this the bootstrapping argument.

As an illustration of this technique, we consider the dimension--1 case, for which Theorem~\ref{thm:disdim} reduces to the theorem of Lubotzky--Mozes--Raghunathan that the inclusion of $\Gamma$ into $G$ is a quasi-isometric embedding, or equivalently, that $\FD$ is undistorted in $X$.  Let $x$ and $y$ be two elements of $\FD$ and let $2r=d_X(x,y)$.  Let $\gamma\from \R \to X$ be the infinite geodesic connecting $x$ and $y$, parametrized so that $\gamma(-r)=x$ and $\gamma(r)=y$.  Let $m=\gamma(0)$ be the midpoint of $x$ and $y$.  There is a flat $F$ containing $x$, $y$, and $\gamma$; let $\bc_x$ and $\bc_y$ be opposite chambers of $F_\infty$ such that $\lim_{t\to -\infty}\gamma (t)\in \bc_x$ and $\lim_{t\to \infty}\gamma(t)\in \bc_y$.

We would like to connect $x$ and $y$ by a path in a flat that lies close to $\FD$.  The intersection $F\cap \FD$ may be disconnected, so we cannot necessarily connect $x$ and $y$ by a path in $F$, but we will connect $x$ and $y$ in a flat $H$ that lies close to $F$.  Let $\br_x$ and $\br_y$ be independent random chambers of $X_\infty$ that are close to $\bc_x$ and $\bc_y$.  Then the chambers $\br_x$ and $\br_y$ are opposite, so there is a unique flat $E_{\br_x,\br_y}$ such that  $\br_x$ and $\br_y$ are chambers of $(E_{\br_x,\br_y})_\infty$, and it means that $E_{\br_x,\br_y}$ is close to $m$, $x$, and $y$.  Let $x', y'\in E_{\br_x,\br_y}$ be points that are close to $x$ and $y$.  

We cannot expect all of $E_{\br_x,\br_y}$ to be close to $\FD$.  The flat $E_{\br_x,\br_y}$ passes close to $m$, and $m$ might be far from $\FD$.  We can, however, prove equidistribution results for all of $E_{\br_x,\br_y}$ except for a ball around $m$.  

To state these results, we introduce a version of the exponential map.  For any $v\in X$, let $e_v\from X_\infty\times [0,\infty)\to X$ be the map such that for any $\sigma\in X_\infty$, the map $t\mapsto e_v(\sigma,t)$ is the unit-speed geodesic ray from $v$ toward $\sigma$.  Since $E_{\br_x,\br_y}$ is close to $m$, the image $e_m((E_{\br_x,\br_y})_\infty\times [0,\infty))$ is close to $E_{\br_x,\br_y}$.  We can use results of Kleinbock and Margulis to show that there are $b>0$ and $R_0>0$ with $R_0\approx r$ such that for any point $\sigma\in (E_{\br_x,\br_y})_\infty$ and any $R>R_0$, 
\begin{equation}\label{eq:expMomentsSketch}
  \EE[\exp (b\dGamma(e_m(\sigma,R)))]\lesssim 1,
\end{equation}
where $\dGamma(v)=d_X([\Gamma],v)$ denotes the distance to the $\Gamma$--orbit $[\Gamma]=\Gamma K\subset X=G/K$.  That is, the probability that $e_m(\sigma,R)$ is distance $l$ from $\FD$ decays exponentially with $l$.  (Since $E_{\br_x,\br_y}$ is random, it is a little ambiguous to say that $\sigma$ is a point in $(E_{\br_x,\br_y})_\infty$; see Sec.~\ref{sec:equidistribution} for a more rigorous statement.)

A similar application of Kleinbock--Margulis shows that the Weyl chambers $e_{x}(\br_x\times [0,\infty))$ and $e_{y}(\br_y\times [0,\infty))$ also lie close to $\FD$.  These Weyl chambers lie close to Weyl chambers in $E_{\br_x,\br_y}$ based at $x'$ and $y'$, so $E_{\br_x,\br_y}$ lies close to $\FD$ on a set shaped like the one in Figure~\ref{fig:equiflat}.  When $k\ge 2$, this set is connected, so we can connect $x'$ and $y'$ by a path $\Omega$ that lies in this region.  Each point in $\Omega$ lies close to $\FD$ with high probability.  

\begin{figure}
  \begin{tikzpicture}[scale=.75]
    \fill[color=gray!30] (-3,-3) rectangle (3,3);
    \fill[fill=white] (0,0) circle (2);
    \filldraw[draw=black, fill=gray!30] (3,1) -- (1,0) -- (3, -1);
    \filldraw[draw=black, fill=gray!30]  (-3,1) -- (-1,0) -- (-3, -1); 
    \draw (0,0) circle (2);
    \node at (-1,0) [below=.12cm] {$x'$}; 
    \node at (1,0) [below=.12cm] {$y'$};
    \draw[draw=black, very thick]  (-1,0) -- (-2.5,0) arc (180:0:2.5) node [below] {$\Omega$}-- (1,0); 
  \end{tikzpicture} 
  \caption{\label{fig:equiflat} Regions of the random flat $E_{\br_x,\br_y}$ that lie close to $\FD$.  When $k\ge 2$, the union of these regions is connected, so we can connect $x'$ and $y'$ by a curve $\Omega$.}
\end{figure}
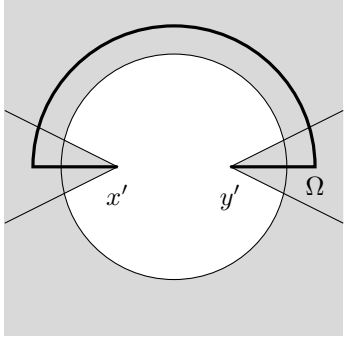

Unfortunately, though the expected distance from $\FD$ to each point in this path is bounded, $\Omega$ has length on the order of $r$, so the expectation of the \emph{maximum} distance from $\FD$ to $\Omega$ is on the order of $\log r$.  That is, letting $X(l)=\dGamma^{-1}([0,l])$, there is an $\eta>0$ such that $\Omega\subset X(\eta \log r)$ with positive probability.  

In order to bound the distortion of $\FD$ in $X$, we need to retract $\Omega$ to $\FD$.  By results of \cite{LeuExh}, there is a $c>0$ such that the closest-point projection $\rho \from X \to X_0$ satisfies $\log \Lip(\rho|_{X(l)})\le c(l+1)$, i.e., 
$$\Lip(\rho|_{X(\eta \log r)})\lesssim r^{c\eta}.$$
The composition $\rho\circ \Omega$ is a curve in $\FD$, and with positive probability, $\ell(\rho\circ \Omega)\lesssim r^{c\eta+1}$.  Therefore, $\FD$ is at most polynomially distorted in $X$.  

To get the sharp bound, we apply the polynomial distortion bound to $\Omega$.  We call this the bootstrapping argument, since it uses the polynomial bound to sharpen itself.  Suppose that $N$ is an integer, that $N\approx r$, and that $\Omega\from [0,N]\to X$ has unit speed.  Let $v_i=\rho(\Omega(i))\in \FD$.  Then 
$$d_X(v_i,v_{i+1})\le d(\Omega(i),\FD)+1+d(\Omega(i+1),\FD)\le 2d(\Omega(i),\FD)+2.$$
Since $\FD$ has polynomial distortion, 
$$d_{\FD}(v_i,v_{i+1})\lesssim (d(\Omega(i),\FD)+1)^{c\eta+1},$$
and there is a path $\omega\from [0,N]\to \FD$, such that $\omega(i)=v_i$ for all $i$ and
$$\ell(\omega)\lesssim \sum_{i=0}^{N-1}(d(v_i,\FD)+1)^{c\eta+1}.$$
The probability that $d(v_i,\FD)>l$ decays exponentially with $l$, so $\EE[d(v_i,\FD)^{c\eta+1}]$ is bounded, and $\ell(\omega)\lesssim N\approx r$.  This is a curve in $\FD$ whose endpoints are a bounded distance from $x$ and $y$, so $d_{\FD}(x,y)\lesssim r+1$, and $\FD$ is undistorted in $X$.

The higher-dimensional case is more complicated, but follows a similar strategy.  Instead of constructing a path in a single random flat, we instead construct a surface in a collection of random flats.  Let $\bD=(\bd_\delta)_{\delta\in \cF(S)}\in (\chx)^{\cF(S)}$ be a tuple of chambers of $X_\infty$, indexed by the simplices of $S$.  We define $\Omega_\bD$ by induction on dimension.  We send each vertex $x\in \cF^0(S)=\FD$ to the corresponding point in $\FD$.  For each edge $e=\langle v,w\rangle$, we will construct $\Omega_\bD(e)$ so that it lies close to the two flats $E_{\bd_v,\bd_e}$ and $E_{\bd_w,\bd_e}$.  When we replace $\bD$ by a random variable $\bR=(\br_\delta)_{\delta\in \cF(S)}$, these become random flats, and, as above, there is a compact set $B$ such that any point in $E_{\br_v,\br_e}\cup E_{\br_w,\br_e}\setminus B$ is close to $\FD$ with high probability.  We choose $\Omega_\bD(e)$ as a path in a neighborhood of $E_{\bd_v,\bd_e}\cup E_{\bd_w,\bd_e}$ that avoids $B$.

Then for any triangle $\delta\in \cF^2(S)$, the image $\Omega_\bD(\partial \delta)$ lies in a neighborhood of the union of six flats.  We denote the boundary at infinity of these flats by $M_{\partial \delta}(\bD)$, and construct $\Omega_\bD(\delta)$ as a surface lying in a neighborhood of the union
$$M_{\delta}(\bD):=\bigcup_{\bb\in \cF^{k-1}(M_{\partial \delta}(\bD))} E_{\bb,\bd_\delta}.$$
This is also a union of boundedly many flats that satisfy equidistribution bounds when $\bD$ is replaced by $\bR$, so we can repeat the process to extend $\Omega_\bD$ to all of $S$.  

This results in a $b>0$ and a random map $\Omega_\bR\from S\to X$ such that for every $s\in S$, 
$$\EE[\exp (b\dGamma(\Omega_\bR(s)))]\lesssim 1.$$
We use this random map to show that there is an $\eta>0$ such that if $n\le k-1$ and $\alpha\in \CLip_{n-1}(\FD)$ is a cycle in $\FD$ of mass at most $V$, then $\alpha$ bounds a chain $\beta_0\in \CLip_{n}(X(\eta \log V))$ with mass at most $V^{\frac{n}{n-1}}$.  This is logarithmically far from $\FD$, but the image of $\beta_0$ under closest-point projection $\rho$ is a polynomial filling for $\alpha$, so $\FD$ satisfies a polynomial filling inequality.  We use this polynomial filling inequality in a bootstrapping argument to prove the sharp filling inequality.  

\subsection{Outline of paper}

In Section~\ref{sec:prelims}, we review some notation and definitions that will be used in the rest of the paper.  This includes smooth random variables and smooth random chambers, the thick part $X_0$ of a symmetric space $X$, and a definition of the filling volume function $\FV$ based on Lipschitz chains.  

In Section~\ref{sec:equidistribution}, we prove bounds on the distribution of random flats and random Weyl chambers in $X$.  These are based on work of Kleinbock and Margulis on logarithm laws, and lead to the inequality \eqref{eq:expMomentsSketch} used in the sketch above.  In Section~\ref{sec:lower bound}, we use these results to prove that $\FV^{k}_{\Gamma}$ grows faster than any polynomial when $k=\rank X$.  We prove this bound by constructing a $(k-1)$--sphere that lies in a random flat.  The center of this sphere lies deep in the thin part, so any filling that avoids the thin part has exponentially large volume.  Unfortunately, the sphere itself may have pieces that are logarithmically far away from the thick part $\FD$, and retracting the sphere to the thick part increases its volume by a polynomial factor.  Later, we will fix this by applying the bootstrapping argument alluded to above.

In Sections~\ref{sec:parametrizedCones}--\ref{sec:randomMapsX}, we construct the family of maps $\Omega_\bD$ and the random map $\Omega_\bR$.  We construct $\Omega_\bD$ by defining functions $f\from S\to X$ and $r\from S\to \R$ and a family of functions $\sansP_\bD\from S\to X_\infty$.  In Section~\ref{sec:parametrizedCones}, we construct $\sansP_\bD$ and show that it varies smoothly with $\bD$.  Next, in Section~\ref{sec:random cones}, we construct the random variable $\bR$ and prove properties of $\sansP_\bR$.  Finally, in Section~\ref{sec:randomMapsX}, we construct $f$ and $r$ and describe $\Omega_\bR$.  

This allows us to prove Theorems~\ref{thm:mainThmDehn} and \ref{thm:mainThmLower} in Section~\ref{sec:DiscsSpheres}.  The bounds in these theorems rely on two-dimensional fillings or round spheres in $X$, so they follow from a simpler version of the bootstrapping argument.  

In higher dimensions, complications may arise, and we need the full version of the bootstrapping argument.  For example, a closed curve in $X$ has diameter at most equal to its length, while a $2$--sphere may consist of two spheres connected by a long, skinny tube, and may have diameter much larger than its area.  We will need some additional tools from geometric measure theory to handle spheres like this.  In Section~\ref{sec:GMT tools}, we introduce these tools, and in Section~\ref{sec:higherDims}, we use them to complete the proofs of Theorem~\ref{thm:mainThmUpper} and Theorem~\ref{thm:disdim}.  

\section{Preliminaries and notation}\label{sec:prelims}

\subsection{Notation and standing assumptions}\label{sec:standing}


If $Z$ is a simplicial or CW complex and $d\geq 0$, we denote by $\mathcal F(Z)$ the set of faces (or cells) of $Z$ and by ${\mathcal F}^d(Z)$ the set of $d$-dimensional faces (or cells) of $Z$.

Throughout this paper, $G$ will be a connected, semisimple, center-free Lie group without compact factors and $K$ will be a maximal compact subgroup.  We denote the Lie algebra of $G$ by $\frakg$ and equip it with an $\Ad(K)$--invariant norm.  Let $X=G/K$ be the corresponding symmetric space of noncompact type and let $d$ be its distance function.  For all $g\in G$, we denote the point $gK\in X$ by $[g]$.  We let $k=\rank X= \Rrank G$.  Let $\Gamma$ be an irreducible nonuniform lattice in $G$ and let $\dGamma \from X\to \R$ be the function $\dGamma(x)=d_X(x,[\Gamma])$ for all $x\in X$; note that if $x=[h]$ for some $h\in G$, then 
$$|d_{[\Gamma]}(h)-d_{\Gamma}(h)|=|d_X([h],[\Gamma])-d_G(h,\Gamma)|\le 2\diam K\approx 1.$$

Let $X_\infty$ denote the geodesic boundary of $X$ at infinity, equipped with the Tits metric $\Td$ associated to the angular metric $\angle$. In particular, $(X_\infty, \Td)$ is the geometric realization of a spherical Tits building.  Let $\chx=\mathcal F^{k-1}(X_{\infty})$ be its set of (maximal) chambers.  If $\bb,\bc\in \chx$ are two opposite chambers, the convex hull of $\bb$ and $\bc$ is an apartment of $X_{\infty}$ \cite[4.70]{AbramBrown} and we let $E_{\bb,\bc}\subset X$ be the corresponding flat, which we call the flat spanned by $\bb$ and $\bc$.  

We fix a maximal $\R$--split torus $A\subset G$ and its corresponding flat $E=[A]\subset X$. Any two such tori are conjugate and are isomorphic to $\R^k$. Let $E_\infty\subset X_\infty$ denote the boundary at infinity of $E$.    We fix a chamber $\bz\in \cF^{k-1}(E_\infty)$ and let $\bz^*$ be its opposite chamber in $E_\infty$.  Let $P=\Stab(\bz)$ be the stabilizer of $\bz$; this is a minimal parabolic subgroup of $G$.  By the Levi decomposition, we can write $P=NAM$ where $N$ is normal and nilpotent and $M$ is the centralizer of $A$ in $K$.  In particular, $M$ is compact.  Note that $P$ acts transitively on $X$; indeed, $NA$ acts simply transitively on $X$.

The notations $f\lesssim g$ and $g\gtrsim f$ indicate that $f\le C g$ for some universal constant $C>0$, and we write $f\approx g$ if and only if $f\lesssim g$ and $g\lesssim f$.  If the implicit constant $C$ depends on some quantities $a,b$, we show this by a subscript, i.e., $f\lesssim_{a,b} g$.  In what follows, many of our implicit constants will depend on $G$, $k$, and $\Gamma$, so for brevity, we omit these subscripts.

For all $t>0$, let $\blog t:=\max\{1,\log t\}$.

\subsection{Probability: Sobolev norms and smooth random variables}\label{sec:probability prelims}

Kleinbock and Margulis \cite{KMLog} proved quantitative results on the distribution of the geodesic flow in quotients $\Gamma\backslash G$ of semisimple Lie groups, showing how the distribution of the geodesic flow at time $t$ depends on $t$ and on the smoothness of the initial distribution.  In this section, we will introduce some concepts that we will need to state these results.

Let $H$ be a Lie group and let $\frakh$ be the Lie algebra of $H$.  Let $\mu$ be a left-invariant Haar measure on $H$.  If $x\in H$ is a continuous random variable, we let $\phi_x\from H\to \R$ be its probability density function, so that for any open set $U\subset H$, we have
$$\Prob[x\in U]=\int_{U} \phi_x(u) \; d\mu(u).$$
If $\phi_x$ is smooth, we say that $x$ is a \emph{smooth random variable}.

If $H,L$ are Lie groups, $f\from H\to L$ is a submersion, and $x$ is a smooth random variable with compact support, then the implicit function theorem implies that $f(x)$ is also a smooth random variable.  If $\phi=\phi_x\in C^\infty(H)$ is the density function of $x$, we define the \emph{push-forward} $f_*(\phi)$ as the density function of $f(x)$, i.e., $f_*(\phi_x)=\phi_{f(x)}$.  In the case that $f$ is a diffeomorphism, this is given by 
\begin{equation}\label{eq:define pushforward}
  f_*(\phi)(f(x))=\left|J(f)(x)^{-1}\right|\cdot \phi(x),
\end{equation}
where $J(f)$ is the Jacobian of $f$.  We take \eqref{eq:define pushforward} to be the definition of $f_*(\phi)$ for arbitrary $\phi\in C^{\infty}(H)$.  

We bound the smoothness of a function on $H$ by introducing a left-invariant \emph{Sobolev norm}.  Let
$$\cT(\frakh)=\R\oplus \frakh\oplus (\frakh\otimes\frakh)\oplus\dots$$
be the tensor algebra of $\frakh$ and let
$$\cT_j(\frakh)=\R\oplus \frakh\oplus \dots\oplus \frakh^{\otimes j}$$
be the subspace spanned by tensors of rank at most $j$.  

We view the elements of $\cT(\frakh)$ as left-invariant differential operators on $H$ by defining $(V_1\otimes\dots \otimes V_j)f=V_1\dots V_jf$.  If $l>0$ and
$f\in C^l(H)$, we define
\begin{equation}\label{eq:defSobOperator}
  \|f\|_{l,2}=\mathop{\sup_{\Upsilon \in \cT_l(\frakh)}}_{\|\Upsilon\|\le 1} \|\Upsilon  f\|_2,
\end{equation}
where $\|\cdot\|_2$ is the $L_2$ norm with respect to $\mu$.  Alternatively, if $Y_1,\dots, Y_d\in \frakh$ is an orthonormal basis, then tensors of the form $Y_{i_1}\otimes \dots \otimes Y_{i_j}$ for $0\le j\le l$ form an orthonormal basis of $\cT_l(\frakh)$, so
\begin{equation}\label{eq:defSobOrtho}
  \|f\|_{l,2}=\sqrt{\sum_{0\le j\le l, 1\le i_n\le d} \|Y_{i_1}\dots Y_{i_j} f\|_2^2}.
\end{equation}
For all $h\in H$, let $hf(x)=f(h^{-1}x)$.  For all $\Upsilon\in \cT(\frakh)$, we have $\Upsilon hf=h(\Upsilon f)$, so $\|hf\|_{l,2}=\|f\|_{l,2}$.  

For any subset $Z\subset H$, let $C^\infty(Z)$ be the set of smooth functions with support in $Z$.  Let $B^\infty(Z)$ be the set of elements of $C^\infty(Z)$ whose derivatives are all $L^2$ functions.  If $U$ is contained in a compact set, then $C^\infty(U)=B^\infty(U)$.  If $\Gamma$ is a lattice in $H$, then elements of $\cT(\frakh)$ also act as differential operators on functions on $\Gamma\backslash H$, so we may also define $\|\cdot\|_{l,2}$ on $B^\infty(\Gamma\backslash H)$.  

\begin{remark}
  The definition \eqref{eq:defSobOperator} takes a supremum over left-invariant differential operators, but one can define an equivalent norm by taking a supremum over all differential operators.  Any map $\Upsilon\from H\to \cT_l(\frakh)$ acts on $B^\infty(H)$ as a differential operator of order at most $l$.  We define $\|\Upsilon\|_\infty=\sup_{h\in H} \|\Upsilon(h)\|_2$.  For all $\Upsilon$ and all $f\in B^\infty(H)$, we have 
  $$\|\Upsilon f\|_2\lesssim_{l,\dim H} \|\Upsilon\|_\infty \cdot \|f\|_{l,2}.$$
\end{remark}

Let $H,L$ be Lie groups.  When $f\from H\to L$ is a diffeomorphism or submersion, we can bound the Sobolev norm of $f_*(\phi)$ in terms of $\|\phi\|_{l,2}$ and the derivatives of $f$.
\begin{lemma}\label{lem:SobolevProps}
  Let $x\in H$, $y\in L$ be smooth random variables with density functions $\alpha\in B^\infty(H)$ and $\beta\in B^\infty(L)$ respectively.  The following properties hold:
  \begin{enumerate}
  \item If $f\from H\to L$ is a diffeomorphism and if $C\subset H$ is
    a compact set such that $\supp \alpha\subset C$, then
    $$\|f_*(\alpha)\|_{l,2}\lesssim_{f,C,l} \|\alpha\|_{l,2}.$$
  \item If $f\from H\to L$ is a Lie group isomorphism, then
    $$\|f_*(\alpha)\|_{l,2}\le \Lip (f^{-1})^{l+\frac{\dim H}{2}} \|\alpha\|_{l,2}.$$
  \item The product $(x,y)\in H\times L$ is a smooth random variable and
    $$\|\phi_{(x,y)}\|_{l,2}\lesssim_{\dim H,l} \|\alpha\|_{l,2}\|\beta\|_{l,2}.$$
  \item 
    If $g\from H\to L$ is a smooth map and $z\in G$ is a point such that the derivative $Dg_z\from T_zH\to T_{g(z)}L$ is surjective, then there is an $\epsilon>0$ such that if $\alpha\in C^\infty(B_z(\epsilon))$, then $g_*(\alpha)\in B^\infty(L)$ and
    $$\|g_*(\alpha)\|_{l,2} \lesssim_{g,z} \|\alpha\|_{l,2}.$$
  \end{enumerate}
\end{lemma}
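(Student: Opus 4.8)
The plan is to prove each of the four parts by reducing to the chain rule for differential operators and the change-of-variables formula for the pushforward density, keeping careful track of how the Sobolev norm transforms. For part (1), I would start from the definition \eqref{eq:define pushforward}: for $y = f(x)$ with $x = f^{-1}(y)$, we have $f_*(\alpha)(y) = |J(f)(f^{-1}(y))^{-1}|\,\alpha(f^{-1}(y))$. Since $\supp\alpha \subset C$ with $C$ compact, everything in sight is supported in the compact set $f(C)$, on which $f^{-1}$ and all its derivatives up to order $l$, as well as the Jacobian factor $|J(f)\circ f^{-1}|^{-1}$ and its derivatives, are bounded (with bounds depending on $f$, $C$, $l$). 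Applying a left-invariant differential operator $\Upsilon \in \cT_l(\frakh_L)$ to $f_*(\alpha)$ and expanding by the Leibniz and chain rules expresses $\Upsilon(f_*\alpha)$ as a sum of terms, each a bounded smooth function times $(\Upsilon' \alpha)\circ f^{-1}$ for some $\Upsilon' \in \cT_l(\frakh_H)$; taking $L^2$ norms and absorbing the bounded Jacobian from the second change of variables gives $\|f_*(\alpha)\|_{l,2} \lesssim_{f,C,l} \|\alpha\|_{l,2}$.

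For part (2), the point is that when $f$ is a Lie group isomorphism the implicit constants in part (1) can be made explicit. Here $J(f)$ is constant (equal to $\det Df_e$), so the Jacobian factor contributes a clean multiplicative constant; I would bound $|\det Df_e^{-1}| \le \Lip(f^{-1})^{\dim H}$ and then split off a factor $\Lip(f^{-1})^{\dim H/2}$ to each of the two $L^2$ changes of variables — more precisely, $\|g\circ f^{-1}\|_2 = |\det Df_e|^{1/2}\|g\|_2 \le \Lip(f^{-1})^{\dim H/2}\|g\|_2$ wait — one should be careful about which direction the Jacobian goes; in any case the change of variables $u \mapsto f^{-1}(u)$ contributes $\Lip(f^{-1})^{\dim H/2}$ to the $L^2$ norm. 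Since $f$ is a homomorphism, $Df_e$ intertwines left-invariant vector fields, so a left-invariant operator $\Upsilon \in \cT_l(\frakh_L)$ pulls back under $f$ to a left-invariant operator of norm at most $\Lip(f^{-1})^l\|\Upsilon\|$ on $\frakh_H$ (each factor in a rank-$\le l$ tensor picks up a factor of $\|Df_e^{-1}\| \le \Lip(f^{-1})$). Combining, $\|f_*(\alpha)\|_{l,2} \le \Lip(f^{-1})^{l + \dim H/2}\|\alpha\|_{l,2}$, exactly as claimed — here there is only one factor of $\Lip(f^{-1})^{\dim H/2}$ because the pushforward density itself carries the compensating constant Jacobian, so the two changes of variables cancel down to one net factor.

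For part (3), the density of $(x,y)$ on $H\times L$ is simply the product $\phi_{(x,y)}(h,l) = \alpha(h)\beta(l)$, and $\cT_m(\frakh_H \oplus \frakh_L)$ is spanned by tensor products of basis elements of $\frakh_H$ and $\frakh_L$ of total rank $\le m$; applying such an operator to a product separates into (operator on $H$ applied to $\alpha$) times (operator on $L$ applied to $\beta$), and Fubini gives $\|\Upsilon\phi_{(x,y)}\|_2 = \|\Upsilon_H\alpha\|_2\|\Upsilon_L\beta\|_2$. Summing over a basis as in \eqref{eq:defSobOrtho} and using that the number of ways to split ranks is bounded in terms of $\dim H$ and $l$ yields the estimate. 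Part (4) is the substantive one: locally near $z$ a submersion $g$ factors (after composing with a diffeomorphism of a neighborhood of $z$, by the constant-rank theorem) as a projection $H \supset U \cong W \times V \to V$ with $g(z)$ corresponding to a point of $V$. One then writes $g_*(\alpha)$ as a fiber integral $g_*(\alpha)(v) = \int_W \tilde\alpha(w,v)\,dw$ over the compact fiber (using that $\alpha$ is supported in $B_z(\epsilon)$ for $\epsilon$ small), bounds the Sobolev norm of the fiber integral by the Sobolev norm of $\tilde\alpha$ on $W\times V$ (differentiation in the $V$-directions commutes with the integral, and the $W$-fiber is compact so $L^1_w L^2_v \hookrightarrow L^2_v L^1_w$-type Cauchy–Schwarz costs only a constant), and finally invokes part (1) to compare $\|\tilde\alpha\|_{l,2}$ with $\|\alpha\|_{l,2}$ on the compact neighborhood. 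The main obstacle is this last part: one must check that passing to the local product coordinates and taking the fiber integral does not require controlling derivatives transverse to the fibers in an uncontrolled way, i.e., that only finitely many integrations by parts in the compact fiber direction are needed and that these are absorbed into the implicit constant depending on $g$ and $z$; the left-invariance of the Sobolev norm is lost in local coordinates, so one works with the equivalent norm over all differential operators from the Remark, then transports back.
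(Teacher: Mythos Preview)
Your proposal is correct and follows essentially the same route as the paper: parts (1)--(3) are handled identically, and for part (4) both you and the paper factor the submersion locally as $b\circ p\circ a$ with $p$ a coordinate projection, bound $\|p_*(\phi)\|_{l,2}$ by differentiating under the fiber integral and applying Jensen/Cauchy--Schwarz on the compact fiber, then invoke part (1) for the diffeomorphisms $a,b$. The only noteworthy difference is that the paper dissolves your ``main obstacle'' about left-invariance by first reducing to $H=\R^m$, $L=\R^n$ via part (1) (local charts are diffeomorphisms on compacta), so that the Sobolev norm is already the ordinary Euclidean one and no transporting back is needed.
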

\begin{proof}

  To show the first property, note that for all $\Upsilon\in T_l(\frakh)$, the operator $\alpha\mapsto \Upsilon f_*(\alpha)$ is a differential operator of order at most $l$.  If $\|\Upsilon\|=1$, then the coefficients of this operator are bounded in terms of the derivatives of $f|_C$ and $f^{-1}|_{f(C)}$, so 
  $$\|\Upsilon f_*(\alpha)\|_2\lesssim_{f,C,l} \|\alpha\|_{l,2}$$
  and thus $\|f_*(\alpha)\|_{l,2}\lesssim_{f,C,l}\|\alpha\|_{l,2}$.

  For the second property, let $c=|J(f^{-1})(e)|$ be the norm of the Jacobian of $f^{-1}$.  For any $\gamma\in B^\infty(H)$, we have $f_*(\gamma)=c  \gamma\circ f^{-1}\in B^{\infty}(L)$.  By the chain rule, for any $Y\in \frakl$,
  $$Y(f_*(\gamma))=c Y(\gamma\circ f^{-1})=c (Df^{-1}(Y)\gamma)\circ f^{-1} =f_*(Df^{-1}(Y)\gamma),$$
  so for $Y_1,\dots, Y_j\in \frakl$, we have
  $$Y_1\dots Y_j f_*(\alpha)=c\cdot (Df^{-1}(Y_1)\dots Df^{-1}(Y_j) \alpha)\circ f^{-1}$$
  and
  \begin{align*}
    \|Y_1\dots Y_j f_*(\alpha)\|_2^2
    &\le \int_L c^2 (Df^{-1}(Y_1)\cdots Df^{-1}(Y_j) \alpha)(f^{-1}(v))^2\; dv\\
    &= \int_H c^2(Df^{-1}(Y_1)\cdots Df^{-1}(Y_j) \alpha)(w)^2 c^{-1}\; dw\\
    &= c  \|Df^{-1}(Y_1)\cdots Df^{-1}(Y_j) \alpha\|_2^2\\
    &\le c(\Lip f^{-1})^j \|Y_1\|_2\cdots \|Y_j\|_2 \|\alpha\|_{l,2}.
  \end{align*}
  We therefore have 
  $$\|f_*(\alpha)\|_{l,2}\le \sqrt{c} (\Lip f^{-1})^l \|\alpha\|_{l,2}\le (\Lip f^{-1})^{l+\frac{\dim H}{2}} \|\alpha\|_{l,2}$$
  as desired.

  Third, if $y\in L$ is a smooth random variable in $H$ with density function $\beta$, then $(x,y)\in H\times L$ is a smooth random variable with density function $\phi(x,y)=\alpha(x)\beta(y)$.  Let $Y_1,\dots, Y_{d_1}$ be an orthonormal basis for $\frakh$ and $Y_{d_1+1},\dots, Y_{d_1+d_2}$ be an orthonormal basis for $\frakl$.  Suppose that $0\le j\le l$ and that $1\le i_n\le d$ for all $n=1,\dots, j$; then 
  $$Y_{i_1}\dots Y_{i_j} \phi(x,y)= \bigl(\Upsilon_1 \alpha(x)\bigr) \bigl(\Upsilon_2 \beta(y) \bigr)$$
  for two operators $\Upsilon_1\in \cT(\frakh)$, $\Upsilon_2\in \cT(\frakl)$ with $\|\Upsilon_1\|=\|\Upsilon_2\|=1$.  (Here, $\Upsilon_1=Y_{i'_1}\dots Y_{i'_{j'}}$ and $\Upsilon_2=Y_{i''_1}\dots Y_{i''_{j''}}$, where $i'_1,\dots, i'_{j'}$ is the subsequence of $i_1,\dots, i_j$ consisting of values that are at most $d_1$ and $i''_1,\dots, i''_{j''}$ is the complement.)  It follows that 
  $$\|Y_{i_1}\dots Y_{i_j} \phi\|_2=\|\Upsilon_1\alpha\|_2 \|\Upsilon_2\beta\|_2\le \|\alpha\|_{l,2}\|\beta\|_{l,2}.$$
  By \eqref{eq:defSobOrtho}, this implies $\|\phi\|_{l,2}\lesssim_{l,d}\|\alpha\|_{l,2}\|\beta\|_{l,2}$.  

  To prove the last property, note that $H$ and $L$ are locally diffeomorphic to Euclidean spaces, so if the property holds for $H=\R^m$ and $L=\R^n$, then it holds for arbitrary Lie groups by part 1 of the lemma.  Let $g\from \R^m\to \R^n$ be a smooth map and let $z\in \R^m$ be a point such that the derivative $Dg_z\from \R^m\to \R^n$ is surjective.  By the Implicit Function Theorem, there is an $\epsilon>0$ and there are diffeomorphisms $a\from \R^m\to \R^m$ and $b\from \R^n\to \R^n$ such that $g|_{B_z(\epsilon)}=b\circ p\circ a$, where $p\from \R^m\to \R^n$ is the projection to the first $n$ coordinates.  By composing with a translation, we may suppose that $a(z)=0$, and we choose $\epsilon$ small enough that $g(B_z(\epsilon))\subset B_0(\frac{1}{2})$.

  Let $I=[-\frac{1}{2},\frac{1}{2}]$ and let $U=I^m$ be a unit cube in $\R^m$.  Suppose that $\phi\in C^\infty(U)$ and $\Upsilon\in \cT(\R^n)$.  We view $\R^n$ as the subspace of $\R^m$ spanned by the first $n$ coordinates.  Then for all $x\in \R^n$, we have
  $$p_*(\phi)(x)=\int_{I^{m-n}} \phi(x+y)\; dy$$
  and
  $$\Upsilon p_*(\phi)(x)=\int_{I^{m-n}} \Upsilon \phi(x+y)\; dy.$$
  By Jensen's inequality,
  \begin{align*}
    \|\Upsilon p_*(\phi)\|_2 
    &= \sqrt{\int_{I^n} \left(\int_{I^{m-n}} \Upsilon \phi(x+y)\; dy \right)^2\;dx}\\
    &\le \sqrt{\int_{I^n} \int_{I^{m-n}} (\Upsilon \phi(x+y))^2\; dy \;dx}\\
    &= \|\Upsilon \phi\|_2
  \end{align*}
  so $\|p_*(\phi)\|_{l,2}\le \|\phi\|_{l,2}$.  

  By our choice of $\epsilon$ and by part 1 of the lemma, we have $a_*(\alpha)\in C^\infty(U)$, so 
  $$\|g_*(\alpha)\|_{l,2}=\|(b_*\circ p_*\circ a_*)(\alpha)\|_{l,2}\lesssim_{a,b} \|\alpha\|_{l,2},$$
  as desired.
\end{proof}

\subsection{Arithmetic groups and lattices}

\subsubsection{Thick parts of groups and symmetric spaces}\label{sec:lattices and horoballs}
As in Section~\ref{sec:standing}, let $G$ be a semisimple group with $\Rrank(G)\geq 2$ and let $\Gamma$ be a nonuniform irreducible (arithmetic) lattice in $G$.  Let $X=G/K$ be the corresponding symmetric space, and for $g\in G$, let $[g]=gK$ be the projection of $g$ to $X$.   

The quotient $Y:=\Gamma\backslash G$ is noncompact, but it can be divided into a thick part (a neighborhood of a basepoint) and a thin part (the union of a collection of cusps).  Let $d_\Gamma \from G\to \R$ be the function $d_\Gamma(g):=d(g,\Gamma)$.  This descends to a function on $Y$, and we let $H(s):=\{y\in Y\mid d_\Gamma(y)> s\}$.  When $s$ is large, the quotient $\Gamma\backslash H(s)$ is contained in the thin part of $Y$.  Kleinbock and Margulis showed that the volume of the thin part of $Y$ decays exponentially with distance.  
\begin{lemma}[{\cite[5.1]{KMLog}}]\label{lem:small horoballs}
  There are $\sansA>0$ and $C_1,C_2>0$ such that for all $s>0$, 
  $$C_1 e^{-\sansA s}\le \mu(H(s))\le C_2 e^{-\sansA s}.$$
\end{lemma}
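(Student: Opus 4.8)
The plan is to combine reduction theory for $\Gamma$ with an explicit computation of Haar measure in Siegel coordinates. First I would invoke reduction theory: there are finitely many minimal $\Q$--parabolic subgroups $P_1,\dots,P_m\subset G$, one per $\Gamma$--conjugacy class, with Langlands decompositions $P_i=N_iA_iM_i$ ($\dim A_i$ equal to the $\Q$--rank, which is positive), such that if $\mathfrak{S}_i=\Omega_i A_i(t)K$ denotes the Siegel set associated to $P_i$ --- with $\Omega_i\subset N_iM_i$ compact and $A_i(t)=\{a\in A_i:\alpha(\log a)>t\text{ for all }\alpha\in\Delta_i\}$, $\Delta_i$ the simple $\Q$--roots --- then for suitable $t$ and $\Omega_i$ one has $G=\bigcup_i\Gamma\mathfrak{S}_i$, the map $\mathfrak{S}_i\to Y$ has bounded multiplicity, and for $s$ above some $s_0$ the thin part $H(s)$ is covered by the images of the $\mathfrak{S}_i$. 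Thus, up to a bounded multiplicative constant,
\[
  \mu(H(s))\approx\sum_{i=1}^m\mu_G\bigl(\{g\in\mathfrak{S}_i:d_\Gamma(g)>s\}\bigr)\qquad(s\ge s_0).
\]

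Next I would use two standard facts. Because $\Omega_i$ is compact, $d_\Gamma(\omega a k)=d_X([a],[\Gamma])+O(1)$, and reduction theory identifies this with $\ell_i(\log a)+O(1)$ for a fixed positively homogeneous ``depth'' function $\ell_i$ on the closed chamber $\overline{\mathfrak{a}_i^{+}}$ with $\ell_i^{-1}(0)=\{0\}$ (one can take $\ell_i$ comparable to $\max_{\alpha\in\Delta_i}\varpi_\alpha(\log a)$, the $\varpi_\alpha$ being the fundamental weights). And in the coordinates $(\omega,a,k)$ the Haar measure is $d\mu_G=e^{-2\rho_i(\log a)}\,d\omega\,da\,dk$, where $2\rho_i=\sum_{\alpha\in\Phi(N_i)}(\dim\frakg_\alpha)\,\alpha$ is the sum of the $\Q$--roots occurring in $N_i$ with multiplicity, a linear functional strictly positive on $\overline{\mathfrak{a}_i^{+}}\setminus\{0\}$ (normalising so that ``out the cusp'' is the positive chamber). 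Integrating out the compact $\omega$-- and $k$--variables reduces everything to estimating
\[
  I_i(s):=\int_{\{u\in\overline{\mathfrak{a}_i^{+}}\,:\,\alpha(u)>t\ \forall\alpha\in\Delta_i,\ \ell_i(u)>s+O(1)\}}e^{-2\rho_i(u)}\,du.
\]

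Set $\sansA:=\min_i\min\{\,2\rho_i(u):u\in\overline{\mathfrak{a}_i^{+}},\ \ell_i(u)=1\,\}$, which is positive and attained, since the slice $\{\ell_i=1\}\cap\overline{\mathfrak{a}_i^{+}}$ is compact and avoids the origin; say the minimum is attained along a ray $\R_{>0}u^{*}\subset\overline{\mathfrak{a}_{i_0}^{+}}$. For the upper bound, $2\rho_i\ge\sansA\,\ell_i$ on $\overline{\mathfrak{a}_i^{+}}$ by homogeneity, so $I_i(s)\le\int_{\{\ell_i>s+O(1),\,u\in\overline{\mathfrak{a}_i^{+}}\}}e^{-\sansA\,\ell_i(u)}\,du$; cutting $\{\ell_i>s'\}\cap\overline{\mathfrak{a}_i^{+}}$ into the finitely many translated cones attached to the top faces of $\{\ell_i=1\}$ and choosing on each one linear coordinates vanishing at its corner exhibits each piece as a translated orthant on which $2\rho_i$ restricts to an affine functional with strictly positive linear part, so $\int e^{-2\rho_i}$ over it is a clean exponential with no polynomial prefactor. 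Hence $I_i(s)\lesssim e^{-\sansA s}$ and $\mu(H(s))\lesssim e^{-\sansA s}$. For the lower bound it suffices to keep the single cusp $P_{i_0}$ and restrict the integral defining $I_{i_0}(s)$ to a fixed thin translated orthant around $u^{*}$ on which $\ell_{i_0}(u)>s$ and $2\rho_{i_0}(u)\le\sansA\,\ell_{i_0}(u)+O(1)$; this region has positive Lebesgue measure and the integrand is $\gtrsim e^{-\sansA s}$ there, so $\mu(H(s))\gtrsim e^{-\sansA s}$.

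The main obstacle is the last step: checking that no polynomial factor $s^{j}$ survives in $I_i(s)$. This requires knowing the reduction--theoretic depth function $\ell_i$ well enough, together with the combinatorics of the $\Q$--root system, to see that in the translated--orthant coordinates $2\rho_i$ has no vanishing coefficient in the ``free'' directions --- which is precisely the strict positivity of $2\rho_i$ on the closed chamber. The inputs I have taken for granted --- the Siegel--set covering of the thin part, the comparison $d_\Gamma(\omega a k)=\ell_i(\log a)+O(1)$, and the modular--function Jacobian $e^{-2\rho_i(\log a)}$ --- are standard reduction theory and structure theory; see \cite[\S5]{KMLog} for the details.
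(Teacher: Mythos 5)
The paper does not prove this lemma but imports it verbatim from Kleinbock--Margulis \cite[Prop.\ 5.1]{KMLog}, so there is no ``paper's own proof'' to compare against beyond that citation. Your plan --- cover the thin part by Siegel sets with bounded overlap, pass to $(\omega,a,k)$ coordinates where Haar measure factors as $e^{-2\rho_i(\log a)}\,d\omega\,da\,dk$, and reduce to a Laplace-type integral $I_i(s)$ over the truncated $\Q$--chamber with decay rate $\sansA=\min_i\min\{2\rho_i(u):u\in\overline{\mathfrak{a}_i^+},\ \ell_i(u)=1\}$ --- is exactly the route Kleinbock and Margulis take, and the reduction-theory ingredients you cite (Siegel covering, comparison $d_\Gamma(\omega a k)=\ell_i(\log a)+O(1)$, modular Jacobian) are standard and correctly identified.

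The gap you yourself flag as the ``main obstacle'' is real, and I would press on it, because as written the argument would not yet deliver matching exponential rates from above and below. Two issues are conflated there. First, the depth function $\ell_i$ that equals $d_\Gamma$ up to an \emph{additive} constant is the restriction of the Riemannian norm on $\mathfrak{a}_i$ to $\overline{\mathfrak{a}_i^+}$; the piecewise-linear $\max_\alpha\varpi_\alpha$ is only \emph{multiplicatively} comparable to it, and any multiplicative slack in $\ell_i$ changes the exponent $\sansA$, so one cannot freely switch to the polyhedral model. Your ``translated cones attached to the top faces of $\{\ell_i=1\}$'' device applies to the polyhedral model, but with the genuine (round) $\ell_i$ one instead needs a polar/Laplace computation in which the Jacobian factor $r^{\dim\mathfrak{a}_i-1}$ must cancel against the $s^{-(\dim\mathfrak{a}_i-1)}$ decay of $\int e^{-s\cdot 2\rho_i(\theta)}\,d\theta$ concentrating at the minimizing direction; this cancellation does occur, but it is not what your cone decomposition captures. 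Second, even in the polyhedral model, ``strictly positive linear part in all free directions'' at each corner is precisely the claim that the minimizing face of $\{2\rho_i=\sansA\ell_i\}\cap\{\ell_i=1\}\cap\overline{\mathfrak{a}_i^+}$ is a vertex; strict positivity of $2\rho_i$ on the open chamber alone does not guarantee this (if $2\rho_i$ were proportional to a linear piece of $\ell_i$ one would pick up a polynomial prefactor), so a genuine combinatorial input about $2\rho_i$ versus $\ell_i$ is needed. Both points are handled in \cite{KMLog}, so this is a matter of sharpening the Laplace asymptotics rather than a wrong turn, but the sketch as given does not yet rule out a polynomial factor $s^{j}$ in front of $e^{-\sansA s}$.
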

In the terminology of \cite{KMLog}, $d_\Gamma$ is an $\sansA$--DL function, which implies that it is a DL--function.  Consequently, it satisfies the following smoothing result.  Let $\mu$ be the Haar measure on $G$ (resp.\ $Y$) normalized such that  $\mu(Y)=1$.
\begin{lemma}[{\cite[4.2]{KMLog}}]\label{lem:KM smoothing}  
  For all $s\in \R$, there are two non-negative functions $m',m''\in B^\infty(Y)$ such that $m'\le \one_{H(s)}\le m''$,
  $$\mu(m')\approx \mu(m'')\approx \mu(H(s)),$$
  and 
  $$\|m'\|_{l,2}\approx \|m''\|_{l,2} \approx \mu(H(s)).$$
\end{lemma}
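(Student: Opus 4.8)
The plan is to produce $m'$ and $m''$ by first replacing the Lipschitz function $d_\Gamma$ by a smooth function at bounded distance from it, and then composing with fixed smooth one--variable cutoffs. The only substantial input is Lemma~\ref{lem:small horoballs}: the exponential decay it provides is exactly what is needed to know that a shell $\{s<d_\Gamma<s+c\}$ of fixed width $c$ around the level set $\{d_\Gamma=s\}$ has measure comparable to $\mu(H(s))$, rather than much larger.

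First I would fix, once and for all (independently of $s$), a smoothing $\tilde d\in C^\infty(Y)$ of $d_\Gamma$ with $\|\tilde d-d_\Gamma\|_\infty\le 1$ and with all derivatives up to order $l$ bounded by an absolute constant; this is obtained by a standard mollification of $d_\Gamma$ on $Y=\Gamma\backslash G$ against a fixed bump function on $G$, using that $d_\Gamma$ is $1$--Lipschitz, so that the mollification stays Lipschitz and its higher derivatives are controlled by those of the kernel. Next, fix a nondecreasing $\chi\in C^\infty(\R)$ with $\chi\equiv 0$ on $(-\infty,0]$ and $\chi\equiv 1$ on $[1,\infty)$, and set
$$m':=\chi(\tilde d-s-1),\qquad m'':=\chi(\tilde d-s+2).$$
If $y\notin H(s)$ then $\tilde d(y)\le d_\Gamma(y)+1\le s+1$, so $m'(y)=0$; if $y\in H(s)$ then $\tilde d(y)\ge d_\Gamma(y)-1>s-1$, so $m''(y)=1$; and $0\le m',m''\le 1$ everywhere. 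Hence $m'\le\one_{H(s)}\le m''$. Moreover $\supp m'\subseteq\{\tilde d>s+1\}\subseteq H(s)$ and $\supp m''\subseteq\{\tilde d>s-2\}\subseteq H(s-3)$, both of finite measure (as $\mu(Y)=1$), and the chain rule together with the bounds on $\tilde d$ shows that all derivatives of $m'$ and $m''$ up to order $l$ are uniformly bounded; thus $m',m''\in B^\infty(Y)$.

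It then remains to estimate masses and Sobolev norms. For the masses: $m'=1$ on $H(s+3)\subseteq\{\tilde d>s+2\}$ while $m'\le\one_{H(s)}$, so $\mu(H(s+3))\le\mu(m')\le\mu(H(s))$, and similarly $\mu(H(s))\le\mu(m'')\le\mu(H(s-3))$; Lemma~\ref{lem:small horoballs} gives $\mu(H(s\pm 3))\approx\mu(H(s))$, hence $\mu(m')\approx\mu(m'')\approx\mu(H(s))$. For the Sobolev norms, take $\Upsilon\in\cT_l(\frakg)$ with $\|\Upsilon\|\le 1$; applying the chain rule to $m'=\chi(\tilde d-s-1)$ expresses $\Upsilon m'$ as a bounded sum of terms $\chi^{(j)}(\tilde d-s-1)\cdot(\Upsilon_1\tilde d)\cdots(\Upsilon_j\tilde d)$ with $1\le j\le l$ (plus, if $\Upsilon$ is scalar, the function $m'$ itself). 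By the construction of $\tilde d$ each factor is bounded by an absolute constant, and $\chi^{(j)}$ is supported in $[0,1]$, so each such term vanishes outside the shell $\{0<\tilde d-s-1<1\}\subseteq\{s<d_\Gamma<s+3\}$; hence $|\Upsilon m'|$ is uniformly bounded and supported in this shell, giving $\|\Upsilon m'\|_2^2\lesssim\mu(\{s<d_\Gamma<s+3\})\le\mu(H(s))$. The same holds for $m''$, with the shell contained in $H(s-3)$, whose measure is $\approx\mu(H(s))$ by Lemma~\ref{lem:small horoballs}. Combining these with the elementary bounds $\|m'\|_2\le\sqrt{\mu(m')}$ and $\mu(m')=\|m'\|_1\le\|m'\|_2$ (using $\mu(Y)=1$), and likewise for $m''$, yields the asserted comparisons for $\|m'\|_{l,2}$ and $\|m''\|_{l,2}$ in terms of $\mu(H(s))$.

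The one place requiring care is the Sobolev estimate, and the crucial point there is not the chain--rule bookkeeping but the geometric input of Lemma~\ref{lem:small horoballs}: because the thin part's measure decays exponentially, a fixed--width shell $\{s<d_\Gamma<s+3\}=H(s)\setminus H(s+3)$ still has measure $\approx\mu(H(s))$, so the (non--small) derivatives of $m'$ and $m''$ are nonetheless supported on a set no larger, up to constants, than $H(s)$ itself. It is equally essential that the smoothing scale used to build $\tilde d$ and the width of the cutoff $\chi$ be fixed independently of $s$, so that the bounds on the derivatives of $m'$ and $m''$ do not deteriorate as $s\to\infty$.
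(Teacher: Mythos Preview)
Your construction---mollify $d_\Gamma$ by right convolution with a fixed bump function on $G$, then compose with a fixed smooth cutoff $\chi$---is precisely the approach the paper indicates: it gives no detailed proof but cites \cite{KMLog} and remarks that the construction there, ``based on a convolution with a smooth bump function,'' yields the stated bounds. Your verification of $m'\le\one_{H(s)}\le m''$, of the mass estimates via Lemma~\ref{lem:small horoballs}, and of the uniform pointwise bounds on $\Upsilon m'$, $\Upsilon m''$ and their supports is correct.

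There is, however, a genuine gap in your last step. From $\|\Upsilon m'\|_2^2\lesssim\mu(H(s))$ for every $\Upsilon\in\cT_l(\frakg)$ with $\|\Upsilon\|\le 1$ (including $\Upsilon=1$) you obtain only $\|m'\|_{l,2}\lesssim\sqrt{\mu(H(s))}$, while your lower bound $\mu(m')=\|m'\|_1\le\|m'\|_2$ gives $\|m'\|_{l,2}\gtrsim\mu(H(s))$. These do \emph{not} combine to the asserted $\|m'\|_{l,2}\approx\mu(H(s))$: for large $s$ the two sides differ by a factor $\sqrt{\mu(H(s))}\to 0$. Indeed your construction satisfies $\|m'\|_2\ge\|\one_{H(s+3)}\|_2=\sqrt{\mu(H(s+3))}\approx\sqrt{\mu(H(s))}$, so in fact $\|m'\|_{l,2}\approx\sqrt{\mu(H(s))}$. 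No construction with $0\le m'\le\one_{H(s)}$ and $\mu(m')\approx\mu(H(s))$ can do better, since Cauchy--Schwarz on $\supp m'\subset H(s)$ forces $\|m'\|_2\ge\mu(m')/\sqrt{\mu(H(s))}\approx\sqrt{\mu(H(s))}$; the same holds for $m''$ via $\|m''\|_2\ge\|\one_{H(s)}\|_2$. The exponent in the lemma as stated therefore appears to be a slip for $\sqrt{\mu(H(s))}$. Your argument proves that corrected version, and it is what the applications actually need: the proof of Lemma~\ref{lem:equiFlats} goes through verbatim with $\|m''\|_{l,2}\lesssim\sqrt{\mu(H(s))}\approx e^{-\sansA s/2}$, at the cost of replacing $\sansA$ by $\sansA/2$ in the final exponent, which is harmless downstream.
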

Lemma~4.2 of \cite{KMLog} proves this result with a slightly weaker norm (i.e., bounds on $\|\Upsilon m'\|_2$ and $\|\Upsilon m''\|_2$ for a particular differential operator $\Upsilon$ rather than for all differential operators), but the construction, which is based on a convolution with a smooth bump function, satisfies the stronger bound.

The next lemma follows from Lemma 3 in \cite{Leu-Kazh}.
\begin{lemma}\label{lem:injrad}
  The injectivity radius of $Y$ at $y$ decays exponentially with $d_\Gamma(y)$, and for all $r>0$, there is a $c>0$ such that for all $g \in G$,
  $$\#\{\gamma\in \Gamma\mid d(g,\gamma g) < r\} \le c e^{c d_\Gamma(g)}.$$
\end{lemma}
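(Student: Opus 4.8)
The plan is to deduce both assertions from the injectivity radius estimate, which is the substance of \cite[Lemma~3]{Leu-Kazh}; the point-counting bound then follows from that by a volume-packing argument, and the packing argument is the step I would write out in full. Concretely, I claim there are $c,C>0$ so that for every $g\in G$,
\[
  c\, e^{-C d_\Gamma(g)} \;\le\; \mathrm{sys}(\bar g) \;:=\; \min_{\gamma\in\Gamma\setminus\{e\}} d(g,\gamma g) \;=\; 2\injrad_Y(\bar g) \;\le\; C\, e^{-c\, d_\Gamma(g)}.
\]
The mechanism is the standard one via reduction theory: $Y$ is its compact thick part together with finitely many cusps, each modeled on a Siegel set attached to a rational parabolic $Q=N_Q A_Q M_Q$, so a point $g$ with $d_\Gamma(g)$ large is, after left translation by $\Gamma$, of the form $g=nak$ with $k\in K$, $n$ in a fixed compact transversal for $(\Gamma\cap N_Q)\backslash N_Q$, and $a\in A_Q$ deep in the positive chamber with $\alpha(\log a)\gtrsim d_\Gamma(g)$ for every root $\alpha$. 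The upper bound comes from a nontrivial $\gamma_0\in\Gamma\cap N_Q$, for which $d(g,\gamma_0 g)=d(e,a^{-1}(n^{-1}\gamma_0 n)a)\lesssim e^{-\min_\alpha\alpha(\log a)}$ since $\Ad(a^{-1})$ contracts $\frakn_Q$ and $n^{-1}\gamma_0 n$ ranges over a bounded set. The lower bound comes from the facts that any $\gamma$ moving $g$ a bounded amount must lie in $\Gamma\cap Q$ (it can cross neither between the Siegel set and its complement nor between distinct cusps), that its $A_QM_Q$--part is then confined to a finite set, and that its $N_Q$--part, a nontrivial element of the discrete group $\Gamma\cap N_Q$ conjugated by $(na)^{-1}$, has length at least $e^{-\max_\alpha\alpha(\log a)}$ times a fixed positive constant. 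The uniformity of all these comparisons over the finitely many cusps is exactly what \cite[Lemma~3]{Leu-Kazh} records; in particular $\injrad_Y(\bar g)\gtrsim e^{-Cd_\Gamma(g)}$, and the upper bound gives the asserted exponential decay $\injrad_Y(\bar g)\le Ce^{-cd_\Gamma(g)}$.

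Granting this, fix $r>0$, set $S=\{\gamma\in\Gamma\mid d(g,\gamma g)<r\}$, and put $\rho=\min\{1,\injrad_Y(\bar g)\}$, so that $\rho\gtrsim e^{-Cd_\Gamma(g)}$. If $\gamma\ne\gamma'$ both lie in $S$, then $\gamma g$ and $\gamma' g$ are distinct points of the orbit $\Gamma g$, hence $d(\gamma g,\gamma' g)\ge\mathrm{sys}(\bar g)=2\injrad_Y(\bar g)\ge 2\rho$; therefore the balls $B_G(\gamma g,\rho/2)$, $\gamma\in S$, are pairwise disjoint, and since each has center within $r$ of $g$ they all lie in $B_G(g,r+1)$. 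Because $G$ acts on itself by isometries, $\vol B_G(x,t)$ is a function $v(t)$ of $t$ alone; $v(r+1)$ depends only on $r$, and $v(t)\gtrsim t^{\dim G}$ for $t\le 1$ by Bishop--Gromov comparison, since the Ricci curvature of $G$ is bounded below. Hence
\[
  \#S \;\le\; \frac{v(r+1)}{v(\rho/2)} \;\lesssim_{r}\; \rho^{-\dim G} \;\lesssim_{r}\; e^{C(\dim G)\,d_\Gamma(g)},
\]
and since $d_\Gamma(g)\ge 0$ the $r$--dependent prefactor can be absorbed, yielding $\#S\le c\,e^{c\,d_\Gamma(g)}$ for a suitable $c=c(r)>0$.

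The main obstacle is the reduction-theory input behind the displayed injectivity radius estimate — in particular the comparison of $d_\Gamma(g)$ with the height of $g$ in its cusp and the dichotomy separating short loops from long ones — which must hold uniformly over all cusps of $Y$. I would not reprove this but quote \cite[Lemma~3]{Leu-Kazh}; the remaining content, the volume-packing estimate above, is routine.
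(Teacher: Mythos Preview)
Your proposal is correct and matches the paper's approach: the paper simply states that the lemma ``follows from Lemma~3 in \cite{Leu-Kazh}'' without further argument, and you cite the same reference for the injectivity-radius estimate. You go further by spelling out the volume-packing step that extracts the counting bound from the lower bound on $\mathrm{sys}(\bar g)$, which the paper leaves implicit; that argument is standard and correctly executed.
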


We can similarly divide the locally symmetric space $\Gamma\backslash X$ into a thick and thin part.  In this case, we can choose the thick part so that its lift $X_0\subset X$ is a submanifold with corners on which $\Gamma$ acts cocompactly.  In fact, the following theorem follows from Theorem~5.2 of \cite{LeuPoly}.
\begin{thm}\label{thm:fundDomain}
  If $X$, $\Gamma$ are as in the standing assumptions, then there exist a $\Gamma$--invariant submanifold with corners $[\Gamma]\subset \FD\subset X$ such that $\Gamma\backslash \FD$ is compact and an exponentially Lipschitz $\Gamma$--equivariant deformation retraction $\rho\from X\to \FD$.  That is, there is a $c>0$ such that for all $r>0$, $\rho$ satisfies 
  $$\Lip(\rho|_{X(r)})\lesssim  e^{cr}.$$
  In fact, $\rho$ is the closest-point projection to $\FD$.
\end{thm}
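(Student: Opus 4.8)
This is essentially a repackaging of results of Leuzinger, and the plan is to assemble it from reduction theory together with the analysis of the closest-point projection in \cite{LeuExh,LeuPoly}. First I would invoke \cite[Theorem~5.2]{LeuPoly}: since $\Gamma$ is an irreducible nonuniform lattice, reduction theory produces a $\Gamma$--invariant family $\{U_j\}_{j\in J}$ of pairwise disjoint open ``cusp neighborhoods'', each contained in an open horoball $B_j\subset X$ centered at a boundary point $\xi_j\in X_\infty$ associated to a rational parabolic subgroup, which $\Gamma$ permutes with finitely many orbits, and such that $\FD:=X\setminus\bigcup_{j}U_j$ is a smooth $\Gamma$--invariant submanifold with corners on which $\Gamma$ acts cocompactly. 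Each $U_j$ lies in the thin part of $\Gamma\backslash X$, so $\FD$ contains the orbit $[\Gamma]=\Gamma K$ (on which $\dGamma$ vanishes); and once the retraction $\rho$ below is constructed, it deformation retracts the contractible space $X$ onto $\FD$, so $\FD$ is contractible.

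Next I would take $\rho\from X\to\FD$ to be the nearest-point projection onto the closed set $\FD$ and verify that it is single-valued, continuous, $\Gamma$--equivariant, and a deformation retraction. On $\FD$ it is the identity. If $x\in U_j$ lies near a codimension-$1$ face $\Phi$ of $\partial\FD$, let $\xi\in X_\infty$ be the center of the horosphere containing $\Phi$ and let $b$ be the Busemann function centered at $\xi$, normalized so that $\Phi\subset\{b=c\}$ and $x\in\{b>c\}$. Every $q\in\FD$ near $\Phi$ satisfies $b(q)\le c<b(x)$, and since $b$ is convex and $1$--Lipschitz on the Hadamard manifold $X$,
$$d_X(x,q)\ \ge\ b(x)-b(q)\ \ge\ b(x)-c,$$
with equality exactly when $q$ is the point where the geodesic ray from $x$ toward $\xi$ meets $\{b=c\}$. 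So near each face the nearest point of $\FD$ is unique and is obtained by flowing $x$ along the geodesic toward the corresponding $\xi$ until it reaches $\partial\FD$. Patching these local descriptions (using disjointness of the $U_j$) shows that $\rho$ is globally single-valued and continuous; $\Gamma$--equivariance is immediate since $\Gamma$ acts by isometries preserving $\FD$; and the geodesic homotopy $h_t(x)=\,$the point dividing $[x,\rho(x)]$ in ratio $t:(1-t)$ exhibits $\rho$ as a $\Gamma$--equivariant deformation retraction.

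The exponential Lipschitz bound is the substantive point, and is exactly \cite{LeuExh}; I would present its geometric content as follows. Because $[\Gamma]\subset\FD$, every point of $X(r)=\dGamma^{-1}([0,r])$ lies within distance $r$ of $\FD$, hence either in $\FD$ or in some $U_j$ at depth at most $r$ from $\partial\FD$. There $\rho$ is the geodesic shadow onto $\partial\FD$ along rays toward $\xi$; the geodesic flow toward $\xi$ contracts horospheres, so the shadow map expands the induced metric by a factor at most $e^{cr}$ over depth $r$, where $c$ depends only on the root system of $X$ (for a singular $\xi$, some transverse directions are not contracted, which only helps). Since the $U_j$ fall into finitely many $\Gamma$--orbits the constant is uniform, so $\Lip(\rho|_{X(r)})\lesssim e^{cr}$.

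The genuine work hidden in the two citations, and the step I expect to be the main obstacle, is the corner bookkeeping in $\Q$--rank $\ge2$: one must check that the piecewise descriptions above assemble, across the corner regions where several horospheres meet, into a single-valued, continuous, exponentially-Lipschitz map, and that $\FD$ is indeed a smooth submanifold with corners. Since those corner regions lie in the compact core of $\FD$, no additional metric expansion occurs there, but making the gluing precise requires the detailed reduction theory developed in \cite{LeuExh,LeuPoly}.
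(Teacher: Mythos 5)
Your approach matches the paper's: the paper does not prove this theorem itself but states that it follows from Theorem~5.2 of \cite{LeuPoly}, and your proposal is essentially a faithful unpacking of how that result (together with the Lipschitz estimate on the closest-point projection from \cite{LeuExh}) yields the statement, with the honest caveat that the corner bookkeeping in $\Q$--rank $\ge 2$ is the real content of the citations. One small sign slip worth fixing: with your normalization ($b$ large inside the horoball), the geodesic ray from $x$ \emph{toward} $\xi$ has $b$ increasing and never meets $\{b=c\}$; the ray realizing the nearest point (and along which $\rho$ pushes $x$) goes \emph{away} from $\xi$, and with this normalization $b$ is concave rather than convex, though only its $1$--Lipschitzness is needed for the displayed inequality.
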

The submanifold $X_0$ is the complement of a union of horoballs and is quasi-isometric to $G$.  

It will be convenient to define neighborhoods of $X_0$.  Let $d_{[\Gamma]}(x):=d(x,[\Gamma])$ for all $x\in X$, and for $r\geq 0$, let $X(r):=d_{[\Gamma]}^{-1}([0,r])$.  Since $\Gamma \backslash \FD$ is compact, there is an $r_0$ such that $X_0\subset X(r_0)$.

\subsubsection{Shadows} \label{sec:shadows}

In this section we will recall the definition of a shadow of a point as in \cite{LeYoRank1}.  As in Section~\ref{sec:standing}, $X_{\infty}$ is the spherical Tits building associated to $X$, $\chx$ is its set of (maximal) chambers, $\bz$ and $\bz^*$ are a pair of opposite chambers, and $P=NAM$ is the Levi decomposition of $P=\Stab(\bz)$.  Let $\chxop=\chxop(\bz)$ be the set of chambers opposite to $\bz$.

The shadow of a point $x\in X$ will consist of the chambers $\bc\in \chxop$ such that the flat $E_{\bc,\bz}$ joining $\bc$ and $\bz$ is close to $x$.  We describe this in terms of a bijection $\iota\from \chxop\to N$.   The group $NA$ acts transitively on $\chxop$ and the stabilizer of $\bz^*$ is $A$.  Let $\iota(n\bz^*)=n$ for all $n\in N$.  A shadow of $x$ will correspond to a neighborhood in $N$.  

Let $p_N\from NA\to N$, $p_A\from NA\to A$ be the projections $p_N(na)=n$ and $p_A(na)=a$ for all $n\in N$, $a\in A$.  The group $NA$ acts on $\chxop$, and we let $\rho_g\from N\to N$ be the conjugate action $\rho_g(\iota(\bc))=\iota(g\bc)$.  By the normality of $N$, we have
\begin{multline*}
  \rho_g(n) = \iota(g \iota^{-1}(n)) = \iota(gn\bz^*) = \iota(p_N(g)p_A(g)n\bz^*)\\ = \iota(p_N(g)n^{p_A(g)}p_A(g)\bz^*)
  = \iota(p_N(g)n^{p_A(g)}\bz^*) = p_N(g)n^{p_A(g)}.
\end{multline*}

\begin{defn}\label{def:shadows}
  Let $\frakg$ and $\frakn$ be the Lie algebras of $G$ and $N$ and let $\|\cdot\|$ be the $\Ad(K)$--invariant norm on $\frakg$.  For any $n\in N$, we define $d_N(n)=\|\log n\|$.  

  Suppose that $x\in X$ and $\bc\in \chxop$.  There is a unique $g\in NA$ such that $x=[g]$, and we define $d_x\from \chxop \to \R$ so that 
  $$d_x(\bc)=d_N(\iota(g^{-1}\bc)).$$
  Note that if $a_1,a_2,\dots \in A$ and $[a_i]$ converges to $\bz$, then $d_{[a_i]}(\bc)\to 0$ (see also Lemma \ref{lem:dil} below).
  For $r>0$, the {\emph r-shadow of $x$} is defined as 
  $$\cSc_{x}(r) := \{\bc\in \chxop \mid d_x(\bc)<r\}.$$ We also set $\cSc_{x}:=\cSc_{x}(1)$.
\end{defn}
It follows from the definition that shadows are equivariant under the action of $NA$; i.e., if $x\in X$, $g\in NA$, $r>0$, and $\bc\in \chxop$, then $d_{gx}(g\bc)=d_{x}(\bc)$ and $\cSc_{gx}(r)=g\cSc_{x}(r)$.  

We proved versions of the following lemmas in \cite{LeYoRank1}
\begin{lemma}[{\cite[3.2]{LeYoRank1}}]\label{lem:compare}
  There is a constant $C>0$ depending on $X$ such that for all points $x\in X$ and chambers $\bc\in \chxop$ opposite to $\bz$,
  $$d(x,E_{\bc,\bz})\le d_x(\bc)\le \exp(C d(x,E_{\bc,\bz})).$$
	Here $E_{\bc,\bz}$ denotes the unique flat in $X$ joining the chambers $\bc$ and $\bz$.
\end{lemma}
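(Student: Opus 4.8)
The plan is to reduce both inequalities to a single-variable estimate on the flat $E$ spanned by $\bz$ and $\bz^*$, using the equivariance of shadows under $NA$ and the transitivity of $NA$ on $X$. First I would write $x=[g]$ with $g\in NA$; by the $NA$--equivariance noted after Definition~\ref{def:shadows}, we have $d_x(\bc)=d_{[e]}(g^{-1}\bc)$ and $d(x,E_{\bc,\bz})=d([e],E_{g^{-1}\bc,\bz})$, since $\bz$ is fixed by $NA$. So it suffices to prove the lemma for the fixed basepoint $o=[e]$, i.e.\ to show $d(o,E_{\bc,\bz})\le d_N(\iota(\bc))\le \exp(C\,d(o,E_{\bc,\bz}))$ for every $\bc\in\chxop$. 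Writing $\bc=n\bz^*$ with $n=\iota(\bc)$, the flat $E_{\bc,\bz}=E_{n\bz^*,\bz}=n E_{\bz^*,\bz}=nE$, so $d(o,E_{\bc,\bz})=d(o,nE)=d(n^{-1}o,E)$, and we must compare $d(n^{-1}o,E)$ with $\|\log n\|$.

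For the left inequality, I would use that $o\in E$ (the basepoint lies on the standard flat), so the geodesic segment from $o$ to $no=n\cdot o$ has length $d(o,no)$, and $d(o,E_{\bc,\bz})=d(n^{-1}o,E)\le d(n^{-1}o,o)=d(o,no)$. Then one needs $d(o,no)\le\|\log n\|=d_N(n)$: this is the standard fact that on a nonpositively curved symmetric space the orbit map $N\to X$, $n\mapsto no$, is $1$--Lipschitz with respect to the metric $d_N$ coming from the $\Ad(K)$--invariant norm restricted to $\frakn$ (the displacement of $o$ along the one-parameter unipotent subgroup $\exp(t\log n)$ is at most the Riemannian length of that path, which is $\|\log n\|$ by the invariance of the norm). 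This is essentially the easy direction and should be a short argument, and it is exactly the inequality $d(x,E_{\bc,\bz})\le d_x(\bc)$.

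The right inequality $d_N(n)\le\exp(C\,d(n^{-1}o,E))$ is the real content and the main obstacle. Here the point is that the horospherical coordinate $n$ can be recovered from how far the flat $nE$ is from $o$, but only up to an exponential blow-up, because moving toward $\bz$ contracts the $N$--directions exponentially (this is the dilation behavior alluded to in the remark after Definition~\ref{def:shadows} and made precise in Lemma~\ref{lem:dil}). Concretely, let $R=d(n^{-1}o,E)$ and let $w\in E$ be the nearest point of $E$ to $n^{-1}o$; translating, $nw\in nE$ is the nearest point of $nE$ to $o$ and $d(o,nw)=R$. Write $w=[a]$ for some $a\in A$ (possibly composed with an element of $K$ fixing $o$, which one absorbs). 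The geodesic from $o$ toward the chamber $\bz$ contracts $N$ by a factor bounded by $e^{-\lambda_{\min}t}$ where $\lambda_{\min}>0$ is the smallest root-space weight, so conjugation by $a$ (which is essentially ``flow for time $\sim d(o,w)$'') changes $\|\log n\|$ by at most a factor $e^{C d(o,w)}$, while $d(o,w)\le d(o,n^{-1}o)+d(n^{-1}o,w) = d(o,no)+R$, and one still needs to control $d(o,no)$. The cleanest route is to invoke Lemma~\ref{lem:dil} directly: it should say that under the geodesic contraction toward $\bz$, shadow-distances $d_x$ scale by a controlled (exponential) factor, so that pushing $x$ a distance $\sim R$ toward $\bz$ brings $E_{\bc,\bz}$ to within a bounded distance of the image of $x$, forcing $d_x(\bc)\le e^{CR}$ after undoing the flow. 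I would structure the argument as: (i) translate $x$ to $o$; (ii) flow $o$ toward $\bz$ until it is within bounded distance of $E_{\bc,\bz}$, which by the left inequality already proven costs distance $\lesssim R + O(1)$; (iii) apply the dilation Lemma~\ref{lem:dil} to track how $d_x(\bc)$ changes under this flow, picking up the factor $e^{CR}$; (iv) at the endpoint $d$ is $O(1)$ so $d_x(\bc)\lesssim e^{CR}$, and adjust $C$. The delicate point to get right is that the contraction rate of the unipotent radical under the flow toward $\bz$ is bounded below by a positive constant depending only on $X$, which is what makes $C$ depend only on $X$; this is a root-system computation that I expect to be the technical heart of the proof, and it is presumably why the paper cites its earlier companion \cite{LeYoRank1} for the precise statement rather than reproving it here.
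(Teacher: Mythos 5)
Your reduction to $x=o$ by $NA$--equivariance and the argument for the left inequality are fine; note also that this paper only cites \cite{LeYoRank1} for the lemma and gives no proof, so there is no in-text argument to compare against. The gap is in the right inequality, and it is a direction error in the invocation of Lemma~\ref{lem:dil}. That lemma asserts $d_{\gamma(t)}(\bc)\le D e^{-\kappa t}\,d_x(\bc)$ --- an \emph{upper} bound on the contracted quantity. ``Undoing the flow'' from this gives only $d_x(\bc)\ge D^{-1}e^{\kappa T}d_{\gamma(T)}(\bc)$, a \emph{lower} bound on $d_x(\bc)$, which is useless for what you want. Step (iii) actually needs the opposite side of the estimate, namely $d_{\gamma(t)}(\bc)\ge e^{-\kappa' t}\,d_x(\bc)$ with $\kappa'=\max_\beta\beta(\sigma)$, so that undoing gives $d_x(\bc)\le e^{\kappa' T}d_{\gamma(T)}(\bc)$. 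That inequality is true (the eigenvalues of $\Ad(\alpha^{-t})$ on $\frakn$ lie in $[e^{-\kappa' t},e^{-\kappa t}]$ and the root spaces are orthogonal for the $\Ad(K)$--invariant form), but it is not Lemma~\ref{lem:dil} and appears nowhere in the paper; you would have to prove it. Step (ii) is also not justified as written: the left inequality together with Lemma~\ref{lem:dil} gives $d(\gamma(t),E_{\bc,\bz})\le De^{-\kappa t}d_o(\bc)$, so concluding that flow time $\lesssim R$ suffices presupposes $\log d_o(\bc)\lesssim R$ --- exactly the conclusion. The correct input for step (ii) is Lemma~\ref{lem:largeShadows}: take the nearest point $y\in E_{\bc,\bz}$, so $d(o,y)=R$ and $d_y(\bc)=0$, hence $\bc\in\cSc_y$; that lemma then gives $\bc\in\cSc_{\gamma_o(T)}$, i.e.\ $d_{\gamma_o(T)}(\bc)<1$, once $T\ge E(R+1)$.

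A cleaner route sidesteps both issues and avoids the flow entirely: work directly in Iwasawa coordinates. Pulling the metric back to $N\times A$ via $(n,a)\mapsto[na]$ gives a metric comparable to $\|dH\|^2+\sum_\beta e^{-2\beta(H)}\|dn_\beta\|^2$ with $H=\log a$. Any path of length $R$ from $o=(e,0)$ to a point $(m,H_1)\in E_{\bc,\bz}=mE$ keeps $\|H(s)\|\le R$, so its left-invariant $N$--velocity satisfies $\|\dot n_\beta(s)\|\le e^{\beta(H(s))}\cdot(\text{speed})\le e^{CR}\cdot(\text{speed})$; integrating and using that $\log$ on the nilpotent group $N$ is polynomially controlled by path length gives $\|\log m\|\lesssim\bigl(R\,e^{CR}\bigr)^{O(1)}\le e^{C'R}$, which is the right inequality in a single estimate.
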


\begin{lemma}[{\cite[3.3]{LeYoRank1}}]\label{lem:dil}
  Let $x\in X$ and let $\gamma\from [0,\infty)\to X$ be a unit-speed geodesic ray starting at $x$ and asymptotic to a point $\sigma\in \inter \bz$ in the interior of $\bz$.  There are constants $D, \kappa>0$ depending on $\sigma$ such that for all $t\ge 0$ and all $\bc\in \chxop$,
  $$d_{\gamma(t)}(\bc)\le D e^{-\kappa t}d_x(\bc)$$
  and thus for any $r>0$, 
  $$\cSc_x(r)\subset \cSc_{\gamma(t)}(D e^{-\kappa t}r).$$
\end{lemma}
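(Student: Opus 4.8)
The plan is to reduce the statement to a computation with the conjugation action $\rho_g$ on $N$ described just above Definition~\ref{def:shadows}, and then to estimate $d_N$ under that action. First I would note that it suffices to treat the case where $x$ lies on the reference flat $E=[A]$ and $\gamma$ points toward $\bz$; the general case follows by equivariance under $NA$, which acts transitively on $X$ and (by the displayed formula $\rho_g(n)=p_N(g)n^{p_A(g)}$) intertwines the $d_x$ functions. More precisely, if $x=[g_0]$ with $g_0\in NA$ and $\gamma$ is asymptotic to $\sigma\in\inter\bz$, then after applying $g_0^{-1}$ we may assume $x\in E$, and since $P=\Stab(\bz)=NAM$ with $M$ compact, $\sigma$ is $M$-conjugate to a fixed interior point $\sigma_0$ of $\bz\cap E_\infty$; absorbing the compact ambiguity into the constants, I would take $\gamma$ to be the geodesic $t\mapsto [x\exp(tH_0)]$ for a fixed unit vector $H_0$ in the interior of the positive Weyl chamber of $\fraka$ determined by $\bz$.

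Next I would compute $d_{\gamma(t)}(\bc)$ explicitly. Writing $a_t=\exp(tH_0)\in A$, we have $\gamma(t)=[g_0 a_t]$ for the appropriate $g_0$, and by definition
$$d_{\gamma(t)}(\bc)=d_N\bigl(\iota(a_t^{-1}g_0^{-1}\bc)\bigr)=d_N\bigl(\rho_{a_t^{-1}}(\iota(g_0^{-1}\bc))\bigr)=d_N\bigl(\iota(g_0^{-1}\bc)^{a_t^{-1}}\bigr),$$
using that $\rho_{a_t^{-1}}(n)=p_N(a_t^{-1})n^{p_A(a_t^{-1})}=n^{a_t^{-1}}$ since $a_t^{-1}\in A$ normalizes and $p_N(a_t^{-1})=e$. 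So if $n:=\iota(g_0^{-1}\bc)$ satisfies $d_N(n)=d_x(\bc)$, I must bound $d_N(n^{a_t^{-1}})=\|\log(a_t^{-1}na_t)\|=\|\Ad(a_t^{-1})\log n\|$. Now $\frakn=\bigoplus_{\alpha\in\Phi^+}\frakg_\alpha$ decomposes into positive root spaces, and $\Ad(a_t^{-1})$ acts on $\frakg_\alpha$ by $e^{-t\alpha(H_0)}$; since $H_0$ is interior to the positive Weyl chamber, $\alpha(H_0)\ge\kappa>0$ for all $\alpha\in\Phi^+$, where $\kappa$ depends on $\sigma$ (equivalently $H_0$). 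Writing $\log n=\sum_\alpha v_\alpha$ and using that the $\Ad(K)$-invariant norm is comparable to the norm for which the root space decomposition is orthogonal, I get $\|\Ad(a_t^{-1})\log n\|\le D e^{-\kappa t}\|\log n\|=D e^{-\kappa t}d_N(n)$ for a constant $D$ depending only on the comparison of norms (hence on $X$). This gives the first inequality, and the shadow containment is then immediate from the definition of $\cSc_x(r)$.

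The main obstacle I anticipate is bookkeeping rather than conceptual: making the reduction to $x\in E$ and $\gamma$ of the standard form clean despite the compact ambiguity coming from $M$ and from the $\Ad(K)$-invariant norm not being literally diagonal on the root space decomposition. In particular one has to be careful that the constant $D$ absorbing the norm comparison and the $M$-action is uniform (it is, since $M$ and $K$ are compact), and that $\kappa$ can be taken uniform over $\sigma$ in any fixed compact subset of $\inter\bz$ but genuinely degenerates as $\sigma$ approaches a wall — which is why the statement only claims constants depending on $\sigma$. Once the normalizations are fixed, the estimate is the elementary observation that $\Ad$ of a positive Weyl chamber element contracts $\frakn$ exponentially.
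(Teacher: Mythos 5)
Your proof is correct, and the argument is the one the paper's shadow formalism is built for: pull back by $g_0 a_t\in NA$, use $\rho_{a_t^{-1}}(n)=n^{a_t^{-1}}=a_t^{-1}na_t$, and observe that $\Ad(a_t^{-1})$ contracts $\frakn=\bigoplus_{\alpha>0}\frakg_\alpha$ by $e^{-\alpha(H_0)t}$, so $\kappa=\min_{\alpha>0}\alpha(H_0)>0$ works. The paper cites \cite{LeYoRank1} for this lemma and does not give a proof itself, but this is the natural proof given the definitions of $d_N$, $\iota$, and $\rho_g$.

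One small correction to your framing: the $M$-ambiguity you budget for does not arise. All of $P=NAM$ fixes $\bz$ pointwise ($A$ fixes $E_\infty$ pointwise; $N$ fixes $\bz$ pointwise because for $n\in N$ the curve $t\mapsto[na_t]=[a_t\,n^{a_t^{-1}}]$ is asymptotic to $\sigma$, since $n^{a_t^{-1}}=a_t^{-1}na_t\to e$; and $M$ fixes $E$ pointwise). In particular $M$ acts trivially on $\inter\bz$, so there is no compact correction to absorb, and $D$ only needs to account for the comparison between the chosen $\Ad(K)$-invariant norm and the $B_\theta$-norm for which the root-space decomposition is orthogonal. Moreover, your displayed computation already proceeds directly from $\gamma(t)=[g_0a_t]$ and never actually uses the reduction to $x\in E$; what it does use, implicitly, is precisely that $NA$ fixes $\sigma$ — that is what makes $t\mapsto[g_0a_t]$ the geodesic ray from $x$ asymptotic to $\sigma$ — so that step is worth stating explicitly rather than the $M$-conjugacy.
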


\begin{lemma}[{see \cite[3.4]{LeYoRank1}}]\label{lem:largeShadows}
  There is a constant $E>0$ with the following property.  Let $x\in X$ and let $\gamma\from [0,\infty)\to X$ be the unit-speed geodesic ray starting at $x$ and asymptotic to the barycenter of $\bz$.  For any $y\in X$ and any $t\ge E(d(x,y)+1)$, we have $\cSc_{y}\subset \cSc_{\gamma(t)}.$
\end{lemma}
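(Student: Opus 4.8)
The plan is to deduce the containment from the two preceding lemmas by a short chain of estimates, proved one chamber at a time. Fix $\bc\in\cSc_{y}$, that is, $\bc\in\chxop$ with $d_{y}(\bc)<1$; I must show that $d_{\gamma(t)}(\bc)<1$ whenever $t\ge E(d(x,y)+1)$, for a suitable universal constant $E$.

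First I would bound the flat $E_{\bc,\bz}$ away from $y$ using the left-hand inequality of Lemma~\ref{lem:compare}: it gives $d(y,E_{\bc,\bz})\le d_{y}(\bc)<1$, so by the triangle inequality $d(x,E_{\bc,\bz})<d(x,y)+1$. Feeding this into the right-hand inequality of Lemma~\ref{lem:compare}, applied at the point $x$, yields
$$d_{x}(\bc)\le\exp\bigl(C\,d(x,E_{\bc,\bz})\bigr)<\exp\bigl(C(d(x,y)+1)\bigr).$$
Next, since the barycenter of $\bz$ lies in $\inter\bz$, Lemma~\ref{lem:dil} applies with $\sigma$ equal to that barycenter; because this asymptotic direction is fixed once and for all, the resulting constants $D,\kappa>0$ are universal, and $d_{\gamma(t)}(\bc)\le D e^{-\kappa t}d_{x}(\bc)$ for all $t\ge 0$. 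Combining the two displays gives
$$d_{\gamma(t)}(\bc)<D e^{-\kappa t}\exp\bigl(C(d(x,y)+1)\bigr).$$

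Finally I would take $E:=\kappa^{-1}\bigl(C+\max\{\log D,0\}+1\bigr)$. For $t\ge E(d(x,y)+1)$, using $d(x,y)+1\ge 1$ one gets $\kappa t-C(d(x,y)+1)\ge(\kappa E-C)(d(x,y)+1)\ge\kappa E-C>\log D$, so the right-hand side above is $<1$, hence $\bc\in\cSc_{\gamma(t)}$. As $\bc\in\cSc_{y}$ was arbitrary, $\cSc_{y}\subset\cSc_{\gamma(t)}$.

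There is no serious obstacle here. The only point that needs attention is that the constants $D,\kappa$ furnished by Lemma~\ref{lem:dil} a priori depend on the chosen asymptotic direction $\sigma$, but since $\sigma$ is always the barycenter of $\bz$ they may be taken independent of $x$, $y$, and $\bc$; everything else is the routine triangle-inequality computation above. This is essentially the argument of \cite[3.4]{LeYoRank1} transported to the present setting.
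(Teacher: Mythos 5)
Your proof is correct and is the natural argument: combining Lemma~\ref{lem:compare} (both directions, plus the triangle inequality to transfer from $y$ to $x$) with Lemma~\ref{lem:dil} applied to the fixed direction $\sigma=$ barycenter of $\bz$, and then choosing $E$ large enough that the exponential decay dominates. The paper defers the proof to \cite[3.4]{LeYoRank1}, but your derivation is a complete and correct reconstruction from the preceding lemmas, and you rightly flag the one subtlety (that $D,\kappa$ are uniform because $\sigma$ is fixed).
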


Shadows induce a partial ordering on $X$, where $x\prec y$ if $\cSc_x\subset \cSc_y$.  The set of points $p$ such that $p\succ x$ is coarsely convex in the following sense.
\begin{lemma}\label{lem:convex hull shadows}
  There is a $r>0$ with the following property.  Suppose that $x\in X$.  Let
  $$P_x=\{p\in X\mid \cSc_x\subset \cSc_p\}$$
  and let $Q_x$ be the convex hull of $P_x$.  Then $\cSc_x\subset \cSc_q(r)$ for all $q\in Q_x$.
\end{lemma}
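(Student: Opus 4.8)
The plan is to sidestep any direct computation in the nilpotent group $N$ and instead compare the shadow distance $d_q(\bc)$ with the genuine distance $d(q,E_{\bc,\bz})$ via Lemma~\ref{lem:compare}, exploiting convexity of distance functions in the Hadamard manifold $X$. Fix $x\in X$; one may assume $\cSc_x\neq\emptyset$, and note $x\in P_x$, so $P_x\neq\emptyset$. For each chamber $\bc\in\cSc_x$ and each $p\in P_x$, the inclusion $\cSc_x\subseteq\cSc_p$ forces $\bc\in\cSc_p$, i.e.\ $d_p(\bc)<1$; the left-hand inequality of Lemma~\ref{lem:compare} then gives $d(p,E_{\bc,\bz})\le d_p(\bc)<1$. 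Hence $P_x$ lies in the open $1$--neighborhood of the flat $E_{\bc,\bz}$, for every $\bc\in\cSc_x$ simultaneously.

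Next I would use that $X=G/K$ is a Hadamard manifold and each $E_{\bc,\bz}$ is a maximal flat, hence a closed convex subset; consequently $p\mapsto d(p,E_{\bc,\bz})$ is a convex function on $X$, and its sublevel set $\{p\in X\mid d(p,E_{\bc,\bz})\le 1\}$ is convex. Intersecting over all $\bc\in\cSc_x$ produces a convex set that contains $P_x$, and therefore contains its convex hull $Q_x$. Thus every $q\in Q_x$ satisfies $d(q,E_{\bc,\bz})\le 1$ for all $\bc\in\cSc_x$.

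Finally, applying the right-hand inequality of Lemma~\ref{lem:compare} to such a $q$ gives $d_q(\bc)\le\exp\!\bigl(C\,d(q,E_{\bc,\bz})\bigr)\le e^{C}$ for every $\bc\in\cSc_x$, where $C$ is the constant of Lemma~\ref{lem:compare} (depending only on $X$). Setting $r=e^{C}+1$ yields $d_q(\bc)<r$, i.e.\ $\cSc_x\subseteq\cSc_q(r)$, which is the assertion, and $r$ depends only on $X$ as required. The only step that needs any care is the convexity of the sublevel sets $\{d(\cdot,E_{\bc,\bz})\le 1\}$, which is the standard fact that the distance to a closed convex subset of a CAT(0) space is a convex function; this is not a genuine obstacle, and the rest of the argument is immediate, so I do not anticipate real difficulty here.
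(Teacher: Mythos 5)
Your proof is correct and follows essentially the same route as the paper's: both use Lemma~\ref{lem:compare} to pass between the shadow distance $d_p(\bc)$ and the genuine distance $d(p,E_{\bc,\bz})$, observe that $P_x$ lies in the intersection $\bigcap_{\bc\in\cSc_x} N_1(E_{\bc,\bz})$, invoke convexity of $1$--neighborhoods of flats to conclude $Q_x$ lies there too, and then apply Lemma~\ref{lem:compare} again to bound $d_q(\bc)$ uniformly. The only cosmetic difference is that the paper packages the distance condition as auxiliary sets $\cR_y$ and $U_x$, while you work with the inequalities directly.
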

\begin{proof}
  For $y\in X$, let 
  $$\cR_y=\{\bc\in \chxop\mid d(y,E_{\bc,\bz})<1\}.$$
  By Lemma~\ref{lem:compare}, there is a $r>0$ such that $\cSc_y\subset \cR_y\subset \cSc_y(r)$ for all $y\in X$, and we define
  $$U_x=\{y\in X\mid \cSc_x\subset \cR_y\}.$$
  This contains $P_x$, and we claim that it is convex.

  We have $y\in U_x$ if and only if $d(y,E_{\bc,\bz})<1$ for all $\bc\in \cSc_x$, i.e.,
  $$U_x=\bigcap_{\bc\in \cSc_x} N_1(E_{\bc,\bz}),$$
  where, for $L\subset X$, $N_1(L)=\{y\in X\mid d(y,L)<1\}$.  Each set $N_1(E_{\bc,\bz})$ is convex (it is a neighborhood of a flat), so their intersection is also convex.  It follows that $Q_x\subset U_x$.  Consequently, for all $q\in Q_x$, we have $q\in U_x$ and thus $\cSc_x\subset \cR_q\subset \cSc_q(r)$, as desired.
\end{proof}

\subsubsection{Smooth random chambers}

Since there is a bijection between $\chxop$ and $N$, we can define random chambers in terms of distributions on $N$.  A \emph{smooth random chamber} $\br\in \chxop$ is a random variable such that the density function $\phi_{\iota(\br)}$ of $\iota(\br)$ satisfies $\phi_{\iota(\br)}\in C^\infty(N)$.  For brevity, we will shorten $\phi_{\iota(\br)}$ to $\phi_\br$.  For any $g\in NA$, we have
$$\phi_{g\br}=\phi_{\iota(g\br)}=\phi_{\rho_g(\iota(\br))}=(\rho_g)_*\phi_\br.$$

In Section~\ref{sec:probability prelims}, we defined Sobolev norms on $C^\infty(N)$.  These are invariant under the action of $N$ but not invariant under the action of $NA$, so we will define a collection of norms that measure the smoothness of a function $\phi$ as viewed from a (base)point $g\in NA$ or $x\in X$.  In Section~\ref{sec:equidistribution}, we will use these norms to bound the distribution of random flats or chambers that pass through $[g]$.  Let $l>0$ be an integer depending on $G$ and $\bz$ to be determined later (see Theorem~\ref{thm:KM matrix coeffs}).  If $\phi\in C^\infty(N)$ and $g\in NA$, let
$$\|\phi\|_g:=\|(\rho_{g^{-1}})_*\phi\|_{l,2},$$  
so that for any smooth random chamber $\br$, $$\|\phi_{\br}\|_g=\|\phi_{g^{-1}\br}\|_{l,2}.$$
For any $h\in NA$, we have
$$\|\phi_{h\br}\|_{g}=\|\phi_{g^{-1}h}\br\|_{l,2} = \|\phi_{\br}\|_{h^{-1}g}.$$
Note that for all $g\in NA$ and $n\in N$, the $N$--invariance of $\|\cdot\|_{l,2}$ implies that
$$\|\phi\|_{gn}=\|(\rho_{n^{-1}}\circ \rho_{g^{-1}})_*\phi\|_{l,2}=\|\phi\|_{g}.$$
In particular, for all $g\in NA$, $\|\phi\|_g=\|\phi\|_{p_A(g)}$.

For all $x\in X$, there is a unique $g\in NA$ such that $x=[g]$, and we define $\|\phi\|_x:=\|\phi\|_g$.  For any $h\in NA$ and any $x\in X$, we have $\|(\rho_{h})_*\phi\|_{hx}=\|\phi\|_x$.
\begin{lemma}\label{lem:expDecay}
  There is a constant $c>0$ depending on $G$ such that for all $\phi\in C^\infty(N)$ and all $x,y\in X$,
  \begin{equation}\label{eq:expDecay}
    \|\phi\|_{x}\le \exp( c d(x,y)) \|\phi\|_{y}.
  \end{equation}
\end{lemma}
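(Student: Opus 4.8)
The plan is to reduce the inequality to the case where $x$ and $y$ are related by a one-parameter subgroup of $A$, and then to use the two norm-comparison facts already established in this subsection: the $N$-invariance $\|\phi\|_{gn}=\|\phi\|_g$ (so that $\|\phi\|_g=\|\phi\|_{p_A(g)}$ depends only on the $A$-component), and the Lie-group-isomorphism bound from Lemma~\ref{lem:SobolevProps}(2), which controls how $\|\cdot\|_{l,2}$ changes under a conjugation automorphism in terms of its Lipschitz constant. First I would write $x=[g]$ and $y=[h]$ with $g,h\in NA$ (unique), so that $\|\phi\|_x=\|\phi\|_{p_A(g)}$ and $\|\phi\|_y=\|\phi\|_{p_A(h)}$; thus it suffices to bound $\|\phi\|_a$ versus $\|\phi\|_{a'}$ for $a,a'\in A$, and since $d(x,y)=d([g],[h])$ is comparable (up to additive and multiplicative constants absorbed into $c$) to $d([p_A(g)],[p_A(h)])=d_A(a,a')$, where $d_A$ is the flat metric on $E=[A]$, this reduces the whole statement to: $\|\phi\|_a\le \exp(c\,d_A(a,a'))\|\phi\|_{a'}$ for all $a,a'\in A$ and all $\phi\in C^\infty(N)$.

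Next, by the cocycle identity $\|\phi_{h\br}\|_g=\|\phi_\br\|_{h^{-1}g}$ (equivalently $\|(\rho_h)_*\phi\|_{hx}=\|\phi\|_x$), it is enough to prove the estimate when $a'=e$, i.e.\ $\|\phi\|_a\le \exp(c\,d_A(a,e))\,\|\phi\|_e=\exp(c\,d_A(a,e))\,\|\phi\|_{l,2}$. Unwinding the definition, $\|\phi\|_a=\|(\rho_{a^{-1}})_*\phi\|_{l,2}$, and $\rho_{a^{-1}}\from N\to N$ is precisely the conjugation automorphism $n\mapsto n^{a^{-1}}=a^{-1}na$ (this follows from the formula $\rho_g(n)=p_N(g)n^{p_A(g)}$ derived just before Definition~\ref{def:shadows}, with $g=a^{-1}\in A$ so $p_N(g)=e$ and $p_A(g)=a^{-1}$). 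Now $\rho_{a^{-1}}$ is a Lie group isomorphism of $N$, so Lemma~\ref{lem:SobolevProps}(2) gives
$$\|(\rho_{a^{-1}})_*\phi\|_{l,2}\le \Lip\bigl((\rho_{a^{-1}})^{-1}\bigr)^{l+\frac{\dim N}{2}}\,\|\phi\|_{l,2}=\Lip(\rho_{a})^{l+\frac{\dim N}{2}}\,\|\phi\|_{l,2},$$
where $\Lip(\rho_a)$ is measured with respect to the left-invariant metric on $N$ coming from the chosen inner product on $\frakn$. So the remaining task is to show $\log\Lip(\rho_a)\lesssim d_A(a,e)$.

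The Lipschitz constant of the conjugation $\rho_a$ on $N$ is governed by the operator norm of $\Ad(a)$ restricted to $\frakn$. Writing $\frakn=\bigoplus_{\lambda>0}\frakg_\lambda$ as a sum of positive restricted root spaces, $\Ad(\exp H)$ acts on $\frakg_\lambda$ by the scalar $e^{\lambda(H)}$, so $\|\Ad(a)|_{\frakn}\|_{\mathrm{op}}=\max_{\lambda>0}e^{\lambda(\log a)}\le e^{C_0 |\log a|}$ where $C_0=\max_{\lambda>0}\|\lambda\|$ depends only on the root system (hence only on $G$); since $|\log a|\approx d_A(a,e)$, this gives $\log\Lip(\rho_a)\lesssim d_A(a,e)$, and combining with the previous paragraph and absorbing the exponent $l+\tfrac{\dim N}{2}$ and all comparison constants into a single $c=c(G)$ completes the proof. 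The main obstacle, or rather the only point requiring care, is bookkeeping: making sure the reduction to $A$, the identification of $\rho_{a^{-1}}$ with the conjugation automorphism, and the comparison between $d(x,y)$ on $X$ and $d_A$ on the flat $E$ all go through with constants depending only on $G$ — in particular that passing from $g,h\in NA$ to their $A$-parts $p_A(g),p_A(h)$ only distorts distances by a bounded factor, which holds because the map $A\to X$, $a\mapsto[a]$, together with the $N$-action, identifies the relevant geometry with that of the flat $E$ up to the uniformly bounded-geometry of the $N$-orbits. None of this is deep, but it is the part where one must resist the temptation to hand-wave.
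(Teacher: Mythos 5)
Your proof follows essentially the same route as the paper: reduce via the $N$-invariance $\|\phi\|_{gn}=\|\phi\|_g$ to a statement about the $A$-component, observe that the relevant transformation is conjugation by an element of $A$, apply Lemma~\ref{lem:SobolevProps}(2), and bound the Lipschitz constant of the conjugation by $e^{c\|a\|}$. The explicit computation of $\Lip(\rho_a)$ via $\Ad(a)$ and the restricted root space decomposition is a nice spelled-out version of what the paper only asserts.

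There is one statement in your argument that is false as written, although (luckily) you only need the true half of it. You claim that $d(x,y)=d([g],[h])$ is ``comparable'' to $d_A(p_A(g),p_A(h))$ and later re-assert that ``passing from $g,h$ to their $A$-parts only distorts distances by a bounded factor.'' This is not so: take $g=n$ for $n\in N$ of large $d_N(n)$ and $h=e$; then $p_A(g)=p_A(h)=e$, so $d_A(p_A(g),p_A(h))=0$, while $d([g],[h])$ is large. The $N$-orbits are horospheres in $X$ and can separate points at arbitrary distance while having identical $A$-components, so the appeal to ``uniformly bounded geometry of the $N$-orbits'' does not give a two-sided comparison. What is true, and what the argument actually uses, is the one-sided inequality $d_A(p_A(g),p_A(h))\le d(x,y)$: this is exactly the paper's observation that $p_A\from NA\to A$ is a $1$-Lipschitz (distance-decreasing) homomorphism, so $\|p_A(h^{-1}g)\|\le d_{NA}(g,h)=d_X([g],[h])$. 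Since you are proving an upper bound for $\|\phi\|_x$ in terms of $\|\phi\|_y$ and $e^{cd(x,y)}$, an upper bound for $d_A(a,a')$ by $d(x,y)$ is all you need, and your proof goes through once ``comparable'' is replaced by this one-sided inequality. You flagged this step as the one not to hand-wave, and indeed it is the one place where the hand-wave asserts something false; but the correction is immediate and the proof is otherwise sound.
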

\begin{proof}
  Let $g,h\in NA$ be such that $x=[g]$, $y=[h]$.  Let $n=p_N(h^{-1}g)$, $a=p_A(h^{-1}g)$, so that $g=hna$; since $p_A$ is distance-decreasing, we have $\|a\|\le d(x,y)$.  

  By the $N$--invariance of $\|\cdot\|_{l,2}$ and Lemma~\ref{lem:SobolevProps},
  $$\|\phi\|_{g}=\|(\rho_{a^{-1}})_* (\rho_{h^{-1}})_*\phi\|_{l,2}\le \Lip(\rho_{a})^{l+\frac{\dim N}{2}}\|(\rho_{h^{-1}})_*\phi\|_{l,2}=\Lip(\rho_{a})^{l+\frac{\dim N}{2}}\|\phi\|_h.$$
  Since $\rho_{a^{-1}}$ acts on $N$ by conjugation, there is a $c_0$ such that $\Lip(\rho_{a})\le e^{c_0\|a\|},$ and the lemma holds.
\end{proof}

\subsection{Filling invariants}\label{sec:fillingDefs}

In this section, we will review the definitions of the filling invariants and some related objects used to state the main theorems.  For a full discussion of higher-dimensional analogues of the Dehn function, see \cite[2.1]{ABDDYraag}.

Let $Y$ be a simply-connected simplicial complex or Riemannian manifold.  If $\alpha\from S^1\to Y$ is a Lipschitz map, we define the \emph{filling area} of $\alpha$ as
\begin{equation}\label{eq:def filling area}
  \delta(\alpha)=\mathop{\inf_{\beta\from D^2\to Y}}_{\beta|_{S^1}=\alpha}\vol^2 \beta,
\end{equation}
where the infimum is taken over Lipschitz maps $\beta\from D^2\to Y$ that agree with $\alpha$ on the boundary.  By Rademacher's Theorem, $\beta$ is differentiable almost everywhere, so we may define $J_\beta$ to be its Jacobian and let $\vol^2 \beta=\int_{D^2} |J_\beta(x)|\;dx$.  

We define the \emph{Dehn function} of $Y$ to be the function $\delta_Y\from [0,\infty)\to \R\cup \{\infty\}$,
\begin{equation}\label{eq:def Dehn fn}
  \delta_Y(t)=\mathop{\sup_{\alpha\from S^1\to Y}}_{\ell(\alpha)\le t}\delta(\alpha).
\end{equation}

If $Y$ and $Y'$ are quasi-isometric simply-connected simplicial complexes or Riemannian manifolds with bounded geometry (i.e., bounded degree or bounded curvature), then there is a $C>0$ such that for all $t>0$,
\begin{equation}\label{eq:eqRel}
  \delta_Y(t)\le C \delta_{Y'}(Ct+C)+Ct+C
\end{equation}
and vice versa \cite{BridsonWordProblem}.  Consequently, one can define the Dehn function $\delta_G$ of a group by letting $\delta_G=\delta_Y$ for any complex or manifold $Y$ on which $G$ acts geometrically; this depends on the choice of $Y$, but any two choices satisfy the inequality \eqref{eq:eqRel}.  

We define higher-order Dehn functions by considering fillings of spheres by discs.  When $d\ge 1$, we define the \emph{homotopical filling volume} of a Lipschitz map $\alpha\from S^{d-1}\to Y$ as the infimal volume of a Lipschitz extension $\beta\from D^d\to Y$ that agrees with $\alpha$ on its boundary and define the $(d-1)$th--order Dehn function $\delta^{d-1}_Y(V)$ as the supremum of the homotopical filling volumes of spheres of volume at most $V$.  

In practice, it is often easier to work with a homological version of $\delta^{n-1}$, the \emph{$n$--dimensional filling volume function}, which measures the difficulty of filling Lipschitz $(n-1)$--cycles by Lipschitz $n$--chains.  Let $Y$ be a simplicial complex or Riemannian manifold.  A \emph{singular Lipschitz $d$--chain} in $Y$, or simply a Lipschitz $d$--chain, is a formal sum (with integer coefficients) of Lipschitz maps $\Delta^d\to Y$, where $\Delta^d$ is the unit $d$--simplex.  The Lipschitz chains form a subcomplex of the singular chain complex, which we denote $\CLip_d(Y)$.

Let $\alpha\in \CLip_{d-1}(Y)$.  There are nonzero coefficients $c_1,\dots, c_n\in \Z$ and distinct Lipschitz maps $\alpha_1,\dots,\alpha_n\from \Delta^{d-1}\to Y$ such that $\alpha=\sum_{i=1}^n c_i \alpha_i$.  Each of these maps is Lipschitz, so, as above, we define $$\vol^{d-1}(\alpha_i)=\int_{\Delta^{d-1}} |J_{\alpha_i}(x)|\;dx.$$
Let $$\supp \alpha=\bigcup_{i=1}^n \alpha_i(\Delta^{d-1}),$$
and 
$$\mass \alpha=\sum_{i=1}^n |c_i| \vol \alpha_i.$$
If $f\from Y\to \R$ is a continuous function, we define
\begin{equation}\label{eq:integral over chains}
  \int_\alpha f(y) \;dy=\sum_{i=1}^n |c_i| \int_{\Delta^{d-1}} f(\alpha(x)) |J_{\alpha_i}(x)|\;dx.
\end{equation}
Note that $\int_{\alpha} 1\;dy=\mass \alpha$ and in general, $\int_{\alpha} f \;dy\le \|f\|_\infty \mass \alpha$.  If $Z\subset Y$, let 
$$\mass_Z \alpha=\int_\alpha \one_Z\;dx.$$

When $\alpha\in \CLip_{d-1}(Y)$ is a cycle, we define the filling volume of $\alpha$ by
$$\FV^{d}(\alpha)=\mathop{\inf_{\beta\in \CLip_{d}(Y)}}_{\partial \beta=\alpha}\mass \beta.$$
If $Y$ is $(d-1)$--connected and $V\ge 0$, we define
$$\FV^{d}_Y(V)=\mathop{\sup_{\alpha\in \CLip_{d-1}(Y)}}_{\partial \alpha=0, \mass \alpha \le V}\FV^d(\alpha).$$
Like the Dehn function, this is a quasi-isometry invariant; if $Y$ and $Y'$ are quasi-isometric $(d-1)$--connected simplicial complexes or Riemannian manifolds with bounded geometry, then there is a $C>0$ such that for all $t>0$,
\begin{equation}\label{eq:eqRel1}
  \FV^d_Y(t)\le C \FV^d_{Y'}(Ct+C)+Ct+C
\end{equation}
and vice versa.  Consequently, if $G$ is a group that acts geometrically on $Y$, we define $\FV_G=\FV_Y$; as before, this is well-defined up to \eqref{eq:eqRel1}.  (See for instance \cite[10.3]{ECHLPT} or \cite{AWP}.)  

When $Y$ is CAT(0), for instance, when $Y$ is a nonpositively curved symmetric space, Gromov showed that it satisfies isoperimetric bounds based on the Euclidean isoperimetric inequality.
\begin{thm}[{\cite{GroFRM, WengerShort}}]\label{thm:CAT0 fillings}
  Let $Y$ be a CAT(0) space and $n\ge 2$.  Then for all $V>0$,
  $$\FV_Y^n(V)\lesssim V^{\frac{n}{n-1}}.$$
\end{thm}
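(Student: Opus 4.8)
The plan is to reprove Gromov's filling inequality following Wenger's method: combine two elementary facts about CAT(0) spaces — the \emph{geodesic cone inequality} and the \emph{coarea (slicing) inequality} for the distance function from a point — and induct on the dimension $n$. It is convenient to first reduce to the homological statement that there is a constant $C=C(n)$, independent of $Y$, such that every cycle $\alpha\in\CLip_{n-1}(Y)$ with bounded support bounds some $\beta\in\CLip_n(Y)$ with $\mass\beta\le C(\mass\alpha)^{n/(n-1)}$. Since the coning and slicing operations below are cleanest in the language of integral currents (where slicing by a Lipschitz function is standard, with slices again integral currents obeying a coarea mass bound), I would work there and invoke the comparison between integral currents and Lipschitz chains in spaces of bounded geometry, which changes all masses by at most a multiplicative constant; alternatively one stays with Lipschitz chains and slices by hand at the cost of more bookkeeping.

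\textbf{The cone inequality.} Since $Y$ is CAT(0), any two points are joined by a unique geodesic and, by convexity of the metric, the map $H\colon Y\times[0,1]\to Y$ sending $(z,t)$ to the point at parameter $t$ on the geodesic from a basepoint $o$ to $z$ is $1$-Lipschitz in $z$ for each fixed $t\le1$ and $d(o,z)$-Lipschitz in $t$. Hence, for any $m$-cycle $\sigma$, the cone $o*\sigma:=H_*(\llbracket0,1\rrbracket\times\sigma)$ is an $(m+1)$-chain with $\partial(o*\sigma)=\sigma$ and
\[
  \mass(o*\sigma)\le D\cdot\mass\sigma,\qquad D:=\diam\bigl(\{o\}\cup\supp\sigma\bigr),
\]
the factor $D$ coming from the worst-case Jacobian $|J_H|\lesssim d(o,z)\le D$ on the cone; taking $o\in\supp\sigma$ gives $D\le 2\diam(\supp\sigma)$. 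In particular the base case $n=2$ is immediate: a loop of length $\ell$ has $\diam(\supp\alpha)\le\ell/2$, so coning from a point of it shows $\FV^2(\alpha)\le\tfrac12\ell^2$, i.e.\ CAT(0) spaces have quadratic Dehn function.

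\textbf{The inductive step.} Assume $n\ge3$ and the dimension-$(n-1)$ Euclidean isoperimetric inequality: every $(n-2)$-cycle $\sigma$ bounds with mass $\le C_{n-1}(\mass\sigma)^{(n-1)/(n-2)}$. Given an $(n-1)$-cycle $\alpha$ with $\mass\alpha=V$, set $r\approx V^{1/(n-1)}$. In the model case $\supp\alpha\subset B_x(r)$ we are done by coning from $x$: $\FV^n(\alpha)\le r\,\mass\alpha\lesssim rV\lesssim V^{n/(n-1)}$. The general case must be reduced to this, and the tool is slicing by the $1$-Lipschitz function $d_x$: for a.e.\ $s$ the slice $R_s:=\langle\alpha,d_x,s\rangle$ is an $(n-2)$-cycle on $\partial B_x(s)$ with $\int_0^\infty\mass R_s\,ds\le\mass\alpha$. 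Splitting $\alpha=\alpha\llcorner B_x(s)+\alpha\llcorner B_x(s)^c$ and capping off the common boundary $R_s$ by a chain $W_s$ — produced from the inductive hypothesis, then re-coned from $x$ so that $\supp W_s\subset B_x(s)$ at only a constant-factor mass cost (legitimate because $R_s$ lies on a sphere of radius $s$) — writes $\alpha$ as a sum of two cycles, the inner one supported in $B_x(s)$ and the outer one (up to the cap) in $B_x(s)^c$. Choosing $s\approx r$ with $\mass R_s\lesssim V/r$ by coarea, the inner cycle has mass $\lesssim V$ and fills at cost $\lesssim rV$ by coning; but one is left with an outer cycle of essentially the same mass.

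\textbf{The main obstacle and its resolution.} That the outer piece does not shrink is the crux, and it is where Wenger's argument does its real work; a single family of concentric slices also fails to control the \emph{diameters} of the pieces when $\alpha$ is highly folded, so the naive cone estimate on each piece is too weak. The fix is to pass from a one-shot split to an iteration at the single natural scale $\rho\approx V^{1/(n-1)}$: one shows there is $\epsilon_0>0$, depending only on $n$ and the cone/isoperimetric constants, such that whenever $\mass\alpha\le\epsilon_0\rho^{n-1}$ a suitably chosen ball of radius $\approx\rho$ — located using coarea together with a covering/continuity argument that forces it to carry a definite fraction of $\mass\alpha$ — and the dimension-$(n-1)$ isoperimetric inequality applied to the slice on its boundary yield an $n$-chain $U$ with $\mass U\lesssim\rho\,\mass\alpha$ and $\partial U=\alpha-\alpha'$, where $\alpha'$ is an $(n-1)$-cycle with $\mass\alpha'\le\tfrac12\mass\alpha$ whose support stays in a fixed neighbourhood. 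Iterating halves the mass, the costs telescope to $\lesssim\rho\,\mass\alpha$, and the residual cycle of negligible mass is coned off directly; this is the step I expect to be the main technical burden, since it is exactly where the ``thin tube'' configurations are tamed and where one must carefully juggle mass, support, and diameter of the pieces. Carrying it out gives $\FV^n(\alpha)\lesssim\rho V\lesssim V^{n/(n-1)}$, hence $\FV^n_Y(V)\lesssim V^{n/(n-1)}$ for all $n\ge2$; for the purposes of this paper one needs only the range $n\le\rank X$, where this bound for $Y=X=G/K$ serves as the starting input for the constructions in the rest of the paper.
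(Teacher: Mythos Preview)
The paper does not prove this theorem; it is stated with a citation to Gromov and Wenger and then used as a black box (in particular in the proof of Theorem~\ref{thm:mainThmUpper}). So there is no ``paper's own proof'' to compare against.

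That said, your sketch is a reasonable outline of Wenger's argument and the ingredients you name --- the CAT(0) cone inequality, coarea/slicing for $d_x$, and an iterative decomposition at the intrinsic scale $\rho\approx V^{1/(n-1)}$ that peels off a definite fraction of the mass at each step --- are the right ones. Two cautions. First, in your inductive step you assert that the cap $W_s$ can be ``re-coned from $x$ so that $\supp W_s\subset B_x(s)$ at only a constant-factor mass cost''; this is not automatic, since coning from $x$ a filling of $R_s$ supported on $\partial B_x(s)$ multiplies its mass by $s$, not by a constant, and in general the inductively produced filling of $R_s$ need not lie on the sphere. Wenger does not try to confine the cap; what matters is only that $\mass W_s$ is controlled by $(\mass R_s)^{(n-1)/(n-2)}$, which after a good choice of $s$ is small compared to $V$. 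Second, the ``definite-fraction'' step you flag as the main burden is indeed the crux, and your description of it is closer to a summary than an argument; in Wenger's proof this is where one uses a maximal-function type argument on $r\mapsto \mass(\alpha\llcorner B_x(r))$ to locate a ball carrying a large fraction of the mass with a thin boundary slice. If you are going to supply a proof rather than cite the result, that step needs to be spelled out.
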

Although the symmetric space $X$ is CAT(0), the thick part $X_0$ typically is not; one proof of this is that, by Theorem~\ref{thm:mainThmLower}, the $k$--dimensional filling volume function is exponential.

For a survey of the relationship between homological and homotopical filling invariants, see \cite{ABDYpnas}.  The main facts that we will use here are that when $Y$ is $(d-1)$--connected and $d\ge 4$, we have $\delta^{d-1}_Y(V)=\FV^{d}_Y(V)$, and when $d=3$, we have $\delta^{2}_Y(V)\le\FV^{3}_Y(V)$ \cite[Rem.2.6(4)]{BBFSsnowflake} \cite[App.2.(A')]{GroFRM}.  In this paper, we will prove upper bounds on $\delta_{X_0}$ and on $\FV^d_{X_0}$ for $d\ge 2$.  By the above, these bounds imply upper bounds on $\delta^{d-1}_{X_0}$ for all $d$.  

\section{Logarithm laws and random chambers}\label{sec:equidistribution}

The first step in the proof of the main theorems is to describe the behavior of random flats and random chambers in $X$; that is, translates $[gA]$ and $g\bz^*$, where $g$ is a random variable in either $G$ or $NA$.  

Our main tool is the following result of Kleinbock and Margulis that bounds the matrix coefficients of the action of $G$ on $B^\infty(Y)$, where $Y=\Gamma\backslash G$ is a quotient of a semisimple group by a non-uniform lattice. The group $G$ acts on the function space $B^\infty(Y)$ defined in Section~\ref{sec:probability prelims} by the right regular representation; that is, for $\phi\in B^\infty(Y)$ and $g\in G$, we let $\phi g\in B^\infty(Y)$ be the function $(\phi g)(y)=\phi(yg^{-1})$, so that if $\phi=\phi_x$ is the density function of $x$, then $\phi g=\phi_{xg}$ is the density function of $xg$.  Let $\mu$ be the Haar measure on $Y$ such that $\mu(Y)=1$.  Define $\mu(\phi)=\int_Y\phi(y) \;d\mu(y)$ and $(\phi, \psi)=\int_Y \phi(y)\psi(y)\;d\mu(y)$. Recall from Section~\ref{sec:probability prelims} that $B^\infty(Y)$  is equipped with a norm $\| \cdot \|_{l,2}$. 

 For a left-invariant metric $d$ on $G$ and $g\in G$ let $\|g\|=d(e,g)$ be the distance to the identity.
\begin{thm}{{\cite[3.5]{KMLog}}}\label{thm:KM matrix coeffs}
  Let $G$ be a connected semisimple, center-free Lie group without compact factors, and let $\Gamma$ be an irreducible non-uniform lattice in $G$,   There exist constants $\sansB, \sansC>0$ and $l\in \N$ such that for any two functions $\phi, \psi\in B^\infty(Y)$ and any $g\in G$, we have
  $$|(\phi g, \psi)-\mu(\phi)\mu(\psi)|\le \sansB e^{-\sansC \|a\|}\|\phi\|_{l,2} \|\psi\|_{l,2}.$$
\end{thm}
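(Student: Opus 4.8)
This is Theorem~3.5 of \cite{KMLog}, so the work is already done in that paper; for completeness I indicate the argument one would give. The plan is to read the left-hand side as a single matrix coefficient of the quasi-regular representation of $G$ on $L^2_0(Y):=\{f\in L^2(Y):\mu(f)=0\}$ and then to combine (i) the quantitative decay of $K$-finite matrix coefficients of this representation with (ii) an estimate showing that the Sobolev norm $\|\cdot\|_{l,2}$ dominates a suitably weighted sum over $K$-isotypic components. To begin, I would subtract off the means: set $\phi_0=\phi-\mu(\phi)$, $\psi_0=\psi-\mu(\psi)$; since $\mu(Y)=1$, Cauchy--Schwarz gives $|\mu(\phi)|\le\|\phi\|_2\le\|\phi\|_{l,2}$, so $\|\phi_0\|_{l,2}\le 2\|\phi\|_{l,2}$, and likewise for $\psi_0$. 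Writing $\pi$ for the right regular representation of $G$ on $L^2_0(Y)$, we have $(\phi g,\psi)-\mu(\phi)\mu(\psi)=\langle\pi(g)\phi_0,\psi_0\rangle$, so it suffices to bound this by a constant multiple of $e^{-\sansC\|g\|}\|\phi_0\|_{l,2}\|\psi_0\|_{l,2}$; here one uses that $\|g\|=d(e,g)$ is comparable, up to the additive constant $2\diam K$, to the norm of the Cartan ($KAK$) projection of $g$.

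The heart of the matter is step (i). Because $\Gamma$ is an \emph{irreducible} lattice in a semisimple $G$ without compact factors, $\pi$ is isolated from the trivial representation; more quantitatively, it is strongly $L^{2N}$ for some $N=N(G,\Gamma)\in\N$ --- this follows from property (T) when $G$ has higher rank, and in general from the Cowling--Haagerup--Howe analysis of the unitary dual together with the irreducibility of $\Gamma$. Invoking the Cowling--Haagerup--Howe estimate, I would obtain exponents $a_0,C_0>0$ (and the $N$ above) such that for all $g\in G$ and all $v,w$ in the $\tau$- and $\tau'$-isotypic components of $\pi|_K$,
$$|\langle\pi(g)v,w\rangle|\le C_0\,(\dim\tau)^{a_0}(\dim\tau')^{a_0}\,\Xi(g)^{1/N}\,\|v\|\,\|w\|,$$
where $\Xi$ is the Harish-Chandra spherical function of $G$. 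Then Harish-Chandra's estimate $\Xi(k_1\exp(H)k_2)\lesssim(1+\|H\|)^{D}e^{-\varrho(H^+)}$ (with $H^+$ the dominant Weyl conjugate of $H$ and $\varrho$ half the sum of the positive restricted roots), the Weyl-invariance of $\Xi$, and the fact that $\varrho$, being strictly dominant, satisfies $\varrho(H^+)\gtrsim\|H^+\|$ on the dominant Weyl chamber, together give $\Xi(g)^{1/N}\lesssim e^{-\sansC\|g\|}$ after shrinking the exponent.

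Finally, for step (ii) I would decompose $\phi_0=\sum_{\tau\in\hat{K}}\phi_\tau$ and $\psi_0=\sum_{\tau'\in\hat{K}}\psi_{\tau'}$ into $\pi|_K$-isotypic components, write $\langle\pi(g)\phi_0,\psi_0\rangle=\sum_{\tau,\tau'}\langle\pi(g)\phi_\tau,\psi_{\tau'}\rangle$, and apply the displayed bound termwise. Let $\Omega_K$ be the Casimir operator of $K$, a left-invariant differential operator of order $2$ in generators drawn from an orthonormal basis of the Lie algebra of $K\subset G$ inside $\frakg$; it acts on the $\tau$-component as the scalar $1+c_\tau\ge1$ and satisfies $\|\Omega_K^m f\|_2\lesssim_m\|f\|_{2m,2}$, so $\|\phi_\tau\|_2\le(1+c_\tau)^{-m}\|(1+\Omega_K)^m\phi_0\|_2\lesssim_m(1+c_\tau)^{-m}\|\phi\|_{2m,2}$. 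Since $\dim\tau$ and $\#\{\tau:c_\tau\le R\}$ grow at most polynomially in $R$ (Weyl's dimension formula and lattice-point counting in the weight lattice of $K$), I would choose $m$, and hence $l:=2m$, large enough that $\sum_\tau(\dim\tau)^{a_0}(1+c_\tau)^{-m}<\infty$; Cauchy--Schwarz over the pairs $(\tau,\tau')$ then yields
$$|\langle\pi(g)\phi_0,\psi_0\rangle|\le C_0\,\Xi(g)^{1/N}\Big(\sum_\tau(\dim\tau)^{a_0}\|\phi_\tau\|_2\Big)\Big(\sum_{\tau'}(\dim\tau')^{a_0}\|\psi_{\tau'}\|_2\Big)\lesssim e^{-\sansC\|g\|}\|\phi\|_{l,2}\|\psi\|_{l,2},$$
which is the assertion. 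The main obstacle is step (i): the uniform quantitative decay of the $K$-finite matrix coefficients of $L^2_0(Y)$, which is precisely where irreducibility of $\Gamma$ and the absence of compact factors enter, and which encapsulates the deep fact that an irreducible lattice's quasi-regular representation is both isolated from the trivial representation and $L^{2N}$-tempered; step (ii) and the initial reduction are routine bookkeeping with $K$-types and Sobolev norms.
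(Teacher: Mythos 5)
The paper does not prove this statement---it is imported verbatim as Theorem~3.5 of Kleinbock--Margulis \cite{KMLog} and used as a black box. Your reconstruction (subtract means to land in $L^2_0(Y)$, invoke the spectral-gap/strong-$L^{2N}$ property of the quasi-regular representation of an irreducible lattice and the Cowling--Haagerup--Howe decay bound for $K$-finite matrix coefficients, then control the $K$-type sums via the Casimir of $K$ and the Sobolev norm) is exactly the route taken in the cited source and its antecedents, so no comparison with an in-paper argument arises.
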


We will use Theorem~\ref{thm:KM matrix coeffs} to describe the distribution of points in random flats and Weyl chambers in $X$.  First, we consider random flats.  If $h\in G$ and if $g\in B_e(1)$ is a smooth random variable, then $[hgA]$ is a random flat centered near $[h]$.  The following lemma shows that for all $a\in A$, the distribution of $d_\Gamma(hga)$ can be bounded in terms of $a$, $d_\Gamma(h)$, and the distribution of $g$.  
\begin{lemma}\label{lem:equiFlats}
  Let $\sansA$, $\sansC$, and $l$ be as in Lemma~\ref{lem:small horoballs} and Theorem~\ref{thm:KM matrix coeffs}.  There is a $\sansD>1$ with the following property.  Let $h\in G$ be a point, $g\in B_e(1)\subset G$ be a smooth random variable, $a\in A$, and $y=hga$.  Then for any $s>0$,
  $$\Prob[d_\Gamma(y)>s]\lesssim e^{-\sansA s} \bigl(1+e^{-\sansC \|a\|} e^{\sansD d_\Gamma(h)} \|\phi_g\|_{l,2}\bigr).$$
\end{lemma}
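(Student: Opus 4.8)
The plan is to estimate $\Prob[d_\Gamma(y)>s] = \Prob[y \in H(s)]$ by pairing the density $\phi_y$ of $y$ against the indicator $\one_{H(s)}$, replacing the indicator by a smooth majorant via Lemma~\ref{lem:KM smoothing}, and then applying the matrix coefficient bound of Theorem~\ref{thm:KM matrix coeffs}. Concretely, write $y = hga$ and view $\phi_y$ as a function on $Y = \Gamma\backslash G$: since $g$ has density $\phi_g$ supported in $B_e(1)$, the random variable $hg$ has density $h_*\phi_g$ (a left translate), and then $y = (hg)a$ has density $\phi_y = (h_*\phi_g)\,a$, the right translate of $h_*\phi_g$ by $a$. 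So $\Prob[y \in H(s)] = \int_Y \phi_y \cdot \one_{H(s)}\, d\mu = (\phi_y, \one_{H(s)})$. First I would pick, via Lemma~\ref{lem:KM smoothing}, a nonnegative $m'' \in B^\infty(Y)$ with $\one_{H(s)} \le m''$, $\mu(m'') \approx \mu(H(s)) \approx e^{-\sansA s}$ (using Lemma~\ref{lem:small horoballs}), and $\|m''\|_{l,2} \approx e^{-\sansA s}$. Then
\[
  \Prob[d_\Gamma(y)>s] \le (\phi_y, m'') \le \mu(\phi_y)\mu(m'') + \sansB\, e^{-\sansC\|a\|}\,\|\phi_y\|_{l,2}\,\|m''\|_{l,2}.
\]
Since $\mu(\phi_y) = 1$ (it is a probability density), the first term is $\mu(m'') \approx e^{-\sansA s}$, which gives the "$1$" in the parenthesized factor.

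The second term requires bounding $\|\phi_y\|_{l,2} = \|(h_*\phi_g)\,a\|_{l,2}$, and this is where the dependence on $d_\Gamma(h)$ enters. The key point is that the $\|\cdot\|_{l,2}$-norm is left-invariant on $G$, hence also on $Y$, but $h_*\phi_g$ is a function on $G$ whose support is a fundamental-domain-sized ball $B_h(1)$; when we push it down to $Y$, overlaps caused by the nontrivial $\Gamma$-action inflate the $L^2$ norm. Quantitatively, the number of $\gamma\in\Gamma$ with $d(h,\gamma h)<2$ is at most $c\, e^{c\, d_\Gamma(h)}$ by Lemma~\ref{lem:injrad} (equivalently, the injectivity radius at $[h]$ decays at most exponentially in $d_\Gamma(h)$), so the image of $\phi_g$ under the projection $G\to Y$ has each of its derivatives' $L^2$ masses multiplied by at most a factor comparable to $e^{c\, d_\Gamma(h)}$ in square, i.e. $\|h_*\phi_g \text{ (on } Y)\|_{l,2} \lesssim e^{c' d_\Gamma(h)}\|\phi_g\|_{l,2}$ (using that right translation by the fixed ball $B_e(1)$ and the left-invariant norm on $G$ interact only through bounded constants, by the Remark after \eqref{eq:defSobOperator}). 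Right translation by $a\in A$ does not preserve $\|\cdot\|_{l,2}$, but $a$-conjugation distorts the Lie algebra of $G$ by at most $e^{c''\|a\|}$, so by Lemma~\ref{lem:SobolevProps}(2)-type reasoning $\|(h_*\phi_g)\,a\|_{l,2} \lesssim e^{c''(l+\dim G/2)\|a\|}\|h_*\phi_g\|_{l,2}$. Absorbing all exponents in $\|a\|$ into a single constant and then choosing $\sansD$ larger than $\sansC$ plus this constant divided by... — more carefully, the exponent $e^{-\sansC\|a\|}$ from Theorem~\ref{thm:KM matrix coeffs} must dominate the growth $e^{c''(l+\dim G/2)\|a\|}$; this is where we need $\sansC$ to be genuinely on the scale of the spectral gap, and one may need to shrink $\sansC$ (which is harmless) so that $\sansC - c''(l+\dim G/2) \ge 0$; wait — re-reading the statement, the conclusion keeps $e^{-\sansC\|a\|}$, so in fact the right move is to observe that the $\|a\|$-growth of $\|(h_*\phi_g)a\|_{l,2}$ is bounded by $e^{\text{const}\cdot\|a\|}$ with a constant strictly smaller than $\sansC$ after possibly decreasing $\sansC$ — alternatively, and this is cleaner, absorb the $a$-growth into $\sansD d_\Gamma(h)$ is not possible, so the honest claim is that we first replace $\sansC$ by a smaller positive constant (still call it $\sansC$) so that the net exponent in $\|a\|$ is $\le 0$.

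Putting this together, the second term is $\lesssim e^{-\sansA s}\cdot e^{-\sansC\|a\|}\cdot e^{\sansD d_\Gamma(h)}\|\phi_g\|_{l,2}$ with $\sansD$ chosen to absorb $c'$ and all implicit constants, giving exactly the claimed bound. The main obstacle is the bookkeeping in the previous paragraph: carefully tracking how the left-invariant Sobolev norm behaves under (a) pushforward from $G$ to $Y$ — which is the source of the $e^{\sansD d_\Gamma(h)}$ factor and relies essentially on Lemma~\ref{lem:injrad} — and (b) right translation by $a\in A$, which must be shown not to overwhelm the decay $e^{-\sansC\|a\|}$ supplied by Kleinbock–Margulis; handling (b) may require harmlessly decreasing the constant $\sansC$. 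Everything else (choosing $m''$, the pairing, $\mu(\phi_y)=1$) is routine.
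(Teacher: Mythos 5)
Your broad strategy is the right one (smooth the indicator via Lemma~\ref{lem:KM smoothing}, pair densities, invoke Theorem~\ref{thm:KM matrix coeffs}, and control the Sobolev norm of the pushed-down density via Lemma~\ref{lem:injrad}), and your bound on the pushforward norm via the injectivity radius is exactly the paper's argument. But there is a genuine misapplication of Theorem~\ref{thm:KM matrix coeffs} that both breaks your centered inequality and manufactures the difficulty with $\sansC$ that you then struggle with.

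Theorem~\ref{thm:KM matrix coeffs} bounds $(\phi a, \psi) - \mu(\phi)\mu(\psi)$ in terms of $\|\phi\|_{l,2}\,\|\psi\|_{l,2}$, i.e.\ the Sobolev norm of $\phi$ \emph{before} the right translation by $a$; the entire $a$-dependence is already encapsulated in the decaying factor $e^{-\sansC\|a\|}$. So one should set $\phi := \phi_{p(hg)}$ (the density of $p(hg)$ on $Y$), observe that $\phi a$ is the density of $p(hga)$, and apply the theorem to $(\phi a, m'')$ to get $\mu(\phi)\mu(m'') + \sansB e^{-\sansC\|a\|}\|\phi\|_{l,2}\|m''\|_{l,2}$. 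Your displayed inequality instead puts $\|\phi_y\|_{l,2} = \|\phi a\|_{l,2}$ on the right, which is not what the theorem says. As a consequence you find yourself needing to control how $\|\cdot\|_{l,2}$ distorts under right translation by $a$ and then proposing to ``decrease $\sansC$'' to compensate --- but the lemma as stated keeps the same $\sansC$ as Theorem~\ref{thm:KM matrix coeffs}, and no shrinking is available or needed. Once you apply the theorem to the untranslated $\phi$, the only quantity left to estimate is $\|\phi\|_{l,2} = \|\phi_{p(hg)}\|_{l,2}$, which your Lemma~\ref{lem:injrad} argument handles correctly (writing $\phi\circ p = \sum_{\gamma\in\Gamma}\gamma h\phi_g$, only the $\gamma$ with $d(h,\gamma h)<2$ contribute on $B_h(1)$, and there are $\lesssim e^{\sansD d_\Gamma(h)}$ of them). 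The paragraph about $a$-conjugation distorting the Lie algebra should simply be deleted.
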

\begin{proof}
  Let $p\from G\to Y$ be the quotient map and let $\phi=\phi_{p(hg)} \in B^\infty(Y)$ be the density function of $p(hg)$.  Then $\phi a$ is the distribution function of $p(hga)$, and 
  $$\Prob[d_\Gamma(y)>s]=\Prob[p(y)\in H(s)]= \int_{H(s)} (\phi a)(u) \; d\mu(u)=(\phi a, \one_{H(s)}).$$
  Let $m''\in B^\infty(Y)$ be as in Lemma~\ref{lem:KM smoothing}, so that $\one_{H(s)}\le m''$ and $\|m''\|_{l,2}\lesssim \mu(H(s))$.  By Theorem~\ref{thm:KM matrix coeffs} and Lemma~\ref{lem:small horoballs},
  \begin{align}
\notag    \Prob[d_\Gamma(y)>s]
\notag    &\le (\phi a, m'')\\ 
\notag    &\le \mu(\phi)\mu(m'')+\sansB e^{-\sansC \|a\|}\|\phi\|_{l,2} \|m''\|_{l,2}\\ 
\notag    &\lesssim \mu(H(s))+e^{-\sansC \|a\|}\mu(H(s)) \|\phi\|_{l,2} \\
    \label{eq:equiFlats intermediate}
          &\lesssim e^{-\sansA s}\bigl(1+e^{-\sansC \|a\|} \|\phi\|_{l,2}\bigr).
  \end{align}
  It remains to estimate $\|\phi\|_{l,2}$.  

  We have $\phi=p_*(h\phi_g)$, so $\phi$ is supported in $p(B_{h}(1))$.  For any $w\in G$,
  $$\phi(\Gamma w)=\sum_{x\in \Gamma w} (h\phi_{g})(x)=\sum_{\gamma\in \Gamma} (h\phi_g)(\gamma w).$$
  That is, 
  $$\phi\circ p=\sum_{\gamma\in \Gamma} \gamma h \phi_g.$$

  Let $\Upsilon\in \Env(G)$ be a left-invariant differential operator of order at most $l$ and suppose that $\|\Upsilon\|\le 1$.  Then
  \begin{align*}
    \|\Upsilon \phi\|_{L_2(Y)}
    &=\sqrt{\int_{p(B_h(1))} \Upsilon \phi(y)^2\;d\mu(y)} \\ 
    &\le \sqrt{\int_{B_h(1)} \Upsilon \phi(p(x))^2\;d\mu(x)}\\ 
    &=\|\Upsilon \phi\circ p\|_{L_2(B_h(1))}\\
    &\le \sum_{\gamma\in \Gamma}\|\Upsilon \gamma h \phi_g\|_{L_2(B_h(1))}.
  \end{align*}
  The function $\gamma h \phi_{g}$ is supported in the ball $B_{\gamma h}(1)$, so $\|\Upsilon(\gamma h\phi_g)\|_{L_2(B_h(1))}=0$ unless $d(h,\gamma h)<2$.  Let $S=\{\gamma\in \Gamma\mid d(h,\gamma h)< 2\}$.  By Lemma~\ref{lem:injrad}, there is a $\sansD >1$ such that
  $\#S \lesssim e^{\sansD d_\Gamma(h)}$, so, since $\Upsilon$ is left-invariant,
  $$\|\Upsilon \phi\|_{L_2(Y)} \le \sum_{\gamma\in S} \|\Upsilon \gamma h\phi_g\|_{L_2(B_h(1))} \le \#S \|\Upsilon \phi_g\|_{L_2(G)}\lesssim e^{\sansD d_\Gamma(h)} \|\phi_g\|_{l,2}.$$

  This holds for all $\Upsilon$, so 
  $$\|\phi\|_{l,2}\lesssim e^{\sansD d_\Gamma(h)} \|\phi_{g}\|_{l,2}.$$
  By \eqref{eq:equiFlats intermediate},
  $$\Prob[d_\Gamma(y)>s]\lesssim e^{-\sansA s} \bigl(1+e^{-\sansC \|a\|} e^{\sansD d_\Gamma(h)} \|\phi_g\|_{l,2}\bigr).$$
\end{proof}

The following corollary summarizes the information provided by the lemma in terms of the exponential moments of $d_\Gamma(hga)$.  Recall that for all $t>0$, we let $\blog t:=\max\{1,\log t\}$.
\begin{cor}\label{cor:expMomentsFlats}
  There are constants $b, b'>0$ with the following property.  Let $h\in G$ be a point and let $g\in B_e(1)\subset G$ be a smooth random variable.  Let $R:=d_\Gamma(h)+\blog \|\phi_g\|_{l,2}$.  For all $a\in A$ such that $\|a\|\ge b'R$, we have
  $$\EE[\exp (b d_\Gamma(hga))]\lesssim 1.$$
\end{cor}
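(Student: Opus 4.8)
The plan is to integrate the tail bound from Lemma~\ref{lem:equiFlats} against the exponential weight $e^{bd_\Gamma(y)}$, choosing $b$ small enough relative to $\sansA$ that the integral converges, and choosing $b'$ large enough that the ``bad'' term in the tail bound is absorbed. First I would recall the standard layer-cake identity: for a nonnegative random variable $Z$ and $b>0$,
\begin{equation*}
  \EE[e^{bZ}] = 1 + b\int_0^\infty e^{bs}\,\Prob[Z>s]\;ds.
\end{equation*}
Apply this with $Z = d_\Gamma(y)$ where $y = hga$. By Lemma~\ref{lem:equiFlats}, $\Prob[d_\Gamma(y)>s]\lesssim e^{-\sansA s}\bigl(1 + e^{-\sansC\|a\|}e^{\sansD d_\Gamma(h)}\|\phi_g\|_{l,2}\bigr)$. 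The first term $e^{-\sansA s}$ contributes $b\int_0^\infty e^{(b-\sansA)s}\,ds = \frac{b}{\sansA-b}\lesssim 1$ as soon as $b<\sansA$, say $b=\sansA/2$. So the entire contribution of the ``$1$'' in the parenthesis is $\lesssim 1$ uniformly, with no hypothesis on $a$ needed.

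The second term is where the hypothesis $\|a\|\ge b'R$ is used. Its contribution to the integral is
\begin{equation*}
  b\,e^{-\sansC\|a\|}e^{\sansD d_\Gamma(h)}\|\phi_g\|_{l,2}\int_0^\infty e^{(b-\sansA)s}\;ds \lesssim e^{-\sansC\|a\|}e^{\sansD d_\Gamma(h)}\|\phi_g\|_{l,2}.
\end{equation*}
Now write $\|\phi_g\|_{l,2} = e^{\log\|\phi_g\|_{l,2}} \le e^{\blog\|\phi_g\|_{l,2}}$ (using $\log t\le \blog t$), so that $e^{\sansD d_\Gamma(h)}\|\phi_g\|_{l,2}\le e^{\sansD d_\Gamma(h)+\sansD\blog\|\phi_g\|_{l,2}}\le e^{\sansD R}$, where $R = d_\Gamma(h)+\blog\|\phi_g\|_{l,2}$ (here I use $\sansD>1$ so that $\sansD R \ge \sansD d_\Gamma(h) + \blog\|\phi_g\|_{l,2}$; in fact $\sansD R = \sansD d_\Gamma(h)+\sansD\blog\|\phi_g\|_{l,2}$, which dominates). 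Hence the second term is $\lesssim e^{-\sansC\|a\|}e^{\sansD R} = e^{\sansD R - \sansC\|a\|}$. Choosing $b' := \sansD/\sansC$, the hypothesis $\|a\|\ge b'R$ gives $\sansC\|a\|\ge \sansD R$, so $e^{\sansD R-\sansC\|a\|}\le 1$. Combining, $\EE[\exp(bd_\Gamma(y))]\lesssim 1$ with $b=\sansA/2$ and $b'=\sansD/\sansC$, which is the claim.

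The only genuine subtlety is bookkeeping: ensuring the same constant $b$ works for both convergence of the integral and the definition of the exponential moment, and tracking that $R\ge 1$ (which holds since $\blog t\ge 1$ always, so $R\ge d_\Gamma(h)+1\ge 1$) so that there is no issue when $d_\Gamma(h)$ and $\log\|\phi_g\|_{l,2}$ are small or negative — in particular $\blog\|\phi_g\|_{l,2}\ge 1 \ge \log\|\phi_g\|_{l,2}$ handles the case $\|\phi_g\|_{l,2}<e$. I expect no real obstacle here; the work is entirely in correctly propagating the constants $\sansA,\sansC,\sansD$ from Lemma~\ref{lem:equiFlats} and choosing $b,b'$ to decouple the two roles that $s$ and $\|a\|$ play. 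I would state $b'$ explicitly as $\max\{1,\sansD/\sansC\}$ to be safe, and note all implicit constants depend only on $G$, $k$, $\Gamma$ as per the standing conventions.
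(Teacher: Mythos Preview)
Your proof is correct and follows essentially the same approach as the paper: set $b=\sansA/2$ and $b'=\sansD/\sansC$, use the tail bound from Lemma~\ref{lem:equiFlats}, and integrate via the layer-cake formula. The only cosmetic difference is that the paper first observes the hypothesis $\|a\|\ge b'R$ forces $e^{-\sansC\|a\|}e^{\sansD d_\Gamma(h)}\|\phi_g\|_{l,2}\le 1$ and then integrates the simplified tail $\Prob[d_\Gamma(hga)>s]\lesssim e^{-\sansA s}$ all at once, whereas you integrate the two terms separately and bound the second afterward; the arithmetic is identical.
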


\begin{proof}
  Let $\sansA, \sansC, \sansD$ be as in Lemma~\ref{lem:equiFlats}.  Set $b:=\frac{\sansA}{2}$ and $b':=\frac{\sansD}{\sansC}$.  If $\|a\|\ge b'R$, then
  \begin{align*}
    \log (e^{-\sansC\|a\|} e^{\sansD d(h,\Gamma)} \|\phi_g\|_{l,2})
\le -\sansC b' (d_\Gamma(h)+\blog \|\phi_g\|_{l,2})+\sansD d_\Gamma(h) + \log \|\phi_g\|_{l,2} 
    &\le 0.
  \end{align*}

  By Lemma~\ref{lem:equiFlats}, this implies
  $$\Prob[d_\Gamma(hga)>s]\lesssim e^{-\sansA s}.$$
  Consequently, substituting $u=e^{b s}$, we find
  \begin{align*}
    \EE[\exp(b d_\Gamma(hga))]
    &=\int_0^\infty \Prob[\exp(b d_\Gamma(hga))>u]\; du\\
    &=\int_{-\infty}^\infty b e^{b s}\Prob[d_\Gamma(hga)>s]\; ds \\
    &\lesssim \int_{-\infty}^{0}b e^{b s}\;ds + \int_{0}^{\infty}b e^{b s} e^{-2b s}\; ds\lesssim 1.
  \end{align*}
\end{proof}

A similar result holds for points in smooth random Weyl chambers.  In order to state this result, we will need to introduce a version of the exponential map for the symmetric space $X=G/K$.  Let $CX_\infty$ be the Euclidean cone over $X_\infty$.  This is the metric space
$$CX_\infty=(X_\infty \times [0,\infty))/(X_\infty \times 0)$$
equipped with the metric
$$d_{CX_\infty}((\sigma_1,t_1),(\sigma_2,t_2))^2=t_1^2+t_2^2-2t_1t_2\cos \angle(\sigma_1,\sigma_2).$$
Under this metric, the cone over an apartment in $X_\infty$ is isometric to $\R^k$.

For any $x\in X$ and $v=(\sigma,t)\in CX_\infty$, let $\gamma_{x,\sigma}\from [0,\infty)\to X$ be the geodesic ray based at $x$ and asymptotic to $\sigma$.  We define
$$e_x(v)=e_x(\sigma,t):=\gamma_{x,\sigma}(t).$$  
This gives rise to a Lipschitz map $X\times CX_\infty\to X$.
\begin{lemma}\label{lem:ex locally Lipschitz}
  For any $x,x'\in X$ and $v,v'\in CX_\infty$, 
  $$d_X(e_x(v),e_{x'}(v'))\le d_X(x,x')+d_{CX_\infty}(v,v').$$
\end{lemma}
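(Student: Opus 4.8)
The plan is to interpose the point $e_x(v')$ and estimate the two resulting pieces. By the triangle inequality,
\[
  d_X(e_x(v),e_{x'}(v'))\le d_X\bigl(e_x(v),e_x(v')\bigr)+d_X\bigl(e_x(v'),e_{x'}(v')\bigr),
\]
so it suffices to prove two things: (i) $d_X(e_x(v),e_x(v'))\le d_{CX_\infty}(v,v')$, i.e.\ that the cone map $e_x\from CX_\infty\to X$ is $1$--Lipschitz, and (ii) $d_X(e_x(v'),e_{x'}(v'))\le d_X(x,x')$.

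For (ii), write $v'=(\sigma',t')$, so that $e_x(v')=\gamma_{x,\sigma'}(t')$ and $e_{x'}(v')=\gamma_{x',\sigma'}(t')$ are the points at parameter $t'$ on the two unit-speed rays issuing from $x$ and from $x'$ that are asymptotic to $\sigma'$. Since $X$ is \textup{CAT}(0), the function $s\mapsto f(s):=d_X(\gamma_{x,\sigma'}(s),\gamma_{x',\sigma'}(s))$ is convex (convexity of the \textup{CAT}(0) metric, applied to the two rays on each interval $[0,s_0]$), and $\sup_s f(s)<\infty$ because the two rays represent the same point $\sigma'\in X_\infty$ and hence are asymptotic. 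A bounded convex function on $[0,\infty)$ is non-increasing, so $f(t')\le f(0)=d_X(x,x')$, which is (ii).

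For (i), write $v=(\sigma_1,t_1)$, $v'=(\sigma_2,t_2)$ and set $p=\gamma_{x,\sigma_1}(t_1)$, $q=\gamma_{x,\sigma_2}(t_2)$; these lie at distances $t_1$ and $t_2$ from $x$ along geodesic rays issuing from $x$. In the Euclidean comparison triangle for the geodesic triangle $\triangle(x,p,q)$, the two sides at $\bar x$ have lengths $t_1$ and $t_2$ and the opposite side has length $d_X(p,q)$, so the comparison angle $\bar\angle_x(p,q)$ at $\bar x$ obeys
\[
  d_X(p,q)^2=t_1^2+t_2^2-2t_1t_2\cos\bar\angle_x(p,q).
\]
Monotonicity of comparison angles in a \textup{CAT}(0) space shows that $(s,s')\mapsto\bar\angle_x(\gamma_{x,\sigma_1}(s),\gamma_{x,\sigma_2}(s'))$ is non-decreasing in each variable, and, $X$ being a Hadamard manifold, its limit as $s,s'\to\infty$ equals the angular distance $\angle(\sigma_1,\sigma_2)$; taking $s,s'\ge\max(t_1,t_2)$ therefore gives $\bar\angle_x(p,q)\le\angle(\sigma_1,\sigma_2)\le\pi$. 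As $\cos$ is decreasing on $[0,\pi]$,
\[
  d_X(p,q)^2\le t_1^2+t_2^2-2t_1t_2\cos\angle(\sigma_1,\sigma_2)=d_{CX_\infty}(v,v')^2,
\]
which is (i). Adding (i) and (ii) to the triangle inequality gives the lemma.

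I expect the delicate point to be the single inequality $\bar\angle_x(p,q)\le\angle(\sigma_1,\sigma_2)$ inside (i): it says that the pointwise \textup{CAT}(0) comparison of two rays based at $x$ is controlled by the \emph{global} angular metric on $X_\infty$, which is exactly where one must invoke the definition of $\angle$ (a supremum of Alexandrov angles over all basepoints) together with the good behavior of this metric on a Hadamard manifold. Everything else reduces to the triangle inequality, convexity of the metric, and the fact that a bounded convex function is monotone.
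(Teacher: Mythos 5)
Your proof is correct and follows essentially the same approach as the paper's: interpose an intermediate point so that only one of $x$ or $v$ changes at a time, then bound the ``move the basepoint'' leg by convexity of the CAT(0) metric along asymptotic rays and the ``move the cone point'' leg by the fact that $e_\bullet$ is $1$--Lipschitz on $CX_\infty$. The only cosmetic difference is that the paper interposes $e_{x'}(v)$ (moving the cone point second) while you interpose $e_x(v')$ (moving it first); the two decompositions are mirror images and both work.

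One small point of precision in part (i): the limit $\lim_{s,s'\to\infty}\bar\angle_x(\gamma_{x,\sigma_1}(s),\gamma_{x,\sigma_2}(s'))$ is the angle $\angle_x(\sigma_1,\sigma_2)$ seen from $x$, which in general is $\le\angle(\sigma_1,\sigma_2)$ rather than equal to it (the angular metric $\angle$ being a supremum over all basepoints). Since you only need the inequality $\bar\angle_x(p,q)\le\angle(\sigma_1,\sigma_2)$, this is harmless and your conclusion stands; it just means part (i) is even a little easier than you feared, as you do not need to know that the supremum defining $\angle$ is achieved from $x$. The paper simply cites the $1$--Lipschitz property of $e_{x'}$ as a known consequence of CAT(0), whereas you spell it out via comparison angles; that extra detail is fine and does not change the structure of the argument.
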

\begin{proof}
  Let $\sigma,\sigma'\in X_\infty$, $t,t'\ge 0$ be such that $v=(\sigma,t)$, $v'=(\sigma',t')$.  We write
  $$d_X(e_x(v),e_{x'}(v'))\le d_X(e_x(v),e_{x'}(v))+d_X(e_{x'}(v),e_{x'}(v')).$$
  The geodesics $\gamma_{x,\sigma}$ and $\gamma_{x',\sigma}$ are both asymptotic to $\sigma$, so convexity implies that
  $$d_X(e_x(v),e_{x'}(v))=d_X(\gamma_{x,\sigma}(t), \gamma_{x',\sigma}(t))\le d_X(x,x').$$
  The fact that $X$ is CAT(0) implies that $e_{x'}$ is a distance-decreasing map from $CX_\infty$ to $X$ and thus
  $$d_X(e_{x'}(v),e_{x'}(v'))\le d_{CX_\infty}(v,v').$$
\end{proof}

This map lets us parametrize Weyl chambers in $X$. Let  $x\in X $, $\bd\in \chx$ be a chamber of $X_\infty$ and let $C\bd\subset CX_\infty$ be the cone over $\bd$.  There is a unique flat $E_\bd$ such that $x\in E_\bd$ and $\bd\subset (E_\bd)_\infty$; this flat contains $\gamma_{x,\sigma}$ for all $\sigma \in \bd$, so $e_x$ sends $C\bd$ isometrically to a Weyl chamber in $E_\bd$ based at $x$.  

As in Section~\ref{sec:standing}, let $E=[A]\subset X$ be the model flat and let $\bz,\bz^*\in \cF^{k-1}(E_\infty)$ be opposite chambers.  If $\br$ is a random chamber, then $e_x(C\br)$ is a \emph{random Weyl chamber based at $x$}.  Let $\pi_\br\from X_\infty \to \br$ be the map that sends each chamber of $X_\infty$ to $\bb$ by a marking-preserving isomorphism, so that we can write points of $\br$ in the form $\pi_\br(z)$ for $z\in \bz^*$.
\begin{lemma}\label{lem:equiChamber}
  Suppose that $\sansA, \sansC, \sansD$, and $l$ are as in Lemma~\ref{lem:equiFlats}.  Let $x\in X$, $\rho \ge 0$, $z\in \bz^*$, and $t\ge 0$.  Let $\br\in \cSc_{x}(\rho)$ be a smooth random chamber and let $y=e_{x}(\pi_\br(z),t)\in X$ be a random variable.  Then for all $s>0$,
  \begin{equation}\label{eq:equiChamber}
    \Prob[\dGamma(y)>s]\lesssim_\rho e^{-\sansA s} \bigl(1+e^{-\sansC t} e^{\sansD \dGamma(x)} \|\phi_\br\|_{x}\bigr).
  \end{equation}
\end{lemma}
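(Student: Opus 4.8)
The plan is to reduce Lemma~\ref{lem:equiChamber} to Lemma~\ref{lem:equiFlats} by exploiting the fact that a point on a random Weyl chamber based at $x$ can be rewritten as a point on a random flat, after we push the basepoint out toward the interior of $\bz$. The key observation is that $y = e_x(\pi_\br(z), t)$ lies on the flat $E_{\br,\bz}$ (the unique flat with chambers $\br$ and $\bz$), but it is \emph{not} symmetric about a single point, so we cannot apply Lemma~\ref{lem:equiFlats} directly with center $x$. Instead, first I would use the shadow machinery (Lemma~\ref{lem:dil} and Lemma~\ref{lem:largeShadows}): pick a point $x'$ obtained from $x$ by flowing a bounded distance (depending on $\rho$) along the geodesic ray toward the barycenter of $\bz$, so that $\cSc_x(\rho) \subset \cSc_{x'}$ with room to spare, and so that $x'$ lies ``downstream'' of both $x$ and the chamber $\br$. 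Then $x'$ is a bounded distance from $x$, so $\dGamma(x') \le \dGamma(x) + O_\rho(1)$ and, by Lemma~\ref{lem:expDecay}, $\|\phi_\br\|_{x'} \lesssim_\rho \|\phi_\br\|_x$.

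Next I would realize $y$ as a translate of a flat. Working in $NA$, write $x = [h]$ with $h \in NA$; since $\br$ is a smooth random chamber with $\br \in \cSc_x(\rho)$, the element $g_\br := \iota(h^{-1}\br) \in N$ is a smooth random variable in $N$ supported in a ball of radius $\rho$ (by Definition~\ref{def:shadows}), with $\|\phi_{g_\br}\|_{l,2} = \|\phi_\br\|_x$. The flat $E_{\br,\bz}$ is then $[h\, g_\br\, A] = [h\, \iota^{-1}(g_\br) A]$ up to the identification of chambers, and the point $y = e_x(\pi_\br(z),t)$ corresponds to $[h\, g_\br\, a]$ for an element $a = a(z,t) \in A$ whose norm $\|a\|$ is comparable to $t$ up to an additive error controlled by $\rho$ and $|z|$ — this is exactly the content of the remark after Definition~\ref{def:shadows} that flowing toward $\bz$ contracts shadows, together with the fact that $e_x$ maps $C\br$ isometrically onto a Weyl chamber of $E_{\br,\bz}$. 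One must be slightly careful that $g_\br$ may not be supported in the unit ball $B_e(1) \subset G$ required by Lemma~\ref{lem:equiFlats}; this is where the passage to $x'$ pays off, since replacing $x$ by $x'$ (flowing toward the barycenter of $\bz$) shrinks the shadow $\cSc_{x'}$ back inside a unit ball by Lemma~\ref{lem:dil}, at the cost of the bounded-in-$\rho$ factors already accounted for. After this normalization, $y = x' g a'$ with $g \in B_e(1)$ a smooth random variable, $\|\phi_g\|_{l,2} \lesssim_\rho \|\phi_\br\|_x$, and $\|a'\| \ge t - O_\rho(1)$.

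Now Lemma~\ref{lem:equiFlats} applies with $h \leftarrow x'$, giving
\begin{equation*}
  \Prob[\dGamma(y) > s] \lesssim e^{-\sansA s}\bigl(1 + e^{-\sansC \|a'\|} e^{\sansD \dGamma(x')}\|\phi_g\|_{l,2}\bigr)
  \lesssim_\rho e^{-\sansA s}\bigl(1 + e^{-\sansC t} e^{\sansD \dGamma(x)}\|\phi_\br\|_x\bigr),
\end{equation*}
where in the second step I absorb the $O_\rho(1)$ discrepancies between $\|a'\|$ and $t$, between $\dGamma(x')$ and $\dGamma(x)$, and between $\|\phi_g\|_{l,2}$ and $\|\phi_\br\|_x$ into the implied constant (which is allowed to depend on $\rho$); one may need to slightly shrink $\sansC$ to $\sansC/2$, say, to keep the exponential bound clean, but this is harmless since the statement only asserts existence of such constants. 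This is exactly \eqref{eq:equiChamber}. The main obstacle, and the step requiring the most care, is the bookkeeping in the second paragraph: identifying the correct $A$-element $a'$ from the pair $(z,t)$ and verifying that its norm is $t$ up to a $\rho$- and $z$-dependent additive constant, and simultaneously verifying that the density of the resulting random variable in $G$ has Sobolev norm controlled by $\|\phi_\br\|_x$ — this is where the dilation estimates of Lemma~\ref{lem:dil}, the equivariance of shadows under $NA$, and the exponential comparison of the norms $\|\cdot\|_x$ (Lemma~\ref{lem:expDecay}) all have to be combined correctly.
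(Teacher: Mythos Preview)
Your overall strategy --- reduce to Lemma~\ref{lem:equiFlats} by writing $y$ as $[h\,g\,a]$ for a smooth random perturbation $g$ and an element $a\in A$ with $\|a\|\approx t$, then handle the general $\rho$ by flowing toward the barycenter of $\bz$ --- matches the paper's. But there is a genuine gap in the middle step.

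The random element $g_\br=\iota(h^{-1}\br)$ you construct is a smooth random variable \emph{in $N$}, not in $G$. Lemma~\ref{lem:equiFlats} requires $g\in B_e(1)\subset G$ to be a smooth random variable in $G$, i.e., to have a smooth density $\phi_g\in C^\infty(G)$ with respect to Haar measure on $G$. A variable supported on the proper submanifold $N\subset G$ has no such density: as a measure on $G$ it is singular, and $\|\phi_g\|_{l,2}$ is undefined. Flowing toward $\bz$ shrinks the support of $g_\br$ inside $N$ but does nothing about this transverse degeneracy. (Concretely: the proof of Lemma~\ref{lem:equiFlats} pushes $\phi_g$ forward to $\Gamma\backslash G$ and applies Theorem~\ref{thm:KM matrix coeffs}; a measure concentrated on $hN$ has infinite Sobolev norm on $\Gamma\backslash G$.)

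The paper repairs exactly this point: it takes independent auxiliary smooth random variables $a\in B^A_e(\epsilon)$, $m\in B^M_e(\epsilon)$, $n^*\in B^{N^*}_e(\epsilon)$ and forms $w=n\,a\,m\,n^*$, where $n=g_\br$. Because $\mathfrak g=\mathfrak n\oplus\mathfrak a\oplus\mathfrak m\oplus\mathfrak n^*$, the multiplication map $N\times A\times M\times N^*\to G$ is a local diffeomorphism near $e$, so by Lemma~\ref{lem:SobolevProps}, $w$ is a genuine smooth random variable in $G$ with $\|\phi_w\|_{l,2}\lesssim \|\phi_n\|_{l,2}=\|\phi_\br\|_x$. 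Since $A$, $M$, and $N^*$ all stabilize $\bz^*$, the geodesic $[g\,w\,\gamma(t)]$ is still asymptotic to $\pi_\br(z)$ and hence stays within bounded distance of $y$; now Lemma~\ref{lem:equiFlats} applies legitimately. Your argument becomes correct once you insert this ``thickening'' step.
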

\begin{proof}
  We will first prove \eqref{eq:equiChamber} when $\rho$ is sufficiently small, then prove the general case by a scaling argument.  

  Let $\Stab(\bz)=NAM$ and $\Stab(\bz^*)=MAN^*$ be the Levi decompositions of $\Stab(\bz)$ and $\Stab(\bz^*)$, so that $M$ is a subgroup of the maximal compact subgroup that normalizes $A$, $N^*\subset G$ is conjugate to $N$ and 
  $$\mathfrak{g}=\mathfrak{n}\oplus \mathfrak{a}\oplus \mathfrak{m}\oplus \mathfrak{n}^*.$$
  Let $Z=N\times A\times M\times N^*$ a left invariant metric  and let $f\from Z\to G$ be the map $f(n,a,m,n^*)=namn^*$.  Let $0<\epsilon<\frac{1}{4}$ be such that $f$ takes the ball $B^Z_e(5\epsilon)$ to its image $f(B^Z_e(5\epsilon))$ diffeomorphically.  Let $m\in B^M_e(\epsilon)$, $a\in B^A_e(\epsilon)$, and $n^*\in B^{N^*}_e(\epsilon)$ be independent smooth random variables that are independent of $\br$.

  Let $g\in G$ be such that $[g]=x$.  Suppose that $\rho\le \epsilon$ and thus $\br\in \cSc_x(\rho)\subset \cSc_x(\epsilon)$.  Let $n=\iota(g^{-1}\br)\in N$; this is a smooth random variable such that $\br=g n \bz^*$ and $\pi_\br(z)=gnz$.  We have $n\in B^N_e(\epsilon)$, and $\|\phi_\br\|_x=\|\phi_{n}\|_{l,2}$.  Let $w=f(n,a,m,n^*)=namn^*\in B_e(4\epsilon).$  Lemma~\ref{lem:SobolevProps} and the fact that $f$ is a diffeomorphism on $B^Z_e(5\epsilon)$ imply that $w$ is a smooth random variable and that
  \begin{equation}\label{eq:equiChamber phi g}
    \|\phi_w\|_{l,2}\lesssim \|\phi_n\|_{l,2}\|\phi_a\|_{l,2}\|\phi_m\|_{l,2}\|\phi_{n^*}\|_{l,2}\lesssim \|\phi_\br\|_x.
  \end{equation}

  Let $\gamma\from \R\to A$ be the unit-speed geodesic that is based at $e$ and asymptotic to $z$.  Since the stabilizer of $\bz^*$ contains $N^*, A$, and $M$,
  $$\lim_{t\to \infty}[gw\gamma(t)]=gwz=gnamn^*z=gnz=\pi_\br(z);$$
  that is, $[gw\gamma]$ is a unit-speed geodesic asymptotic to $\pi_\br(z)$.   Since $\Omega(u)=e_{x}(\pi_\br(z),u)$ is another such geodesic, convexity implies
  that for $y=e_{x}(\pi_\br(z),t)=\Omega(t)$,
  $$d([gw\gamma(t)],y)=d([gw\gamma(t)],\Omega(t))\le d([gw\gamma(0)], \Omega(0))=d([gw],x)\le 4\epsilon.$$

  Consequently, $d_\Gamma(gw\gamma(t))>\dGamma(y)-4\epsilon-2\diam K$.  By Lemma~\ref{lem:equiFlats} and \eqref{eq:equiChamber phi g},
  \begin{align}\label{eq:prob}
    \Prob\bigl[\dGamma(y)>s\bigr]
    &\le \Prob[d(gw\gamma(t),\Gamma)>s-4\epsilon-2\diam K]\\ 
    &\lesssim e^{-\sansA s} \bigl(1+e^{-\sansC t} e^{\sansD d_\Gamma(g)} \|\phi_\br\|_{x}\bigr).
  \end{align}
  This concludes the case $\rho<\epsilon$.  

  If $\rho\ge \epsilon$, then by Lemma~\ref{lem:dil}, there is a $x'=[g']\in X$ such that $\br\in \cSc_{x'}(\epsilon)$ and $d(x,x')\lesssim 1+\log \frac{\rho}{\epsilon}$.  If $y'=e_{x'}(\pi_\br(z),t))$, then by convexity, $d(y,y')\le d(x,x')$.  By \eqref{eq:prob} and Lemma~\ref{lem:expDecay}, 
  \begin{align*}
    \Prob[\dGamma(y)>s]
    &\le \Prob[\dGamma(y')>s-d(x,x')]\\
    &\lesssim e^{-\sansA (s-d(x,x'))} \bigl(1+e^{-\sansC t} e^{\sansD d_\Gamma(g)} \|\phi_\br\|_{x'}\bigr)\\
    &\lesssim_\rho e^{-\sansA s} \bigl(1+e^{-\sansC t} e^{\sansD d_\Gamma(g)} \|\phi_\br\|_{x}\bigr).
  \end{align*}
\end{proof}

The following corollary is analogous to Corollary~\ref{cor:expMomentsFlats}, and its proof is essentially the same.  
\begin{cor}\label{cor:expMomentsChambers}
  For any $\rho>0$, there are $b, b'>0$ with the following property.  Let $x\in X$ be a point and let $\br\in \cSc_x(\rho)$ be a smooth random chamber.  Let $z\in \bz^*$.  Let $R=\dGamma(x)+\blog \|\phi_\br\|_x$.  If $t\ge b'R$ and $y=e_{x}(\pi_\br(z),t)\in X$, then 
  $$\EE[\exp (b\dGamma(y))]\lesssim_\rho 1.$$
\end{cor}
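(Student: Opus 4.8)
The plan is to mirror the proof of Corollary~\ref{cor:expMomentsFlats} almost verbatim, with Lemma~\ref{lem:equiChamber} playing the role that Lemma~\ref{lem:equiFlats} played there. I would fix $\rho > 0$, let $\sansA, \sansC, \sansD$ be the constants furnished by Lemma~\ref{lem:equiChamber}, and set $b := \sansA/2$ and $b' := \sansD/\sansC$; the $\rho$--dependence then enters only through the implicit constant of Lemma~\ref{lem:equiChamber}.

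The first step is to check that the hypothesis $t \ge b' R$, with $R = \dGamma(x) + \blog\|\phi_\br\|_x$, forces the error term $e^{-\sansC t}\,e^{\sansD \dGamma(x)}\,\|\phi_\br\|_x$ appearing in Lemma~\ref{lem:equiChamber} to be at most $1$. Indeed, taking logarithms and using $\sansC b' = \sansD$, the exponent is
$$-\sansC t + \sansD\,\dGamma(x) + \log\|\phi_\br\|_x \le -\sansD\,\blog\|\phi_\br\|_x + \log\|\phi_\br\|_x \le 0,$$
the last inequality because $\sansD > 1$, $\blog\|\phi_\br\|_x \ge \log\|\phi_\br\|_x$, and $\blog\|\phi_\br\|_x \ge 1$ --- exactly the estimate used in the proof of Corollary~\ref{cor:expMomentsFlats}. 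Hence Lemma~\ref{lem:equiChamber} yields, for $y = e_x(\pi_\br(z), t)$ and all $s > 0$, the clean tail bound $\Prob[\dGamma(y) > s] \lesssim_\rho e^{-\sansA s}$.

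The remaining step is the standard layer-cake integration. Writing the expectation as $\int_{-\infty}^\infty b\,e^{bs}\,\Prob[\dGamma(y) > s]\,ds$ and splitting the integral at $s = 0$,
$$\EE[\exp(b\,\dGamma(y))] \lesssim_\rho \int_{-\infty}^0 b\,e^{bs}\,ds + \int_0^\infty b\,e^{bs}\,e^{-\sansA s}\,ds \lesssim_\rho 1,$$
where convergence of the second integral uses $\sansA - b = \sansA/2 > 0$. I do not expect a genuine obstacle here: Lemma~\ref{lem:equiChamber} has already absorbed the analytic content (the Levi-decomposition reduction of the ray toward $\pi_\br(z)$ to a random flat, and the scaling argument handling large $\rho$), so what remains is bookkeeping with constants. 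The only points requiring a little care are propagating the $\rho$--dependence consistently through both the tail estimate and the integration, and verifying the key inequality in both regimes $\log\|\phi_\br\|_x \le 1$ and $\log\|\phi_\br\|_x > 1$.
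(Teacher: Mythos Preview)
Your proposal is correct and follows essentially the same approach as the paper: the paper's proof is in fact just a two-sentence reference back to Corollary~\ref{cor:expMomentsFlats}, and you have correctly unwound what that entails, choosing $b=\sansA/2$, $b'=\sansD/\sansC$, bounding the error term in Lemma~\ref{lem:equiChamber} by~$1$, and finishing with the same layer-cake integration.
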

\begin{proof}
  As in the proof of Corollary~\ref{cor:expMomentsFlats}, there are $b,b'>0$ so that if $t\ge b'R$, then 
  $$\Prob[\dGamma(y)>s]\lesssim e^{-2 b s}.$$
  We conclude as above that
  $$\EE[\exp(b \dGamma(y))]\lesssim 1.$$
\end{proof}

\section{$k$-dimensional fillings: a super-polynomial lower bound}\label{sec:lower bound}

As a first application of the results of Section~\ref{sec:equidistribution}, we prove a superpolynomial lower bound on the $k$--dimensional filling volume function of $\Gamma$ thus establishing that $k=\Rrank G$ is a ``critical dimension'' of the isoperimetric behavior.  This bound is not sharp, but it is the first step in obtaining the sharp bound (see Proposition~\ref{prop:sharpLowerBounds}).

We start by constructing a sphere that lies logarithmically close to $[\Gamma]$.  Recall that for $r>0$, we defined $X(r)=\dGamma^{-1}([0,r])$ and chose $r_0$ such that $\FD\subset X(r_0)$.
\begin{lemma}\label{lem:polyLower}
  There are constants $b, \eta>0$ such that for any sufficiently large $L>2$, there is a Lipschitz map $\alpha\from S^{k-1}\to X(\eta \log L)$ (indeed, an isometric embedding of a round sphere) such that $\Lip \alpha\approx L$, $\vol^{k-1} \alpha\approx L^{k-1}$, and
  \begin{equation}\label{eq:polyLower moment bound}
    \int_{S^{k-1}} \exp(b \dGamma(\alpha(x)))\; dx\lesssim 1,
  \end{equation}
  and there is a $\omega>0$ such that 
  \begin{equation}\label{eq:polyLower filling bound}
    \FV^k_{X(L/4)}(\fclass{\alpha})\gtrsim e^{\omega L},
  \end{equation}
  where $\fclass{\alpha}$ is the fundamental class of $\alpha$.  
\end{lemma}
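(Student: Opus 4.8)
{\bf Proof proposal.} The plan is to build $\alpha$ as the boundary of a round $(k-1)$--sphere sitting in a random flat $E$ whose center $m$ lies deep in a cusp, at distance roughly $L/2$ from $[\Gamma]$, so that any filling staying in $X(L/4)$ is forced to go ``around'' the missing horoball region and hence has exponentially large volume, while simultaneously arranging that the sphere $\alpha$ itself stays within a logarithmic neighborhood of $[\Gamma]$.

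First I would fix a point $m\in X$ with $\dGamma(m)\approx L/2$ (such points exist since $\Gamma\backslash X$ is noncompact; concretely one can take $m=[ga]$ for $g$ in a bounded set and $a\in A$ with $\|a\|\approx L$). Let $\bc,\bc^*$ be a pair of opposite chambers with $E_{\bc,\bc^*}\ni m$, and choose a smooth random chamber $\br\in\cSc_m(\rho)$ close to $\bc$ (independent copies won't be needed here, just one); let $E=E_{\br,\br^*}$ be the corresponding random flat through $m$, and parametrize the round sphere $\alpha$ of radius $R_0\approx L$ centered at $m$ inside $E$, i.e.\ $\alpha(x)=e_m(\sigma_x, R_0)$ for $\sigma_x$ ranging over $\br\cup\br^*$ appropriately (more precisely over the boundary sphere $E_\infty$, scaled to radius $R_0$ — the cone $CE_\infty$ over an apartment is isometric to $\R^k$, so the Euclidean round sphere of radius $R_0$ in this flat makes sense). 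This gives $\Lip\alpha\approx L$ and $\vol^{k-1}\alpha\approx L^{k-1}$ automatically, and it is an isometric embedding of a round sphere. The moment bound \eqref{eq:polyLower moment bound} follows from Corollary~\ref{cor:expMomentsChambers} (or the flat version, Lemma~\ref{lem:equiChamber} with $x=m$): since $\|\phi_\br\|_m\lesssim 1$ and we may take $R_0\ge b'(\dGamma(m)+1)\approx b' L$, each point $\alpha(x)$ satisfies $\EE[\exp(b\dGamma(\alpha(x)))]\lesssim 1$; integrating over $x\in S^{k-1}$ and using Fubini shows that with positive probability $\int_{S^{k-1}}\exp(b\dGamma(\alpha(x)))\,dx\lesssim 1$, and in particular (by Markov) there is a choice of $\br$ for which $\alpha$ lands in $X(\eta\log L)$ for a suitable $\eta$ — the point being that although each point is $O(1)$ from $[\Gamma]$ in expectation, the \emph{maximum} over $L^{k-1}$ roughly independent-looking points is only $O(\log L)$.

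For the exponential lower bound \eqref{eq:polyLower filling bound}, the key is a lower bound on filling volume via a ``linking'' or degree argument: the sphere $\alpha$ bounds a ball $B$ in the flat $E$ (or in $X$, by coning to $m$), and $m\in B$ with $\dGamma(m)\approx L/2 > L/4$. For any cycle $\beta\in\CLip_k(X(L/4))$ with $\partial\beta=\fclass\alpha$, the difference $\beta - B$ is a $k$--cycle in $X$; since $X$ is contractible this is a boundary, but what matters is a transversality/intersection count. I would use the standard device from \cite{ECHLPT}: project $X$ radially from $m$ onto a sphere of small radius around $m$; $\alpha$ has degree $1$ onto this small sphere, so any filling $\beta$ must cover it, and the part of $\beta$ inside the horoball $\dGamma>L/4$ near $m$ is excluded, forcing $\beta$ to contain a large ``annular'' region. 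More efficiently: the volume of the region $\{x : L/4 < \dGamma(x) < L/2\}\cap E$ that $\beta$ must ``fill around'' is exponentially large because, by Lemma~\ref{lem:small horoballs}, horoballs are exponentially thin — equivalently, a filling avoiding $\dGamma > L/4$ must have a ``shadow'' on $\partial X(L/4)$ of at least the volume of that part of the sphere $\alpha$ which is visible past the horoball, and the intrinsic geometry there forces exponential volume via the exponential distortion of the metric on horospheres (cf. Theorem~\ref{thm:fundDomain} and \cite{LeuExh}). I would formalize this with a calibration: construct a closed $k$--form $\omega$ on $X(L/4)$, supported near $E$, with $\|\omega\|_\infty\lesssim 1$ but $\int_B \omega\gtrsim e^{\omega_0 L}$ where $B$ is any chain filling $\alpha$ in $X(L/4)$ — then $\mass\beta\ge \int_\beta\omega = \int_{\fclass\alpha}(\text{primitive})\gtrsim e^{\omega_0 L}$ by Stokes. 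The form $\omega$ is built from the volume form of the flat $E$ damped by a cutoff in $\dGamma$; its integral over the filling is large because the filling is pinched into the exponentially-distorted collar of the horoball.

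The main obstacle I expect is \eqref{eq:polyLower filling bound} — specifically, producing the calibrating form (or linking argument) rigorously and showing the lower bound is genuinely exponential in $L$ rather than just superpolynomial. The subtlety is that the sphere $\alpha$ is only \emph{logarithmically} close to $[\Gamma]$, not inside $\FD$, so one cannot directly invoke filling bounds for $\FD$; one has to work with $X(L/4)$, which is larger, and argue that the exponential thinness of the horoball $\{ \dGamma > L/4\}$ near $m$ — a consequence of Lemma~\ref{lem:small horoballs} together with the structure of $\FD$ as a complement of horoballs (Theorem~\ref{thm:fundDomain}) — forces any $X(L/4)$--filling of the round sphere of radius $\approx L$ around $m$ to have exponentially large volume. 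I would model this on the $\SL_{k+1}(\Z)$ argument of \cite{ECHLPT}, where the flat $E$ corresponds to a subgroup $\Z^{k-1}$ acting on a flat in the thick part and the exponential lower bound comes from the exponential decay of the relevant Iwasawa coordinate; here the analogous statement is the exponential decay of $\injrad$ (Lemma~\ref{lem:injrad}) combined with a volume-of-tube estimate. The moment bound \eqref{eq:polyLower moment bound} and the geometric data ($\Lip$, $\vol$) are comparatively routine consequences of the equidistribution results of Section~\ref{sec:equidistribution}.
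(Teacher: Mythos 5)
Your construction of the sphere $\alpha$ matches the paper's in essence: pick a point deep in a cusp (at $\dGamma$-depth comparable to $L$), take a smooth random perturbation of a flat through it, let $\alpha$ be a round $(k-1)$--sphere of radius $\approx L$ in that flat, and use the equidistribution results of Section~\ref{sec:equidistribution} together with Fubini and a pigeonhole/Chebyshev step to pin down a deterministic choice for which the moment bound \eqref{eq:polyLower moment bound} holds. (The paper uses Corollary~\ref{cor:expMomentsFlats} with a random $g\in B_e(1)\subset G$ rather than Corollary~\ref{cor:expMomentsChambers} applied to a random chamber, and takes the base point to have $\dGamma \approx L$ rather than $\approx L/2$, but these are cosmetic differences; the paper also extracts the $\eta\log L$ bound by noting that the integral $\int \exp(b\dGamma(\alpha(x)))\,dx\lesssim L^{k-1}$ together with the bounded Lipschitz constant of $\alpha$ forces $\max\dGamma\circ\alpha\lesssim\log L$, which is slightly cleaner than an abstract Markov/union argument.)

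The genuine gap is exactly where you flagged it: the exponential lower bound \eqref{eq:polyLower filling bound}. The paper does not prove this from scratch; it invokes a divergence estimate from \cite{LeuCorank}, namely that there is $\omega>0$ so that a round $(k-1)$--sphere $S(x,r)$ in a maximal flat satisfies $\FV^k_{X\setminus B_x(r/2)}(S(x,r))\gtrsim e^{\omega r}$, and then simply observes that $X(L/4)\subset X\setminus B_v(L/2)$ because the center $v$ is at depth $\ge L-1$. Your proposed substitutes do not obviously yield the same conclusion. The radial degree argument (projecting onto a small sphere around $m$) proves that a filling must ``link'' the center, but gives no quantitative volume bound at all, let alone an exponential one. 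The calibration idea requires exhibiting a closed $k$--form of bounded $L^\infty$ norm whose integral over any chain in $X(L/4)$ bounded by $\alpha$ is exponentially large; you do not explain why such a form exists, and constructing it would amount to reproving the divergence estimate. Finally, the ``exponential thinness of horoballs'' intuition conflates two different phenomena: Lemma~\ref{lem:small horoballs} is about the \emph{measure} of $H(s)$ in $\Gamma\backslash G$, which says nothing directly about the filling volume of a sphere in $X\setminus B_v(L/2)$, and the injectivity-radius decay of Lemma~\ref{lem:injrad} likewise does not translate into an isoperimetric lower bound without a substantive argument. In short, the sphere construction and the moment bound are fine, but the filling lower bound rests on a cited result about divergence in higher-rank symmetric spaces that you would need to locate and use (or independently reprove); the sketches you give do not bridge that gap.
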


\begin{proof}
  As in \cite{LeYoRank1}, our bound is based on the estimate of the
  divergence of $X$ in \cite{LeuCorank}.  It is shown there that there is an $\omega>0$ such that if $E\subset X$ is a flat, $x\in E$, $r>0$, and $S(x,r)$ is the $(k-1)$--sphere in $E$ with center $x$ and radius $r$, then
  \begin{equation}\label{eq:leuzinger divergence bound}
    \FV^k_{X\setminus B_x(\frac{r}{2})}(S(x,r)) \gtrsim e^{\omega r}.
  \end{equation}

  We first construct $\alpha$.    Let $g\in B_e(1)\subset G$ be a smooth random variable and let $L_0=\max\{2,2\diam K+\blog \|\phi_g\|_{l,2}\}$.  For any $r>0$, let $S(r)\subset A$ be the sphere of radius $r$ centered at the identity.  We claim that there are $c, \eta>0$ such that for all $L>L_0$ and $h_0\in G$ such that $\dGamma([h_0])=L$, 
  \begin{equation}\label{eq:random spheres eta blog}
    \Prob\bigl[ [h_0gS(cL)] \subset X(\eta \log L)\bigr]>0.
  \end{equation}
  That is, a random sphere of radius $cL$ centered near $[h_0]$ typically lies in $X(\eta \log L)$.  
  
  Let $c=2b'$, where $b'$ is as in Corollary~\ref{cor:expMomentsFlats}.    Since $L>L_0$, we have 
  $$b'(d_\Gamma(h_0)+\blog \|\phi_g\|_{l,2})\le b'(L+2\diam K+\blog \|\phi_g\|_{l,2})<cL.$$
  By Corollary~\ref{cor:expMomentsFlats}, this implies that there is a $c_0$ depending only on $G$ and $\Gamma$ such that for any $a\in S(cL)$, 
  $$\EE[\exp b\dGamma([h_0ga])]\le c_0.$$

  By Fubini's theorem,
  $$\EE\left[\int_{S(cL)}\exp b\dGamma([h_0ga]) \;da\right]
  =\int_{S(cL)}\EE[\exp b\dGamma([h_0ga])]\;da\le v_{k-1} c_0 L^{k-1}c^{k-1},$$
  where $v_{k-1}=\vol^{k-1}(S(1))$ is the volume of the unit sphere.  Let $c_1=v_{k-1}c_0 c^{k-1}$.  By Chebyshev's inequality, there is a $g_0\in B_e(1)$ such that 
  \begin{equation}\label{eq:choice of g0}
    \int_{S(cL)} \exp b\dGamma([h_0g_0a])\;da\le 2 c_1 L^{k-1}.
  \end{equation}

  Let $D=\max_{a\in S(cL)} \dGamma([h_0g_0a])$ and let $v\in S(cL)$ be such that $\dGamma([h_0g_0v])=D$.  For all $w\in B_v(1)$, we have $\dGamma([h_0g_0w])\ge D-1$, so
  $$\int_{S(cL)} \exp b\dGamma([h_0g_0a]) \;da \ge \vol^{k-1}(B_v(1)\cap S(cL)) \exp (b(D-1)) \gtrsim \exp(bD),$$
  and by \eqref{eq:choice of g0}, we have $\exp(bD)\lesssim L^{k-1}$.  That is, there is an $\eta>0$ depending only on $G$ and $\Gamma$ such that $D \le \eta \log L$ and thus $[h_0g_0S(cL)]\subset X(\eta \log L)$.  Let $\alpha$ parametrize $[h_0g_0S(cL)]$ by a scaling, so that $\Lip \alpha\approx L$ and $\vol^{k-1} \alpha\approx L^{k-1}$.  The bound \eqref{eq:polyLower moment bound} then follows from \eqref{eq:choice of g0}.

  Thus, for any $L>L_0$, there is a sphere $\alpha$ of radius $cL$ such that $\alpha\subset X(\eta \log L)$.  We claim that these spheres satisfy  \eqref{eq:polyLower filling bound} when $L>4+4\eta \log L$.  

  Let $v$ be the center of $\alpha$ (i.e., $v=[h_0g_0]$). Then  $\dGamma(v)\ge L-1$, so 
  $$d(v,X(L/4)) \ge L-1-\frac{L}{4} > \frac{L}{2}.$$
  Consequently, $X(\eta \log L)\subset X(L/4)\subset X\setminus B_{v}(L/2)$.  We have $\alpha\subset X(\eta \log L)\subset X(L/4)$, so by \eqref{eq:leuzinger divergence bound},
  $$\FV_{X(L/4)}(\fclass{\alpha})\ge \FV_{X\setminus B_{v}(L/2)}(\fclass{\alpha}) \gtrsim e^{\omega L},$$
  as desired.
\end{proof}

By combining this with the retraction from Theorem~\ref{thm:fundDomain}, we can construct a sphere in $\FD$.
\begin{prop}\label{prop:polyLower}
  There is an $\epsilon>0$ and an $\omega'>0$ such that when $V$ is
  sufficiently large,
  $\FV^k_{X_0}(V)\gtrsim e^{\omega' V^{\epsilon}}$.
\end{prop}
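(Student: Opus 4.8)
The plan is to push the sphere $\alpha$ supplied by Lemma~\ref{lem:polyLower} into the thick part $X_0$ by means of the closest-point projection $\rho$ of Theorem~\ref{thm:fundDomain}, and to observe that, since $\alpha$ lies only a logarithmic distance from $X_0$, this retraction distorts volumes by at most a polynomial factor, which is negligible against the exponential lower bound \eqref{eq:polyLower filling bound}. Concretely, I would fix a large parameter $L$ and take $\alpha\colon S^{k-1}\to X(\eta\log L)$ as in Lemma~\ref{lem:polyLower}, so that $\vol^{k-1}\alpha\approx L^{k-1}$, $\Lip\alpha\approx L$, and $\FV^k_{X(L/4)}(\fclass{\alpha})\gtrsim e^{\omega L}$. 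Because $\supp\alpha\subset X(\eta\log L)$, Theorem~\ref{thm:fundDomain} gives $\Lip(\rho|_{X(\eta\log L)})\lesssim e^{c\eta\log L}=L^{c\eta}$. Set $\beta:=\rho_*\fclass{\alpha}$, a Lipschitz $(k-1)$-cycle in $X_0$; pushing volume forward through $\rho$ yields $\mass\beta\lesssim L^{c\eta(k-1)}\vol^{k-1}\alpha\lesssim L^{p}$ with $p:=(c\eta+1)(k-1)$.

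Next I would connect $\fclass{\alpha}$ to $\beta$ by an explicit homotopy chain. Since $\rho$ is the closest-point projection and $[\Gamma]\subset X_0$, for $x\in X(\eta\log L)$ we have $d(x,\rho(x))=d(x,X_0)\le\dGamma(x)\le\eta\log L$; defining $H(x,s)$ to be the point at parameter $s$ on the geodesic from $x$ to $\rho(x)$ gives a homotopy with $H_0=\id$, $H_1=\rho$, and $\dGamma(H(x,s))\le 2\eta\log L$. Convexity of the metric on the CAT(0) space $X$ bounds $\Lip\bigl(H|_{X(\eta\log L)\times[0,1]}\bigr)\lesssim L^{c\eta}$, so the prism $h:=H_\#(\fclass{\alpha}\times[0,1])$ is a Lipschitz $k$-chain with $\supp h\subset X(2\eta\log L)$, with $\partial h=\fclass{\alpha}-\beta$ (the side faces cancel because $\fclass{\alpha}$ is a cycle), and with $\mass h\lesssim L^{q}$ for a constant $q$.

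Then I would take $L$ large enough that $r_0<L/4$, $2\eta\log L<L/4$, and Lemma~\ref{lem:polyLower} applies, so that $X_0\cup\supp h\subset X(L/4)$. If $\Sigma$ is any Lipschitz $k$-chain in $X_0$ with $\partial\Sigma=\beta$, then $\Sigma+h$ is a $k$-chain in $X(L/4)$ with $\partial(\Sigma+h)=\fclass{\alpha}$, whence $\mass\Sigma+\mass h\ge\FV^k_{X(L/4)}(\fclass{\alpha})\gtrsim e^{\omega L}$; since $\mass h\lesssim L^{q}$, for $L$ large this forces $\mass\Sigma\gtrsim e^{\omega L}$, i.e.\ $\FV^k(\beta)\gtrsim e^{\omega L}$. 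Finally, given $V$ large, I would choose $L$ with $L\approx V^{1/p}$ so that $\mass\beta\le V$; then $\beta$ is an admissible cycle for $\FV^k_{X_0}(V)$, so $\FV^k_{X_0}(V)\ge\FV^k(\beta)\gtrsim e^{\omega L}\gtrsim e^{\omega' V^{\epsilon}}$ with $\epsilon=1/p=1/((c\eta+1)(k-1))$ and $\omega'$ a suitable constant.

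The step I expect to require the most care is the quantitative control of the homotopy chain $h$: one must check that the deformation realizing $\rho$ can be taken to be the geodesic contraction to the nearest point, that it remains within $X(2\eta\log L)$, and that its Lipschitz constant there is controlled by the exponential-Lipschitz bound of Theorem~\ref{thm:fundDomain} evaluated at radius $\eta\log L$ — hence only polynomial in $L$. This polynomial distortion is also the fundamental limitation of the argument: it is the reason the exponent $\epsilon$ here is $1/((c\eta+1)(k-1))$ rather than the conjectured $1/(k-1)$. The optimal exponent is recovered later in Proposition~\ref{prop:sharpLowerBounds} by the bootstrapping argument, which, once polynomial upper bounds on the filling functions of $X_0$ are available, replaces $\rho$ by a retraction with polynomial rather than exponential control.
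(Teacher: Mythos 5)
Your proposal is correct and takes essentially the same route as the paper: push $\alpha$ into $X_0$ using the closest-point retraction $\rho$, bound the resulting mass by the polynomial $L^{(c\eta+1)(k-1)}$ coming from the exponential Lipschitz bound of Theorem~\ref{thm:fundDomain} evaluated at radius $\eta\log L$, and use the geodesic prism (straight-line homotopy) between $\fclass{\alpha}$ and $\rho_\sharp\fclass{\alpha}$, whose support stays in $X(2\eta\log L)$ and whose mass is only polynomial in $L$, to transfer the exponential bound $\FV^k_{X(L/4)}(\fclass{\alpha})\gtrsim e^{\omega L}$ from Lemma~\ref{lem:polyLower}. The only cosmetic difference is that you phrase the retraction at the level of chains and their prism while the paper works with the maps $\tilde\alpha=\rho\circ\alpha$ and $H$; the resulting exponent $\epsilon=1/((c\eta+1)(k-1))$ and the reason it is suboptimal are exactly as in the paper.
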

\begin{proof}
  Let $L>0$ and $\alpha\from S^{k-1}\to X(\eta \log L)$ be as in Lemma~\ref{lem:polyLower} and let $\rho \from X\to \FD$ and $c$ be as in Theorem~\ref{thm:fundDomain}.  Let $\tilde{\alpha}=\rho\circ\alpha\from S^{k-1}\to \FD$.  Since $\alpha(S^{k-1})\subset X(\eta\log L)$, 
  $$\Lip \tilde{\alpha} \lesssim L e^{c \eta\log L} \lesssim L^{c\eta+1},$$
  and thus there is a $C>0$ such that $\vol^{k-1} \tilde{\alpha}\le CL^{(k-1)(c\eta+1)}$.

  We claim that when $L$ is sufficiently large, $\FV^k_{X(L/4)}(\tilde{\alpha})\approx \FV^k_{X(L/4)}(\alpha)$.  It suffices to bound the volume of a homotopy between $\alpha$ and $\tilde{\alpha}$.  Let $H\from S^n(L)\times [0,1]\to X$ be the straight-line homotopy from $\alpha$ to $\tilde{\alpha}=\rho\circ \alpha$.  Every point is moved at most distance $\eta \log L$ by this homotopy, so
  $$\vol^k H\lesssim \eta \log L (\max\{ \Lip \alpha, \Lip \tilde{\alpha}\})^{k-1}\lesssim L^{(k-1)(c\eta+1)+1}.$$
  If $L$ is sufficiently large, then the image of $H$ lies in $X(L/4)$, so
  $$|\FV^k_{X(L/4)}(\tilde{\alpha})-\FV^k_{X(L/4)}(\alpha)|\le \vol^k H\lesssim L^{(k-1)(c\eta+1)+1}$$
  and $H(S^{k-1}\times [0,1])\subset X(2\eta \log L)$.

  When $2\eta \log L<\frac{L}{4}$, we have 
  $$\FV^k_{X(r_0)}(\tilde{\alpha})\ge \FV^k_{X(L/4)}(\alpha)-\vol^k(H)\gtrsim e^{\omega L}.$$
  By Lemma~\ref{lem:polyLower}, 
  $$\FV^k_{X(r_0)}(CL^{(k-1)(c\eta+1)})\gtrsim e^{\omega L}$$
  and thus, if $\epsilon=(k-1)^{-1}(c\eta+1)^{-1}$ and $V$ is
  sufficiently large, then
  $$FV^k_{X_0}(V)\ge\FV^k_{X(r_0)}(V)\gtrsim e^{\omega C^{-\epsilon}V^\epsilon}.$$
\end{proof}

This estimate is superpolynomial, but not sharp; we will find a better estimate in Section~\ref{sec:DiscsSpheres}.  That estimate is based on the same construction as the estimate above; the main difference is that, instead of using the retraction $\rho$ to construct a sphere in $\FD$, we use a Lipschitz extension result.  

\section{Parametrized cones in $X_\infty$}\label{sec:parametrizedCones}
Let $\Delta_{\FD}$ be the infinite-dimensional simplex with vertex set $\FD$ and let $S:=\Delta_{\FD}^{(k-1)}$ be its $(k-1)$--skeleton.  Let $\bz,\bz^*$ be opposite chambers in $E_\infty$, and for any chamber $\bb\in \chx$, let $\pi_\bb\from X_\infty \to \bb$ be the map that sends each chamber of $X_\infty$ to $\bb$ by a marking-preserving isomorphism.  For each chamber $\bb\in \cF^{k-1}(X_\infty)$, let $c_\bb$ be the barycenter of $\bb$; likewise, if $\delta\in \cF(S)$, let $c_\delta$ be the barycenter of $\delta$.

In this section, we will construct a family of maps $\sansP_{\bD}\from S\to X_\infty$ parametrized by an element $\bD=(\bd_{\delta})_{\delta\in \cF(S)}\in (\chxop)^{\cF(S)}$.  The full properties of this map are complicated to state and will appear in Proposition~\ref{prop:generic sansP} and Lemma~\ref{lem:bb}, but we will use it  to construct a family of maps $\Omega_\bD\from S\to X$,
\begin{equation}
  \Omega_\bD(x)=e_{f(x)}(\sansP_{\bD}(x),r(x)),
\end{equation}
where $r\from S\to [0,\infty)$ and $f\from S\to X$.  These are ``based polar coordinates'' in the sense that $\Omega_\bD(x)$ is the endpoint of a ray with origin $f(x)$, direction $\sansP_{\bD}(x)$, and length $r(x)$.  

Our goal is to construct $\sansP_\bD$ and a random variable $\bR$ so that for all $x\in S$, $\Omega_\bR(x)$ is a random variable of the form considered in Corollary~\ref{cor:expMomentsChambers}.  This requires $f$, $r$, and $\pi_\bz\circ \sansP_\bD$ to be independent of $\bD$.  

We will construct $\sansP_\bD$ so that $\pi_\bz\circ \sansP_\bD$ is independent of $\bD$ as long as $\bD$ satisfies the following general-position condition.  For every $\bb,\bc\in \chx$, we choose an apartment $E_{\bb,\bc}$ containing $\bb$ and $\bc$; when $\bb$ and $\bc$ are opposite, this choice is unique.  For every $\bD=(\bd_{\delta})_{\delta\in \cF(S)}$ and every vertex $v\in \cF^0(S)$, let $M_v(\bD)=\bd_v$.  Proceeding inductively, for every simplex $\delta\in \cF^i(S)$ with $i>0$, let
$$M_{\partial \delta}(\bD)=\bigcup_{\delta'\in \cF(\partial\delta)} M_{\delta'}(\bD)\subset X_\infty$$
and let
$$M_{\delta}(\bD)=\bigcup_{\bb \in \cF^{k-1}(M_{\partial \delta}(\bD))} (E_{\bb,\bd_{\delta}})_\infty$$
This is a union of finitely many chambers, and if $\delta'$ is a face of $\delta$, then $M_{\delta'}(\bD)\subset M_{\delta}(\bD)$.  

If $\bD\in (\chxop)^{\cF(S)}$, we say that $\bD$ is \emph{well-opposed} if for all $\delta\in \cF(S)$, $\bd_{\delta}$ is opposite to every chamber of $M_{\partial \delta}(\bD)$.  One can construct well-opposed $\bD$'s inductively, by choosing $\bd_{\delta}$ first for vertices, then edges, triangles, etc.; since $M_{\partial \delta}(\bD)$ has finitely many chambers, the set of choices of $\bd_{\delta}$ that satisfy the condition always has measure zero.

We will prove the following proposition.  
\begin{prop}\label{prop:generic sansP}
  There is a family of maps $\sansP_{\bD}\from S\to X_\infty$ such that for every well-opposed $\bD=(\bd_{\delta})_{\delta\in \cF(S)}\in (\chxop)^{\cF(S)}$, we have:
  \begin{enumerate}
  \item $\sansP_{\bD}(c_\delta)=c_{\bd_\delta}$ for every $\delta \in \cF(S)$.
  \item $\sansP_{\bD}(\delta)\subset M_\delta(\bD)$ for every $\delta\in \cF(S)$.
  \item $\Lip \sansP_{\bD}\lesssim 1$.
  \item If $\bE\in (\chxop)^{\cF(S)}$ is well-opposed, then $\pi_\bz\circ \sansP_{\bD}=\pi_\bz\circ \sansP_{\bE}.$
  \end{enumerate}
  Consequently, the map $z(x)=\pi_\bz(\sansP_{\bD}(x))$ is well-defined.  
\end{prop}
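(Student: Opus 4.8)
The plan is to construct $\sansP_{\bD}$ by induction on the skeleta of $S$, extending over one simplex at a time by a ``coning'' operation inside $X_\infty$, and to organize the construction so that property (4) is automatic rather than proved separately. I will use the following standard facts about the building: $(X_\infty,\Td)$ is CAT(1); every chamber is a spherical simplex identified with the model chamber $\bz$ via its marking map; for opposite chambers $\bb,\bc$ the apartment $(E_{\bb,\bc})_\infty$ is the \emph{unique} apartment through both and is a round sphere $S^{k-1}$; two points at Tits distance $<\pi$ are joined by a unique geodesic; and in an apartment the only point at distance $\pi$ from the barycenter $c_\bd$ of a chamber is the barycenter $c_{\bd^*}$ of the opposite chamber (which has the same ``barycenter type'', so maps to $c_\bz$ under every $\pi_\bb$). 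I will also fix, once and for all on the model apartment with its marked opposite chambers $\bz,\bz^*$, a geodesic $\gamma_0$ from $c_\bz$ to $c_{\bz^*}$ whose interior avoids every chamber barycenter; transporting $\gamma_0$ along the marking isomorphisms gives, for every pair of opposite chambers $\bb,\bc$, a geodesic $\gamma_{\bb,\bc}$ from $c_\bb$ to $c_\bc$ with $\pi_\bz\circ\gamma_{\bb,\bc}=\pi_\bz\circ\gamma_0$ independent of $\bb,\bc$.

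The inductive step goes as follows. Suppose $\sansP_{\bD}$ has been defined on the $(i-1)$--skeleton, is Lipschitz with a uniform constant, satisfies (1)--(2) there, and $\pi_\bz\circ\sansP_{\bD}$ equals a map $z$ not depending on $\bD$. Given $\delta\in\cF^i(S)$, write each point of $\delta$ as $x=(1-t)c_\delta+ty$ with $y\in\partial\delta$, $t\in[0,1]$, and extend $\sansP_{\bD}$ over $\delta$ by coning to $c_{\bd_\delta}$. For $q:=\sansP_{\bD}(y)\in M_{\partial\delta}(\bD)$, well-oppositeness provides a chamber $\bb\ni q$ with $\bd_\delta$ opposite to $\bb$, hence a well-defined apartment $(E_{\bb,\bd_\delta})_\infty$. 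If $q$ is not the antipode of $c_{\bd_\delta}$ in that apartment --- equivalently $z(y)\neq c_\bz$, which is the generic case --- then $\Td(c_{\bd_\delta},q)<\pi$, so there is a unique geodesic $g_y$ in $X_\infty$ from $c_{\bd_\delta}$ to $q$, and it lies in $(E_{\bb,\bd_\delta})_\infty$ for \emph{every} admissible $\bb$, so the definition does not depend on that choice. If $z(y)=c_\bz$ --- in particular at the barycenters $c_{\delta'}$ of proper faces $\delta'\subsetneq\delta$, where $q=c_{\bd_{\delta'}}$ is the antipode of $c_{\bd_\delta}$ --- take instead $g_y:=\gamma_{\bb,\bd_\delta}$. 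Setting $\sansP_{\bD}(x):=g_y\bigl((1-t)\,\Td(c_{\bd_\delta},q)\bigr)$ (with $g_y$ unit speed, possibly after a fixed reparametrization) then gives (1), since $c_\delta$ is the cone point, and (2), since each $g_y$ lies in some $(E_{\bb,\bd_\delta})_\infty\subset M_\delta(\bD)$ while $\sansP_{\bD}(\partial\delta)\subset M_{\partial\delta}(\bD)\subset M_\delta(\bD)$ by induction.

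Property (4) is then essentially automatic: the whole recipe on $\delta$ is transported from one recipe on the model apartments attached to the abstract chamber $\bz$, so $\pi_\bz\circ g_y$ depends only on $z(y)$ and on the fixed $\gamma_0$, not on $\bb$ or on $\bd_\delta$. Hence $\pi_\bz\circ\sansP_{\bD}|_\delta$ is a map determined by $z|_{\partial\delta}$, the reparametrization, and $\gamma_0$, and is independent of $\bD$; this extends $z$ to the $i$--skeleton. Since any two well-opposed tuples $\bD,\bE$ yield the same $z$, we get $\pi_\bz\circ\sansP_{\bD}=\pi_\bz\circ\sansP_{\bE}$, so $z(x)=\pi_\bz(\sansP_{\bD}(x))$ is well-defined.

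The hard part will be continuity and the uniform Lipschitz bound (3) near the antipodal locus: geodesic contraction of a round sphere to a point is not Lipschitz near the antipode, and the geodesic from $c_{\bd_\delta}$ to a target near $c_\bb$ has initial direction that jumps as the target crosses $c_\bb$. I expect to handle this by running the induction on the barycentric subdivision $\mathrm{sd}(S)$ rather than on $S$: on a subdivision simplex $c_{\delta_0}\ast\cdots\ast c_{\delta_i}$ one cones over the face opposite $c_{\delta_i}$, and one chooses the model map $z$ so that $z^{-1}(c_\bz)$ is exactly the vertex set of $\mathrm{sd}(S)$ and chooses each $\gamma_{\bb,\bc}$ to be the limit of the generic geodesics forced by the lower-dimensional construction; then the image of the coned face meets the antipodal locus of $c_{\bd_{\delta_i}}$ only at the vertices $c_{\delta_0},\dots,c_{\delta_{i-1}}$, where $\sansP_{\bD}$ already ``collapses'' because it was built there by coning, and a suitable reparametrization of the cone (slowing it down near $c_{\bd_{\delta_i}}$) keeps the map continuous and Lipschitz. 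Away from the antipodal locus, continuity follows from uniqueness of geodesics in a CAT(1) space together with compatibility on overlapping chambers, and the Lipschitz estimate propagates through the induction because $X_\infty$ is a spherical building of a fixed type, so all its chambers and apartments are isometric and coning inside an apartment multiplies Lipschitz constants by a bounded factor. Reassembling over all simplices of $\mathrm{sd}(S)$ and identifying $|\mathrm{sd}(S)|$ with $|S|$ then produces $\sansP_{\bD}$ satisfying (1)--(4); checking that $z$ can be chosen with the stated preimage and that the cone maps stay uniformly Lipschitz is where the real work lies.
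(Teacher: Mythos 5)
Your construction identifies the right technical obstruction — the failure of geodesic coning in the CAT(1) space $X_\infty$ to be Lipschitz near the antipodal locus of the cone point — but the proposed fix does not address it, and the paper's route is different in precisely the step you flag as ``where the real work lies.''

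The paper does not cone directly on $\sansP_\bD|_{\partial\delta}$. Instead it splits $\delta$ into a collar $C$ (where $t\ge \frac12$) and a core $D$ (where $t<\frac12$), uses a simplicial-approximation lemma (Lemma~\ref{lem:approx Lipschitz}, with the scale subdivision of Lemma~\ref{lem:cube subdiv}) to homotope $\sansP_\bD|_{\partial\delta}$ to a simplicial map $\kappa_\bD$ whose image is in the simplicial structure of $X_\infty$, and only then cones $\kappa_\bD$ to $c_{\bd_\delta}$. The point is that since $\dim S=k-1$, the boundary $\partial\delta$ has dimension at most $k-2$, so $\kappa_\bD(\partial\delta)$ lands in the $(k-2)$--skeleton of $X_\infty$, which contains no chamber barycenter; since within any apartment the only point at distance $\pi$ from $c_{\bd_\delta}$ is a chamber barycenter, the image of $\kappa_\bD$ is uniformly bounded away from the antipodal locus. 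That is what makes the coning uniformly Lipschitz, and it is the step that uses the dimension restriction $n<k$ in an essential way.

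The gap in your version is that reparametrization cannot repair the Lipschitz estimate near the antipode. The blow-up is in the transversal direction, not the radial one: in a round $S^{k-1}$, if $y,y'$ lie at distance $\pi-\epsilon$ from the cone point $c$, then $d(y,y')$ can be of order $\epsilon\cdot\theta$ while the midpoints of the geodesics $[c,y]$ and $[c,y']$ are at distance of order $\theta$, so the coning map at the ``equator'' stretches by a factor $\approx 1/\epsilon$, independently of how you reparametrize the radial variable. Ensuring via barycentric subdivision that $z^{-1}(c_\bz)$ is exactly the vertex set does not remove this stretch, because the estimate degenerates whenever the boundary passes \emph{near} the antipodal set, not only at it, and ``collapsing at the cone vertices'' only controls the radial, not the angular, behavior of $\sansP_\bD$ there. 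Moreover, the claim that the preimage $z^{-1}(c_\bz)$ can be arranged to be exactly the $0$--skeleton of $\mathrm{sd}(S)$ is itself an unverified transversality assertion about the inductively constructed map, not something you get for free. Your treatment of property~(4) via transport to the model apartment is sound and close in spirit to the paper's, but the construction of $\sansP_\bD$ on the core needs the paper's ``push into the $(k-2)$--skeleton before coning'' step, or an equally concrete substitute, rather than a reparametrization.
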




The construction of $\sansP_{\bD}$ is based on geodesic conings.  We start by letting $\sansP_{\bD}(v)=c_{\bd_v}$ for every $v\in \cF^0(S)$, then extend it to $S$ by induction.  Suppose that $\sansP_{\bD}$ is defined on $\partial \delta$.  After a perturbation, we may suppose that $\sansP_{\bD}(\partial \delta)$ avoids the barycenters of the chambers of $X_\infty$.  We can thus define $\sansP_{\bD}$ on $\delta$ by geodesic coning:  we send $c_\delta$ to $c_{\bd_\delta}$, and for every $x\in \partial \delta$, we send the ray from $c_\delta$ to $x$ to the unique minimal geodesic from $c_{\bd_\delta}$ to $\sansP_{\bD}(x)$.  When $\bD$ is well-opposed, the endpoints of this ray are in opposite chambers, so its projection is independent of $\bD$.

To formalize this construction, we need two lemmas.  The first lemma approximates Lipschitz maps by simplicial maps.
\begin{lemma}\label{lem:approx Lipschitz} 
  Let $Y, Z$ be simplicial complexes of dimension at most $n$.  Suppose that each simplex of $Z$ is isometric to a unit Euclidean simplex and that each simplex of $Y$ is $c$--bilipschitz equivalent to an equilateral Euclidean simplex of diameter $r$.  Let $\beta\from Z\to Z^{(0)}$ be a map such that for each $x\in Z$, $\beta(x)$ is the nearest vertex to $x$.

  There is an $\epsilon>0$ depending only on $n$ such that for any map $\alpha\from Y\to Z$ with $\Lip(\alpha)\le c^{-1}r^{-1}\epsilon$, there is a simplicial map $\kappa\from Y\to Z$ such that $\kappa(v)=\beta(\alpha(v))$ for all $v\in Y^{(0)}$.  Furthermore, for all $y\in Y$, the image $\kappa(y)$ is contained in the minimal simplex containing $\alpha(y)$, so there is a straight-line homotopy $h\from Y\times [0,1]\to Z$ such that $h_0=\alpha$, $h_1=\kappa$, and $\Lip h\lesssim c r^{-1}$, where $h_t(y)=h(y,t)$. 
\end{lemma}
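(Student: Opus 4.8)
The plan is to reduce everything to the elementary geometry of barycentric coordinates in $Z$. For a vertex $w\in Z^{(0)}$, let $t_w\from Z\to[0,1]$ be the \emph{global barycentric coordinate function}: on the closed star of $w$ it is the usual barycentric coordinate of a point at $w$, and it is $0$ elsewhere. First I would record three standard facts. (i) Each $t_w$ is continuous (the restrictions to two simplices sharing a face agree, since a vertex not belonging to that face has vanishing coordinate along it) and $L$--Lipschitz for a universal $L\le\sqrt2$, because on each simplex it is affine with gradient of norm at most $\sqrt2$. (ii) A vertex $w$ is a vertex of the carrier $\sigma(x)$ (the minimal simplex containing $x$) if and only if $t_w(x)>0$. (iii) In a regular Euclidean simplex the vertex nearest a point is the one at which the point has the largest barycentric coordinate; hence $\beta(x)\in\sigma(x)^{(0)}$ and, since $\sigma(x)$ has at most $n+1$ vertices whose coordinates at $x$ sum to $1$, we have $t_{\beta(x)}(x)\ge\frac1{n+1}$. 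I would then fix $\epsilon=\epsilon(n)>0$ with $L\epsilon<\frac1{n+1}$.

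The key step is: if $x,y\in Z$ with $d_Z(x,y)\le\epsilon$, then $\beta(x)$ is a vertex of the carrier $\sigma(y)$. Indeed, for $v=\beta(x)$, facts (i) and (iii) give $t_v(y)\ge t_v(x)-L\,d_Z(x,y)\ge\frac1{n+1}-L\epsilon>0$, and then (ii) applies. To use this, fix a simplex $\tau=\langle v_0,\dots,v_d\rangle$ of $Y$; since $\tau$ is $c$--bilipschitz equivalent to an equilateral Euclidean simplex of diameter $r$ it has diameter at most $cr$, so $\diam \alpha(\tau)\le\Lip(\alpha)\cdot cr\le\epsilon$. Applying the key step to the pairs $\alpha(v_i),\alpha(v_0)$ shows that $\beta(\alpha(v_0)),\dots,\beta(\alpha(v_d))$ are all vertices of the single simplex $\sigma(\alpha(v_0))$, hence span a face of $Z$. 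Therefore defining $\kappa$ on $Y^{(0)}$ by $\kappa(v)=\beta(\alpha(v))$ and extending it affinely over each simplex of $Y$ yields a well-defined simplicial map $\kappa\from Y\to Z$.

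For the remaining claims, let $y\in Y$ have carrier $\tau=\langle v_0,\dots,v_d\rangle$ with barycentric coordinates $(t_i)$. Then $\alpha(y)\in\alpha(\tau)$ lies within $\epsilon$ of each $\alpha(v_i)$, so by the key step every $\beta(\alpha(v_i))$ is a vertex of $\sigma(\alpha(y))$, the minimal simplex containing $\alpha(y)$. Hence $\kappa(y)=\sum_i t_i\,\beta(\alpha(v_i))$ is a convex combination of vertices of $\sigma(\alpha(y))$, so $\kappa(y)\in\sigma(\alpha(y))$ --- exactly the assertion that $\kappa(y)$ lies in the minimal simplex containing $\alpha(y)$. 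Consequently the segment from $\alpha(y)$ to $\kappa(y)$ stays inside $\sigma(\alpha(y))\subset Z$, so $h(y,t):=(1-t)\alpha(y)+t\kappa(y)$ is a well-defined, continuous homotopy from $\alpha$ to $\kappa$. For the Lipschitz bound, on each simplex of $Y$ the map $\kappa$ is the composite of a $c$--bilipschitz homeomorphism onto an equilateral $r$--simplex with an affine map of that simplex onto a (possibly degenerate) unit simplex of $Z$, and such an affine map has Lipschitz constant $\lesssim r^{-1}$ (with constant depending only on $n$, via the inradius of the regular unit simplex); hence $\Lip(\kappa)\lesssim cr^{-1}$, and since $\Lip(\alpha)\le c^{-1}r^{-1}\epsilon\lesssim cr^{-1}$ and $h$ moves each point a distance at most the diameter of a simplex of $Z$, one gets $\Lip(h)\lesssim cr^{-1}$ (using $cr^{-1}\ge1$, which holds in the relevant regime $r\lesssim1$; in general one only obtains $\lesssim\max(cr^{-1},1)$).

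The only step carrying real content is the key step above, and its whole force comes from combining the uniform Lipschitz bound on the coordinate functions $t_w$ with the lower bound $t_{\beta(x)}(x)\ge\frac1{n+1}$; once this is in hand the rest is bookkeeping with carriers and convex combinations. The point deserving a little care is fact (iii) --- that the nearest vertex of a point of $Z$ is both the largest-coordinate vertex of its carrier and an element of that carrier --- which I would verify from the identity $|x-v_k|^2=\tfrac{r^2}{2}\bigl(\sum_i t_i^2+1-2t_k\bigr)$ for a point $x=\sum_i t_i v_i$ of a regular edge-$r$ simplex, together with the observation that a vertex outside the carrier is strictly farther.
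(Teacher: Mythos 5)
Your proof is correct and takes essentially the same route as the paper's: both hinge on the Lipschitz continuity of the barycentric coordinate functions together with the lower bound $t_{\beta(x)}(x)\ge\tfrac{1}{n+1}$ to conclude that nearby points in $Z$ have nearest vertices spanning a common simplex, then extend $\kappa$ affinely and take the straight-line homotopy. The only cosmetic difference is that the paper packages your family $(t_w)_w$ as an $\ell_1$--embedding $\iota\from Z\to[0,1]^{\cF^0(Z)}$ and merely asserts your fact (iii) (nearest vertex $=$ largest-coordinate vertex of the carrier), which you verify explicitly.
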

\begin{proof}
  Let $\iota\from Z\to [0,1]^{\cF^0(Z)}\subset \ell_1(\cF^0(Z))$ be the embedding that sends each vertex $v$ to the characteristic function $\one_v$ and sends each simplex $\delta$ to the set of functions
  $$\{f\in [0,1]^{\cF^0(\delta)}\mid \sum_{v\in \cF^0(\delta)}f(v)=1\},$$
  This embedding is Lipschitz on each simplex, with constant $L$ depending on $n$.  The map $\beta$ sends each $x\in Z$ to a maximum of $\iota_x$; since $\iota_x$ is nonzero at all but at most $n+1$ points and $\|\iota_x\|_1=1$, we have 
  \begin{equation}\label{eq:iota maximizes}
    \iota_x(\beta(x))\ge \frac{1}{n+1}.
  \end{equation}

  For all $a,b\in Z$, we have
  $$\iota_b(\beta(a))\ge \iota_a(\beta(a))-Ld(a,b),$$
  so if $d(a,b)<\frac{1}{L(n+1)}$, then $\iota_b(\beta(a))>0$.  That is, $\beta(a)$ and $\beta(b)$ both lie in the support of $\iota_b$, so $\beta(a)$ and $\beta(b)$ are adjacent.

  Let $\epsilon=\frac{1}{2L(n+1)}$, let $\alpha$ be $c^{-1}r^{-1}\epsilon$--Lipschitz, and for each vertex $v\in \cF^0(Y)$, let $\kappa(v)=\beta(\alpha(v))$.  If $v$ and $w$ are adjacent vertices of $Y$, then 
  $$d(\alpha(v),\alpha(w))\le cr\Lip(\alpha)\le \frac{1}{2(n+1)},$$
  so $\kappa(v)=\beta(\alpha(v))$ and $\kappa(w)$ are also adjacent.  We extend $\kappa$ linearly on each simplex to obtain a simplicial map $Y\to Z$.  Since this map sends simplices to simplices, $\Lip(\kappa)\le cr^{-1}$.  

  Let $y\in Y$ and let $\delta$ be a simplex containing $y$.  Let $\Delta$ be the minimal simplex of $Z$ that contains $\alpha(y)$; that is, $\Delta=\langle \supp \iota_{\alpha(y)}\rangle$.  If $v$ is a vertex of $\delta$, then $d(\alpha(y),\alpha(v))<\frac{1}{2L(n+1)}$, so $$\iota_{\alpha(y)}(\kappa(v))\ge \frac{1}{L(n+1)}-L d(\alpha(y),\alpha(v))>0.$$
  Thus $\kappa(v)$ is a vertex of $\Delta$.  Since this holds for every vertex of $\delta$, we have $\kappa(\delta)\subset \Delta$, as desired, and we can define $h$ to be the straight-line homotopy from $\alpha$ to $\kappa$. This satisfies
  $$\Lip h\lesssim \max \{1,\Lip \alpha, \Lip \kappa\}\lesssim 1.$$
\end{proof}

We can combine this lemma with the next lemma to approximate maps with larger Lipschitz constants.
\begin{figure}
  \begin{tikzpicture}[scale=2.5]
    \draw ({sqrt(3)/2},-1/2) -- ({-sqrt(3)/2},-1/2) -- (0,1) -- cycle;
    \foreach \n in {0,.25,...,1.01} {
      \draw (0, {-1/2*\n}) -- ({sqrt(3)/4+sqrt(3)/4*\n}, {1/4 - 3/4*\n}); 
      \draw ({sqrt(3)/4*\n}, 1/4*\n) -- ({sqrt(3)/2*\n}, {-1/2}); 
      \draw (0, {-1/2*\n}) -- ({-sqrt(3)/4-sqrt(3)/4*\n}, {1/4 - 3/4*\n}); 
      \draw ({-sqrt(3)/4*\n}, 1/4*\n) -- ({-sqrt(3)/2*\n}, {-1/2}); 
      \draw ({-sqrt(3)/4*\n}, 1/4*\n) -- ({sqrt(3)/4-sqrt(3)/4*\n}, {1/4 + 3/4*\n}); 
      \draw ({sqrt(3)/4*\n}, 1/4*\n) -- ({-sqrt(3)/4+sqrt(3)/4*\n}, {1/4 + 3/4*\n}); 
    }
  \end{tikzpicture}
  \caption{\label{fig:cube subdiv} Partitioning a triangle into cells that are bilipschitz equivalent to cubes.}
\end{figure}
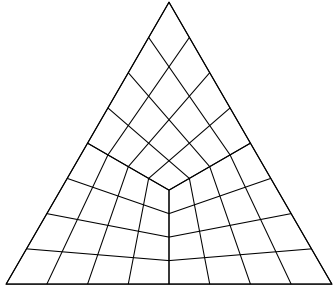
\begin{lemma}\label{lem:cube subdiv}
  For every $d>0$, there is a $c_d$ such that for every $0<r<1$, one can subdivide the standard unit $d$--simplex $\Delta$ into roughly $r^{-d}$ simplices, each of which is $c_d$--bilipschitz equivalent to the equilateral Euclidean simplex of diameter $r$.
\end{lemma}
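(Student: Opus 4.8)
The plan is to reduce to the case $r=\frac1N$ with $N$ an integer (for general $r$ one takes $N=\lceil r^{-1}\rceil$, so that $\frac1N\approx r$), and then to build the subdivision in three stages, following Figure~\ref{fig:cube subdiv}: first cut $\Delta^d$ into $d+1$ ``barycentric cubes'', then cut each of these into $N^d$ small cells with a grid, and finally triangulate each cell into $d!$ simplices. The total count is $(d+1)\cdot N^d\cdot d!=(d+1)!\,N^d\approx r^{-d}$, so the real content is to check that every small cell, hence every final simplex, is $c_d$--bilipschitz to a rescaled standard simplex with $c_d$ depending only on $d$.

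For the first stage I would write points of $\Delta^d$ in barycentric coordinates $t=(t_0,\dots,t_d)$ with $t_j\ge 0$ and $\sum_j t_j=1$, and for $0\le i\le d$ set $R_i=\{t\in\Delta^d\mid t_i\ge t_j \text{ for all }j\}$. These $d+1$ regions are convex, cover $\Delta^d$, have pairwise disjoint interiors, and meet along common faces; for $d=2$ they are exactly the three quadrilaterals in Figure~\ref{fig:cube subdiv}. On $R_i$ one has $t_i\ge\frac{1}{d+1}>0$, so the functions $u_j:=t_j/t_i$ for $j\ne i$ are well defined, take values in $[0,1]$, and give a coordinate map $\psi_i\from[0,1]^d\to R_i$ with inverse $t\mapsto(u_j)_{j\ne i}$; explicitly $t_i=(1+\sum_{j\ne i}u_j)^{-1}$ and $t_j=u_j(1+\sum_{j\ne i}u_j)^{-1}$. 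Since $1+\sum_{j\ne i}u_j\in[1,d+1]$, the maps $\psi_i$ and $\psi_i^{-1}$ are $C^1$ on convex domains with derivatives bounded in terms of $d$ only, so $\psi_i$ is a $c_d$--bilipschitz homeomorphism from $[0,1]^d$ onto $R_i$.

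In the second stage I would transport the standard grid subdivision of $[0,1]^d$ into $N^d$ subcubes of side $\frac1N$ through $\psi_i$; since $\psi_i$ is $c_d$--bilipschitz, each subcube maps to a cell of $R_i$ that is $c_d$--bilipschitz to $\frac1N[0,1]^d$. These subdivisions are compatible across a shared face $R_i\cap R_{i'}=\{t_i=t_{i'}\ge t_j\ \forall j\}$: on that face the coordinate $u_{i'}$ of $R_i$ equals $1$, the remaining coordinates are $u_j=t_j/t_i$, and the corresponding coordinates of $R_{i'}$ are $t_j/t_{i'}=t_j/t_i$, so the two grids agree. Finally, triangulate each small cube by the Kuhn (order-simplex) triangulation, which uses a fixed linear order on the $d$ coordinate directions to produce $d!$ simplices per cube that glue into a simplicial complex; choosing these orders consistently, e.g. from a fixed order on $\{0,\dots,d\}$, makes the triangulations match along the faces $R_i\cap R_{i'}$ as well. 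Each resulting simplex is then the image under a $c_d$--bilipschitz map of a fixed simplex of diameter $\approx\frac1N$, hence $c_d'$--bilipschitz to the equilateral Euclidean simplex of diameter $\frac1N\approx r$, with $c_d'$ depending only on $d$.

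The main obstacle is the bilipschitz bound in the first stage: one must exhibit a model of the barycentric cube $R_i$ whose distortion is controlled uniformly in $N$ and depends only on $d$. The explicit coordinates $u_j=t_j/t_i$ do this precisely because $t_i\ge\frac{1}{d+1}$ on $R_i$, which keeps the Jacobians of $\psi_i$ and $\psi_i^{-1}$ bounded; once this is in hand, the rescaling and the Kuhn triangulation of the cube contribute only further dimension--dependent constants, and the coordinate identities above ensure the pieces genuinely assemble into a subdivision of $\Delta^d$.
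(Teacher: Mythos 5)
Your proof is correct and follows essentially the same approach as the paper: partition $\Delta^d$ into $d+1$ cube-like regions (your $R_i=\{t_i\ge t_j\}$ are exactly the Voronoi cells of the vertices used in the paper), map each bilipschitzly to $[0,1]^d$, transport the $\frac1N$--grid, and triangulate. The only cosmetic difference is that you use the Kuhn triangulation of the small cubes where the paper uses barycentric subdivision; your version also fills in the explicit coordinates and compatibility checks that the paper leaves implicit.
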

\begin{proof}
  Let $n=\lfloor r^{-1}\rfloor$.  Consider the partition of $\Delta$ into $d+1$ Voronoi cells centered at the vertices of $\Delta$.  Each cell is bilipschitz equivalent to the unit cube.  We cellulate $\Delta$ by subdividing each of these cubes into $n^{-d}$ subcubes that are each bilipschitz equivalent to the cube of side length $\frac{1}{n}$ (see Figure~\ref{fig:cube subdiv}).  The barycentric subdivision of the resulting cubes is the desired triangulation.
\end{proof}

\subsection{Constructing $\sansP_{\bD}$}
Let $\bD=(\bd_{\delta})_{\delta\in \cF(S)}$.  For every vertex $v\in \cF^0(S)$, let $\sansP_{\bD}(v)=c_{\bd_\delta}$, in accordance with Proposition~\ref{prop:generic sansP}.(1).  Let $L_0=2\pi$, so that $\Lip \sansP_{\bD}|_{S^{(0)}}\le L_0$.  

Let $i\ge 0$.  Suppose by induction that $\sansP_{\bD}$ is defined on $S^{(i)}$, that $\Lip \sansP_{\bD}|_{S^{(i)}}\le L_i$, and that $\sansP_{\bD}(\lambda)\subset M_\lambda(\bD)$ for every $\lambda\in \cF^i(S)$.  Let $\delta\in \cF^{i+1}(S)$.  We extend $\sansP_{\bD}$ to $\delta$ as follows.  For every $\psi\in \partial \delta$ and $r\in [0,1]$, let $$(r,\psi)_\delta=r \psi+(1-r)c_\delta;$$
these are essentially polar coordinates on $\delta$.  Let 
$C:=\{(r,\psi)_\delta\mid r\ge\frac{1}{2}\}$
be a collar of $\partial \delta$ and let
$D:=\{(r,\psi)_\delta\mid r<\frac{1}{2}\}$
be its complement.

The restriction of $\sansP_{\bD}$ to $C$ will be a homotopy from $\sansP_{\bD} |_{\partial \delta}$ to a simplicial map.  Let $\epsilon>0$ be as in Lemma~\ref{lem:approx Lipschitz} and let $c_{i+1}$ be as in Lemma~\ref{lem:cube subdiv}.  Let $r_{i+1}=\epsilon (L_ic_{i+1})^{-1}$.  Let $\tau=\tau_{r_{i+1}}$ be a subdivision of $\partial \delta$ as in Lemma~\ref{lem:cube subdiv} so that each simplex of $\tau$ is $c_{i+1}$--bilipschitz equivalent to the equilateral Euclidean simplex of diameter $r_{i+1}$.

Choose a total order on the set of vertex types of $X_\infty$ and let $\beta\from X_\infty\to X_\infty^{(0)}$ be the map that sends each $x\in X_\infty$ to the closest vertex, breaking ties according to vertex type.  Since vertex type is preserved by the action of $G$, $\beta$ is $G$--equivariant and $\beta\circ \pi_\bz=\pi_\bz\circ \beta$.  By Lemma~\ref{lem:approx Lipschitz}, there is a simplicial map $\kappa_\bD\from \tau \to X_\infty$ such that $\kappa_\bD(v)=\beta(\sansP_{\bD}(v))$ for all $v\in \tau^{(0)}$ and a homotopy $h_\bD\from \partial \delta\times [0,1]\to X_\infty$ such that for all $\psi\in \partial \delta$ we have $h_\bD(\psi,1)=\sansP_{\bD}(\psi)$ and $h_\bD(\psi,0)=\kappa_\bD(\psi)$.  For all $t\ge \frac{1}{2}$, let 
$$\sansP_{\bD}((t,\psi)_\delta)=h_\bD(\psi,2(1-t)).$$
This extends $\sansP_{\bD}$ to $C$ and satisfies $\Lip \sansP_{\bD}|_C\lesssim_i 1$.  For all $\psi \in \partial \delta$, $h_\bD(\psi\times [0,1])$ is contained in the minimal simplex containing $\sansP_{\bD}(\psi)$.  Since $\sansP_{\bD}(\partial\delta)\subset M_{\partial \delta}(\bD)$, we have $\sansP_{\bD}(C)\subset M_{\partial \delta}(\bD)$.

Since $\dim \delta\le k-1$, $\sansP_{\bD}(\partial D)=\kappa_\bD(\partial\delta)$ lies in the $(k-2)$--skeleton of $X_\infty$ and thus does not contain the barycenter of any chamber.  It follows that for any $\psi\in \partial \delta$, there is a unique minimal geodesic $\gamma_\psi\from [0,1]\to X_\infty$ from $c_{\bd_\delta}$ to $\kappa_\bD(\psi)$.  For $t\le \frac{1}{2}$, let $$\sansP_{\bD}((t,\psi)_\delta)=\gamma_\psi(2t).$$
The length of $\gamma_\psi$ is at most $\pi$, so there is a $L_{i+1}$ independent of $\delta$ so that $\Lip(\sansP_{\bD}|_{\delta})\le L_{i+1}$.  Finally, $\kappa_\bD(\psi)\in M_{\partial \delta}(\bD)$, so there is a chamber $\bb\in \cF^{k-1}(M_{\partial\delta})$ that contains $\kappa_\bD(\psi)$.  Then $\gamma_\psi\subset (E_{\bb,\bd_\delta})_\infty \subset M_\delta$, so $\sansP_{\bD}(\delta)\subset M_\delta$.  This proves Proposition~\ref{prop:generic sansP}.(2)--(3).  

\subsection{Proving Proposition~\ref{prop:generic sansP}.(4)}
Let $\bD=(\bd_\delta)$ and $\bE=(\be_\delta)\in (\chxop)^{\cF(S)}$ be well-opposed.  We claim that for any well-opposed $\bD=(\bd_\delta)\in (\chxop)^{\cF(S)}$, we have 
\begin{equation}\label{eq:sansP independence}
  \pi_{\bz}\circ \sansP_{\bD}=\pi_{\bz}\circ \sansP_{\bE}.
\end{equation}
We proceed by induction.  For every vertex $v\in \cF^0(S)$, we defined $\sansP_\bD(v)=c_{\bd_v}$, so $\pi_{\bz}(\sansP_{\bD}(v))=\pi_{\bz}(\sansP_{\bE}(v))=c_\bz$, and \eqref{eq:sansP independence} holds on $S^{(0)}$.  

Let $i\ge 0$ and suppose by induction that \eqref{eq:sansP independence} holds on $S^{(i)}$.  Let $\delta\in \cF^{i+1}(S)$ and let $C$, $D$, $\tau$, $\beta$, and $\kappa_\bD$ be as in the construction of $\sansP_\bD|_\delta$.  We first consider $C$.  Since $\beta$ is $G$--equivariant, for any vertex $v\in \tau^{(0)}$, 
$$\pi_{\bz}(\kappa_\bD(v)) = \pi_{\bz}(\beta(\sansP_{\bD}(v))) = \beta(\pi_{\bz}(\sansP_{\bD}(v)))  = \beta(\pi_{\bz}(\sansP_{\bE}(v))) = \pi_{\bz}(\kappa_\bE(v)).$$
Then $\pi_{\bz}\circ \kappa_\bD$ and $\pi_{\bz}\circ \kappa_\bE$ are simplicial maps that agree on the vertices of $\tau$, so they are equal.  

Let $\psi\in \partial \delta$ and let $\lambda\in \cF(X_\infty)$ be a simplex containing $\sansP_\bD(\psi)$.  Then $\sansP_\bD(\psi)$ and $\kappa_\bD(\psi)$ both lie in $\lambda$, and the curve $r_\psi:=\sansP_{\bD}(([\frac{1}{2},1],\psi)_\delta)$ is the line segment in $\lambda$ that connects them.  The projection $\pi_\bz(r_\psi)$ is therefore the line segment from $\pi_{\bz}(\sansP_{\bD}(\psi))$ to $\pi_{\bz}(\kappa_\bD(\psi))$.  Both endpoints are independent of $\bD$, so \eqref{eq:sansP independence} holds on $r_\psi$ and thus on $C$. 

Now we consider $D$.  Let $\psi \in \partial \delta$, and let $\gamma\from [0,1]\to X_\infty$ be the minimal geodesic from $c_{\bd_\delta}$ to $\kappa_\bD(\psi)$, so that $\sansP_\bD((t,\psi)_\delta)=\gamma(2t)$ for all $t\le \frac{1}{2}$.  Since $\kappa_\bD(\psi)\in M_{\partial\delta}(\bD)$, there is a chamber $\bb$ of $M_{\partial\delta}(\bD)$ such that $\kappa_\bD(\psi)\in \bb$, and because $\bD$ is well-opposed, $\bd_\delta$ is opposite to $\bb$.  

Let $g\in G$ be an element such that $g\bb=\bz$, $g\bd_\delta=\bz^*$.  Then $g c_{\bd_\delta}=c_{\bz^*}$ and $g \kappa_\bD(\psi)=\pi_\bz(\kappa_\bD(\psi))$.  Let $\lambda \from [0,1]\to X_\infty$ be the minimal geodesic from  $c_{\bz^*}$ to $\pi_\bz(\kappa_\bD(\psi))$.  This is independent of $\bD$, and we have $g\gamma_\bD=\lambda$, so
$$\pi_{\bz}(\sansP_{\bD}((t,\psi)_\delta))=\pi_\bz(\lambda(t))$$
for all $t\le \frac{1}{2}$.  The right-hand side is independent of $\bD$, so equation \eqref{eq:sansP independence} holds on $\gamma$ for every $\psi$, and thus holds on $D$.  This proves Proposition~\ref{prop:generic sansP}.  

\subsection{Perturbations of $\bD$}
As in Proposition~\ref{prop:generic sansP}, let $z\from S\to \bz$, $z(x)=\pi_\bz(\sansP_{\bD}(x))$.  If $\bb_\bD\from S\to \chx$ is a function such that
\begin{equation}
  \label{eq:bb prop}
  \sansP_\bD(x)\subset \bb_\bD(x)\text{\qquad for all $x$ and $\bD$},
\end{equation}
then $\sansP_\bD(x)=\pi_{\bb_\bD(x)}(z(x))$.  The only part of this expression that depends on $\bD$ is $\bb_\bD$, and in this section, we will construct a map $\bb_\bD$ that satisfies \eqref{eq:bb prop} and varies smoothly with $\bD$.

The main idea of the construction is that the flat $E_{\bb,\bc}$ depends smoothly on $\bb$ and $\bc$ as long as $\bb$ and $\bc$ are opposite.  Similarly, for any $\delta\in \cF(S)$, $M_\delta(\bD)$ varies smoothly with $\bD$ as long as $\bD$ is well-opposed.  For all $x\in \delta$, $\sansP_\bD(x)\in M_\delta(\bD)$, and we use the fact that $M_\delta(\bD)$ varies smoothly with $\bD$ to show that $x$ varies smoothly with $\bD$.

We start by formalizing the statement that $E_{\bb,\bc}$ depends smoothly on $\bb$ and $\bc$.  Let $W$ be the Weyl group acting on $E_\infty$.  Let
$$\Theta=\{(\bb,\bc)\in \chx\times \chx\mid \bb\text{ is opposite to }\bc\}.$$
The set of chambers $\chx$ is the Furstenberg boundary of $X$, so it is equipped with the structure of a smooth manifold.  Then $\Theta$ is an open subset of $\chx\times \chx$ whose complement has codimension at least $1$.  (See \cite{Warner}, Prop. 1.2.4.9 or \cite{Helg}, Ch. IX, Cor. 1.8.)  

For $(\bb,\bc)\in \Theta$, let $\pi_{\bb,\bc}\from E_\infty \to (E_{\bb,\bc})_\infty$ be the marking-preserving isomorphism that sends $\bz$ to $\bb$ and $\bz^*$ to $\bc$.  For every $w\in W$, we define $m_w\from \Theta\to \chx$ by  
\begin{equation}\label{eq:defMw}
  m_w(\bb,\bc)=\pi_{\bb,\bc}(w\bz).
\end{equation}
Note that for all $(\bb,\bc)\in \Theta$ and $g\in G$, we have $m_w(g\bb,g\bc)=gm_w(\bb,\bc)$, so $m_w$ is $G$--equivariant with respect to the diagonal action on $\Theta$.  Furthermore, if $w^*$ is the maximal-length element of $W$, then $m_e(\bb,\bc)=\bb$, $m_{w^*}(\bb,\bc)=\bc$, and 
$$E_{\bb,\bc}=\bigcup_{w\in W} m_w(\bb,\bc).$$

\begin{lemma}\label{lem:mw submersion}
  For any $w\in W$, the map $m_w$ is a smooth submersion. 
\end{lemma}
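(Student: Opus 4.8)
The plan is to exploit the homogeneity of $\Theta$ and $\chx$ together with the $G$--equivariance of $m_w$: since $\Theta$ is a single $G$--orbit, it suffices to show that $m_w$ is a submersion at one point, and there it reduces to an elementary statement about parabolic subgroups.

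First I would set up the homogeneous structures. The group $G$ acts transitively on $\Theta$: given $(\bb,\bc)\in\Theta$, transitivity of $G$ on $\chx$ lets us move $\bb$ to $\bz$, and then the transitivity of $NA$ on $\chxop$ recorded in Section~\ref{sec:shadows} lets us move the image of $\bc$ to $\bz^*$ while fixing $\bz$ (as $NA\subset P=\Stab(\bz)$). The stabilizer of $(\bz,\bz^*)$ is $\Stab(\bz)\cap\Stab(\bz^*)=MAN\cap MAN^*=MA$, so $\Theta\cong G/MA$, and similarly $\chx\cong G/P$. Next I would unwind the definition of $m_w$ under these identifications. If $(\bb,\bc)=g\cdot(\bz,\bz^*)$ with $g\in G$, then $g$ restricts to a type-preserving isomorphism $E_\infty\to (E_{\bb,\bc})_\infty$ sending $\bz$ to $\bb$ and $\bz^*$ to $\bc$; since $\pi_{\bz,\bz^*}=\mathrm{id}_{E_\infty}$ and a type-preserving isomorphism of apartments is determined by the image of a single chamber, this forces $\pi_{\bb,\bc}=g|_{E_\infty}$ and hence $m_w(\bb,\bc)=g(w\bz)=g\dot w\bz$, where $\dot w\in N_K(A)$ is a representative of $w$. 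In other words, under $\Theta\cong G/MA$ and $\chx\cong G/P$ the map $m_w$ is $gMA\mapsto g\dot w P$; this is well defined (it does not depend on the choice of $g$ or of $\dot w$, since $\dot w^{-1}MA\dot w\subset P$ and $M\subset P$) and is smooth, being the descent of the smooth map $g\mapsto g\dot w P$.

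Granting this description, the submersion property is immediate. Because $\dot w\in N_K(A)$ normalizes $A$ and normalizes $M=Z_K(A)$, it normalizes $MA=Z_G(A)$, so $MA\subset \dot w P\dot w^{-1}$ and $m_w$ factors as the bundle projection $G/MA\to G/(\dot w P\dot w^{-1})$ --- which is a submersion, with fibre $(\dot w P\dot w^{-1})/MA$ --- followed by the diffeomorphism $G/(\dot w P\dot w^{-1})\xrightarrow{\ \sim\ }G/P$, $g(\dot w P\dot w^{-1})\mapsto g\dot w P$. Equivalently, one can verify surjectivity of the differential directly at the base point: writing $\mathfrak{g}=\mathfrak{n}^*\oplus\mathfrak{m}\oplus\mathfrak{a}\oplus\mathfrak{n}$, the tangent space $T_{eMA}(G/MA)$ is $\mathfrak{n}^*\oplus\mathfrak{n}$ and $T_{\dot w P}(G/P)\cong\mathfrak{g}/\Ad(\dot w)\mathfrak{p}$ with $\Ad(\dot w)\mathfrak{p}\supset\mathfrak{m}\oplus\mathfrak{a}$, while $d(m_w)$ is the map induced by $\mathrm{id}_{\mathfrak{g}}$, which is onto because $\mathfrak{n}^*\oplus\mathfrak{n}$ and $\mathfrak{m}\oplus\mathfrak{a}$ together span $\mathfrak{g}$. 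By equivariance the rank of $dm_w$ is the same at every point of the orbit $\Theta$, so $m_w$ is a submersion everywhere.

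The routine content is all in the second paragraph: checking that the transporting element $g$ agrees with $\pi_{\bb,\bc}$ on $E_\infty$ (which uses that $G$ acts by type-preserving automorphisms of the building and the rigidity of apartment isomorphisms), the stabilizer computation $\Stab(\bz)\cap\Stab(\bz^*)=MA$, and that $\dot w$ normalizes $MA$. I expect the only potential pitfall to be these identifications; once they are in place, the statement is just the general fact that a projection $G/H_1\to G/H_2$ of homogeneous spaces with $H_1\subset H_2$ is a submersion, and there is nothing further to do.
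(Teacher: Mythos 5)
Your proof is correct, and it takes a genuinely different route from the paper's. The paper establishes smoothness by taking the orbit map $f\from G\to\Theta$, $f(g)=(g\bz,g\bz^*)$, checking that $D_ef$ is onto (by observing that $N$ and $N^*$ sweep out the two tangent factors), invoking the implicit function theorem to get a local smooth section of $f$, and then obtaining submersivity from a Sard-theorem trick: the critical-value set of $m_w$ is $G$--invariant and hence, if nonempty, would be all of $\chx$, contradicting Sard. Your argument instead exhibits the homogeneous-space structure explicitly --- computing $\Stab(\bz,\bz^*)=MA$ so that $\Theta\cong G/MA$, and unwinding the definition to see that $m_w$ is the map $gMA\mapsto g\dot w P$ --- and then reads off submersivity directly because this factors through the bundle projection $G/MA\to G/(\dot w P\dot w^{-1})$ followed by the right-translation diffeomorphism to $G/P$. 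Your version buys more structural information (an explicit description of $m_w$ as a composition of a fiber bundle and a diffeomorphism, which in particular identifies the fibers) and needs no Sard; the paper's version is shorter precisely because it sidesteps the bookkeeping of identifying the stabilizer and verifying that the transporting element $g$ coincides with $\pi_{\bb,\bc}$ on the apartment. Both are complete proofs of the lemma.
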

\begin{proof}
  First, we claim that $m_w$ is smooth on a neighborhood of $(\bz,\bz^*)$.  Let $w^*\in W$ be the element of the Weyl group of maximal length, so that $w^* \bz= \bz^*$ and $w^*\bz^*=\bz$.  Let $f\from G\to \Theta$ be the map $f(g)=(g\bz,g\bz^*)$.  We claim that the derivative $D_ef\from T_eG\to T_{(\bz,\bz^*)}\Theta=T_{\bz}\chx\times T_{\bz^*}\chx$ is surjective.  The group $N$ acts transitively on $\chxop$ and stabilizes $\bz$, so $D_ef(\mathfrak{n})=0 \times T_{\bz^*}\chx$; likewise, $N^*=w^* N (w^*)^{-1}$ acts transitively on the set of chambers opposite to $\bz^*$ and stabilizes $\bz^*$, so $D_ef(\mathfrak{n}^*)=T_{\bz}\chx\times 0$, and $D_ef$ is surjective.

  By the implicit function theorem, there is locally a smooth section of $f$; i.e., a neighborhood $U\subset \Theta$ containing $(\bz,\bz^*)$ and a $g\from U\to G$ such that $g(\ba,\bb)(\bz, \bz^*)=(\ba,\bb)$ for all $(\ba,\bb)\in U$.  By the equivariance of $m_w$, we have
  $$m_w(\ba,\bb)=g(\ba,\bb)m_w(\bz,\bz*)=g(\ba,\bb)w\bz,$$
  so $m_w$ is smooth on $U$.  Since $G$ acts transitively on $\Theta$, this implies that $m_w$ is smooth on $\Theta$.

  Since $m_w$ is $G$--equivariant, the set of critical values of $m_w$ is also $G$--equivariant; i.e., it is either empty or all of $\chx$.  By Sard's Theorem, it must be empty, so $m_w$ is a submersion.

\end{proof}

We can use the $m_w$ to describe $M_\delta(\bD)$ and define $\bb_\bD$.

\begin{lemma}\label{lem:bb}
  There is a family of maps $w_\delta\from \inter \delta\to W$ with the following property.  For every well-opposed $\bD$, let $\bb_\bD\from S\to \chxop$ be the map defined inductively by $\bb_\bD(c_v)=\bd_v$ for all $v\in \cF^0(S)$ and, for every $i>0$, every $\delta\in \cF^i(S)$, and every $x=(\psi,t)_\delta\in \inter \delta$, 
  \begin{equation}\label{eq:bb inductive}
    \bb_\bD(x)=m_{w_\delta(x)}(\bb_\bD(\psi), \bd_\delta).
  \end{equation}
  Then:
  \begin{enumerate}
  \item\label{eq:bb contain} 
    For every $x\in S$, we have $\sansP_{\bD}(x)\in \bb_\bD(x)$ and thus $\sansP_{\bD}(x)=\pi_{\bb_\bD(x)}(z(x)).$
  \item \label{it:bb in mw}
    For every $\delta\in \cF(S)$ and every $x\in \delta$, $\bb_\bD(x)\in \cF(M_\delta(\bD))$.
  \item \label{it:bb collar} 
    For every $\delta\in \cF(S)$, $\psi\in \partial \delta$, and $t\in [\frac{1}{2},1]$, we have $\bb_\bD((t,\psi)_\delta)=\bb_\bD(\psi)$.
  \end{enumerate}
\end{lemma}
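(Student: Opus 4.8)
I would define the maps $w_\delta$ by induction on $\dim\delta$, tracking the two‑stage construction of $\sansP_\bD|_\delta$ from Section~\ref{sec:parametrizedCones}. For a vertex $v$ there is nothing to do: $\bb_\bD(c_v)=\bd_v$, and $\sansP_\bD(v)=c_{\bd_v}\in\bd_v$ gives (1), while $\bd_v=M_v(\bD)$ gives (2) and (3) is vacuous; note $\bd_v\in\chxop$ by hypothesis. For the inductive step, recall that on the collar $C=\{(t,\psi)_\delta : t\ge\tfrac12\}$ the map $\sansP_\bD$ is a straight‑line homotopy from $\sansP_\bD|_{\partial\delta}$ to the simplicial approximation $\kappa_\bD$, while on the inner region $D=\{(t,\psi)_\delta : t<\tfrac12\}$ one has $\sansP_\bD((t,\psi)_\delta)=\gamma_\psi(2t)$, the minimal geodesic in $X_\infty$ from $c_{\bd_\delta}$ to $\kappa_\bD(\psi)$. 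By the inductive hypothesis (parts (1)--(2) applied to the face of $\partial\delta$ containing $\psi$), the point $\sansP_\bD(\psi)$, hence also $\kappa_\bD(\psi)$ — which lies in the minimal simplex containing $\sansP_\bD(\psi)$ — belongs to the maximal chamber $\bb:=\bb_\bD(\psi)\in\cF^{k-1}(M_{\partial\delta}(\bD))$, and since $\bD$ is well‑opposed $\bd_\delta$ is opposite $\bb$; thus $\gamma_\psi$ runs in the apartment $(E_{\bb,\bd_\delta})_\infty$ and meets only chambers of the form $m_w(\bb,\bd_\delta)$, $w\in W$.

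On the collar I would simply set $w_\delta(x)=e$ for $t\ge\tfrac12$, so that \eqref{eq:bb inductive} gives $\bb_\bD(x)=m_e(\bb_\bD(\psi),\bd_\delta)=\bb_\bD(\psi)$; this is part (3), and parts (1)--(2) on the collar follow because $\sansP_\bD(x)$ lies in a face of the minimal simplex containing $\sansP_\bD(\psi)$, hence in $\bb_\bD(\psi)$, and $\bb_\bD(\psi)\in\cF(M_{\partial\delta}(\bD))\subseteq\cF(M_\delta(\bD))$. On the inner region, for $x=(t,\psi)_\delta$ with $t<\tfrac12$ I would set $w_\delta(x)$ to be the generic $w\in W$ with $\gamma_\psi(2t)\in m_w(\bb,\bd_\delta)$, with any fixed choice at the measure‑zero parameter set where $\gamma_\psi(2t)$ meets a wall. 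The crux is that this $w$ is independent of $\bD$, and this is precisely where Proposition~\ref{prop:generic sansP}.(4) is used: choosing $g\in G$ with $g\bb=\bz$ and $g\bd_\delta=\bz^*$, the map $g$ restricts on $(E_{\bb,\bd_\delta})_\infty$ to $\pi_{\bb,\bd_\delta}^{-1}$ and agrees with $\pi_\bz$ on $\bb$ (both being the unique marking‑preserving isomorphism), so $g\circ\gamma_\psi$ is the minimal geodesic $\lambda_\psi$ in $E_\infty$ from $c_{\bz^*}$ to $\pi_\bz(\kappa_\bD(\psi))$, and $\pi_\bz(\kappa_\bD(\psi))$ does not depend on $\bD$ (the map $\pi_\bz\circ\kappa_\bD$ is simplicial with values on vertices prescribed by $\pi_\bz\circ\sansP_\bD$, as in the proof of Proposition~\ref{prop:generic sansP}.(4)). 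Hence $w_\delta(x)$ is characterized intrinsically as the $w$ with $\lambda_\psi(2t)\in w\bz$, and by the $G$‑equivariance $m_w(g\bb,g\bc)=g\,m_w(\bb,\bc)$ one gets $\sansP_\bD(x)=g^{-1}(\lambda_\psi(2t))\in g^{-1}(w\bz)=m_w(\bb,\bd_\delta)=\bb_\bD(x)$, giving part (1); part (2) is immediate since $m_w(\bb,\bd_\delta)\in\cF((E_{\bb,\bd_\delta})_\infty)\subseteq\cF(M_\delta(\bD))$.

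What remains is the gluing and the codomain: one checks that the two formulas match at the seam $t=\tfrac12$ (there $\gamma_\psi(1)=\kappa_\bD(\psi)\in\bb_\bD(\psi)=m_e(\bb_\bD(\psi),\bd_\delta)$, consistent with $w_\delta=e$), that near $c_\delta$ the value is the constant $m_{w^*}(\bb_\bD(\psi),\bd_\delta)=\bd_\delta$, independent of the choice of $\psi$ (since $\gamma_\psi(0)=c_{\bd_\delta}\in\inter\bd_\delta$), and that $\bb_\bD$ in fact takes values in $\chxop$: this holds on vertices, propagates along collars unchanged, and on inner regions is maintained using the position of the apartments $E_{\bb,\bd_\delta}$ relative to $\bz$ (this is the one point requiring some care in the tie‑breaking/perturbation entering the definition of $\kappa_\bD$).

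I expect the genuine work to be twofold: (i) verifying that the data assumed in the construction of $\sansP_\bD|_\delta$ — namely that $\bb_\bD(\psi)$ is a maximal chamber of $M_{\partial\delta}(\bD)$ opposite $\bd_\delta$ — is exactly what parts (1), (2) and the codomain claim supply at the previous inductive stage, and (ii) the bookkeeping at the seams and along $\partial D$. The one conceptual input, namely well‑definedness of $w_\delta$ independently of $\bD$, is carried entirely by Proposition~\ref{prop:generic sansP}.(4) together with the equivariance of the $m_w$, so I do not anticipate a serious obstacle beyond organizing the induction carefully.
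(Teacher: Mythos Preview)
Your approach is essentially the same as the paper's. The paper packages the transport to $E_\infty$ via the canonical projection $\Pi\colon X_\infty\to E_\infty$ centered at $\bd_\delta$ (so $\Pi|_{F_\psi}$ plays the role of your $g$), then defines $w_\delta(x)$ uniformly on all of $\inter\delta$ as the \emph{shortest} $w\in W$ with $\Pi(\sansP_\bD(x))\in w\bz$; this single definition automatically gives $w_\delta=e$ on the collar and handles tie-breaking in a canonically $\bD$-independent way, which is slightly cleaner than your ad hoc choice at walls. Your worry about the codomain $\chxop$ is not addressed in the paper's proof of this lemma; it is established only later, under the additional hypothesis $\bD\in\cU$, in Proposition~\ref{prop:basepointChoices}.\ref{it:bc well-opposed}, so you need not (and cannot) resolve it here for arbitrary well-opposed $\bD$.
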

\begin{proof}
  Let $\delta\in \cF(S)$.  Let $\Pi\from X_\infty\to E_\infty$ be the canonical projection of $X_\infty$ centered at $\bd_\delta$, so that $\Pi(\bd_\delta)=\bz^*$ and the restriction of $\Pi$ to any apartment containing $\bd_\delta$ is an isomorphism.  Since every chamber of $M_{\partial\delta}(\bD)$ is opposite to $\bd_\delta$, we have $\Pi(m)=\pi_{\bz}(m)$ for all $m\in M_{\partial\delta}(\bD)$ and thus
  $\Pi(\sansP_\bD(y))=z(y)$ for all $y\in \partial \delta$.  

  Let $\psi\in \partial \delta$.  Let $\gamma_\psi\from [0,1]\to X_\infty$ be the curve $\gamma_\psi(t)=\sansP_{\bD}((t,\psi)_\delta)$.  This is a broken geodesic from $\gamma_\psi(0)=c_{\bd_\delta}$ through $\gamma_\psi(\frac{1}{2})=\kappa_\bD(\psi)$ to $\gamma_\psi(1)=\sansP_{\bD}(\psi)$.  Since $\kappa_\bD(\psi)$ and $\sansP_{\bD}(\psi)$ lie in $M_{\partial\delta}(\bD)$, the composition $\Pi\circ \gamma_\psi$ consists of the broken geodesic from $c_{\bz^*}$ through $\pi_{\bz}(\kappa_\bD(\psi))=z((\frac{1}{2},\psi)_\delta)$ to $z(\psi)$.  In particular, $\Pi\circ \gamma_\psi$ is independent of $\bD$.  Since this holds for every $\psi$, the composition $\Pi\circ \sansP_\bD|_{\delta}$ is independent of $\bD$.

  For all $x\in \delta$, let $w_\delta(x)$ be the shortest element of $W$ such that $\Pi(\sansP_\bD(x)) \in w_\delta(x)\bz$.  This is independent of $\bD$.  Furthermore, if $\sansP_\bD(x)\in M_{\partial\delta}(\bD)$, then $\Pi(\sansP_\bD(x))\in \bz$, so $w_\delta(x)=e$.  In particular, for all $t\in [\frac{1}{2},1)$ and all $\psi\in \partial \delta$, we have $w_\delta((t,\psi)_\delta)=e$ and thus $\bb_\bD((t,\psi)_\delta)=\bb_\bD(\psi)$; this proves property \ref{it:bb collar}.

  Since $\bD$ is well-opposed, property \ref{it:bb in mw} follows by induction from the definitions.  That is, if $\delta\in \cF(S)$, $(\psi,t)_\delta\in \delta$, and $\bb_\bD(\psi)\in \cF(M_{\partial\delta}(\bD))$, then $\bb_\bD((\psi,t)_\delta)$ is a face of $E_{\bb_\bD(\psi),\bd_\delta}$ and thus is a face of $M_{\delta}(\bD).$

  We use a similar induction to prove \eqref{eq:bb contain}.  If $v\in S^{(0)}$, then $\sansP_{\bD}(v)=c_{\bd_v}\in \bd_v=\bb_\bD(v)$.  Let $i>0$ and $\delta\in \cF^i(S)$ and suppose by induction that $\sansP_{\bD}(\psi)\in \bb_\bD(\psi)$ for all $\psi\in \partial\delta$.  Let $F_\psi=(E_{\bb_\bD(\psi), \bd_\delta})_\infty$.  

  Let $t\in [0,1)$ and let $x=(t,\psi)_\delta$.  By \eqref{eq:bb inductive}, we have $\bb_\bD(x)\subset F_\psi$; in fact, $\Pi|_{F_\psi}$ is a marking-preserving isomorphism, so 
  $$\bb_\bD(x)=(\Pi|_{F_\psi})^{-1}(w_\delta(x) \bz).$$
  By induction and Lemma~\ref{lem:approx Lipschitz}, we have $\sansP_{\bD}(\psi),\kappa_\bD(\psi)\in \bb_\bD(\psi)$, so the broken geodesic $\gamma_\psi$ defined above lies in $F_\psi$.  Thus $\sansP_\bD(x)=\gamma_\psi(t) \in F_\psi$, and
  $$\sansP_\bD(x)=(\Pi|_{F_\psi})^{-1}(\Pi(\sansP_\bD(x)))\in (\Pi|_{F_\psi})^{-1}(w_\delta(x) \bz)=\bb_\bD(x),$$
  as desired.
\end{proof}

\section{Random cones in $X_\infty$}\label{sec:random cones}
In this section, we will construct a random variable $\bR\in (\chxop)^{\cF(S)}$ and consider the random map $\sansP_{\bR}$.  Our first step is to construct the subset of $(\chxop)^{\cF(S)}$ on which $\bR$ is supported.  We will prove the following proposition.  For each $\delta\in \cF(S)$, define the \emph{size} of $\delta$ as
\begin{equation}\label{eq:defBarS}
  \sigma(\delta):=\diam_X \cF^0(\delta)+1.
\end{equation}
\begin{prop}\label{prop:basepointChoices}
  There is an $\epsilon>0$ and a collection of points $g_\delta\in NA$, $x_{\delta}:=[g_\delta]\in X$ indexed by $\delta\in \cF(S)$ such that
  \begin{enumerate}
  \item  \label{it:bc well-opposed}
    If $\bd_\delta\in \cSc_{x_\delta}(\epsilon)$ for all $\delta\in \cF(S)$, then $\bD=(\bd_\delta)$ is well-opposed and $\cF^{k-1}(M_{\delta}(\bD))\subset \cSc_{x_\delta}$ for every $\delta\in \cF(S)$.  Thus, for all $x\in \delta$, we have $\bb_\bD(x)\in \cSc_{x_\delta}$
  \item \label{it:bc opposition}
    Let $\delta', \delta\in \cF(S)$ be such that $\delta'\subsetneq \delta$.  If $\bb\in \cSc_{x_{\delta'}}$ and $\bc\in \cSc_{x_\delta}(\epsilon)$, then $\bb$ is opposite to $\bc$ and 
    $$\cF^{k-1}((E_{\bb,\bc})_\infty)\subset \cSc_{x_\delta}.$$
  \item \label{it:bc vertices}
    For each vertex $v\in \cF^0(S)=\FD$, $x_v=v$.
  \item \label{it:bc diameters} 
    For all $\delta\in \cF(S)$ and all $v\in \cF^0(\delta)$, we have $d(x_\delta, v)\lesssim \sigma(\delta)$.  Consequently, if $\delta'\subset \delta$, then
    $$d(x_{\delta'},x_{\delta}) \lesssim \sigma(\delta),$$
    and 
    \begin{equation}\label{eq:height of xDelta}
      d_{[\Gamma]}(x_\delta)\lesssim d_{[\Gamma]}(v)+ \sigma(\delta)\lesssim \sigma(\delta).
    \end{equation}
  \item \label{it:bc faces}
    If $\delta'\subset \delta$, then $\cSc_{x_{\delta'}}\subset \cSc_{x_\delta}.$
  \end{enumerate}
\end{prop}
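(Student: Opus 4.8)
The plan is to construct the elements $g_\delta\in NA$ — equivalently the base points $x_\delta=[g_\delta]$ — by induction on $i=\dim\delta$, verifying all five properties along the way. For vertices we are forced to take $x_v:=v$ for $v\in\cF^0(S)=\FD$; since $NA$ acts simply transitively on $X$ this determines $g_v\in NA$, and \ref{it:bc vertices}, \ref{it:bc diameters} hold trivially (here $\sigma(v)=1$, and $d_{[\Gamma]}(v)\le r_0\lesssim 1$ because $\FD\subseteq X(r_0)$), while \ref{it:bc well-opposed}, \ref{it:bc opposition}, \ref{it:bc faces} are vacuous at vertices. For the inductive step I would fix once and for all a regular direction $\sigma\in\inter\bz$, say the barycenter $c_\bz$, and write $\gamma_y$ for the unit-speed geodesic ray from $y$ asymptotic to $\sigma$; because $NA\subseteq P=\Stab(\sigma)$, if $y=[g]$ with $g\in NA$ then $\gamma_y(t)=[ga_t]$ for a fixed ray $a_t\subseteq A$, so the construction never leaves $NA$ and the constants in Lemmas \ref{lem:compare}, \ref{lem:dil}, \ref{lem:largeShadows}, \ref{lem:convex hull shadows} stay fixed. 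Given $\delta$ of dimension $i$ and all $x_{\delta'}$ for $\delta'\subsetneq\delta$, pick an auxiliary vertex $y_\delta$ of $\delta$, so that $d(y_\delta,v)<\sigma(\delta)$ for every $v\in\cF^0(\delta)$, and so that $d(y_\delta,x_{\delta'})\lesssim\sigma(\delta)$ for every proper face $\delta'$ by the inductive case of \ref{it:bc diameters}. Then set $x_\delta:=\gamma_{y_\delta}(t_\delta)$ with $t_\delta:=\Lambda_i\,\sigma(\delta)$, where $\Lambda_i=\Lambda_i(G,\Gamma,k,i)$ is a large constant fixed at the end of the step. Property \ref{it:bc diameters} is then immediate from $d(x_\delta,v)\le t_\delta+d(y_\delta,v)\lesssim\sigma(\delta)$ and the triangle inequality (and $d_{[\Gamma]}(x_\delta)\le d_{[\Gamma]}(v)+d(v,x_\delta)$ for any vertex $v$ of $\delta$); and property \ref{it:bc faces} follows from Lemma \ref{lem:largeShadows} applied to the ray $\gamma_{y_\delta}$ and the point $x_{\delta'}$: once $\Lambda_i$ is large enough relative to the constant in that lemma, $t_\delta\ge E(d(y_\delta,x_{\delta'})+1)$ and hence $\cSc_{x_{\delta'}}\subseteq\cSc_{\gamma_{y_\delta}(t_\delta)}=\cSc_{x_\delta}$ for every proper face $\delta'$.

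The substance is in the opposition property \ref{it:bc opposition}, and I expect this to be the main obstacle. Fix a small $\epsilon>0$ at the very start of the construction. The claim is that for $\delta'\subsetneq\delta$, every $\bb\in\cSc_{x_{\delta'}}$ is opposite to every $\bc\in\cSc_{x_\delta}(\epsilon)$, and $\cF^{k-1}((E_{\bb,\bc})_\infty)\subseteq\cSc_{x_\delta}$. The ingredients are: Lemma \ref{lem:compare}, which identifies $\cSc_x(r)$, up to bounded distortion of the radius, with the set of chambers $\bc$ whose flat $E_{\bc,\bz}$ passes within $r$ of $x$; Lemma \ref{lem:dil}, which says moving the base point a distance $t$ towards $\sigma$ rescales shadow radii by $De^{-\kappa t}$; and the bound $d(x_{\delta'},x_\delta)\lesssim\sigma(\delta)$. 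Using these, and making $\Lambda_i$ large, one arranges — after transporting the configuration into a normal form by an element of $NA$ — that $\cSc_{x_{\delta'}}$ and $\cSc_{x_\delta}(\epsilon)$ occupy geometrically complementary positions relative to $\bz$. Opposition of $\bb$ and $\bc$ and the containment of the full apartment $(E_{\bb,\bc})_\infty$ in $\cSc_{x_\delta}$ then follow from the openness of the locus $\Theta$ of opposite pairs, whose complement has positive codimension (recorded just before Lemma \ref{lem:mw submersion}), from the continuity of the chamber maps $m_w$ of Lemma \ref{lem:mw submersion}, and from the coarse convexity of the domination sets $P_x$ in Lemma \ref{lem:convex hull shadows}, which is what keeps the apartment spanned by $\bb$ and $\bc$ inside the scale-$1$ shadow. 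One fixes $\epsilon$ small enough, once and for all, that these genericity statements have room to spare.

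Finally, property \ref{it:bc well-opposed} is formal, by the same induction on $i$. By the inductive case of \ref{it:bc well-opposed}, every chamber of $M_{\partial\delta}(\bD)=\bigcup_{\delta'\in\cF(\partial\delta)}M_{\delta'}(\bD)$ lies in $\cSc_{x_{\delta'}}$ for some proper face $\delta'$ of $\delta$. Applying \ref{it:bc opposition} with $\bc=\bd_\delta\in\cSc_{x_\delta}(\epsilon)$ shows $\bd_\delta$ is opposite to every such chamber — so $\bD$ is well-opposed at $\delta$ — and gives $\cF^{k-1}((E_{\bb,\bd_\delta})_\infty)\subseteq\cSc_{x_\delta}$ for each such $\bb$; taking the union over $\bb$ yields $\cF^{k-1}(M_\delta(\bD))\subseteq\cSc_{x_\delta}$. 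The last assertion of \ref{it:bc well-opposed}, that $\bb_\bD(x)\in\cSc_{x_\delta}$ for all $x\in\delta$, then follows from Lemma \ref{lem:bb}(\ref{it:bb in mw}) together with this containment. This closes the induction, the opposition statement \ref{it:bc opposition} being the step that requires real work.
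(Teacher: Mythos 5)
Your overall strategy is right --- induct on dimension, keep $x_v=v$ for vertices, and for higher cells push $x_\delta$ far along the geodesic ray from a chosen vertex toward the barycenter of $\bz$, using Lemma~\ref{lem:largeShadows} to get the shadow nesting of property~\ref{it:bc faces} and using the trivial estimates for properties~\ref{it:bc vertices} and~\ref{it:bc diameters}. Your reduction of property~\ref{it:bc well-opposed} to property~\ref{it:bc opposition} by a second induction is also exactly how the paper argues.

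But your treatment of property~\ref{it:bc opposition} has a genuine gap, and in fact the construction you describe does not satisfy it. You set $x_\delta=\gamma_{y_\delta}(t_\delta)=[g_{y_\delta}\alpha^{t_\delta}]$. Since $\alpha^t$ stabilizes $\bz^*$, the chamber $g_{y_\delta}\bz^*$ lies in $\cSc_{x_\delta}(r)$ for every $r>0$; in particular $g_{y_\delta}\bz^*\in\cSc_{x_\delta}(\epsilon)$. Likewise, $g_{y_{\delta'}}\bz^*\in\cSc_{x_{\delta'}}$. Because $d(y_\delta,y_{\delta'})\lesssim\sigma(\delta)$ and $t_\delta$ is a large multiple of $\sigma(\delta)$, Lemma~\ref{lem:dil} forces $g_{y_{\delta'}}\bz^*\in\cSc_{x_\delta}(\epsilon)$ as well. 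So $\bb=\bc=g_{y_{\delta'}}\bz^*$ is a legitimate choice with $\bb\in\cSc_{x_{\delta'}}$ and $\bc\in\cSc_{x_\delta}(\epsilon)$, yet a chamber is never opposite to itself. The two shadows are not ``geometrically complementary relative to $\bz$''; they are concentric around (translates of) $\bz^*$. Openness of $\Theta$ and continuity of the $m_w$ are not enough to rescue this --- they would only help if you had already arranged the shadows inside a product that sits in $\Theta$, which you haven't. (Also, Lemma~\ref{lem:convex hull shadows}, which you cite, plays no role here; it is used later in Proposition~\ref{prop:OmegaBounds}.)

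The missing ingredient is the conjugating element $n_0\in N$ of Lemma~\ref{lem:pairOfShadows}. That lemma produces $n_0$ and $\epsilon$ so that $\cSc_{[n_0]}(\epsilon)\subset\cSc_{[e]}$, the product $\cSc_{[e]}(\epsilon)\times\cSc_{[n_0]}(\epsilon)$ lies in $\Theta$, and all the $m_w$ send it into $\cSc_{[e]}$. The paper's construction is $g_\delta:=g_{v(\delta)}\alpha^{C_d\sigma(\delta)}n_0^{-1}$, i.e.\ your $g_{y_\delta}\alpha^{t_\delta}$ post-composed with $n_0^{-1}$. This shift is what produces the complementary positioning: one then proves the two inclusions $\cSc_{x_{\delta'}}\subset\cSc_{[g_\delta n_0]}(\epsilon)\subset\cSc_{x_\delta}$, notes that $g_\delta n_0=g_{v(\delta)}\alpha^{C_d\sigma(\delta)}$, translates everything by $g_\delta^{-1}$, and applies Lemma~\ref{lem:pairOfShadows} directly. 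Without the $n_0^{-1}$ factor in the definition of $g_\delta$, property~\ref{it:bc opposition} --- and with it property~\ref{it:bc well-opposed} --- fails, so the induction does not close.
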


Let $\Theta$ and $m_w\from \Theta\to \chx$ be as in the previous section.  In order to prove Proposition~\ref{prop:basepointChoices}, we will need the following lemma, which describes the behavior of $m_w$ on $\cSc_x(\epsilon)\times \cSc_y(\epsilon)$ for a particular pair of points $x$ and $y$.
\begin{lemma}\label{lem:pairOfShadows}
  There is an $n_0\in N$ and an $\epsilon>0$ such that $\cSc_{[n_0]}(\epsilon)\subset \cSc_{[e]}$, $\cSc_{[e]}(\epsilon)\times\cSc_{[n_0]}(\epsilon)\subset \Theta$, and for all $w\in W$,
  \begin{equation}\label{eq:mwDefined}
    m_w(\cSc_{[n_0]}(\epsilon)\times \cSc_{[e]}(\epsilon)) \subset \cSc_{[e]}.
  \end{equation}
  Furthermore, there is a $D>0$ such that if $\phi\in C^\infty(\cSc_{[e]}(\epsilon))$ and $\psi \in C^\infty(\cSc_{[n_0]}(\epsilon))$, then
  \begin{equation}\label{eq:mwSobolev}
    \|(m_w)_*(\phi\times \psi)\|_{e}\le D  \|\phi\|_{e} \|\psi\|_{e}.
  \end{equation}
\end{lemma}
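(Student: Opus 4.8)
\subsection*{Proof proposal}

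The plan is to construct a single chamber $\iota^{-1}(n_0)$ with the right combinatorial and metric properties and then fatten it to shadows by an openness argument. The naive attempt of taking $n_0$ close to the identity of $N$ fails: then $n_0\bz^*$ degenerates to $\bz^*$, which is not opposite $\bz^*$, so $(n_0\bz^*,\bz^*)$ leaves $\Theta$. The key point is that the two \emph{opposition} conditions we need --- ``$n_0\bz^*$ is opposite $\bz^*$'' and ``every chamber of the apartment $(E_{n_0\bz^*,\bz^*})_\infty$ is opposite $\bz$'' --- are invariant under conjugating $n_0$ by an element of $A$, since $A$ fixes both $\bz$ and $\bz^*$ and each $m_w$ is $G$--equivariant; whereas conjugation by a deep element of $A$ pointing toward $\bz^*$ contracts $N$ and so drives all the relevant values of $d_N$ below the threshold $1$. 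So I would first fix $n_1\in N$ such that $n_1\bz^*$ is opposite $\bz^*$ and such that every chamber of $(E_{n_1\bz^*,\bz^*})_\infty=\bigcup_{w\in W}m_w(n_1\bz^*,\bz^*)$ is opposite $\bz$; such $n_1$ exist because both requirements hold for generic $n_1$ (the locus of chambers opposite a fixed chamber is open and dense, and an apartment through $\bz^*$ is generically in general position with respect to $\bz$), and in concrete cases such as $\SL_n$ one can write one down explicitly in root coordinates.

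Next, choose $a\in A$ deep toward $\bz^*$, so that conjugation by $a$ contracts $N$, i.e.\ $d_N(a\,n\,a^{-1})\le\lambda(a)\,d_N(n)$ for all $n\in N$ with $\lambda(a)\to 0$ as $a$ tends to $\bz^*$, and set $n_0:=a\,n_1\,a^{-1}\in N$. Using $a\bz=\bz$, $a\bz^*=\bz^*$ and the $G$--equivariance of $m_w$, one gets $m_w(n_0\bz^*,\bz^*)=a\cdot m_w(n_1\bz^*,\bz^*)$ for every $w\in W$; hence $(n_0\bz^*,\bz^*)\in\Theta$ and every chamber of $(E_{n_0\bz^*,\bz^*})_\infty$ is again opposite $\bz$. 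Moreover, if a chamber of $(E_{n_1\bz^*,\bz^*})_\infty$ equals $\iota^{-1}(\tilde n)$ for some $\tilde n\in N$, then the corresponding chamber of $(E_{n_0\bz^*,\bz^*})_\infty$ is $\iota^{-1}(a\tilde n a^{-1})$, whose $d_N$--value is at most $\lambda(a)\,d_N(\tilde n)$. Choosing $a$ deep enough, we may therefore assume $d_N(n_0)<\tfrac12$ and $d_N(\iota(m_w(n_0\bz^*,\bz^*)))<\tfrac12$ for all of the finitely many $w\in W$.

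Now pass to shadows. Since $\Theta$ is open and the shadows $\cSc_{[n_0]}(\epsilon)$, $\cSc_{[e]}(\epsilon)$ shrink to the single chambers $\iota^{-1}(n_0)=n_0\bz^*$, $\iota^{-1}(e)=\bz^*$ as $\epsilon\to 0$, for $\epsilon$ small both $\cSc_{[n_0]}(\epsilon)\times\cSc_{[e]}(\epsilon)$ and $\cSc_{[e]}(\epsilon)\times\cSc_{[n_0]}(\epsilon)$ lie in $\Theta$. The set $\cSc_{[e]}=\cSc_{[e]}(1)$ is open in $\chx$ (it is the image under the open embedding $\iota^{-1}\from N\to\chx$ of the open $d_N$--ball of radius $1$) and, by the previous paragraph, contains each $m_w(n_0\bz^*,\bz^*)$; since each $m_w$ is smooth on $\Theta$ (Lemma~\ref{lem:mw submersion}) and $W$ is finite, shrinking $\epsilon$ further gives $m_w(\cSc_{[n_0]}(\epsilon)\times\cSc_{[e]}(\epsilon))\subset\cSc_{[e]}$ for all $w$. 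Taking $w=\id$, where $m_w$ is the projection to the first factor, together with the Baker--Campbell--Hausdorff estimate $d_N(n_0 n')\le\|\log n_0\|+C_{n_0}\|\log n'\|$ for bounded $n'$, also yields $\cSc_{[n_0]}(\epsilon)\subset\cSc_{[e]}$ after one more shrinking of $\epsilon$. This proves the first three assertions.

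For the Sobolev estimate, identify $\chxop$ with $N$ via $\iota$ and recall $\|\cdot\|_e=\|\cdot\|_{l,2}$ on $C^\infty(N)$. Given $\phi\in C^\infty(\cSc_{[e]}(\epsilon))$ and $\psi\in C^\infty(\cSc_{[n_0]}(\epsilon))$, Lemma~\ref{lem:SobolevProps}(3) gives $\|\phi\times\psi\|_{l,2}\lesssim\|\phi\|_{l,2}\|\psi\|_{l,2}=\|\phi\|_e\|\psi\|_e$, where the last equality uses the left--$N$--invariance of $\|\cdot\|_{l,2}$ for the factor supported near $n_0$. Since $m_w$ is a submersion (Lemma~\ref{lem:mw submersion}), cover the compact set $\overline{\cSc_{[n_0]}(\epsilon)}\times\overline{\cSc_{[e]}(\epsilon)}$ by finitely many balls on which Lemma~\ref{lem:SobolevProps}(4) applies, use a subordinate smooth partition of unity, and add the resulting estimates to get $\|(m_w)_*(\phi\times\psi)\|_{l,2}\lesssim\|\phi\times\psi\|_{l,2}$. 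Because $(m_w)_*(\phi\times\psi)$ is supported in $\cSc_{[e]}\subset\chxop\cong N$, its $\|\cdot\|_e$--norm coincides with its $\|\cdot\|_{l,2}$--norm, and combining the two displays produces a constant $D$, uniform over the finitely many $w\in W$, with $\|(m_w)_*(\phi\times\psi)\|_e\le D\|\phi\|_e\|\psi\|_e$. The one genuinely delicate point is the opening observation --- that $n_0$ must be a deep $A$--conjugate of a generic $n_1$ rather than something near the identity --- after which everything reduces to openness of $\Theta$, continuity and submersivity of the $m_w$, and the calculus of Lemma~\ref{lem:SobolevProps}.
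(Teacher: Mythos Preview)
Your proposal is correct and follows essentially the same route as the paper: pick a generic element of $N$ so that the apartment through $\bz^*$ and its translate has every chamber opposite $\bz$, conjugate by a deep element of $A$ (your $a$ toward $\bz^*$ is the paper's $\alpha^{-1}$ with $\alpha$ toward $\bz$) to drag all the relevant $d_N$--values below the threshold, then use openness of $\Theta$ and continuity of the $m_w$ to fatten to shadows, and finally invoke the submersion property with Lemma~\ref{lem:SobolevProps} for the Sobolev bound. The only cosmetic differences are that the paper first picks two generic opposite chambers $\bb_1,\bb_2$ and translates to normalize one of them to $\bz^*$, and that it applies Lemma~\ref{lem:SobolevProps}(4) directly on a single small ball (shrinking $\epsilon$) rather than via a partition of unity on the compact closure; your remark about left--$N$--invariance in the Sobolev step is harmless but unnecessary, since $\|\cdot\|_e=\|\cdot\|_{l,2}$ regardless of support.
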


\begin{proof}
  By Lemma~\ref{lem:mw submersion} and the fact that a generic chamber is opposite to $\bz$, there are two opposite chambers $\bb_1$ and $\bb_2$ such that every chamber of $(E_{\bb_1,\bb_2})_\infty$ is opposite to $\bz$.  For every $n\in N$, let $\bc_n=n\bz^*$, so that $\cSc_{[n]}(\epsilon)$ is a neighborhood of $\bc_n$.  Let $n_1,n_2$ be such that $\bb_i=\bc_{n_i}$, let $n=n_1^{-1}n_2$, and let $F=E_{\bc_e,\bc_n}$.  Then $F_\infty=n_1^{-1}(E_{\bb_1,\bb_2})_\infty$, so every chamber of $F$ is opposite to $\bz$.  

  By Lemma~\ref{lem:dil}, there is an element $\alpha\in A$ (indeed, a vector in the direction of the barycenter of $\bz$) such that $\cF^{k-1}(F_\infty)\subset \cSc_{[\alpha]}(\frac{1}{2})$ and thus $\cF^{k-1}(\alpha^{-1}F_\infty)\subset \cSc_{e}(\frac{1}{2})$.  We have
  $$\alpha^{-1}F=E_{\alpha^{-1}\bc_e, \alpha^{-1}\bc_n}=E_{\bc_e,\bc_{\alpha^{-1}n\alpha}},$$
  and we let $n_0=\alpha^{-1} n\alpha$.  

  For any $w\in W$, 
  $$m_w(\bc_{n_0},\bc_{e})\in \cF^{k-1}(\alpha^{-1}F_\infty)\subset \cSc_{[e]}(\frac{1}{2}).$$
  In particular, $\bc_{n_0}\in \cSc_{[e]}(\frac{1}{2}),$ so $\cSc_{[n_0]}(\frac{1}{2})\subset \cSc_{[e]}$.  By Lemma~\ref{lem:pairOfShadows}, $\cSc_{[e]}(\epsilon)\times\cSc_{[n_0]}(\epsilon)\subset \Theta$ and \eqref{eq:mwDefined} holds when $\epsilon$ is sufficiently small.  Since $m_w$ is a submersion, Lemma~\ref{lem:SobolevProps} implies that \eqref{eq:mwSobolev} holds for sufficiently small $\epsilon$.
\end{proof}

Now we prove Proposition~\ref{prop:basepointChoices}.
\begin{proof}[{Proof of Proposition~\ref{prop:basepointChoices}}]
  Let $n_0\in N$ and $\epsilon>0$ be as in Lemma~\ref{lem:pairOfShadows}.  Let $\alpha\in A$ be the unit vector in the direction of the barycenter of $\bz$ and let $\alpha^t\subset A$ be the one-parameter subgroup generated by $\alpha$.  By Lemma~\ref{lem:largeShadows}, there is a $C>1$ such that for all $g, h\in NA$ and all $t\ge C(d(g,h)+1)$,
  \begin{equation}\label{eq:alphaShrink}
    \cSc_{[h]}\subset \cSc_{[g\alpha^t]}(\epsilon).
  \end{equation}

  Let $C_0=C$, and for $i\ge 1$, let $C_i=C(C_{i-1}+3)$.  For each vertex $v\in \cF^0(\delta)$, let $g_v\in NA$ be an element such that $[g_v]=v$.  For each $\delta\in \cF(S)$ with $\dim \delta\ge 1$, choose a vertex $v(\delta)\in \cF^0(\delta)$ and, for $d:=\dim\delta$, set
  $$g_\delta:=g_{v(\delta)}\alpha^{C_d\sigma(\delta)}n_0^{-1}.$$
  We claim that this choice of $g_\delta$ satisfies the desired properties.  By definition, it satisfies property \ref{it:bc vertices}.

  We have $d(g_{v(\delta)},g_\delta)\le C_d\sigma(\delta)+1\lesssim \sigma(\delta)$, so for an arbitrary vertex $v$ of $\delta$
  $$d(g_v,g_{\delta})\lesssim d(g_v,g_{v(\delta)})+ d(g_{v(\delta)},g_{\delta})\lesssim \sigma(\delta).$$
  Thus property \ref{it:bc diameters} holds.  

  To prove the remaining properties, we will show that for all $\delta'\subsetneq \delta$, 
  \begin{equation}\label{eq:bc faces intermediate}
    \cSc_{x_{\delta'}}\subset \cSc_{[g_{\delta}n_0]}(\epsilon)\subset \cSc_{x_{\delta}};
  \end{equation}
  Let $d=\dim \delta$, $d'=\dim \delta'$.  Then
  $$d(g_{v(\delta)},g_{\delta'})\le d(g_{v(\delta)},g_{v(\delta')})+C_{d'} \sigma(\delta')+1\le (C_{d'}+2)\sigma(\delta).$$
  Since $d>d'$, we have
  $$C(d(g_{v(\delta)},g_{\delta'})+1)\le C (C_{d'}+3)\sigma(\delta)\le C_d \sigma(\delta);$$
  and, as $g_{v(\delta)}\alpha^{C_d\sigma(\delta)}=g_{\delta}n_0$, \eqref{eq:alphaShrink} implies $\cSc_{x_{\delta'}}\subset \cSc_{[g_{\delta}n_0]}(\epsilon)$.  The second inclusion in \eqref{eq:bc faces intermediate} follows from the fact that $\cSc_{[n_0]}(\epsilon)\subset \cSc_{[e]}$.  This proves property \ref{it:bc faces}.

  We prove properties \ref{it:bc well-opposed} and \ref{it:bc opposition} using \eqref{eq:bc faces intermediate}.  Let $\delta', \delta\in \cF(S)$ be such that $\delta'\subsetneq \delta$ and let $\bb\in \cSc_{x_{\delta'}}$ and $\bc\in \cSc_{x_\delta}(\epsilon)=\cSc_{[g_\delta]}(\epsilon)$.  By \eqref{eq:bc faces intermediate}, we have $\bb\in \cSc_{[g_{\delta}n_0]}(\epsilon)$, so by Lemma~\ref{lem:pairOfShadows}, $m_w(\bb,\bc)\in \cSc_{x_\delta}$ for all $w\in W$; that is, $\cF^{k-1}((E_{\bb,\bc})_\infty)\subset \cSc_{x_\delta}$.  This proves property \ref{it:bc opposition}.

  Property \ref{it:bc well-opposed} follows by applying property \ref{it:bc opposition} inductively.  Suppose that $\bd_\delta\in \cSc_{x_\delta}(\epsilon)$ for all $\delta\in \cF(S)$.  We claim that $\cF^{k-1}(M_{\delta}(\bD))\subset \cSc_{x_\delta}$ for every $\delta\in \cF(S)$.  Suppose that the property holds for every simplex of dimension at most $i$; this is true by hypothesis when $i=0$.  

  Let $\delta\in \cF^{i+1}(S)$ and let $\delta'$ be a proper face of $\delta$.  Let $\bb\in \cF^{k-1}(M_{\delta'}(\bD))$.  By induction, $\bb\in \cSc_{x_{\delta'}}$; by property \ref{it:bc opposition}, $\bb$ is opposite to $\bd_\delta$, and
  $$\cF^{k-1}((E_{\bb,\bd_\delta})_\infty)\subset \cSc_{x_\delta}.$$
  Therefore, $\bD$ is well-opposed and
  $$\cF^{k-1}(M_\delta(\bD))=\bigcup_{\bb\in \cF^{k-1}(M_{\partial\delta}(\bD))} \cF^{k-1}((E_{\bb,\bd_{\delta}})_\infty)\subset \cSc_{x_\delta},$$
  as desired.
\end{proof}

Finally, we construct $\bR$.  
\begin{defn}\label{def:bR}
  Let $x_\delta$, $g_\delta$, and $\epsilon$ be as in Proposition~\ref{prop:basepointChoices}.  (In particular, suppose that  $\epsilon$ satisfies Lemma~\ref{lem:pairOfShadows}.)  Let
  $$\cU:=\{(\bd_\delta)_{\delta\in\cF(S)} \in (\chxop)^{\cF(S)}\mid \bd_\delta\in\cSc_{x_\delta}(\epsilon)\text{ for all }\delta\in \cF(S)\}.$$

  Choose a distribution function $\phi_0\in C^\infty(\cSc_{[e]}(\epsilon))$ and let $\bR=(\br_\delta)_{\delta\in \cF(S)}\in \cU$ be a random variable whose coordinates $\br_\delta\in \cSc_{x_\delta}(\epsilon)$ are independent random variables with distribution function $g_\delta\phi_0$.  By Proposition~\ref{prop:basepointChoices}.\ref{it:bc well-opposed}, $\bR$ is well-opposed, and for every $\delta\in \cF(S)$ and every $x\in \delta$, we have $\bb_\bR(x)\in \cSc_{x_\delta}$.
\end{defn}

\begin{lemma}\label{lem:qSmoothRandom}
  There is a $c>0$ such that for every $\delta\in \cF(S)$ and every $x\in \delta$,
  \begin{equation}\label{eq:qSmoothRandom}
    \|\phi_{\bb_\bR(x)}\|_{x_{\delta}}\lesssim \exp(c\sigma(\delta)).
  \end{equation}
\end{lemma}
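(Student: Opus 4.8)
The plan is to establish \eqref{eq:qSmoothRandom} by induction on the dimension of the smallest face containing $x$, tracking how the Sobolev norm grows through the recursion \eqref{eq:bb inductive}. Two preliminary remarks. First, $\|\phi_{\br_\delta}\|_{x_\delta}$ is a universal constant for every $\delta\in\cF(S)$: since $\phi_{\br_\delta}=g_\delta\phi_0$, the chamber $g_\delta^{-1}\br_\delta$ has density $\phi_0\in C^\infty(\cSc_{[e]}(\epsilon))$, so $\|\phi_{\br_\delta}\|_{x_\delta}=\|\phi_{g_\delta^{-1}\br_\delta}\|_{l,2}=\|\phi_0\|_{l,2}$. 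Second, by Lemma~\ref{lem:expDecay} and Proposition~\ref{prop:basepointChoices}.\ref{it:bc diameters} there is a universal $\kappa>0$ such that whenever $\delta'\subseteq\delta$ in $\cF(S)$ and $r$ is a smooth random chamber, $\|\phi_r\|_{x_\delta}\le\exp(\kappa\,\sigma(\delta))\,\|\phi_r\|_{x_{\delta'}}$; since also $\cF^0(\delta')\subseteq\cF^0(\delta)$ forces $\sigma(\delta')\le\sigma(\delta)$, it is enough to prove \eqref{eq:qSmoothRandom} when $x$ lies in the interior of $\delta$ (apply this to the minimal face $\delta'$ with $x\in\inter\delta'$). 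Along the way one checks by induction — using that $m_w$ is a submersion and that products and pushforwards of smooth compactly supported densities are smooth — that every $\bb_\bR(x)$ is a smooth random chamber, so all the norms below are finite.

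I now claim there are universal constants $c_0\le c_1\le\dots\le c_{k-1}$ such that $\|\phi_{\bb_\bR(x)}\|_{x_\delta}\lesssim\exp(c_i\sigma(\delta))$ whenever $\delta\in\cF^i(S)$ and $x\in\inter\delta$. For $i=0$ this is the first remark, since $\bb_\bR(v)=\br_v$ for a vertex $v$. For the step, let $\delta\in\cF^{i+1}(S)$ and $x\in\inter\delta$, and write $x=(\psi,t)_\delta$ with $\psi\in\partial\delta$ and $t$ the radial parameter (so $t=0$ gives $c_\delta$; any $\psi$ works if $x=c_\delta$); let $\delta''\subsetneq\delta$ be the smallest face containing $\psi$, so $\dim\delta''\le i$. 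By the inductive hypothesis at $\delta''$, the second preliminary remark, and $\sigma(\delta'')\le\sigma(\delta)$,
\[
  \|\phi_{\bb_\bR(\psi)}\|_{x_\delta}\le\exp\bigl(\kappa\,\sigma(\delta)\bigr)\,\|\phi_{\bb_\bR(\psi)}\|_{x_{\delta''}}\lesssim\exp\bigl((c_i+\kappa)\sigma(\delta)\bigr).
\]
If $t\ge\tfrac12$, then $\bb_\bR(x)=\bb_\bR(\psi)$ by Lemma~\ref{lem:bb}.\ref{it:bb collar}, and the display is exactly the bound we want, so $c_{i+1}=c_i+\kappa$ works.

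The remaining case is $t<\tfrac12$, where \eqref{eq:bb inductive} gives $\bb_\bR(x)=m_w(\bb_\bR(\psi),\br_\delta)$ for the fixed element $w=w_\delta(x)\in W$. Since $\bb_\bR|_{\partial\delta}$ is a function of $(\br_{\delta'})_{\delta'\in\cF(\partial\delta)}$ and $\delta\notin\cF(\partial\delta)$, the chambers $\bb_\bR(\psi)$ and $\br_\delta$ are independent, so $(\bb_\bR(\psi),\br_\delta)$ has density $\phi_{\bb_\bR(\psi)}\times\phi_{\br_\delta}$. By Definition~\ref{def:bR}, $\bb_\bR(\psi)\in\cSc_{x_{\delta''}}$, and by \eqref{eq:bc faces intermediate}, $\cSc_{x_{\delta''}}\subset\cSc_{[g_\delta n_0]}(\epsilon)$; also $\br_\delta\in\cSc_{[g_\delta]}(\epsilon)$. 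By the $NA$-equivariance of shadows, $g_\delta^{-1}\bb_\bR(\psi)$ is then supported in $\cSc_{[n_0]}(\epsilon)$ and $g_\delta^{-1}\br_\delta$ in $\cSc_{[e]}(\epsilon)$; and by the $G$-equivariance of $m_w$, $g_\delta^{-1}\bb_\bR(x)=m_w(g_\delta^{-1}\bb_\bR(\psi),g_\delta^{-1}\br_\delta)$. Applying the Sobolev estimate \eqref{eq:mwSobolev} of Lemma~\ref{lem:pairOfShadows}, together with the identities $\|\phi_r\|_{x_\delta}=\|\phi_{g_\delta^{-1}r}\|_e$ (valid for any smooth random chamber $r$, from the $NA$-equivariance of $\|\cdot\|_x$) and $\|\cdot\|_e=\|\cdot\|_{l,2}$, we get
\[
  \|\phi_{\bb_\bR(x)}\|_{x_\delta}=\|\phi_{g_\delta^{-1}\bb_\bR(x)}\|_e\le D\,\|\phi_{g_\delta^{-1}\bb_\bR(\psi)}\|_e\,\|\phi_{g_\delta^{-1}\br_\delta}\|_e=D\,\|\phi_{\bb_\bR(\psi)}\|_{x_\delta}\,\|\phi_0\|_{l,2}.
\]
Combining with the previous display gives $\|\phi_{\bb_\bR(x)}\|_{x_\delta}\lesssim\exp((c_i+\kappa)\sigma(\delta))$ (the universal prefactor $D\|\phi_0\|_{l,2}$ is absorbed into $\lesssim$), so again $c_{i+1}=c_i+\kappa$ works. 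The induction terminates at $i=k-1$ because $S=\Delta_\FD^{(k-1)}$; setting $c:=c_{k-1}$ — a universal constant, depending only on $G$, $k$, $\Gamma$ — and invoking the interior reduction once more yields \eqref{eq:qSmoothRandom} for all $\delta\in\cF(S)$ and $x\in\delta$.

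The substance of the argument is already packaged in Lemma~\ref{lem:pairOfShadows}, which supplies the multiplicative Sobolev bound for $m_w$ on a single model pair of shadows $\cSc_{[n_0]}(\epsilon)$, $\cSc_{[e]}(\epsilon)$, and in Proposition~\ref{prop:basepointChoices}; what remains is largely bookkeeping. The one point needing genuine care is the basepoint change: one must check that conjugating the two inputs $\bb_\bR(\psi)$ and $\br_\delta$ by $g_\delta^{-1}$ moves them into \emph{exactly} those two model shadows and that this same conjugation turns the weighted norm $\|\cdot\|_{x_\delta}$ into the plain Sobolev norm $\|\cdot\|_e=\|\cdot\|_{l,2}$ — both rely on the $NA$-equivariance of shadows and of the norms $\|\cdot\|_x$, on the containment \eqref{eq:bc faces intermediate}, and on the independence of $\bb_\bR(\psi)$ and $\br_\delta$. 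The rest is just the observation that the exponential constant accumulates additively over at most $k$ steps and hence stays universal.
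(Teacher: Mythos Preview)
Your proof is correct and follows essentially the same approach as the paper: induction on the dimension of the minimal face containing $x$, using the recursion \eqref{eq:bb inductive}, the independence of $\bb_\bR(\psi)$ and $\br_\delta$, the conjugation by $g_\delta^{-1}$ to land in the model shadows via \eqref{eq:bc faces intermediate}, the Sobolev estimate \eqref{eq:mwSobolev}, and the basepoint-change bound from Lemma~\ref{lem:expDecay}. The only differences are organizational---you split the collar case $t\ge\tfrac12$ off explicitly via Lemma~\ref{lem:bb}.\ref{it:bb collar} (the paper treats it uniformly since $w_\delta(x)=e$ there), and you spell out the reduction to interior points and the independence argument more carefully than the paper does.
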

\begin{proof}
  We proceed by induction.  If $v\in S$ is a vertex, then $x_v=v$ and $\bb_\bR(v)=\br_v$.  This is a smooth random chamber satisfying $\|\phi_{\bb_\bR(v)}\|_v=\|g_v^{-1} \phi_{\br_v}\|_e=\|\phi_0\|_{e}$, so $\|\phi_{\bb_\bR(v)}\|_v\lesssim 1$.  

  Let $\epsilon$, $n_0$, and $D$ be as in Lemma~\ref{lem:pairOfShadows} so that for all $w\in W$, $\phi\in C^\infty(\cSc_{[e]}(\epsilon))$, and $\psi \in C^\infty(\cSc_{[n_0]}(\epsilon))$, the push-forward $(m_w)_*(\phi\times \psi)$ is smooth and satisfies     $\|(m_w)_*(\phi\times \psi)\|_{e}\le D  \|\phi\|_{e} \|\psi\|_{e}$.  Let $\delta\in \cF(S)$ and let $i=\dim \delta$.  Suppose by induction that there is a $C_{i-1}>0$ such that for any proper face $\lambda\subsetneq \delta$ and any $y\in \lambda$,
  \begin{equation}\label{eq:qSmooth induction}
    \|\phi_{\bb_\bR(y)}\|_{x_{\lambda}}\le C_{i-1} \exp(C_{i-1} \sigma(\lambda)).
  \end{equation}
  Let $w_\delta\from \inter \delta\to W$ be as in Lemma~\ref{lem:bb}, and let $t\in [0,1)$, $\psi\in \partial \delta$, and $x=(t,\psi)_\delta \in \inter\delta$.  Consider
  \begin{align*}
    g_\delta^{-1}\phi_{\bb_\bR(x)} 
    &=g_\delta^{-1} (m_{w_\delta(x)})_*(\phi_{\bb_\bR(\psi)}\times \phi_{\br_\delta})\\
    &=(m_{w_\delta(x)})_*(g_\delta^{-1}\phi_{\bb_\bR(\psi)}\times \phi_0).
  \end{align*}

  Let $\lambda$ be a proper face of $\delta$ that contains $\psi$.  By \eqref{eq:bc faces intermediate}, 
  $$g_\delta^{-1}\phi_{\bb_\bR(\psi)}\in C^{\infty}(g_\delta^{-1} \cSc_{x_\lambda})\subset C^{\infty}(g_\delta^{-1} \cSc_{[g_\delta n_0]}(\epsilon)) = C^{\infty}(\cSc_{[n_0]}(\epsilon)).$$
  Let $c$ be as in Lemma~\ref{lem:expDecay}.  By \eqref{eq:mwSobolev} and \eqref{eq:qSmooth induction}, there is a $C_i>C_{i-1}$ such that
  \begin{align*}
    \|\phi_{\bb_\bR(x)}\|_{x_\delta} & =\|g_\delta^{-1} \phi_{\bb_\bR(x)}\|_{e}\\
    &\le D \|g_\delta^{-1} \phi_{\bb_\bR(\psi)}\|_e \cdot\|\phi_0\|_e \\ 
    &= D \|\phi_{\bb_\bR(\psi)}\|_{x_\delta}\cdot \|\phi_0\|_e \\
    &\le D \exp(c d(x_\delta,x_\lambda)) \|\phi_{\bb_\bR(\psi)}\|_{x_{\lambda}} \cdot\|\phi_0\|_e \\
    &\le D C_{i-1} \exp(C_{i-1} \sigma(\lambda) + c d(x_\delta,x_\lambda)) \|\phi_0\|_e \\
    &\le C_i \exp(C_{i} \sigma(\delta)),
  \end{align*}
  as desired.
\end{proof}

\section{Random maps to $X$}\label{sec:randomMapsX}

In the previous sections, we constructed a random map $\sansP_{\bR}\from S\to X_\infty$, a family of smooth random chambers $\bb_\bR(x)$, $x\in S$, and a (deterministic) function $z\from S\to \bz$ such that $\sansP_\bR(x)=\pi_{\bb_\bR(x)}(z(x))$.  Now we will use $\sansP_\bR$ to construct a random map $\Omega_\bR\from S\to X$ such that with high probability, most of the image of $\Omega_\bR$ lies close to $[\Gamma]$.  

\begin{prop}\label{prop:OmegaBounds}
  There are maps $f\from S\to X$ and $r\from S\to \R$ such that if 
  \begin{equation}\label{eq:OmegaDef}
    \Omega_{\bD}(x):=e_{f(x)}(\sansP_\bD(x),r(x)),
  \end{equation}
  then for all $v\in \cF^0(S)=\FD$, all well-opposed $\bD$, and all $\delta\in \cF(S)$, we have $\Omega_{\bD}(v)=v$ and 
  \begin{equation}\label{eq:OmegaLocallyLip}
    \Lip \Omega_{\bD}|_{\delta}\lesssim \sigma(\delta).
  \end{equation}
  Furthermore, there is a $b>0$ such that for any $x\in S$, 
  \begin{equation}\label{eq:OmegaExpMoments}
    \EE[\exp(b \dGamma(\Omega_\bR(x)))]\lesssim 1.
  \end{equation}
\end{prop}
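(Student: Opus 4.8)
The idea is to build $f$ and $r$ by induction over the skeleta of $S$, paralleling the construction of $\sansP_\bD$ in Section~\ref{sec:parametrizedCones} (and reusing its collar/core decomposition $\delta=C\cup D$ of each simplex), in such a way that for every $x\in S$ the point $\Omega_\bR(x)$, which by Lemma~\ref{lem:bb}.(1) equals $e_{f(x)}\bigl(\pi_{\bb_\bR(x)}(z(x)),\,r(x)\bigr)$, is precisely a random variable of the type analysed in Lemma~\ref{lem:equiChamber}: the image under $e$ of a random Weyl direction in the smooth random chamber $\bb_\bR(x)$, based at the deterministic point $f(x)$ and of deterministic length $r(x)$. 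First I set $f(v):=v$ and $r(v):=0$ on $\cF^0(S)$; then $\Omega_\bD(v)=e_v(\sansP_\bD(v),0)=v$ for every $\bD$. To pass from $S^{(i)}$ to $S^{(i+1)}$, fix $\delta\in\cF^{i+1}(S)$ and let $x_\delta$, $\epsilon$ be as in Proposition~\ref{prop:basepointChoices}. On the collar $C$ I flow the boundary value $f(\psi)$ a distance $O(\sigma(\delta))$ toward the barycenter of $\bz$, far enough that its image lands in the coarsely convex region $Q_\delta:=\hull\{p:\cSc_{x_\delta}\subset\cSc_p\}$ of Lemma~\ref{lem:convex hull shadows}; this is possible with a move of size $O(\sigma(\delta))$ because flowing toward $\bz$ contracts shadows at an exponential rate (Lemma~\ref{lem:dil}) and $d(f(\psi),x_\delta)\lesssim\sigma(\delta)$. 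On the core $D$ I set $f(c_\delta):=x_\delta$ and let $f|_D$ be the geodesic cone from $x_\delta$ over these flowed values, which stays in $Q_\delta$ by convexity. For $r$ I set $r(c_\delta):=C_0\,\sigma(\delta)$, keep $r\ge C_0'\,\sigma(\delta)$ on $D$, and interpolate on $C$ between the boundary values and $r|_{\partial D}$, where $C_0\ge C_0'>0$ are large universal constants fixed at the very end. Using Proposition~\ref{prop:basepointChoices}.(4),(5), Lemma~\ref{lem:convex hull shadows}, and the CAT(0) geometry of $X$, an induction on $i$ shows $f,r$ are continuous on $S$, and for every $\delta$ and $x\in\delta$: $f(x)\in Q_\delta$ with $d(f(x),x_\delta)\lesssim\sigma(\delta)$, so $\dGamma(f(x))\lesssim\sigma(\delta)$; $\Lip(f|_\delta)\lesssim\sigma(\delta)$; and $\Lip(r|_\delta)\lesssim\sigma(\delta)$ with $r|_\delta\le C_0\,\sigma(\delta)$. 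Since $\bb_\bR(x)\in\cSc_{x_\delta}$ for all $x\in\delta$ (Proposition~\ref{prop:basepointChoices}.(1)) and $f(x)\in Q_\delta$, Lemma~\ref{lem:convex hull shadows} supplies a \emph{universal} $\rho_0$ with $\bb_\bR(x)\in\cSc_{f(x)}(\rho_0)$.

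\textbf{The Lipschitz bound.}
By Lemma~\ref{lem:ex locally Lipschitz}, for $x,x'\in\delta$,
$$d_X(\Omega_\bD(x),\Omega_\bD(x'))\le d_X(f(x),f(x'))+d_{CX_\infty}\bigl((\sansP_\bD(x),r(x)),(\sansP_\bD(x'),r(x'))\bigr).$$
The first term is $\lesssim\sigma(\delta)\,d(x,x')$. For the second, on the Euclidean cone $d_{CX_\infty}((\tau,s),(\tau',s'))\le|s-s'|+\max(s,s')\,\angle(\tau,\tau')$, so it is $\lesssim|r(x)-r(x')|+\max(r(x),r(x'))\,\Td(\sansP_\bD(x),\sansP_\bD(x'))\lesssim\sigma(\delta)\,d(x,x')$, using $\Lip(r|_\delta)\lesssim\sigma(\delta)$, $r|_\delta\lesssim\sigma(\delta)$, $\angle\le\Td$, and $\Lip\sansP_\bD\lesssim1$ (Proposition~\ref{prop:generic sansP}.(3)). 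This gives \eqref{eq:OmegaLocallyLip}.

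\textbf{The exponential moment bound, and the main obstacle.}
Fix $x\in S$ and let $\delta$ be the simplex whose core region contains $x$ (for $x$ on a collar one peels off collars until reaching a core point; when $x$ is near a vertex of $\cF^0(S)$, $\Omega_\bR(x)$ lies within $r(x)\lesssim\sigma(\delta)$ of a point of $\FD$, so $\dGamma(\Omega_\bR(x))$ is bounded and \eqref{eq:OmegaExpMoments} is immediate). Otherwise $\Omega_\bR(x)=e_{f(x)}(\pi_{\bb_\bR(x)}(z(x)),r(x))$ is the variable $y$ of Lemma~\ref{lem:equiChamber} with basepoint $f(x)$, smooth random chamber $\bb_\bR(x)\in\cSc_{f(x)}(\rho_0)$, and length $r(x)\ge C_0'\,\sigma(\delta)$. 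We have $\dGamma(f(x))\lesssim\sigma(\delta)$, and Lemma~\ref{lem:qSmoothRandom} together with Lemma~\ref{lem:expDecay} gives $\|\phi_{\bb_\bR(x)}\|_{f(x)}\le\exp(O(\sigma(\delta)))$. Since $\rho_0$ is fixed, the implicit constant of Lemma~\ref{lem:equiChamber} is fixed, and the lemma yields
$$\Prob[\dGamma(\Omega_\bR(x))>s]\lesssim e^{-\sansA s}\bigl(1+e^{-\sansC r(x)}\exp(O(\sigma(\delta)))\bigr)\qquad(s>0).$$
Choosing $C_0'$ (hence $C_0$) large enough that $\sansC C_0'\,\sigma(\delta)$ dominates the $O(\sigma(\delta))$ term makes the parenthesis $\lesssim1$, so $\Prob[\dGamma(\Omega_\bR(x))>s]\lesssim e^{-\sansA s}$ uniformly in $x$; integrating as in the proof of Corollary~\ref{cor:expMomentsFlats} gives $\EE[\exp(b\,\dGamma(\Omega_\bR(x)))]\lesssim1$ with $b=\sansA/2$, which is \eqref{eq:OmegaExpMoments}. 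The real work is in the inductive construction of $f$ and $r$: one must at once keep $f$ continuous, keep each $f(x)$ inside the coarsely convex region $Q_\delta$ (so that $\rho_0$ can be chosen \emph{uniformly} --- this is essential, since the $\rho$-dependence in Lemma~\ref{lem:equiChamber} is only polynomial in $\rho$, which would still wreck a uniform bound if $\rho$ grew with $\sigma(\delta)$), and keep $r(x)\gtrsim\sigma(\delta)$ on cores while matching the smaller boundary values on collars. Threading this needle is exactly what the dimension-dependent nesting of the basepoints $x_\delta$ in Proposition~\ref{prop:basepointChoices} (the constants $C_d$ with $C_d=C(C_{d-1}+3)$) is designed to permit, and it is cleanest to organize the whole argument as an induction on skeleta that establishes the uniform tail bound $\Prob[\dGamma(\Omega_\bR(x))>s]\lesssim e^{-\sansA s}$ for all $x\in S^{(i)}$ before passing to $S^{(i+1)}$.
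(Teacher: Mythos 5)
Your overall strategy is the same as the paper's: build $f$ and $r$ by induction on skeleta so that $\Omega_\bR(x)=e_{f(x)}(\pi_{\bb_\bR(x)}(z(x)),r(x))$ is exactly the kind of random point treated in Lemma~\ref{lem:equiChamber}/Corollary~\ref{cor:expMomentsChambers}, keep $\bb_\bR(x)\in\cSc_{f(x)}(\rho_0)$ with $\rho_0$ universal, and make $r(x)$ large enough to beat the factors $e^{\sansD\dGamma(f(x))}\|\phi_{\bb_\bR(x)}\|_{f(x)}$. The Lipschitz bound via Lemma~\ref{lem:ex locally Lipschitz} and the cone metric is fine. There are two points to flag.

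A minor one: the running claim that $f(x)\in Q_\delta$ for \emph{all} $x\in\delta$ is false. At the outer boundary of the collar ($t=1$) one has $f(x)=f(\psi)$, which need not have $\cSc_{x_\delta}\subset\cSc_{f(\psi)}$ (for a vertex $v\subset\delta$, Proposition~\ref{prop:basepointChoices}(\ref{it:bc faces}) gives the \emph{opposite} inclusion $\cSc_v\subset\cSc_{x_\delta}$, so $v\notin Q_\delta$). The uniform-$\rho_0$ argument still goes through because on the collar you can apply Lemma~\ref{lem:dil} (flow toward $\bz$ shrinks shadows) together with the inductive hypothesis at $\psi$, and use Lemma~\ref{lem:convex hull shadows} only on the core, where the geodesic-cone image does lie in $Q_\delta$. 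This is a misstatement, not a fatal flaw, but it should be corrected.

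The substantial gap is in the exponential moment bound on the collar. Your case split covers $x$ in a core (where $r(x)\gtrsim\sigma(\delta)$ can be enforced) and $x$ near a vertex (where $r(x)$ is small and $\Omega_\bR(x)$ stays near $\FD$). But take $x=(t,\psi)_\delta$ with $t\in(1/2,1)$ where $\psi$ lies in the core of a proper face $\delta'\subsetneq\delta$ with $\sigma(\delta')\ll\sigma(\delta)$. Then $r(x)$ interpolates from $r(\psi)\approx\sigma(\delta')$ at $t=1$ up to $\approx\sigma(\delta)$ at $t=1/2$; meanwhile $\dGamma(f(x))$ and $\blog\|\phi_{\bb_\bR(x)}\|_{f(x)}$ also grow from $O(\sigma(\delta'))$ toward $O(\sigma(\delta))$. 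Whether the parenthesis in your tail bound stays $\lesssim 1$ depends on a \emph{race} between the growth of $r$ and the growth of these quantities along the collar, and ``peeling off collars'' is not a valid pointwise reduction: a fixed collar point $x$ does not become a core point under any peeling, and $\Omega_\bR(x)$ is not $\Omega_\bR$ of a peeled point. The paper closes this by setting $R(s):=\dGamma(f(s))+\blog\|\phi_{\bb_\bR(s)}\|_{f(s)}$ and proving the pointwise inequality $r(s)\ge b'R(s)-C_2$ on all of $S$ by induction; the key collar estimate uses the Lipschitz bound on $f$ and Lemma~\ref{lem:expDecay} to show
$$|R(s)-R(\psi)|\le (a+1)L\,\sigma(\delta)(1-t)\pi\le \tfrac{C_1}{b'}\,\sigma(\delta)(1-t),$$
so that the explicit slope $C_1\sigma(\delta)$ of $r$ along the collar dominates $b'$ times the growth of $R$. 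You acknowledge that this race must be won and point to the right tools, but you do not set up or prove the controlling inequality, so the proof is not complete as written.
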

\begin{proof}
  Let the points $x_\delta$ be as in Proposition~\ref{prop:basepointChoices}.  For all $v\in \cF^0(S)$, let $f(v)=v$.  We define $f$ inductively; for $\delta\in \cF^{i+1}(S)$ and $\psi\in \partial \delta$, let $\gamma\from [0,1]\to X$ be the geodesic from $x_\delta$ to $f(\psi)$.  Let
  $$f((t,\psi)_\delta)=\begin{cases}
    x_\delta & t\le \frac{1}{2} \\
    \gamma_{\psi}(2t-1) & t\ge \frac{1}{2}.
  \end{cases}$$
  Since $X$ is a CAT(0) space, this map is continuous, and
  $$\Lip f|_{\delta} \lesssim \max\{\Lip f|_{\partial \delta}, \max_{\delta'\subset \delta} d(x_\delta, x_{\delta'})\}.$$
  By Proposition~\ref{prop:basepointChoices} and induction on $i$, this implies that there is a $L>0$ such that 
  \begin{equation}\label{eq:Lip OmegaX}
    \Lip f|_{\delta} \le L\sigma(\delta).
  \end{equation}

  Let $R\from S\to \R$ be the function
  $$R(s)=\dGamma(f(s))+ \blog \|\phi_{\bb_\bR(s)}\|_{f(s)}.$$
  Let $\delta\in \cF(S)$ and let $s\in \delta$.  By \eqref{eq:Lip OmegaX}, we have $\diam f(\delta)\lesssim \sigma(\delta)$.  Since $f(v)=v$ for all $v\in \cF^0(S)$, this implies $\dGamma(f(s))\lesssim\sigma(\delta)$.  Combining this with Lemma~\ref{lem:expDecay} and Lemma~\ref{lem:qSmoothRandom}, we get
  $$R(s)\lesssim \sigma(\delta) + d(x_\delta,f(s)) + \blog \|\phi_{\bb_\bR(s)}\|_{x_\delta}\lesssim \sigma(\delta).$$
  Let $C_0>0$ be such that $R(s)\le C_0 \sigma(\delta)$ for all $s\in \delta$.

  Let $a>0$ be as in Lemma~\ref{lem:expDecay} and let $b, b'>0$ be as in Corollary~\ref{cor:expMomentsChambers}.  Let $C_1=2b' C_0+(a+1) b' L \pi.$
  We define $r$ by induction.  For $v\in \cF^0(S)$, let $r(v)=0$.  Suppose by induction that $i\ge 1$ and we have defined $r$ on $S^{(i-1)}$.  Let $\delta\in \cF^i(S)$.  For $r\in[0,1]$, $\psi\in \partial\delta$, define
  $$r((t,\psi)_\delta)=\min\{b' C_0\sigma(\delta), r(\psi) + (1-t) C_1 \sigma(\delta)\}.$$
  One can check that $r((t,\psi)_\delta)=b' C_0\sigma(\delta)$ for all $t\le \frac{1}{2}$ and that $\Lip r|_\delta\lesssim \sigma(\delta)$.  

  Let $\Omega_\bD$ be as in \eqref{eq:OmegaDef}.  This satisfies $\Omega_{\bD}(v)=v$ for all $v\in \cF^0(S)$ by definition.  We claim that $\Lip \Omega_{\bD}|_{\delta}\lesssim \sigma(\delta)$ for all $\delta\in \cF(S)$.  In order to show this, we first show that there is a $\rho>0$ such that for all $\bD\in \cU$, all $\delta\in \cF(S)$ and all $s\in \delta$, we have $\bb_\bD(s)\in \cSc_{f(s)}(\rho)$.  

  We proceed by induction on dimension.  Let $\rho_0=1$; when $v\in S^{(0)}$, we have $\bb_\bD(v)\in \cSc_v(\epsilon)\subset \cSc_v$.  Suppose that $i\ge 0$ and that there is a $\rho_i\ge 1$ such that $\bb_\bD(s)\in \cSc_{f(s)}(\rho_i)$ for all $s\in S^{(i)}$.  Let $\delta\in \cF^{i+1}(S)$ and let $s=(t,\psi)_\delta$.  If $t\le \frac{1}{2}$, then $f(s)=x_\delta$, and by Lemma~\ref{lem:qSmoothRandom}, $\bb_\bD(s)\in \cSc_{x_\delta}$.  

  Otherwise, $t\ge\frac{1}{2}$, and $f(s)$ lies on the geodesic from $f(\psi)$ to $x_\delta$.  Lemma~\ref{lem:convex hull shadows} implies that it suffices to show that $\bb_\bD(s)$ lies in the shadows of $f(\psi)$ and $x_\delta$.  By Lemma~\ref{lem:bb}.\ref{it:bb collar} and induction, we have
  $$\bb_\bD(s)=\bb_\bD(\psi)\in \cSc_{f(\psi)}(\rho_i),$$
  and by Proposition~\ref{prop:basepointChoices}.\ref{it:bc well-opposed}, we have $\bb_\bD(s)\in \cSc_{x_\delta}$.  It follows that there is a $\rho_{i+1}$ such that $\bb_\bD(s)\in \cSc_{f(s)}(\rho_{i+1})$.

  Thus, for all $s\in S$, we have $\bb_\bD(s)\in \cSc_{f(s)}(\rho_{k-1})$.  By Lemma~\ref{lem:ex locally Lipschitz}, for each simplex $\delta\in \cF(S)$, 
  \begin{equation}\label{eq:Omega local Lip formula}
    \Lip \Omega_\bD|_{\delta}\lesssim \Lip(z|_\delta)\max(r(\delta)) + \Lip(r|_\delta) + \Lip(f|_\delta)\lesssim \sigma(\delta).
  \end{equation}

  It remains to prove \eqref{eq:OmegaExpMoments}.  Let $C_2=b' \max(\dGamma(\FD)) + \blog \|\phi_0\|_{l,2}$.  We will show that 
  \begin{equation}\label{eq:main criterion r}
    r(s)\ge b' R(s)-C_2
  \end{equation}
  for all $s\in S$, then apply Corollary~\ref{cor:expMomentsChambers}.

  By construction, \eqref{eq:main criterion r} holds when $s$ is a vertex of $S$.  Suppose that \eqref{eq:main criterion r} holds on $\partial \delta$ and let $s=(t,\psi)_\delta$.  If $r(s)=b' C_0\sigma(\delta)$ (in particular, if $t\le \frac{1}{2}$), then $r(s)=b' C_0\sigma(\delta)\ge b' R(s)$, so \eqref{eq:main criterion r} holds.  

  We thus suppose that $t\ge \frac{1}{2}$ and $r(s)= r(\psi) + (1-t) C_1 \sigma(\delta).$  Then $\bb_\bR(s)=\bb_\bR(\psi)$, and by Lemma~\ref{lem:expDecay}, 
  \begin{multline*}
    |R(s)-R(\psi)|\le d(f(s),f(\psi))+a d(f(s), f(\psi))\\ \le (a+1) L \sigma(\delta) d(s,\psi)\le (a+1) L \sigma(\delta) (1-t) \pi\le \frac{C_1}{b'} \sigma(\delta) (1-t).
  \end{multline*}
  By the inductive hypothesis, $r(\psi)\ge b' R(\psi)-C_2$, so 
  $$r(s)=r(\psi) + C_1 \sigma(\delta)(1-t)  \ge b' R(\psi) + C_1 \sigma(\delta) (1-t) - C_2\ge b' R(s)-C_2.$$
  Thus \eqref{eq:main criterion r} holds for all $s\in S$.  

  Let $s\in S$ and let
  $$y=e_{f(s)}(\sansP_\bR(s),r(s)+b' C_2),$$
  so that $d(y,\Omega_\bR(s))=b' C_2$.  By Corollary~\ref{cor:expMomentsChambers}, we have $\EE[\exp (b\dGamma(y))]\lesssim 1$, so
  $$\EE[\exp(b \dGamma(\Omega_\bR(s)))]\le e^{b b' C_2}\EE[\exp (b\dGamma(y))]\lesssim 1.$$
\end{proof}

In the rest of this paper, we will use this construction to bound the filling functions of $\Gamma$.

\section{Bootstrapping: Proof of Theorems~\ref{thm:mainThmDehn} and \ref{thm:mainThmLower}}\label{sec:DiscsSpheres}

In this section, we prove two of our main theorems, the exponential lower bound on $\FV^k_\Gamma$, and the quadratic bound on the Dehn function of lattices in higher-rank semisimple Lie groups.  

Both of these theorems use a Lipschitz extension theorem based on $\Omega_\bR$.  We can use $\Omega_\bR$ to extend an $L$--Lipschitz map $\alpha\from S^{n-1}\to \FD$ to a $CL$--Lipschitz map $\beta_0\from D^{n}\to X$.  This extension is made up of random flats, and by \eqref{eq:OmegaExpMoments}, it typically lies logarithmically close to $\FD$; by composing it with the retraction from Theorem~\ref{thm:fundDomain}, we obtain a map $\beta\from D^n\to X$.  The Lipschitz constant of the retraction grows exponentially, so the volume of $\beta$ is at most polynomial in $L$. 

When $n=2$, this gives a polynomial bound on the Dehn function of $\Gamma$, but the degree depends on the constants in \eqref{eq:OmegaExpMoments} and in Theorem~\ref{thm:fundDomain}.  Nonetheless, we can use this polynomial bound to prove a sharp bound.  Instead of composing the exponential retraction with $\beta_0$ to get $\beta$, we cut out the parts of $\beta_0$ that leave $\FD$, then use the polynomial filling inequality to fill in the resulting holes.  The area of the resulting filling is bounded in terms of an integral of a power of $\dGamma\circ \beta_0$, but \eqref{eq:OmegaExpMoments} implies that any such integral is bounded.  Since this uses the polynomial bound to prove a sharp bound, we call this \emph{the bootstrapping argument}.

The bootstrapping argument also lets us improve the super-polynomial bound in Section~\ref{sec:lower bound}.  In Lemma~\ref{lem:polyLower}, we constructed a sphere with large filling volume using the exponential retraction.  As above, we can use the polynomial Lipschitz extension theorem instead of the exponential retraction to obtain a sharp bound.

First, we prove the Lipschitz extension theorem.  

Recall that for all $t>0$, we defined $X(t)=\dGamma^{-1}([0,t])$ and $\blog t=\max\{1,\log t\}$.  We will prove the following proposition:
\begin{prop}\label{prop:lipExtend}
  There is an $\eta>0$ such that if $n < k = \rank\, X$, $L\ge 0$, and
  $\alpha\from S^{n-1}\to \FD$ is an $L$--Lipschitz map, then
  there is an extension $\beta_0\from D=D^n\to X(\eta\blog L)$
  with $\Lip \beta_0\lesssim L$.  Furthermore, if $b$ is as in
  Corollary~\ref{cor:expMomentsChambers}, then
  \begin{equation}\label{eq:LipExpMoment}
    \int_{D^n} e^{b\dGamma(\beta_0(x))}\; dx\lesssim 1.
  \end{equation}
\end{prop}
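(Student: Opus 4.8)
The plan is to build the extension $\beta_0$ by subdividing the boundary sphere finely, mapping the subdivided sphere into $S = \Delta_{X_0}^{(k-1)}$, and then composing with $\Omega_{\bR}$. More precisely, I would first approximate $\alpha$ by a simplicial map. Pick a triangulation of $S^{n-1}$ into roughly $L^{n-1}$ simplices of diameter $\approx L^{-1}$ (as in Lemma~\ref{lem:cube subdiv}), and for each vertex $v$ of this triangulation, choose a vertex $\hat\alpha(v) \in X_0 = \cF^0(S)$ within bounded distance of $\alpha(v)$; since $\Gamma$ acts cocompactly on $X_0$, such a vertex exists at uniformly bounded distance. Because $n-1 < k-1$, any two vertices $\hat\alpha(v), \hat\alpha(w)$ of a common simplex span an edge, and more generally every simplex of the triangulation maps to a simplex of $S$; extending linearly over $S$ gives a simplicial map $\hat\alpha \colon S^{n-1} \to S$ (using here that $\dim S^{n-1} = n-1 \le k-1 = \dim S$, so there is no obstruction to the simplices landing in $S$). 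The straight-line homotopy in $X$ from $\alpha$ to $\Omega_{\bR}\circ \hat\alpha$ (both land close to $X_0$) has controlled volume; but actually the cleaner route is to just fill $\alpha$ itself: cone off $S^{n-1}$ to a disc $D^n$, using that $n < k$ so the cone $c\ast S^{n-1}$, triangulated compatibly, still has dimension $n \le k$ — wait, we need $\le k-1$ for it to land in $S$. Instead I would fill the \emph{simplicial} sphere $\hat\alpha(S^{n-1})$ inside $S$: since $\hat\alpha$ is a simplicial $(n-1)$--sphere in the $(k-1)$--skeleton $S$ of a simplex with $n-1 < k-1$, it bounds (e.g. cone it to any vertex, which stays within dimension $n \le k-1$), giving a simplicial map $\hat\beta \colon D^n \to S$ agreeing with $\hat\alpha$ on the boundary, with $O(L^{n-1})$ simplices of bounded size. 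Then set $\beta_0' = \Omega_{\bR}\circ \hat\beta$ on the interior and patch to $\alpha$ near the boundary via the straight-line homotopy.

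The Lipschitz bound on $\beta_0$ comes from Proposition~\ref{prop:OmegaBounds}: each simplex $\delta$ of $\hat\beta$ has size $\sigma(\delta) \approx 1$ (its vertices are at bounded mutual distance, since consecutive ones in the sphere are and coning adds only bounded-diameter simplices — here I should be a little careful, as coning to a \emph{fixed} vertex could create large simplices; a better choice is to cone the $(n-1)$--sphere to a ``nearby'' structure, e.g. fill it inside $S$ using the fact that the $1$-skeleton distances among the $O(L^{n-1})$ relevant vertices are $O(L\cdot L^{-1}) = O(1)$ along the sphere, then note that any PL filling of a sphere of combinatorial diameter $D$ in the skeleton of a simplex can be taken with simplices of size $\lesssim$ the sphere's mesh; concretely, iterate the coning locally), so $\Lip \Omega_{\bR}|_{\hat\beta(\delta)} \lesssim 1$ on each cell, and since $\hat\beta$ contracts the $L^{-1}$-sized cells of $D^n$ to bounded cells of $S$, the composition is $\lesssim L$--Lipschitz after rescaling the domain to the unit disc. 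The claim $\beta_0(D^n)\subset X(\eta\blog L)$: a single point $\Omega_{\bR}(s)$ need not be close to $X_0$, but by \eqref{eq:OmegaExpMoments}, $\Prob[\dGamma(\Omega_{\bR}(s)) > t] \lesssim e^{-bt}$; taking a union bound over a $1$-net of the $O(L^n)$-many simplices (or over $O(L^n)$ sample points, using the $O(1)$-Lipschitz bound to interpolate), with positive probability $\max_{s} \dGamma(\beta_0(s)) \lesssim \log(L^n) \approx \blog L$. Fix such an $\bR$. Finally \eqref{eq:LipExpMoment}: writing $b' = b/2$ in \eqref{eq:OmegaExpMoments} gives $\EE\big[e^{b\dGamma(\beta_0(x))}\big]\lesssim 1$ pointwise, so $\EE\big[\int_{D^n} e^{b\dGamma(\beta_0(x))}\,dx\big] \lesssim 1$ by Fubini (the domain has bounded volume); by Chebyshev (or by choosing $\bR$ to simultaneously satisfy the sup bound and the integral bound, each holding with probability $> 3/4$), there is an $\bR$ with both properties, completing the proof.

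The main obstacle I expect is the \textbf{size control on the filling simplices} of $\hat\beta$ — i.e., ensuring $\sigma(\delta) \lesssim 1$ for every simplex of the filling disc, not just the boundary sphere. Coning a combinatorial $(n-1)$--sphere to a single vertex destroys size control (edges to the cone point can be long). The right fix is to observe that the simplicial sphere $\hat\alpha(S^{n-1})$, living in the skeleton $S$ with $n-1 < \dim S$, bounds in the subcomplex spanned by a \emph{bounded-size neighborhood} of its own vertex set: more carefully, because $n < k$, one can use general position / the simplicial approximation in Lemma~\ref{lem:approx Lipschitz} to null-homotope the sphere through maps supported on simplices whose vertices are drawn from $X_0$ at bounded mutual distance, essentially because a small-mesh map of $S^{n-1}$ into a simplex skeleton of one higher dimension bounds a small-mesh map of $D^n$. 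Carrying this out rigorously — controlling the combinatorial geometry of the filling so that every $\sigma(\delta) = O(1)$ — is the one step that needs genuine care; everything downstream (Lipschitz bounds, the $\blog L$ neighborhood, the exponential-moment integral) then follows mechanically from Propositions~\ref{prop:OmegaBounds} and~\ref{prop:basepointChoices}.
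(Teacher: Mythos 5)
Your high-level plan is the same as the paper's — build a simplicial map into $S=\Delta_{X_0}^{(k-1)}$, compose with $\Omega_\bR$, use the exponential moment bound \eqref{eq:OmegaExpMoments} to get \eqref{eq:LipExpMoment} by Fubini and then extract the $\blog L$ sup bound. But you correctly identify, and then fail to close, a genuine gap at the filling step, and the ``fix'' you sketch is not just incomplete but circular.

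The problem is your requirement that the filling disc $\hat\beta\colon D^n\to S$ consist entirely of simplices $\delta$ with $\sigma(\hat\beta(\delta))\lesssim 1$, using at most $\lesssim L^n$ of them (the cardinality bound is forced by the Lipschitz budget once the domain is rescaled to the unit disc). That is exactly the statement that an $(n-1)$--sphere of bounded mesh in a bounded-scale Rips complex over $X_0$, with $\approx L^{n-1}$ cells, bounds an $n$--disc of bounded mesh with $\approx L^n$ cells — i.e.\ a Euclidean homotopical filling inequality in $X_0$ at dimension $n-1$. That is precisely what Proposition~\ref{prop:lipExtend} and its consequences (Corollary~\ref{cor:polyLipExtend}, Theorem~\ref{thm:mainThmUpper}) are aiming to establish. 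You cannot assume it. The assertion that ``a small-mesh map of $S^{n-1}$ into a simplex skeleton of one higher dimension bounds a small-mesh map of $D^n$'' is the whole content of the theorem being proved, not a general position fact, and Lemma~\ref{lem:approx Lipschitz} only converts a given Lipschitz map to a nearby simplicial one; it supplies no null-homotopy.

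The paper avoids this by \emph{not} trying to keep $\sigma(f(\delta))$ bounded. Instead it rescales to a $1$--Lipschitz boundary map on $\partial I^n(L)$ and takes a dyadic (Whitney) decomposition of the solid cube $I^n(L)$ (Figure~\ref{fig:dyadicSub}), whose cells have side length $s(\delta)\approx d(\cdot,\partial I^n(L))+1$, growing from $\approx 1$ at the boundary to $\approx L$ at the center, with only $O(L^{n-1})$ cells total. Vertices are sent to $\alpha$ of the closest-point projection to the boundary, so $\sigma(f(\delta))\lesssim s(\delta)$; then $\Lip\Omega_\bD|_{f(\delta)}\lesssim\sigma(f(\delta))\lesssim s(\delta)$ from Proposition~\ref{prop:OmegaBounds}, while $\Lip f|_\delta\lesssim s(\delta)^{-1}$, so the composition is uniformly $\lesssim 1$--Lipschitz with no connectivity input at all. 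Your instinct that one should ``iterate the coning locally'' is pointing in this direction, but the working version of that idea lets the mesh grow toward the interior rather than staying bounded, and that size dichotomy is the entire trick; without it your argument needs the conclusion as a hypothesis.

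Two smaller remarks: your parenthetical ``writing $b'=b/2$ in \eqref{eq:OmegaExpMoments}'' is a slip (no $b'$ appears there); and your union-bound/Chebyshev route to the $\blog L$ sup bound is fine and interchangeable with the paper's ``a far excursion forces a large contribution to the integral'' argument, so that part is not where the issue lies.
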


\begin{proof}
  We apply a technique of Gromov to construct Lipschitz extensions out of simplices \cite[3.5.D]{GroCC}.  

  When $L<1$, the proposition follows from the fact that $X$ is CAT(0).  We thus suppose that $L\ge 1$; in fact, we may suppose that $L=2^i-1$ for some $i\ge 1$.  By applying a scaling and a homeomorphism, it suffices to show that if $I^{n}(L)=[-L,L]^{n}$ and $\alpha\from \partial I^{n}(L)\to \FD$ is a $1$--Lipschitz map defined on the surface of the cube, then there is an extension $\beta_0\from I^n(L)\to X(\eta\blog L)$ with $\Lip \beta_0\lesssim 1$ such that
  \begin{equation}\label{eq:LipExpMomentRescaled}
    \int_{I^n(L)} e^{b\dGamma(\beta_0(x))}\; dx\lesssim L^n.
  \end{equation}

  We first subdivide $I^{n}(L)$ into dyadic cubes as in Figure~\ref{fig:dyadicSub}; for each cube $C$, the side length $s(C)$ is a power of 2 such that for all $x\in C$, 
  $$1\le s(C)\approx d(x,\partial I^n(L))+1.$$
  We barycentrically subdivide this complex into a simplicial complex $\tau$ and let $\iota\from \cF^0(\tau)\to \partial I^n(L)$ be closest-point projection.  If $C$ is a cube and $v\in C$, we have $d(v,\iota(v))\lesssim s(C)$, and if $v,w\in C$ are connected by an edge, then
  \begin{align*}
    d(\iota(v),\iota(w)) &\lesssim s(C)+d(v,w)+s(C)\\
                         &\lesssim s(C)\approx d(v,w),
  \end{align*}
  so $\iota$ is Lipschitz.

  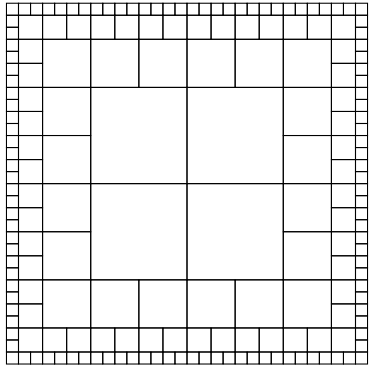
\begin{figure}
    \begin{tikzpicture}[scale=.16]
      \foreach \tk in {1,2,4,8} {
        \pgfmathsetmacro\endn{32/\tk-3}
        \pgfmathsetmacro\ty{16-2*\tk}
        \foreach \tn in {1,...,\endn} {
          \pgfmathsetmacro\tx{-16+\tn*\tk}
          \draw (\tx,\ty) rectangle (\tx+\tk,\ty+\tk); 
          \draw (-\tx,-\ty) rectangle (-\tx-\tk,-\ty-\tk); 
          \draw (-\ty,\tx) rectangle (-\ty-\tk,\tx+\tk); 
          \draw (\ty,-\tx) rectangle (\ty+\tk,-\tx-\tk); 
        }
      }
    \end{tikzpicture}
    \caption{\label{fig:dyadicSub} A subdivision of $[-2^i+1,2^i-1]$ into dyadic squares}
  \end{figure}
  
  This lets us define a map on the $(k-1)$--skeleton of $\tau$.  Let $f\from \tau^{(k-1)}\to S=\Delta_{\FD}^{(k-1)}$ be the simplicial map such that $f(v)=\alpha(\iota(v))$ for every $v\in \tau^{(0)}$.  This map sends each simplex $\delta\in \cF(\tau)$ of scale $s(\delta)$ to a  simplex $f(\delta)$ of scale $1$, so $\Lip f|_\delta\lesssim s(\delta)^{-1}$.  Furthermore, the bound on $\Lip\iota$ implies that $\sigma(f(\delta))\lesssim s(\delta)$ for all $\delta\in \cF(\tau)$.

  By Proposition~\ref{prop:OmegaBounds}, for any $\bD\in \mathcal U$ and any $\delta\in \cF(\tau)$ with $\dim \delta>0$, we have 
  $$\Lip(\Omega(\bD)\circ f|_\delta)\le \Lip \Omega(\bD)|_{f(\delta)}  \Lip f|_\delta\lesssim \sigma(f(\delta))s(\delta)^{-1}\lesssim 1.$$
  Furthermore, if $\bR\in \mathcal U$ is the random variable constructed in Section~\ref{sec:random cones}, then 
  $$\EE[\int_{I^n(L)}\exp(b \dGamma(\Omega(\bR)(f(x))))\;dx]=\int_{I^n(L)} \EE[\exp(b \dGamma(\Omega(\bR)(f(x))))]\;dx\lesssim L^n,$$
  so there is a $\bD_0\in \mathcal U$ such that if $g=\Omega(\bD_0)\circ f$, 
  $$\int_{I^n(L)}\exp(b \dGamma(g(x)))\;dx \lesssim L^n.$$

  The map $g$ need not agree with $\alpha$ on $\partial I^n(L)$, but for any vertex $v\in \partial I^n(L)$, we have $g(v)=(\Omega(\bD)\circ \alpha \circ \iota)(v)=\alpha(v)$.  Every point in $\partial I^n(L)$ is bounded distance from a vertex, so since $\Lip g\lesssim 1$, we have $d(\alpha(x),g(x))\lesssim 1$.  Let $\gamma\from \partial I^n(L)\times [0,1]\to X$ be the straight-line homotopy from $\alpha$ to $g$.  Since $\alpha$ and $g$ are Lipschitz and are a bounded distance apart, $\Lip \gamma\lesssim 1$, and $\dGamma$ is bounded on the image of $\gamma$.  Let $\beta_0$ be the concatenation of $g$ and $\gamma$.  Then $\Lip \beta_0\lesssim 1$ and $\beta_0$ satisfies \eqref{eq:LipExpMomentRescaled}.

  Finally, we check that there is an $\eta>0$ such that $\beta_0(I^n(L))\subset X(\eta \blog L)$.  Let $M=\max_x \dGamma(\beta_0(x))$ and let $x_0\in I^n(L)$ be such that $M=\dGamma(\beta_0(x_0))$.  Let $B\subset I^n(L)$ be the unit ball around $x_0$; we have $\vol B\approx 1$ and $\dGamma(\beta_0(x))\ge M-\Lip \beta_0$ for all $x\in B$.  Then
  $$\int_{I^n(L)} \exp(b \dGamma(\beta_0(x)))\; dx \ge \int_{B} \exp(b \dGamma(\beta_0(x)))\; dx \gtrsim e^{bM}.$$
  Combined with \eqref{eq:LipExpMomentRescaled}, this yields $e^{bM}\lesssim L^n$ and thus
  $M\lesssim \blog L$.
\end{proof}

By composing this with a retraction, we get an extension with image in $\FD$.  
\begin{cor}\label{cor:polyLipExtend}
  There is an $m>0$ such that if $n < k$, $L\ge1$, and
  $\alpha\from S^{n-1}\to \FD$ is a $L$--Lipschitz map, then there
  is an extension $\beta\from D^n\to \FD$ with
  $\Lip \beta\lesssim L^m$.
\end{cor}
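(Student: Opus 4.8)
The plan is to deduce this immediately from Proposition~\ref{prop:lipExtend} by post-composing with the exponentially Lipschitz retraction $\rho\from X\to\FD$ from Theorem~\ref{thm:fundDomain}. First I would apply Proposition~\ref{prop:lipExtend} to the given $L$--Lipschitz map $\alpha\from S^{n-1}\to\FD$ (noting $n<k$), obtaining an extension $\beta_0\from D^n\to X(\eta\blog L)$ with $\Lip\beta_0\lesssim L$. Because $\rho$ is a deformation retraction onto $\FD$, it fixes $\FD$ pointwise; since $\beta_0|_{S^{n-1}}=\alpha$ takes values in $\FD$, the composite $\beta:=\rho\circ\beta_0\from D^n\to\FD$ is again an extension of $\alpha$.

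Next I would bound $\Lip\beta$. Theorem~\ref{thm:fundDomain} supplies a $c>0$ with $\Lip(\rho|_{X(r)})\lesssim e^{cr}$ for all $r>0$; taking $r=\eta\blog L$ gives $\Lip(\rho|_{X(\eta\blog L)})\lesssim e^{c\eta\blog L}$. For $L\ge e$ one has $\blog L=\log L$ and hence $e^{c\eta\blog L}=L^{c\eta}$, while for $1\le L<e$ the bound $e^{c\eta\blog L}=e^{c\eta}$ is a constant and so is also $\lesssim L^{c\eta}$ since $L\ge1$. Therefore
\[
  \Lip\beta\le \Lip(\rho|_{X(\eta\blog L)})\cdot\Lip\beta_0\lesssim L^{c\eta}\cdot L=L^{c\eta+1},
\]
and the corollary follows with $m=c\eta+1$.

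There is essentially no obstacle here: all the real work is in Proposition~\ref{prop:lipExtend}. The only points requiring care are that $\rho$ genuinely restricts to the identity on $\FD$ (so that $\beta$ extends $\alpha$, and not merely some map a bounded distance from it), and the elementary bookkeeping converting the logarithmic distance estimate $\beta_0(D^n)\subset X(\eta\blog L)$ into a polynomial Lipschitz constant by feeding it into the exponentially growing Lipschitz constant of $\rho$. The exponent $m$ obtained this way is explicit but possibly very large, depending on $\eta$ and on the constant $c$ of Theorem~\ref{thm:fundDomain}; this loss of sharpness is precisely what the subsequent bootstrapping argument is designed to remove.
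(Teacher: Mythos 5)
Your proof matches the paper's argument exactly: apply Proposition~\ref{prop:lipExtend} to get $\beta_0$ with image in $X(\eta\blog L)$, then post-compose with the exponentially Lipschitz retraction $\rho$ from Theorem~\ref{thm:fundDomain}, yielding $\Lip(\rho\circ\beta_0)\lesssim e^{c\eta\blog L}\cdot L\lesssim L^{c\eta+1}$. The extra bookkeeping you include (the $1\le L<e$ case, and noting $\rho|_{\FD}=\id$) is correct and makes the argument slightly more explicit than the paper's.
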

\begin{proof}
  By Proposition~\ref{prop:lipExtend}, $\alpha$ has an extension $\beta_0\from D\to X(\eta\blog L)$ such that $\Lip\beta_0\lesssim L$.  Let $\rho\from X\to \FD$ be as in Theorem~\ref{thm:fundDomain}.  Then $\rho\circ \beta_0$ is an extension of $\alpha$ with image in $\FD$ and
  $$\Lip \rho\circ \beta_0 \lesssim e^{c \eta \blog L}L\lesssim L^{c\eta+1}.$$
\end{proof}

Using this bound, we can retract surfaces in $X$ to surfaces in $\FD$ with polynomial bounds.  

\begin{lemma}\label{lem:cuspReplacement}
  Let $n<k$ and let $m$ be as in Corollary~\ref{cor:polyLipExtend}.  Let $t, L\ge 1$, $Y=S^n(L)$ or $D^n(L)$ and let $\beta\from Y\to X(t)$ be a $1$--Lipschitz map.  There is a map $\kappa \from Y\to \FD$ such that
  \begin{enumerate}
  \item $\Lip \kappa\lesssim t^{m^n}$,
  \item for all $y\in \beta^{-1}(\FD)$, $\kappa(y)=\beta(y)$,
  \item \label{it:cuspRep bdd dist}
    for all $y\in Y$, $d(\kappa(y),\beta(y))\lesssim t^{m^n}$, and
  \item we have
    \begin{equation}\label{eq:integralVolume}
      \vol \kappa \lesssim \int_Y (\dGamma(\beta(y))+1)^{nm^n}\;dy.
    \end{equation}
  \end{enumerate}
\end{lemma}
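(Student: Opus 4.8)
The plan is to build $\kappa$ by a "local filling" procedure on a triangulation of $Y$ whose mesh is adapted to the geometry, following the idea already used in Corollary~\ref{cor:polyLipExtend}. First I would triangulate $Y = S^n(L)$ or $D^n(L)$ into unit-size simplices $\tau$; since $\Lip\beta\le 1$, the image $\beta(\delta)$ of each simplex has diameter $\lesssim 1$. I would then process the skeleta of $\tau$ in increasing order of dimension, defining $\kappa$ on $\tau^{(j)}$ by induction. On the $0$-skeleton, set $\kappa(v)=\rho(\beta(v))\in\FD$, where $\rho$ is the closest-point projection of Theorem~\ref{thm:fundDomain}; note $d(\kappa(v),\beta(v))=\dGamma(\beta(v))\le t$. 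Having defined $\kappa$ on the boundary $\partial\delta$ of a $j$-simplex so that it is Lipschitz on each face, I would extend it across $\delta$ using the polynomial Lipschitz extension Corollary~\ref{cor:polyLipExtend}: if $\Lip(\kappa|_{\partial\delta})\le \Lambda_{j-1}$, the extension has Lipschitz constant $\lesssim \Lambda_{j-1}^m$, giving a recursion $\Lambda_j\lesssim\Lambda_{j-1}^m$. Since $\Lambda_0 \lesssim t$ (the map on vertices has displacement $\lesssim t$, and adjacent vertices map to points within $\lesssim t$ of each other, so $\Lambda_0\lesssim t$), unwinding the recursion through $j = 0,1,\dots,n$ gives $\Lip\kappa = \Lambda_n \lesssim t^{m^n}$, which is property~(1).

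For property~(2), the subtlety is that $\kappa$ must agree with $\beta$ exactly on $\beta^{-1}(\FD)$, but the inductive construction above only matches $\beta$ at vertices. I would fix this by a collaring trick analogous to the one used for $\sansP_\bD$ and $f$ in Sections~\ref{sec:parametrizedCones}--\ref{sec:randomMapsX}: decompose $Y$ using $\beta^{-1}(\FD)$, and on the part of $Y$ where $\beta$ already lands in $\FD$, simply take $\kappa = \beta$; then on a collar neighborhood, interpolate between $\beta$ and the simplicial construction via a straight-line homotopy in $\FD$ (using that $\FD$ is a nonpositively curved-enough submanifold with corners — or, more safely, interpolate in $X$ and then re-apply $\rho$). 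A cleaner route that avoids worrying about the geometry of $\FD$: modify the closest-point projection used at vertices so that $\kappa(v)=\beta(v)$ whenever $\beta(v)\in\FD$ (which is automatic, since $\rho$ fixes $\FD$), and handle the boundary case by noting that if $\beta(\delta)\subset\FD$ then all vertices of $\delta$ are fixed and the extension can be taken to be $\beta|_\delta$ itself. Property~(3) then follows from property~(1) together with the vertex estimate: $d(\kappa(y),\beta(y))\le d(\kappa(y),\kappa(v))+d(\kappa(v),\beta(v))+d(\beta(v),\beta(y))\lesssim t^{m^n}+t+1\lesssim t^{m^n}$ for $v$ a nearby vertex.

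For the volume bound~(4), the key point is that the construction is \emph{local}: $\kappa|_\delta$ depends only on $\beta$ restricted to a bounded neighborhood of $\delta$, so $\vol(\kappa|_\delta)\lesssim (\Lip\kappa|_\delta)^n$, and $\Lip\kappa|_\delta$ is governed not by the global bound $t$ but by $\max\{\dGamma(\beta(y))+1 : y \text{ near } \delta\}$. Concretely, running the recursion $\Lambda_j\lesssim\Lambda_{j-1}^m$ with the local starting value $\Lambda_0(\delta)\lesssim \max_{y\in \nbhd(\delta)}(\dGamma(\beta(y))+1)$ gives $\Lip(\kappa|_\delta)\lesssim (\max_{y\in\nbhd(\delta)}(\dGamma(\beta(y))+1))^{m^n}$, hence $\vol(\kappa|_\delta)\lesssim (\max_{y\in\nbhd(\delta)}(\dGamma(\beta(y))+1))^{nm^n}$. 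Summing over the $\lesssim L^n$ (or $\lesssim L^{n+1}$, whichever) unit simplices and comparing the sum to the integral — using that each point of $Y$ lies in boundedly many neighborhoods $\nbhd(\delta)$ and that $\dGamma\circ\beta$ has bounded oscillation on each $\nbhd(\delta)$ since $\Lip\beta\le1$ — yields $\vol\kappa \lesssim \int_Y (\dGamma(\beta(y))+1)^{nm^n}\,dy$, which is~\eqref{eq:integralVolume}. The main obstacle I anticipate is the bookkeeping in property~(2): getting the collar/interpolation to match $\beta$ \emph{exactly} on $\beta^{-1}(\FD)$ while keeping the Lipschitz and volume estimates local requires care, essentially because $\beta^{-1}(\FD)$ need not be subcomplex-shaped; the cleanest fix is probably to first homotope $\beta$ slightly (at bounded cost in area and with image staying in $X(t+1)$) so that $\beta^{-1}(\FD)$ is a union of simplices of a sufficiently fine subdivision, and then the extension on simplices mapping into $\FD$ is literally $\beta$ itself.
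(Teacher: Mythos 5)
Your overall strategy — triangulate $Y$ into unit simplices, project vertices to $\FD$ by the closest-point retraction $\rho$, and extend inductively over the skeleta using Corollary~\ref{cor:polyLipExtend} with the recursion $\Lambda_j\lesssim\Lambda_{j-1}^m$ running on the \emph{local} starting value $\dGamma(\beta(\cdot))+1$ — is exactly what the paper does, and your derivations of properties (1), (3), and (4) from the local Lipschitz bound match the paper's.

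Where you get stuck is property (2), and you have correctly identified the subtlety ($\beta^{-1}(\FD)$ need not be a subcomplex), but your final fix does not quite work: homotoping $\beta$ so that $\beta^{-1}(\FD)$ becomes subcomplex-shaped would necessarily move $\beta$ on $\beta^{-1}(\FD)$ itself (or else you cannot rectify its shape), and then matching the homotoped $\beta'$ exactly on $(\beta')^{-1}(\FD)$ does not give $\kappa=\beta$ on $\beta^{-1}(\FD)$. The paper's resolution is the clean version of your ``interpolate in $X$ and re-apply $\rho$'' idea: let $\tau_0$ be the union of all closed cells of $\tau$ that meet $\beta^{-1}(\FD)$, and define $\kappa\coloneqq\rho\circ\beta$ on the entire subcomplex $\tau_0$ (not just on $\beta^{-1}(\FD)$). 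This is automatically $O(1)$--Lipschitz because $\beta(\tau_0)$ lies in a bounded neighborhood of $\FD$, it agrees with $\beta$ on $\beta^{-1}(\FD)$ because $\rho$ fixes $\FD$ pointwise, and — crucially — it is defined on a subcomplex, so the inductive extension on the remaining cells starts from data that already includes the exact values of $\rho\circ\beta$ on $\partial\tau_0$, with no mismatch. Property (2) is then vacuous outside $\tau_0$ because $\beta^{-1}(\FD)\subset\tau_0$ by construction. This replaces the homotopy you proposed and avoids the gap.
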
  
\begin{proof}
  We may suppose without loss of generality that $L$ is a positive
  integer.  Let $\tau$ be a triangulation of $Y$ such that the number of $n$-simplices is
  $\#\cF^n(\tau)\approx L^n$ and each simplex is bilipschitz equivalent
  to the unit simplex.  For instance, we can identify $D^n$ with
  the cube of side length $L$ by a bilipschitz map, then take $\tau$
  to be a subdivision of the unit grid. 

  Let $\tau_0$ be the union of all of the closed cells of $\tau$ that intersect $\beta^{-1}(\FD)$.  Let $\rho\from X\to X_0$ be the retraction from Theorem~\ref{thm:fundDomain} and define $\kappa=\rho\circ \beta$ on $\tau_0$.  In particular, $\beta(y)=\kappa(y)$ for all $y\in \beta^{-1}(\FD)$.  For each vertex $v\in \tau^{(0)}$, let $\kappa(v)$ be the closest point in $\FD$ to $\beta(v)$.  For each edge $e=\langle v,w\rangle\in \cF^1(\tau)$, either $e\subset \tau_0$, in which case $d(\kappa(v),\kappa(w))\lesssim 1$, or
  $$d(\kappa(v),\kappa(w))\lesssim \dGamma(\beta(v))+d(v,w)+\dGamma(\beta(w))\lesssim \dGamma(\beta(v))+1\le t+1.$$

  By Corollary~\ref{cor:polyLipExtend}, we can extend $\kappa$ to $\tau^{(1)}$ so that for each $e\in \cF^1(\tau)$ and all $x\in e$,
  $$\Lip \kappa|_{e}\lesssim (\dGamma(\beta(x))+1)^m.$$
  Applying Corollary~\ref{cor:polyLipExtend} inductively, we can extend $\kappa$ to $\tau^{(i)}$ so that for all $i$, all $\delta \in \cF^i(\tau)$, and all $x\in \delta$,
  \begin{equation}\label{eq:indPoly}
    \Lip \kappa|_{\delta}\lesssim (\dGamma(\beta(x))+1)^{m^i}.
  \end{equation}
  
  This extension satisfies the desired conditions.  By \eqref{eq:indPoly}, we have $\Lip \kappa\lesssim t^{m^n}$.  We constructed $\kappa$ so that $\kappa(y)=\beta(y)$ for all $y\in \beta^{-1}(\FD)$.  For any $y\in Y$, let $v\in \tau^{(0)}$ is a vertex such that $d(v,y)\lesssim 1$.  Then
  \begin{align*}
    d(\beta(y),\kappa(y))
    &\le d(\beta(y),\beta(v))+d(\beta(v),\kappa(v))+d(\kappa(v),\kappa(y))\\
    &\lesssim 1+\dGamma(\beta(v))+t^{m^n}\lesssim t^{m^n}.
  \end{align*}
  
  Finally, by \eqref{eq:indPoly}, we have
  $$\vol \kappa \lesssim \sum_{\delta\in \cF^n(\tau)} (d([\Gamma],\beta(\delta))+1)^{nm^n}\approx \int_Y (\dGamma(\beta(y))+1)^{nm^n}\;dy,$$
  as desired.
\end{proof}

The proof of Lemma~\ref{lem:cuspReplacement} uses Proposition~\ref{prop:lipExtend}, but we can also use Lemma~\ref{lem:cuspReplacement}, to improve Proposition~\ref{prop:lipExtend} and prove sharp bounds on the Dehn functions of higher-rank lattices.

\begin{proof}[{Proof of Theorem~\ref{thm:mainThmDehn}}]
  Since $\Gamma$ is not hyperbolic, we have $\delta_\Gamma(L)\gtrsim L^2$ for all $L\ge 1$.  It thus suffices to show that $\delta_{\FD}(L)\lesssim L^2$ for all $L\ge 1$.  

  Let $\alpha\from S^1\to \FD$ be a closed curve of length $L$, parametrized with constant speed and let $\beta_0$ be an extension as in Proposition~\ref{prop:lipExtend}.  By rescaling, we may view $\beta_0$ as a $1$--Lipschitz map $\beta_0 \from D=D^2(L)\to X(\eta\blog L)$ such that $\int_{D}e^{b\dGamma(\beta_0(y))}\;dy\lesssim L^2$ (see \eqref{eq:LipExpMomentRescaled}).  By Lemma~\ref{lem:cuspReplacement}, there is a map $\kappa\from D\to \FD$ such that $\kappa$ extends $\alpha$ and
  $$\area \kappa \lesssim \int_{D}(1+\dGamma(\beta_0(y)))^{2m^n}\;dy\lesssim \int_{D}e^{b\dGamma(\beta_0(y))}\;dy\lesssim L^2.$$
\end{proof}
\begin{remark}
  Note that, though the area of $\kappa$ is quadratic in $L$, its Lipschitz constant could be much larger than $L$.  It is an open question whether $\FD$ is Lipschitz $1$--connected.  Lipschitz $1$--connectedness implies a quadratic Dehn function.  It is not known whether the converse is always true, but Lipschitz $1$--connectedness for some solvable groups with quadratic Dehn functions is proved in \cite{CohenLipOne}
\end{remark}

\subsection{Proof of Thm.~\ref{thm:mainThmLower}}

Combining Lemma~\ref{lem:cuspReplacement} with Lemma~\ref{lem:polyLower} leads to exponential lower bounds on the $k$-dimensional filling functions of  irreducible lattices in semisimple groups of $\mathbb R$-rank $k$.  As it is known that filling functions of lattices are at most exponential, these bounds are sharp \cite[5.A$_7$]{GroAII}\cite{LeuPoly}.

\begin{thm}\label{prop:sharpLowerBounds}
  There is an $c>0$ such that for every sufficiently large
  $L\ge 2$, there is a sphere $\kappa\from S^{k-1}\to \FD$ such
  that $\vol^{k-1} \kappa\approx L^{k-1}$ and
  $$\delta^{k-1}_{\FD}(\kappa)\ge \FV^k_{\FD}(\fclass{\kappa})\gtrsim e^{c L}.$$
  Consequently, 
  $$\FV^k_{\Gamma}(V)\gtrsim e^{c V^{\frac{1}{k-1}}}$$
  and 
  $$\delta^{k-1}_{\Gamma}(V)\gtrsim e^{c V^{\frac{1}{k-1}}}.$$
\end{thm}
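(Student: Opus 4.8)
The plan is to combine the super-polynomial lower bound of Lemma~\ref{lem:polyLower} with the polynomial retraction of Lemma~\ref{lem:cuspReplacement}, exactly as the sketch in Section~\ref{sec:sketch} and the paragraph introducing this theorem suggest. Fix $L \ge 2$ large, and let $\alpha \from S^{k-1} \to X(\eta \log L)$ be the round sphere produced by Lemma~\ref{lem:polyLower}: it satisfies $\Lip \alpha \approx L$, $\vol^{k-1}\alpha \approx L^{k-1}$, the exponential-moment bound $\int_{S^{k-1}} \exp(b \dGamma(\alpha(x)))\,dx \lesssim 1$, and the filling lower bound $\FV^k_{X(L/4)}(\fclass{\alpha}) \gtrsim e^{\omega L}$. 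Rescale so that $\alpha$ is a $1$--Lipschitz map from $S^{k-1}(L')$ with $L' \approx L$, landing in $X(\eta \log L) \subseteq X(t)$ for $t = \eta\blog L$.

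First I would apply Lemma~\ref{lem:cuspReplacement} (with $n = k-1 < k$, $Y = S^n(L')$, $\beta = \alpha$) to obtain a map $\kappa \from S^{k-1}(L') \to \FD$ with $\Lip \kappa \lesssim t^{m^n} = (\eta\blog L)^{m^{k-1}}$, agreeing with $\alpha$ on $\alpha^{-1}(\FD)$, staying within distance $\lesssim t^{m^n}$ of $\alpha$ everywhere, and with volume controlled by
\begin{equation*}
  \vol^{k-1}\kappa \lesssim \int_{S^{k-1}(L')} (\dGamma(\alpha(y))+1)^{(k-1)m^{k-1}}\,dy.
\end{equation*}
Since $(u+1)^{(k-1)m^{k-1}} \lesssim e^{b u}$ for $u \ge 0$, the moment bound \eqref{eq:polyLower moment bound} (rescaled, so it contributes a factor $L^{k-1}$) gives $\vol^{k-1}\kappa \lesssim L^{k-1}$. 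Because $\kappa$ stays within a distance $\lesssim (\blog L)^{m^{k-1}}$ of $\alpha$, a straight-line homotopy $H$ from $\alpha$ to $\kappa$ has volume $\lesssim (\blog L)^{m^{k-1}} \cdot L^{k-1}$ and, crucially, image contained in $X(\eta\log L + C(\blog L)^{m^{k-1}})$. The point is that this polylogarithmic quantity is still $\le L/4$ for $L$ large, so $H$ lives in $X(L/4)$. Therefore
\begin{equation*}
  \FV^k_{\FD}(\fclass{\kappa}) \ge \FV^k_{X(L/4)}(\fclass{\alpha}) - \vol^k H \gtrsim e^{\omega L} - (\blog L)^{m^{k-1}} L^{k-1} \gtrsim e^{c L}
\end{equation*}
for a suitable $c \in (0,\omega)$ and $L$ large, where I use $\FD \subseteq X(r_0) \subseteq X(L/4)$ for $L$ large. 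Since $\vol^{k-1}\kappa \lesssim L^{k-1}$, we can rescale $\kappa$ back to a map with $\vol^{k-1}\kappa \approx L^{k-1}$, proving the first display of the theorem; the inequality $\delta^{k-1}_{\FD}(\kappa) \ge \FV^k_{\FD}(\fclass{\kappa})$ is the comparison between homotopical and homological filling recalled at the end of Section~\ref{sec:fillingDefs}.

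For the last two displays, set $V \approx L^{k-1}$, i.e.\ $L \approx V^{1/(k-1)}$, and feed $\kappa$ into the definition of $\FV^k_{\FD}$ and $\delta^{k-1}_{\FD}$: since $\mass \fclass{\kappa} \approx L^{k-1} \approx V$, we get $\FV^k_{\FD}(V) \gtrsim e^{cV^{1/(k-1)}}$ and similarly for $\delta^{k-1}$. Finally, by the quasi-isometry invariance \eqref{eq:eqRel1} of filling functions and the fact that $\FD$ is quasi-isometric to $\Gamma$ (Theorem~\ref{thm:fundDomain}), these transfer to $\FV^k_\Gamma(V) \gtrsim e^{cV^{1/(k-1)}}$ and $\delta^{k-1}_\Gamma(V) \gtrsim e^{cV^{1/(k-1)}}$ after adjusting $c$; this establishes Theorem~\ref{thm:mainThmLower}. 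The \textbf{main obstacle} is the bookkeeping in the subtraction step: one must verify that the homotopy $H$ (and hence the ambient region in which the filling inequality \eqref{eq:polyLower filling bound} is applied) really stays inside $X(L/4)$ — this requires that the Lipschitz constant $t^{m^{k-1}} = (\eta\blog L)^{m^{k-1}}$ of $\kappa$, which governs how far $\kappa$ and $H$ can stray from $X(\eta\log L)$, grows only polylogarithmically in $L$, so that $\eta\log L + C(\blog L)^{m^{k-1}} \le L/4$ for $L$ large; once that is checked, the exponential term $e^{\omega L}$ dominates everything and the argument closes.
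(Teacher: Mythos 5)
Your proposal follows the paper's own proof of this theorem essentially verbatim: take the round sphere $\alpha$ from Lemma~\ref{lem:polyLower}, rescale it to a $1$--Lipschitz map $\alpha'$ on $S^{k-1}(L)$, apply Lemma~\ref{lem:cuspReplacement} to obtain $\kappa$, bound $\vol^{k-1}\kappa\lesssim L^{k-1}$ by comparing the polynomial $(u+1)^{(k-1)m^{k-1}}$ against $e^{bu}$ and invoking the moment bound~\eqref{eq:polyLower moment bound}, bound the straight-line homotopy $H$ by a polylogarithmic-times-$L^{k-1}$ quantity, check that $H$ stays inside $X(L/4)$, and subtract. You also correctly identify the bookkeeping step about the support of $H$ as the point that must actually be verified. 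The only discrepancies are cosmetic exponent slips — you write $\vol^k H\lesssim (\blog L)^{m^{k-1}}L^{k-1}$ and the paper writes $L^{k-1}(\eta\log L)^{km^k}$, whereas the correct bound from Lemma~\ref{lem:cuspReplacement} (with $n=k-1$) is $\lesssim L^{k-1}(\blog L)^{km^{k-1}}$; none of this matters since every such quantity is $o(e^{\omega L})$.
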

\begin{proof}
  The sphere $\alpha\from S^{k-1}\to X(\eta \log L)$ constructed in
  Lemma~\ref{lem:polyLower} is a round sphere of radius $cL$
  lying in a flat, and there is an $\omega>0$ such that 
  $$\FV^k_{X(L/4)}(\fclass{\alpha})\gtrsim e^{\omega L}.$$
  Let $\alpha'\from S^{k-1}(L)\to X(\eta \log L)$ be a rescaling of $\alpha$ and let $\kappa\from Y=S^{k-1}(L)\to \FD$ be the result of applying
  Lemma~\ref{lem:cuspReplacement} to $\alpha'$.  Then
  \begin{align*}
    \vol \kappa&\lesssim \int_{Y} (1+\dGamma(\alpha'(y)))^{(k-1)m^{k-1}}\;dy\\ 
               &\lesssim \int_{Y} e^{b\dGamma(\alpha'(y))}\;dy \\
               &=L^{k-1}\int_{S^{k-1}} e^{b\dGamma(\alpha(y))}\;dy \lesssim L^{k-1}.
  \end{align*}
  We claim that $\FV^k_{\FD}(\fclass{\kappa}) \gtrsim e^{\omega L}.$

  By Lemma~\ref{lem:cuspReplacement}.\ref{it:cuspRep bdd dist}, we have
  $$w:=\sup_{y\in Y} d(\alpha'(y),\kappa(y))\lesssim (\eta \log L)^{m^k}.$$  
  Let $r_0=\sup_{y\in \FD} \dGamma(y)$ and let $H\from S^{k-1}(L)\times [0,1]\to X(r_0+w)$ be the straight-line
  homotopy from $\alpha'$ to $\kappa$.  Then
  $$\Lip H \lesssim \max \{\Lip \alpha',\Lip \kappa, w\}\lesssim (\eta \log L)^{m^k},$$
  and $\vol^k H\lesssim L^{k-1}(\eta \log L)^{k m^k}.$ This is asymptotically smaller than $e^{\omega L}$, so if $L$ is sufficiently large, we may assume that
  $$\vol^k H\le \frac{\FV^k_{X(L/4)}(\fclass{\alpha})}{2}$$
  and $X(r_0+w) \subset X(L/4)$.  If so, then 
  $$\FV^k_{\FD}(\fclass{\kappa})\ge \FV^k_{X(L/4)}(\fclass{\alpha})-\vol^k H\gtrsim e^{\omega L}.$$
\end{proof}

\section{Additional tools from geometric measure theory}\label{sec:GMT tools}

The remainder of this paper is devoted to proving sharp polynomial bounds for the higher-dimensional filling volume functions of lattices.  This will require some additional tools from geometric measure theory, because the geometry of fillings is more complicated in higher dimensions.  In low dimensions, one can extend a closed curve to a disc using a dyadic partition of the square as in Figure~\ref{fig:dyadicSub} and Proposition~\ref{prop:lipExtend}.  In higher dimensions, however, the geometry and topology of an efficient filling of a cycle depend on the cycle.  In this section, we introduce some concepts and theorems from geometric measure theory that will help us to construct the desired fillings.

\subsection{Quasiconformal complexes and simplicial approximations}\label{sec:QCcomplexes}

We define a \emph{Riemannian simplicial complex} to be a simplicial complex in which each simplex has the metric of a Riemannian manifold with corners and each gluing map is a Riemannian isometry.  The \emph{standard metric} on a simplex is the metric of the unit Euclidean simplex, and the standard metric on a simplicial complex is the path metric such that each simplex is isometric to a standard simplex.  A Riemannian simplicial complex $\Sigma$ is a quasiconformal complex or \emph{QC complex} if there is a $c$ (called the \emph{QC constant} of $\Sigma$) and a \emph{scale function} $s\from \cF(\Sigma)\to \R$ on
 the faces of $\Sigma$ such that the Riemannian metric on each $d$--simplex $\delta$ is $c$--bilipschitz equivalent to the standard metric scaled by a factor of $s(\delta)$.  In particular, $\diam \delta\approx s(\delta)$.  We further require that if $\delta, \delta'\in \cF(\Sigma)$ and $\delta\cap \delta'\ne \emptyset$, then $s(\delta)\approx s(\delta')$.  (This is a slightly stronger condition than the definition in \cite{YoungHigherSol}, which did not impose a condition on adjacent simplices.)

The Federer-Fleming deformation theorem \cite{FedFlemNormInt} approximates Lipschitz cycles and chains by cellular cycles and chains.  For any $Z\subset Y$, let $\nbhd(Z)$ be the union of all of the closed cells of $Y$ that intersect $Z$.  We first recall a version of the approximation theorem proved in \cite{ECHLPT}.  We use the definitions of $\mass$ etc.\ given in Section~\ref{sec:fillingDefs}.
\begin{thm}[{\cite[10.3]{ECHLPT}}]\label{thm:FedFlemSimp}
  Let $\Sigma$ be a finite-dimensional QC complex with scale function $s\from \cF(\Sigma)\to \R$.  There is a constant $c$ depending on $\dim \Sigma$ and the QC constant of $\Sigma$ such that if $a\in \CLip_k(\Sigma)$ is a Lipschitz $k$--chain and $\partial a\in \Ccell_{k-1}(\Sigma)$, then there are $P(a)\in \Ccell_k(Y)$ and $Q(a)\in \CLip_{k+1}(Y)$ such that:
  \begin{enumerate}
  \item $\partial a=\partial P(a)$,
  \item $\partial Q(a) = a - P(a)$,
  \item $\mass P(a)\le c \cdot\mass(a)$,
  \item if $s(\delta)\le s_0$ for all $\delta\in \cF(\nbhd(\supp a))$, then $\mass Q(a)\le c s_0 \cdot\mass(a)$,
  \item $\supp P(a)\cup \supp Q(a)\subset \nbhd(\supp a)$,
  \item if $Z\subset Y$, then
  \begin{equation}\label{eq:FedFlemMassZBound}
    \mass_Z P(a)=\int_{P(a)} 1_Z\;dz\le C \mass_{\nbhd(Z)} a.
  \end{equation}
  \end{enumerate}  
\end{thm}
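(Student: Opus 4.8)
The plan is to deduce Theorem~\ref{thm:FedFlemSimp} from a scale-sensitive form of the Federer--Fleming deformation theorem \cite{FedFlemNormInt}, along the lines of \cite[10.3]{ECHLPT}: one pushes $a$ off the interiors of the cells of dimension larger than $k$, one dimension at a time, by radially projecting from generic interior points, keeping track of the way the scale function $s$ enters the mass bounds. Throughout, every implicit constant is required to depend only on $\dim\Sigma$ and the QC constant $c$.

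The first step is the single-cell estimate. Fix $j$ with $k<j\le\dim\Sigma$ and a $j$-cell $\delta$, and identify $\delta$ with the standard $j$-simplex scaled by the factor $s(\delta)$ via a $c$--bilipschitz map. For a point $p$ in the concentric ``middle half'' $\delta^{1/2}$ of $\delta$, let $\psi_p$ be the radial projection of $\inter\delta$ from $p$ onto $\partial\delta$, extended by the identity on the rest of $\Sigma$; away from a small ball about $p$ it has Lipschitz constant $\approx 1$. For a Lipschitz chain $b$ with $\supp\partial b\subset\Sigma\setminus\inter\delta$, the push-forward $\psi_{p*}b$ is defined for almost every $p$, and averaging the classical Federer--Fleming estimate over $p\in\delta^{1/2}$ produces a generic $p$ for which $\mass(\psi_{p*}b)\lesssim\mass_{\bar\delta}(b)$; the straight-line homotopy $h_p$ from $b$ to $\psi_{p*}b$ is supported in $\bar\delta$, satisfies $\partial h_p=b-\psi_{p*}b$ over $\inter\delta$, and, for a generic $p$, has $\mass(h_p)\lesssim s(\delta)\,\mass_{\bar\delta}(b)$, the extra factor $\diam\delta\approx s(\delta)$ coming from the cone direction. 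Since $\psi_p$ carries $\inter\delta$ into $\partial\delta\subset\bar\delta$, the restricted mass $\mass_Z(\psi_{p*}b)$ vanishes unless $Z$ meets $\bar\delta$, in which case $\bar\delta\subset\nbhd(Z)$, so the same averaged bound controls it by $\mass_{\nbhd(Z)}(b)$.

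Next I would run the induction, applying the single-cell step simultaneously to all $j$-cells (projections in distinct $j$-cells have disjoint supports on the relevant interiors and commute), for $j$ decreasing from $\dim\Sigma$ down to $k+1$. Since $\partial a\in\Ccell_{k-1}(\Sigma)$ already lies in $\Sigma^{(k-1)}$, every $\psi_p$ fixes it, so the boundary of the running chain stays equal to $\partial a$ and no term involving $\partial a$ ever enters the mass estimates. Making the generic choices of all the projection points by a single union bound (so that the bounds for $\mass$, for $\mass_{\nbhd(Z)}$, and for the fixed set $Z$ all hold for the same choice), after the finitely many $\dim\Sigma-k$ stages one obtains $P(a)\in\Ccell_k(\Sigma)$ with $\partial P(a)=\partial a$ and $\mass P(a)\lesssim\mass a$, and $Q(a)=\sum h_p\in\CLip_{k+1}(\Sigma)$ with $\partial Q(a)=a-P(a)$. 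Each cone $h_p$ lies in a cell $\delta$ whose closure meets the then-current support; that closure is contained in $\nbhd(\supp a)$, and when $s\le s_0$ on $\cF(\nbhd(\supp a))$ the adjacency hypothesis $s(\delta)\approx s(\delta')$ for $\delta\cap\delta'\ne\emptyset$ gives $s(\delta)\lesssim s_0$, so summing over the stages yields $\mass Q(a)\lesssim s_0\,\mass a$.

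Finally, for the support and localization clauses: each $\psi_p$ moves a point only within its minimal closed cell, so the composite deformation $\Psi$ satisfies $\Psi(x)\in\bar\delta_x$ with $\delta_x$ the minimal cell containing $x$. This gives $\supp P(a)\cup\supp Q(a)\subset\nbhd(\supp a)$ at once, and also $\Psi^{-1}(Z)\subset\nbhd(Z)$ for every $Z$ (if $\Psi(x)\in Z$ then $\bar\delta_x$ meets $Z$, so $\bar\delta_x\subset\nbhd(Z)$ and $x\in\nbhd(Z)$); hence the part of $P(a)$ lying in $Z$ comes only from simplices of $a$ meeting $\nbhd(Z)$, and applying the averaged mass bound to $a$ restricted to $\nbhd(Z)$ gives \eqref{eq:FedFlemMassZBound}. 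I expect the main obstacle to be the bookkeeping of the constants: one has to check that no implicit constant depends on $s$ or on the (possibly infinite) number of cells, which is exactly where the uniform per-simplex bilipschitz control and the hypothesis that adjacent simplices have comparable scale are used --- the latter being the extra ingredient over the definition in \cite{YoungHigherSol}. A secondary subtlety is arranging the generic choices so that the unlocalized bound and the localized bound \eqref{eq:FedFlemMassZBound} hold simultaneously for the same $P(a)$, which requires fixing $Z$ and setting up the union bound before the projection points are chosen.
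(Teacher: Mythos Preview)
Your approach is essentially the same as the paper's: push $a$ down through the skeleta by radial projection from well-chosen centers, one codimension at a time, tracking mass and homotopy.  The paper phrases the projections as Lipschitz maps $P_i$ that blow up a small ball $B_\delta\subset\delta$ onto $\inter\delta$ and send the rest to $\partial\delta$, which avoids the singularity at the projection center and makes the composite globally Lipschitz, but this is only a cosmetic difference from your radial projection.

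There is, however, a genuine wobble in your last paragraph.  You say the localized bound \eqref{eq:FedFlemMassZBound} ``requires fixing $Z$ and setting up the union bound before the projection points are chosen.''  That would make $P(a)$ depend on $Z$, which is not what the theorem asserts and not what the application (Proposition~\ref{prop:bootstrapping}) needs, where the bound is invoked for an infinite family $Z_i$ with a single $P(a)$.  The paper avoids this by observing that the averaging argument actually yields a \emph{per-cell} estimate: choosing the center in each $j$--cell $\delta$ so that the mass of the projected chain restricted to $\partial\delta$ is at most a constant times $\mass_\delta a$ (this is the content of the displayed inequality $\int_a D_i(x)^n\one_{\inter\delta}(x)\,dx\lesssim\mass_{\inter\delta}a$ in the sketch).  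These choices are made cell by cell, independently, so no union bound over cells or over $Z$ is needed.  Once you have the per-cell bound at every stage, summing over the cells $\delta\subset\nbhd(Z)$ gives \eqref{eq:FedFlemMassZBound} for \emph{every} $Z$ simultaneously.  Reorganize your argument so that the choice in each cell controls the outgoing mass from that cell alone; the localization then follows automatically.
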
 
The proof of this theorem is essentially the same as the argument in
Chapter~10.3 of \cite{ECHLPT}, so we provide only a sketch of the
argument.
\begin{proof}[Sketch of proof] 
  Let $d=\dim \Sigma$.  As in Chapter~10.3 of \cite{ECHLPT}, we construct $P(a)$ by constructing a sequence of locally Lipschitz maps $P_i\from \Sigma^{(d-i)}\to \Sigma^{(d-i)}$, $i=0,\dots, d-n-1$.  On each simplex $\delta$, the map $P_i$ sends a small ball $B_\delta\subset \delta$ homeomorphically to $\inter \delta$ and sends the rest of $\delta$ to $\partial \delta$.  Epstein, et al.\ show that by choosing the $B_\delta$ carefully (i.e., to avoid $\supp a$), we can ensure that $P_i$ fixes $\Sigma^{(d-i-1)}$ pointwise, that $(P_i\circ\dots \circ P_1)(\supp a)\subset \Sigma^{(d-i-1)}$, and that
  \begin{equation}\label{eq:PiP0 mass a}
    \mass (P_{i}\circ\dots \circ P_0)_\sharp(a)\lesssim \mass a
  \end{equation}
  for all $i$.  The argument in \cite{ECHLPT} is stated for simplicial complexes formed from unit simplices, but by a scaling argument, it remains true for QC complexes.

  The image $p=(P_{d-n-1}\circ\dots \circ P_0)_\sharp(a)$ is then a Lipschitz $n$--chain in $\Sigma^{(n)}$ with cellular boundary.  The degree of this chain on each $n$--cell is well-defined, and we let $P(a)$ be the cellular $n$--chain whose coefficient on $\delta$ is the degree of $p$ on $\delta$.  Then $\partial P(a)=\partial p=\partial a$ and
  $$\mass P(a)\le \mass p\lesssim \mass a.$$

  The bound \eqref{eq:FedFlemMassZBound} follows from a local version of \eqref{eq:PiP0 mass a}.  Roughly speaking, for any $\delta\in \cF^{\delta}$, the map $P_{i}\circ\dots \circ P_0$ expands the part of $a$ that lies in $\inter \delta$ by at most a constant factor.  To state this rigorously, let
  $$D_i(x)=\limsup_{\epsilon \to 0} \Lip(P_{i}\circ\dots \circ P_0|_{B_{x}(\epsilon)}),$$
  so that
  $$\mass (P_{i}\circ\dots \circ P_0)_\sharp(a)\lesssim \int_{a} D_i(x)^n\;dx.$$
  Then
  $$\int_{a} D_i(x)^n \one_{\inter \delta}(x)\;dx\lesssim \mass_{\inter \delta} a.$$
  
  If $Z\subset Y$, then
  $(P_{d-n-1}\circ\dots \circ P_0)^{-1}(Z)\subset \nbhd(Z)$, so
  \begin{align*}
    \mass_Z P(a)&\le \mass_Z p\\ 
                &\lesssim \sum_{\delta\in \cF(\nbhd(Z))} \int_{a} D_i(x)^n \one_{\inter \delta}(x)\;dx\\ 
                &\lesssim \mass_{\nbhd(Z)} a.
  \end{align*}
\end{proof}

We can also approximate chains with non-cellular boundaries, but the mass of the approximation depends on the boundary of the chain.  
\begin{cor}\label{cor:FFnoncell}
  For $a\in \CLip_n(\Sigma)$, let $\bar{P}(a)=P(a-Q(\partial a))\in \Ccell_n(\Sigma)$ and $\bar{R}(a)=Q(\partial a)\in \CLip_n(\Sigma)$.  If $s_0=\sup \{s(\delta)\mid \delta \subset \nbhd(\supp \partial a)\}$, then 
  $$\mass \bar{P}(a)\lesssim \mass a+s_0 \mass \partial a,$$
  $$\mass \bar{R}(a)\lesssim s_0 \mass \partial a.$$
  Furthermore, 
  $$\partial \bar{P}(a)=\partial(a-Q(\partial a))=P(\partial a)$$
  and thus
  $$\partial a=\partial \bar{P}(a)+\partial \bar{R}(a).$$
\end{cor}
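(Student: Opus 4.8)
The statement to prove is Corollary~\ref{cor:FFnoncell}, which is essentially a bookkeeping consequence of Theorem~\ref{thm:FedFlemSimp}. Here is my plan.

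\medskip

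The plan is to apply Theorem~\ref{thm:FedFlemSimp} twice: once to $\partial a$ (which is a Lipschitz $(n-1)$--chain whose boundary $\partial\partial a = 0$ is trivially cellular), and once to the corrected chain $a - Q(\partial a)$. First I would set $c = \partial a \in \CLip_{n-1}(\Sigma)$; since $\partial c = 0 \in \Ccell_{n-2}(\Sigma)$, the theorem applies and yields $P(c) = P(\partial a) \in \Ccell_{n-1}(\Sigma)$ and $Q(c) = Q(\partial a) \in \CLip_n(\Sigma)$ with $\partial P(\partial a) = \partial(\partial a) = 0$, $\partial Q(\partial a) = \partial a - P(\partial a)$, $\mass P(\partial a) \lesssim \mass \partial a$, and, using property (4) with $s_0 = \sup\{s(\delta) \mid \delta \subset \nbhd(\supp \partial a)\}$, $\mass Q(\partial a) \lesssim s_0 \mass \partial a$; also $\supp Q(\partial a) \subset \nbhd(\supp \partial a)$. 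This immediately gives the bound on $\bar R(a) = Q(\partial a)$.

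\medskip

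Next I would consider $a' := a - Q(\partial a) \in \CLip_n(\Sigma)$. Its boundary is $\partial a' = \partial a - \partial Q(\partial a) = \partial a - (\partial a - P(\partial a)) = P(\partial a)$, which is cellular. Hence Theorem~\ref{thm:FedFlemSimp} applies to $a'$, producing $P(a') = \bar P(a) \in \Ccell_n(\Sigma)$ with $\partial \bar P(a) = \partial a' = P(\partial a)$ and $\mass \bar P(a) \le c \mass a' \le c(\mass a + \mass Q(\partial a)) \lesssim \mass a + s_0 \mass \partial a$. Finally, combining the two boundary identities gives $\partial a = \partial a' + \partial Q(\partial a) = \partial \bar P(a) + \partial \bar R(a)$, which is the last assertion.

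\medskip

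There is essentially no serious obstacle here — the only point requiring a little care is the invocation of property (4) of Theorem~\ref{thm:FedFlemSimp}, which needs the hypothesis that the scale function is bounded by $s_0$ on $\cF(\nbhd(\supp \partial a))$; this is exactly how we define $s_0$, so it is automatic, and the QC-complex condition that adjacent simplices have comparable scales ensures $\nbhd(\supp Q(\partial a)) \subset \nbhd(\nbhd(\supp \partial a))$ still has scale comparable to $s_0$, which is what lets the second application of the theorem contribute only $\mass a + s_0 \mass \partial a$ rather than something worse. One should also note that $\supp \bar P(a) \cup \supp \bar R(a) \subset \nbhd(\supp a) \cup \nbhd(\supp \partial a)$, though the corollary statement does not explicitly record this. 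The argument is short and the implicit constants depend only on $\dim \Sigma$ and the QC constant, as in the theorem.
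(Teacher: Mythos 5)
Your proof is correct and is the evident derivation from Theorem~\ref{thm:FedFlemSimp}; the paper does not spell out a separate argument for the corollary, and your two-step application of the theorem (first to $\partial a$, then to $a-Q(\partial a)$, using that the latter has cellular boundary $P(\partial a)$) is exactly what is intended. One small remark: the closing comment about adjacent simplices having comparable scales being ``what lets the second application of the theorem contribute only $\mass a + s_0\mass\partial a$'' is a red herring — the $\mass\bar P(a)$ bound comes purely from property (3) of the theorem applied to $a'=a-Q(\partial a)$, whose mass is $\le \mass a + \mass Q(\partial a)$ by subadditivity of mass; no scale condition enters there. The scale condition is needed only once, in the first application (property (4), bounding $\mass Q(\partial a)$ by $s_0\mass\partial a$), and your choice of $s_0$ handles that directly. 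The rest of the argument, including the boundary identities $\partial\bar P(a)=\partial(a-Q(\partial a))=P(\partial a)$ (from property (1)) and $\partial a=\partial\bar P(a)+\partial\bar R(a)$, checks out.
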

We think of $\bar{P}(a)$ as a cellular approximation of $a$ and $\bar{R}(a)$ as an annulus connecting the boundaries of $a$ and $\bar{P}(a)$.  In fact, by making more careful estimates, one can construct approximations of $a$ with mass bounds independent of the mass of $\partial a$; see \cite{WhiteDeform}.  For our purposes, the estimates in Corollary~\ref{cor:FFnoncell} will suffice.

\subsection{Approximating spaces with finite Assouad-Nagata dimension}

The above theorems are particularly useful when working with metric spaces with finite Assouad--Nagata dimension, because such spaces can be approximated by QC complexes.
\begin{defn}\label{def:ANdim}
  Let $Y$ be a metric space and suppose that $D,s>0$.  A $D$--bounded covering of $Y$ is a collection $\mathcal{B}=(B_i)_{i\in I}$ of subsets of $Y$ such that $\diam B_i\le D$ for all $i\in I$.  The $s$--multiplicity of $\mathcal{B}$ is the smallest integer $n$ such that each subset $U\subset Y$ with $\diam U\le s$ meets at most $n$ elements of $\mathcal{B}$.  

  The \emph{Assouad--Nagata dimension} $\dimAN Y$ of $Y$ is the smallest integer $n$ such that there exists $c>0$ such that for all $s>0$, $Y$ has a $cs$--bounded covering with $s$--multiplicity at most $n+1$.  
\end{defn}
Note that if $Z\subset Y$, then $\dimAN Z\le \dimAN Y$, since any cover of $Y$ can be intersected with $Z$ to obtain a cover of $Z$ with the same diameter and multiplicity bounds.

Importantly, symmetric spaces of noncompact type have finite Assouad--Nagata dimension.
\begin{thm}[\cite{LangSch}]
  Any symmetric space $X$ of noncompact type satisfies $\dimAN X<\infty$.
\end{thm}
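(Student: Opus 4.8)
The plan is to reduce the statement to a fact about solvable Lie groups via the Iwasawa decomposition. Write $G=KAN$ and set $\mathsf S:=AN$; then $\mathsf S$ acts simply transitively by isometries on $X=G/K$, so the orbit map $g\mapsto gK$ is a diffeomorphism $\mathsf S\to X$, and pulling back the Riemannian metric of $X$ equips $\mathsf S$ with a left-$\mathsf S$-invariant Riemannian metric. Any two left-invariant Riemannian metrics on a connected Lie group are comparable at the identity and hence, by left translation, uniformly comparable everywhere, so they induce bi-Lipschitz distance functions. Since $\dimAN$ is visibly a bi-Lipschitz invariant (bounded covers with controlled multiplicity transport across a bi-Lipschitz map to bounded covers with controlled multiplicity, at the cost of changing the linear constant), it suffices to show $\dimAN \mathsf S<\infty$ for $\mathsf S$ with its standard left-invariant metric.

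For that I would induct on $\dim \mathsf S$ along a filtration by codimension-one normal subgroups. Since $\mathsf S$ is simply connected solvable, its Lie algebra has a chain $0=\mathfrak s_0\subset\mathfrak s_1\subset\dots\subset\mathfrak s_m=\mathfrak s$ with each $\mathfrak s_{j-1}$ an ideal of codimension one in $\mathfrak s_j$ (every nonzero solvable Lie algebra has a codimension-one ideal, namely the preimage of a hyperplane in the nonzero abelianization; then iterate). Exponentiating gives closed connected subgroups $\{e\}=\mathsf S_0\lhd\mathsf S_1\lhd\dots\lhd\mathsf S_m=\mathsf S$ with $\mathsf S_{j-1}$ normal in $\mathsf S_j$ and $\mathsf S_j/\mathsf S_{j-1}\cong\mathbb R$. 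Choose left-invariant metrics on the $\mathsf S_j$ making each quotient map $p_j\colon\mathsf S_j\to\mathsf S_j/\mathsf S_{j-1}\cong\mathbb R$ a Riemannian submersion, rescaled to be $1$-Lipschitz; its fibers are the left cosets of $\mathsf S_{j-1}$, which are pairwise isometric (by left translations of $\mathsf S_j$) and carry Assouad–Nagata dimension controlled by the inductive hypothesis. Then I would apply the Hurewicz-type theorem for Assouad–Nagata dimension (Brodskiy–Dydak–Higes–Mitra): if $f\colon Z\to W$ is Lipschitz, $\dimAN W\le\ell$, and the preimages of bounded subsets of $W$ have $\dimAN\le m$ uniformly, then $\dimAN Z\le\ell+m$. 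With $f=p_j$ and $W=\mathbb R$ (so $\ell=1$) this gives $\dimAN\mathsf S_j\le 1+\dimAN(\text{fiber})$, whence inductively $\dimAN\mathsf S=\dimAN\mathsf S_m\le\dim\mathsf S=\dim X<\infty$.

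The main obstacle is verifying the hypothesis of the Hurewicz theorem: the preimages $p_j^{-1}(B)$ of bounded sets $B\subset\mathbb R$ must have $\dimAN\le j-1$ with a \emph{single} constant valid for all widths of $B$ and at all scales at once — precisely the uniformity that distinguishes $\dimAN$ from ordinary asymptotic dimension. Here the structure of $\mathsf S$ enters: homogeneity reduces the estimate to a neighborhood of one fiber, and since the factor $\mathbb R=\mathsf S_j/\mathsf S_{j-1}$ acts on $\mathsf S_{j-1}$ through a fixed one-parameter group of automorphisms, the induced metric on a coset of $\mathsf S_{j-1}$ is distorted inside a slab $p_j^{-1}(B)$ by at most an exponential of $\diam B$ with a fixed rate; one then checks that transporting the inductively given covers of a fiber across the slab, and rescaling, still yields covers with the required diameter and multiplicity bounds (the logarithmic-versus-linear bookkeeping is routine but is the technical heart of the matter). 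Alternatively, one may bypass the explicit induction altogether by invoking the general fact — proved by essentially this argument — that every connected Lie group with a left-invariant metric has finite Assouad–Nagata dimension, which is what \cite{LangSch} establishes; this is the route the present paper takes.
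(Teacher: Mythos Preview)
The paper does not prove this theorem at all: it is stated with the attribution \cite{LangSch} and used as a black box, with no argument given. There is therefore no ``paper's own proof'' to compare your proposal against. Your closing remark that ``this is the route the present paper takes'' is simply mistaken --- the present paper takes no route; it quotes the result.

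As for the substance of your sketch: the reduction to the solvable Iwasawa factor $\mathsf{S}=AN$ and the induction along a chain of codimension-one normal subgroups, combined with a Hurewicz-type estimate for $\dimAN$, is indeed one of the standard arguments in the literature (and is close in spirit to what Lang--Schlichenmaier do for homogeneous Hadamard manifolds; the general Lie-group statement you allude to at the end is usually attributed to later work of Higes--Peng and Brodskiy--Dydak--Levin--Mitra rather than to \cite{LangSch} itself). Your identification of the main technical point --- the uniformity of the fiberwise Nagata constant across slabs of all widths --- is accurate, and your indication of how the one-parameter automorphism group controls the distortion is the right idea, though what you have written is a plan rather than a proof. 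None of this is needed for the present paper, which only requires the finiteness and obtains it by citation.
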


The following lemmas, which are based on the constructions used by Lang and Schlichenmaier to prove Theorem~1.5 of \cite{LangSch}, approximate a metric space with finite Assouad--Nagata dimension by a QC complex.

\begin{lemma}\label{lem:AssouadNagataCovers}
  If $Y$ is a space with Assouad--Nagata dimension $n$, then there are $a>0$ and $0<b<1$ (depending on $n$ and the constant $c$ in Definition~\ref{def:ANdim}) with the following property.  Suppose that $w \from Y\to [0,\infty)$ is a 1--Lipschitz function and let $Y_{>0}=w^{-1}((0,\infty))$.  There is a cover $\mathcal{B}=(B_i)_{i\in I}$ of $Y_{>0}$ such that 
  \begin{enumerate}  
  \item For all $i\in I$ we have $\diam B_i\le a \inf w(B_i)$.
  \item If $U\subset Y$ is a set such that $\diam U\le b \inf w(U)$, then $U$ meets at most $2(n+1)$ members of $\mathcal{B}$.
  \end{enumerate}
\end{lemma}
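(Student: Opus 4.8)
The plan is to assemble $\mathcal B$ by overlaying Assouad--Nagata covers at a geometric sequence of scales, one scale per ``annulus'' of $w$. Fix a ratio $\lambda>1$ to be pinned down later, and for $k\in\Z$ set $A_k=w^{-1}\big((\lambda^{k-1},\lambda^k]\big)$, so that $Y_{>0}=\bigcup_{k\in\Z}A_k$ is a disjoint union. By the definition of Assouad--Nagata dimension (with constant $c$), for each $k$ choose a cover $\mathcal B^{(k)}=(B^{(k)}_j)_j$ of $Y$ with $\diam B^{(k)}_j\le c\lambda^k$ for all $j$ and with $\lambda^k$--multiplicity at most $n+1$. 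Then let
$$\mathcal B=\big\{\,B^{(k)}_j\cap A_k \;:\; k\in\Z,\ j\ \text{with}\ B^{(k)}_j\cap A_k\neq\emptyset\,\big\},$$
which covers $Y_{>0}$ because each $\mathcal B^{(k)}$ already covers $A_k$.

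First I would verify property (1). A member $B^{(k)}_j\cap A_k$ has diameter at most $c\lambda^k$, while $w>\lambda^{k-1}=\lambda^{-1}\lambda^k$ everywhere on $A_k$; hence $\diam\big(B^{(k)}_j\cap A_k\big)\le c\lambda\cdot\inf w\big(B^{(k)}_j\cap A_k\big)$, so we may take $a:=c\lambda$.

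Next, property (2). Given $U$ with $\diam U\le b\inf w(U)$, put $t=\inf w(U)$; we may assume $t>0$ since otherwise $U$ misses $Y_{>0}$ entirely. Let $k_0$ be the unique index with $t\in(\lambda^{k_0-1},\lambda^{k_0}]$. Because $w$ is $1$--Lipschitz, $w(U)\subseteq[t,t+\diam U]\subseteq[t,(1+b)t]$; if $b$ is chosen with $1+b\le\lambda$, then $(1+b)t\le\lambda^{k_0+1}$, so $w(U)\subseteq(\lambda^{k_0-1},\lambda^{k_0+1}]$ and therefore $U\subseteq A_{k_0}\cup A_{k_0+1}$. Consequently $U$ can only meet members of $\mathcal B$ built from levels $k_0$ and $k_0+1$. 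On level $k_0$ (and likewise $k_0+1$) one has $\diam U\le bt\le b\lambda^{k_0}\le\lambda^{k_0}\le\lambda^{k_0+1}$, so $U$ has diameter at most the scale of that level's cover and hence meets at most $n+1$ of its members; adding the two contributions gives the bound $2(n+1)$.

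Finally I would fix the constants: taking, say, $b=\tfrac12$ and $\lambda=\tfrac32$ meets both requirements ($1+b\le\lambda$ and $b<1\le\lambda$), giving $a=\tfrac32 c$. The only genuinely delicate point is this last balancing act --- choosing $\lambda$ and $b$ so that a set of diameter $\le b\inf w$ straddles at most two consecutive annuli while still sitting below the covering scale inside each of them; once that is arranged, the rest is bookkeeping with the Assouad--Nagata constant.
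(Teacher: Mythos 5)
Your proof is correct and follows essentially the same strategy as the paper's: stratify $Y_{>0}$ into dyadic (for you, $\lambda$-adic) annuli of $w$, cover each annulus at a scale comparable to its level, and observe that a set with $\diam U \le b\inf w(U)$ straddles at most two consecutive annuli, each contributing at most $n+1$ intersections. The paper uses base $2$ and takes covers of the annuli $R_j$ directly (invoking $\dimAN R_j \le \dimAN Y$), whereas you cover all of $Y$ at scale $\lambda^k$ and intersect with $A_k$; this is a cosmetic difference with no effect on the argument or the constants' qualitative form.
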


\begin{proof}
  Let $c$ be as in Definition~\ref{def:ANdim}.  For each $j\in \Z$, let $R_j=w^{-1}([2^j,2^{j+1}))$ and let $\mathcal{A}^j=(A^j_i)_{i\in I_j}$ be a $c2^j$--bounded covering of $R_j$ with $2^j$--multiplicity at most $n+1$.  Let $\mathcal{B}=\bigcup_j \mathcal{A}^j$ and let $I=\bigsqcup_j I_j$.  For all $i\in I_j$, we have $\diam A^j_i\le c2^j$ and $\inf w(A^j_i)\ge 2^j$, so we have $\diam B_i \le c \inf w(B_i)$ for all $i\in I$.

  Let $b=\frac{1}{2}$.  Let $U\subset Y$ be a set such that $\diam U\le b \inf w(U)$ and let $j\in \Z$ be such that $\inf w(U)\in [2^j,2^{j+1})$.  Then  $w(U)\subset [2^j, 2^{j+2})$, so $U\subset R_j\cup R_{j+1}$.  Since $\diam U< 2^{j}$, at most $2(n+1)$ elements of $\mathcal{A}^j\cup \mathcal{A}^{j+1}$ meet $U$.
\end{proof}

Given such a cover, we can construct a simplicial complex $\Sigma$ equipped with a Lipschitz map $g\from Y_{>0}\to \Sigma$.  The following lemma restates part of the proof of Theorem~5.2 of \cite{LangSch}.  There are no new ideas in the proof; the main changes are in terminology (i.e., QC complexes) and in the hypothesis that $Y$ is a path metric space, which is necessary to prove that $g$ is Lipschitz.  For $U\subset Y$, let $N_r(U)=\{y\in Y\mid d(y,U)<r)$ be the $r$--neighborhood of $U$.
\begin{lemma}\label{lem:AssouadNagataComplexes}
  Let $Y$ be a path metric space, let $n=\dimAN(Y)$, and suppose that $\mathcal{B}=(B_i)_{i\in I}$ is a cover satisfying Lemma~\ref{lem:AssouadNagataCovers}.  For each $i\in I$, let $s_i= \inf w(B_i)$.  There is an $0<\epsilon<\frac{1}{2}$ depending on $a$ and $b$ such that if $C_i=N_{\epsilon s_i}(B_i)$ for all $i$ and $\Sigma$ is the nerve of $\mathcal{C}=(C_i)_{i\in I}$ with vertex set $I$, then $\dim \Sigma\le 2n+1$ and there is a QC-complex structure on $\Sigma$ with scale function $s_\Sigma\from \cF(\Sigma)\to \R$,
  \begin{equation}\label{eq:sSigma def}
    s_\Sigma(\delta)=\min_{i\in \cF^0(\delta)} s_i
  \end{equation}
  and a map $g\from Y_{>0}\to \Sigma$ such that $\Lip g\lesssim_n 1$ and $g$ satisfies the following properties:
  \begin{enumerate}
  \item 
    For all $x\in Y_{>0}$, let $I_x=\{i\in I\mid x\in C_i\}$. Then, for all $x\in Y_{>0}$,
    \begin{equation}\label{eq:ANcomplexes nerve}
      g(x)\in \langle I_x \rangle,
    \end{equation}
    where $\langle I_x\rangle$ denotes the simplex with vertex set $I_x$.  
  \item For all $i\in I$ 
    \begin{equation}\label{eq:wx sim sSigma}
      \diam C_i\lesssim \inf w(C_i) \approx \sup w(C_i) \approx s_i.
    \end{equation}
  \end{enumerate}
\end{lemma}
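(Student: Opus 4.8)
The plan is to follow the construction in the proof of Theorem~5.2 of \cite{LangSch} essentially verbatim; the two places that need care are the choice of $\epsilon$, which must be small in terms of $a$ and $b$ so that the thickened cover $\mathcal C=(C_i)$ keeps a uniformly bounded multiplicity (this is what pins down $\dim\Sigma\le 2n+1$), and the globalization of a local Lipschitz estimate for $g$, where the hypothesis that $Y$ is a path metric space enters. I would begin with elementary comparisons: since $w$ is $1$--Lipschitz, $C_i=N_{\epsilon s_i}(B_i)$, and $\diam B_i\le a s_i$, every $z\in C_i$ satisfies $(1-\epsilon)s_i\le w(z)\le (1+a+2\epsilon)s_i$ and $\diam C_i\le (a+2\epsilon)s_i\lesssim s_i$; hence $w\approx s_i$ on $C_i$, which is \eqref{eq:wx sim sSigma}, and $C_i\cap C_j\ne\emptyset$ implies $s_i\approx s_j$, with constants depending only on $a$.

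Next I would bound the multiplicity of $\mathcal C$. If $x\in C_{i_1}\cap\dots\cap C_{i_m}$ with the $i_j$ distinct, pick $b_j\in B_{i_j}$ with $d(x,b_j)<\epsilon s_{i_j}$; by the comparisons above all $s_{i_j}\approx w(x)$, so $U=\{x,b_1,\dots,b_m\}$ has $\diam U\lesssim\epsilon\, w(x)$ and $\inf_U w\gtrsim w(x)$, and for $\epsilon$ small enough (depending on $a,b$) this forces $\diam U\le b\,\inf_U w$. Lemma~\ref{lem:AssouadNagataCovers}(2) then says $U$ meets at most $2(n+1)$ members of $\mathcal B$, and it meets $B_{i_1},\dots,B_{i_m}$, so $m\le 2(n+1)$. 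Thus every point of $Y_{>0}$ lies in at most $2(n+1)$ of the $C_i$ and $\dim\Sigma\le 2n+1$. For the QC structure, the vertices of any simplex $\delta$ of $\Sigma$ index sets $C_i$ with a common point, so their $s_i$ are mutually comparable, and likewise across intersecting simplices; hence $s_\Sigma$ as in \eqref{eq:sSigma def} satisfies $s_\Sigma(\delta)\approx s_i$ for $i\in\cF^0(\delta)$ and $s_\Sigma(\delta)\approx s_\Sigma(\delta')$ when $\delta\cap\delta'\ne\emptyset$. Interpolating the scaled standard metrics by a partition of unity on $\Sigma$ gives a genuine Riemannian metric that is $\lesssim 1$--bilipschitz to the standard metric scaled by $s_\Sigma(\delta)$ on each $\delta$, so $\Sigma$ is a QC complex with constant depending only on $n$, $a$, $b$.

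Finally I would construct $g$ from a partition of unity subordinate to $\mathcal C$: set $\psi_i(x)=\max\{0,\epsilon s_i-d(x,B_i)\}$, which is $1$--Lipschitz and supported in $C_i$, let $\Psi=\sum_i\psi_i$, and put $\phi_i=\psi_i/\Psi$ and $g(x)=\sum_i\phi_i(x)v_i$, where $v_i$ is the vertex of $\Sigma$ indexed by $i$. Since $\mathcal B$ covers $Y_{>0}$, each $x\in Y_{>0}$ lies in some $B_{i_0}$, so $\Psi(x)\ge\epsilon s_{i_0}\gtrsim\epsilon\, w(x)$, and $\Psi$ is locally a sum of boundedly many $1$--Lipschitz functions; because $\phi_i(x)\ne0$ only for $i\in I_x$ we get $g(x)\in\langle I_x\rangle$, i.e.\ \eqref{eq:ANcomplexes nerve}. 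On a ball of radius $\asymp\epsilon\, w(x)$ about $x$ the nonzero $\phi_i$ all have $s_i\approx w(x)$ and are $\lesssim 1/(\epsilon\, w(x))$--Lipschitz, and $g$ maps the ball into a single simplex $\delta$ with $s_\Sigma(\delta)\approx w(x)$, so $g$ is locally $\lesssim 1/\epsilon\lesssim_n 1$--Lipschitz. Since $Y$ is a path metric space, the global bound $\Lip g\lesssim_n 1$ follows: for $d(x,y)<\tfrac12\min\{w(x),w(y)\}$ a shortest path from $x$ to $y$ stays in $Y_{>0}$ and one integrates the local estimate, while for $d(x,y)\ge\tfrac12\min\{w(x),w(y)\}$ the images $g(x),g(y)$ lie in simplices of scale $\lesssim d(x,y)$ meeting a common vertex region, so their distance in $\Sigma$ is $\lesssim d(x,y)$ directly. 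The only real subtlety is the parameter bookkeeping in the multiplicity step together with this last use of the path--metric hypothesis; everything else is routine bilipschitz geometry.
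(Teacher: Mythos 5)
Your proposal follows the same construction as the paper's, which is itself a restatement of Lang–Schlichenmaier's Theorem~5.2: thicken $\mathcal B$ to $\mathcal C=(C_i)$, bound the multiplicity of $\mathcal C$ via Lemma~\ref{lem:AssouadNagataCovers}(2), build $g$ from the partition of unity $\sigma_i(x)=\max\{0,\epsilon s_i-d(x,B_i)\}$, and establish the Lipschitz bound locally and then globalize via the path-metric hypothesis. (The paper makes the Riemannian metric on $\Sigma$ explicit as the standard metric rescaled by the piecewise-linear extension $r$ of $i\mapsto s_i$ rather than by a partition-of-unity interpolation, but this is a cosmetic difference.)

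The one step in your sketch that is not correct as written is the second case of the dichotomy in the globalization: when $d(x,y)\ge\tfrac12\min\{w(x),w(y)\}$, it is not true that $\langle I_x\rangle$ and $\langle I_y\rangle$ ``meet a common vertex region'' (they may be disjoint), nor that both have scale $\lesssim d(x,y)$ (only the one with the smaller $w$ does). Fortunately this case is not needed. The cover $\mathcal C$ consists of open sets that cover $Y_{>0}$, and you have shown $\Lip g|_{C_i}\lesssim_n 1$ uniformly in $i$; since $Y$ (hence $Y_{>0}$, in the paper's application $Y_{>0}=Y$) is a length space, a chaining of paths through this open cover upgrades the local bound to a global one directly, which is exactly what the paper does. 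You should replace your second case with this local-to-global principle rather than trying to compare $g(x)$ and $g(y)$ directly for widely separated $x,y$.

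A small point for completeness: your local estimate asserts ``$g$ maps the ball into a single simplex $\delta$,'' which need not be literally true (nearby points can belong to different $C_j$'s). What is true, and what the paper uses, is that the minimal simplices $S$, $T$ containing $g(x)$, $g(y)$ both contain the vertex $i$ when $x,y\in C_i$, so $S\cap T\ne\emptyset$, and one compares $g_0(x),g_0(y)$ through a point of $L_{S\cap T}$ before pushing through the parametrization $J$. Your argument is easily repaired by this observation.
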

\begin{proof}
  Implicit constants in this proof will all depend on $n$, $a$, and $b$.  Let $\epsilon=\frac{b}{2(b+1)}<\frac{1}{2}$.  Let $\Sigma$ be the nerve of $\mathcal{C}$ as above and let $r\from \Sigma\to (0,\infty)$ be the map that is linear on each simplex of $\Sigma$ and such that $r(i)=s_i$ for all $i\in I$.  We give $\Sigma$ the Riemannian metric obtained by rescaling the standard metric by the conformal factor $r$.  

  We first show that \eqref{eq:wx sim sSigma} holds.  Let $i\in I$ and let $x\in C_i$.  Then on one hand, $d(x, B_i)<\frac{s_i}{2}$, so $w(x)\ge s_i - d(x, B_i)\ge \frac{s_i}{2}$.  On the other hand, $\diam C_i\le (2\epsilon+a) s_i$, and $w(x)\le s_i+\diam C_i\le (2\epsilon+a+1)s_i$; i.e., $s_i\approx w(x)$.  Since this holds for all $x\in C_i$, this implies \eqref{eq:wx sim sSigma}.  

  Let $\delta\in \cF(\Sigma)$.  Since $\Sigma$ is the nerve of $\{C_i\}$, there is an $x\in Y_{>0}$ such that $\cF^0(\delta)\subset I_x$.  Then for all $y\in \delta$, 
  $$w(x)\approx \min_{i\in I_x} s_i\le r(y)\le \max_{i\in I_x} s_i\approx w(x).$$
  Since $s_\Sigma(\delta)=\min_{i\in I_x} s_i$, this implies that $\delta$ is quasiconformally equivalent to a standard simplex of diameter $s_\Sigma(\delta)$.

  Next, we bound the dimension of $\Sigma$.  Suppose that $x\in Y_{>0}$.  For each $i\in I_x$, let $x_i\in B_i$ be a point such that $d(x,x_i)<\epsilon s_i$, and let $Z_x=\{x_i\mid i\in I_x\}$; then
  $$\diam Z_x\le 2\epsilon \sup_{i\in I_x} s_i \le 2\epsilon( \diam Z_x+\inf w(Z_x))$$
  and thus $\diam Z_x\le b\inf w(Z_x)$.  By Lemma~\ref{lem:AssouadNagataCovers}, $Z_x$ meets at most $2n+2$ members of $\mathcal{B}$, so $|I_x|\le 2n+2$.  This holds for all $x$, so $\dim \Sigma\le 2n+1$.

  We construct $g\from Y_{>0}\to \Sigma$ by identifying $\Sigma$ with a subset of the Hilbert space $\ell_2(I)$.  For each simplex $\delta\in \cF(\Sigma)$, let $L_\delta\subset \ell_2(\cF^0(\delta))\subset \ell_2(I)$ be the set of non-negative functions $f$ such that $\sum_i f(i)=1$.  Each of the $L_\delta$ is isometric to a standard simplex rescaled by a factor of $\sqrt{2}$, and the path metric on the set $L_\Sigma=\bigcup_{\delta\in \cF(\Sigma)}$ is isometric (up to a constant scaling) to the standard metric on $\Sigma$.  We thus parametrize $\Sigma$ by a map $J\from L_\Sigma\to \Sigma$ such that $J(L_i)=i$ for all $i$.  For all $\delta\in \cF(\Sigma)$, we have $\Lip J|_{L_\delta}\lesssim s_\Sigma(\delta)$.

  For each $i$, define
  $$\sigma_i(x)=\max\{0,\epsilon s_i-d(x,B_i)\}.$$
  Let $\bar{\sigma}=\sum_i\sigma_i$.  Note that $\supp \sigma_i \subset C_i$ for all $i$.  Define a map $g_0\from Y_{>0}\to \ell_2(I)$ by letting
  $$g_0(x)=\left(\frac{\sigma_i(x)}{\bar{\sigma}(x)}\right)_{i\in I}$$
  for all $x\in Y_{>0}$.  We have $\supp g_0(x)\subset I_x$, so the image of this map lies in $L_\Sigma$, and if $g=J\circ g_0$, then $g$ satisfies \eqref{eq:ANcomplexes nerve}.

  Finally, we claim that $g$ is Lipschitz.  Since $Y$ is a path metric space, it suffices to prove this locally, i.e., that $\Lip g|_{C_i}\lesssim 1$ for all $i\in I$.  Let $x,y\in C_i$ and let $S,T\in \cF(\Sigma)$ be the minimal simplices containing $g(x)$ and $g(y)$ respectively.  Since $S$ and $T$ intersect, there is a $v\in L_{S\cap T}$ such that 
  $$\|g_0(x)- v\|_2+\|g_0(y)- v\|_2 \lesssim \|g_0(x)-g_0(y)\|_2.$$
  Since $\Lip J|_{L_S}\approx \Lip J|_{L_T} \approx s_i$, this implies
  $$d(g(x),g(y))\lesssim \|g_0(x)-g_0(y)\|_2 s_i.$$

  We thus proceed as in \cite[(5.13)]{LangSch}.  The points $g_0(x)$ and $g_0(y)$ differ in at most $4n+4$ coordinates, and for each such coordinate $k$, we have
  \begin{align*}
    \left|\frac{\sigma_k(x)}{\bar{\sigma}(x)} - \frac{\sigma_k(y)}{\bar{\sigma}(y)}\right| 
    &\le 
      \left|\frac{\sigma_k(x)}{\bar{\sigma}(x)} - \frac{\sigma_k(y)}{\bar{\sigma}(x)}\right| +
      \left|\frac{\sigma_k(y)}{\bar{\sigma}(x)} - \frac{\sigma_k(y)}{\bar{\sigma}(y)}\right| \\ 
    &\le
      \frac{1}{\bar{\sigma}(x)}(|\sigma_k(x)-\sigma_k(y)|+|\bar{\sigma}(x)-\bar{\sigma}(y)| )\\
    &\le
      \frac{(4n+5)d(x,y)}{\bar{\sigma}(x)}.
  \end{align*}
  Let $j\in I$ be such that $x\in B_j$.  Then 
  $$\bar{\sigma}(x)\ge \sigma_j(x)=\epsilon s_j\approx s_i,$$
  so 
  $$d(g(x),g(y))\lesssim \|g_0(x)-g_0(y)\|_2 s_i \le \frac{(4n+5)d(x,y)}{\epsilon s_j} s_i\approx d(x,y),$$
  as desired.
\end{proof}

\section{Euclidean filling functions below the rank}\label{sec:higherDims} 

\subsection{Filling with random flats}
In this section, we prove that the filling volume functions of $\Gamma$ satisfy polynomial bounds below the rank.  The proof is similar to the proof of Theorem~\ref{thm:mainThmDehn}:  First, we construct a filling from pieces of random apartments using $\Omega_\bR$.  This filling leaves the thick part of $\Gamma\backslash X$, so we use the polynomial extension theorem, Corollary~\ref{cor:polyLipExtend}, to replace the parts of the random filling that leave the thick part.  Since the bulk of the random filling lies in the thick part, we obtain a sharp estimate of the filling volume.

Let $w\from X\to \R$ be the function $w(x)=\dGamma(x)+1$.  This is 1--Lipschitz, and we use Lemmas~\ref{lem:AssouadNagataCovers} and \ref{lem:AssouadNagataComplexes} to construct a QC complex $\Sigma$ approximating $X$ and a map $g\from X\to \Sigma$.  Let $\cU\subset (\chxop)^{\cF(S)}$ be as in Proposition~\ref{prop:basepointChoices}.  We start by constructing a family of maps $f_\bD\from \Sigma^{(k-1)}\to X$, parametrized by an element $\bD\in \cU$, such that $f_\bD$ is a coarse inverse of $g$ for all $\bD$.  
\begin{lemma}\label{lem:random Sigma map}
  For any $\bD\in \cU$, there is a map $f_\bD\from \Sigma\to X$ such that:
  \begin{enumerate}
  \item $\Lip f_\bD\lesssim 1$, 
  \item for all $x\in \FD$, $d(x,f_\bD(g(x)))\lesssim 1$,
  \item if $\bR\in \cU$ is the random variable constructed in Section~\ref{sec:random cones} and if $b>0$ is as in Corollary~\ref{cor:expMomentsChambers}, then for all $y\in \Sigma^{(k-1)}$, 
    \begin{equation}\label{eq:expMomentsSigma}
      \EE[\exp(b \dGamma(f_\bR(y)))]\lesssim 1.
    \end{equation}
  \end{enumerate}
\end{lemma}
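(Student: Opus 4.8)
First I would build $f_\bD$ on $\Sigma^{(k-1)}$ by pulling back the map $\Omega_\bD\from S\to X$ of Proposition~\ref{prop:OmegaBounds} (which is defined since $\bD\in\cU$ is well-opposed) along a simplicial map, and then extend over the higher skeleta of $\Sigma$ by geodesic coning in the CAT(0) space $X$. For each vertex $i\in\cF^0(\Sigma)$, choose a point $v_i\in\FD$ with $d(v_i,C_i)\lesssim s_i$: by \eqref{eq:wx sim sSigma} we have $s_i\approx\inf w(C_i)$, so some $z_i\in C_i$ satisfies $\dGamma(z_i)=w(z_i)-1\lesssim s_i$, and $[\Gamma]\subset\FD$ contains a point within distance $\dGamma(z_i)$ of $z_i$. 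Since every simplex of $\Sigma^{(k-1)}$ has at most $k$ vertices, $i\mapsto v_i$ extends to a simplicial map $\phi\from\Sigma^{(k-1)}\to S=\Delta_{\FD}^{(k-1)}$, and I would set $f_\bD:=\Omega_\bD\circ\phi$ on $\Sigma^{(k-1)}$.

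The Lipschitz bound on $\Sigma^{(k-1)}$ rests on one scale comparison. If $\delta\in\cF(\Sigma^{(k-1)})$ and $i,j\in\cF^0(\delta)$, then $C_i\cap C_j\ne\emptyset$, so $s_i\approx s_j\approx s_\Sigma(\delta)$ by \eqref{eq:wx sim sSigma}; hence $d(v_i,v_j)\le d(v_i,C_i)+\diam(C_i\cup C_j)+d(C_j,v_j)\lesssim s_\Sigma(\delta)$, and since also $s_\Sigma(\delta)\ge 1$ (as $w\ge 1$) we get $\sigma(\phi(\delta))=\diam_X\{v_i:i\in\cF^0(\delta)\}+1\lesssim s_\Sigma(\delta)$. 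As $\phi$ maps $\delta$ affinely onto a simplex of $S$ of diameter $\lesssim 1$, $\Lip\phi|_\delta\lesssim s_\Sigma(\delta)^{-1}$, and together with $\Lip\Omega_\bD|_{\phi(\delta)}\lesssim\sigma(\phi(\delta))$ from \eqref{eq:OmegaLocallyLip} this yields $\Lip f_\bD|_\delta\lesssim 1$; since $\Sigma$ is a path metric space, $\Lip f_\bD\lesssim 1$ on $\Sigma^{(k-1)}$. Property (3) is then immediate: for $y\in\Sigma^{(k-1)}$, $f_\bR(y)=\Omega_\bR(\phi(y))$ with $\phi(y)\in S$, so \eqref{eq:expMomentsSigma} is exactly \eqref{eq:OmegaExpMoments}.

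Next I would extend $f_\bD$ over the simplices of $\Sigma$ of dimension $\ge k$, one dimension at a time: given such a $\delta$ with $f_\bD$ already defined on $\partial\delta$, realize $\delta$ as the cone from its barycenter over $\partial\delta$, send the barycenter to $f_\bD(w)$ for some vertex $w$ of $\delta$, and send the segment from the barycenter to $y\in\partial\delta$ to the geodesic in $X$ from $f_\bD(w)$ to $f_\bD(y)$. By the inductive bound and comparability of scales on faces, $\diam f_\bD(\partial\delta)\le\Lip(f_\bD|_{\partial\delta})\,\diam_\Sigma\partial\delta\lesssim s_\Sigma(\delta)\approx\diam_\Sigma\delta$, and the cone point lies in $f_\bD(\partial\delta)$; convexity in the CAT(0) space $X$ then makes the extension $\lesssim 1$-Lipschitz, so (1) holds on all of $\Sigma$. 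For (2), fix $x\in\FD$; by \eqref{eq:ANcomplexes nerve}, $g(x)\in\langle I_x\rangle$ with $I_x=\{i:x\in C_i\}$. Each $i\in I_x$ has $x\in C_i$ and $x\in\FD\subset X(r_0)$, so $s_i\approx w(x)\lesssim 1$ by \eqref{eq:wx sim sSigma}; thus $\diam_\Sigma\langle I_x\rangle\approx 1$ and $d(v_i,x)\le d(v_i,C_i)+\diam C_i\lesssim s_i\lesssim 1$. Fixing a vertex $i_0\in I_x$ we have $f_\bD(i_0)=\Omega_\bD(v_{i_0})=v_{i_0}$, so $d(x,f_\bD(g(x)))\le d(x,v_{i_0})+\Lip(f_\bD)\,d_\Sigma(i_0,g(x))\lesssim 1+\diam_\Sigma\langle I_x\rangle\lesssim 1$.

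The main obstacle is the Lipschitz bookkeeping: one must keep the QC scales $s_\Sigma(\delta)$, the sizes $\sigma(\phi(\delta))$ of the target simplices of $S$, the local Lipschitz constants of $\Omega_\bD$ supplied by \eqref{eq:OmegaLocallyLip}, and the Lipschitz cost of the coning all mutually commensurate, uniformly over $\delta$. This is precisely where the strengthened definition of a QC complex---comparability of scales on simplices that meet---and the fact from \eqref{eq:wx sim sSigma} that $w\approx s_\Sigma$ on each star are used; with these in place the verifications of (1)--(3) are routine. One minor point worth recording is that $\bD\mapsto f_\bD$ depends on $\bD$ only through $\Omega_\bD$ (the map $\phi$ and the coning being fixed), so $f_\bR$ is a genuine random map and the expectation in \eqref{eq:expMomentsSigma} is meaningful.
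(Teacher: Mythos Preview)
Your proof is correct and follows essentially the same approach as the paper: choose a point $v_i\in\FD$ close to each $C_i$, define a simplicial map $\phi\from\Sigma^{(k-1)}\to S$, set $f_\bD=\Omega_\bD\circ\phi$ on the $(k-1)$--skeleton, and extend to the rest of $\Sigma$ using that $X$ is CAT(0). Your treatment is slightly more detailed than the paper's in spelling out the geodesic coning for the extension step, whereas the paper simply invokes the CAT(0) property and finite-dimensionality of $\Sigma$; otherwise the arguments are the same.
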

\begin{proof}




  As in Lemma~\ref{lem:AssouadNagataComplexes}, for each $i\in I$, let $s_i= \inf w(B_i)$, and let $s_\Sigma(\delta)=\min_{i\in \cF^0(\delta)} s_i$ be the scale function of $\Sigma$.  By the lemma, we have 
  $$\diam C_i \lesssim s_i \approx \inf w(C_i) \approx \sup w(C_i),$$
  and if $i$ and $i'$ are adjacent, then $s_i\approx s_{i'}$.

  For each $i\in I$, let $c_i$ be the corresponding vertex of $\Sigma$, and for each $x\in \FD$, let $c_x$ be the corresponding vertex of $S$.  For each $i\in I$, we have $d(\FD,C_i)\le \inf w(C_i)$.  We choose a point $r(i)\in \FD$ such that $d(r(i),C_i) \le \inf w(C_i)\approx s_i$.  If $i,i'\in \cF^0(\Sigma)$ are adjacent, then $C_i$ and $C_{i'}$ intersect, so
  \begin{equation}\label{eq:iotaSbound}
    d(r(i),r(i'))\lesssim s_i+\diam C_i+\diam C_{i'}+s_{i'}\approx s_i.
  \end{equation}
  Let $\iota\from \Sigma^{(k-1)}\to S$ be the simplicial map such that $\iota(c_i)=c_{r(i)}$ for all $i\in I$, and for all $\bD\in \cU$, let $f_\bD|_{\Sigma^{(k-1)}}=\Omega_\bD\circ \iota$.  For every $i\in I$, we have $f_\bD(c_i)=\Omega_\bD(\iota(c_i))=r(i)$.  

  For every $\delta\in \cF(\Sigma)$, the map $\iota$ sends $\delta$ to a simplex of $S$ with $\sigma(\iota(\delta))\lesssim s_\Sigma(\delta)$, so by Proposition~\ref{prop:OmegaBounds}, we have $\Lip f_\bD|_{\Sigma^{(k-1)}}\lesssim 1$.  Since $X$ is CAT(0) and $\dim \Sigma<\infty$, we can extend $f_\bD$ to all of $\Sigma$ so that $\Lip f_\bD\lesssim 1$.  

  This satisfies \eqref{eq:expMomentsSigma}, and it remains to prove that $d(x,f_\bD(g(x)))\lesssim 1$ for all $x\in \FD$.  Let $x\in \FD$ and let $i\in I$ be such that $x\in C_i$.  By Lemma~\ref{lem:AssouadNagataComplexes}, $d(g(x),c_i)\lesssim s_i \lesssim 1$, and
  $$d(x,f_\bD(c_i))=d(x,r(i)) \le \diam C_i+d(r(i),C_i) \lesssim s_i\lesssim 1.$$
  Consequently,
  \begin{multline*}
    d(x,f_\bD(g(x)))\le d(x,f_\bD(c_i))+d(f_\bD(c_i),f_\bD(g(x))) \\
    \le d(x,f_\bD(c_i))+\Lip(f_\bD) d(g(x),c_i) \lesssim 1.
  \end{multline*}
\end{proof}

We use this lemma and the results in Section~\ref{sec:GMT tools} to prove the following proposition.
\begin{prop}\label{prop:expMomentDistortion}
  If $n\le k-1$, $\beta$ is a Lipschitz $n$--chain in $X$ such that $\supp \partial \beta\subset \FD$, and $M=\mass \beta+\mass \partial \beta$, then there is a Lipschitz $n$--chain $\gamma\in \CLip_n(X)$ such that $\partial\beta=\partial \gamma$, $\mass \gamma\lesssim M$, and
  \begin{equation}\label{eq:expMomentDistortion}
    \int_{\gamma} e^{b \dGamma(x)}\; dx\lesssim M,
  \end{equation}
  where $b$ is as in Corollary~\ref{cor:expMomentsChambers}.
\end{prop}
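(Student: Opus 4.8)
The plan is to transfer $\beta$ to the quasiconformal complex $\Sigma$ built from $X$ in Lemma~\ref{lem:AssouadNagataComplexes}, replace the transferred chain by a cellular approximation via the Federer--Fleming deformation of Corollary~\ref{cor:FFnoncell}, and then push the cellular approximation back into $X$ using the random map $f_\bR$ of Lemma~\ref{lem:random Sigma map}. Since $f_\bR$ satisfies the pointwise moment bound \eqref{eq:expMomentsSigma} on $\Sigma^{(k-1)}$, and since the cellular approximation of an $n$--chain with $n\le k-1$ is supported in $\Sigma^{(n)}\subset \Sigma^{(k-1)}$, taking expectations will convert a deterministic mass bound on the cellular approximation into the desired bound on $\EE[\int_\gamma e^{b\dGamma}\,dx]$.

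Concretely, I would first set $a:=g_\sharp\beta\in\CLip_n(\Sigma)$; since $\Lip g\lesssim 1$ we have $\mass a+\mass\partial a\lesssim M$. Because $\supp\partial\beta\subset\FD\subset X(r_0)$, the $1$--Lipschitz function $w=\dGamma+1$ used to build $\Sigma$ is bounded on $\supp\partial\beta$, so by \eqref{eq:wx sim sSigma} every cell of $\Sigma$ meeting $g(\supp\partial\beta)$ has scale $\lesssim 1$; as the scale function is comparable on adjacent cells, the same holds throughout $\nbhd(\supp\partial a)$, and hence the parameter $s_0$ of Corollary~\ref{cor:FFnoncell} is $\lesssim 1$. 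The corollary then produces $\bar P(a)\in\Ccell_n(\Sigma)$ and $\bar R(a)\in\CLip_n(\Sigma)$ with $\mass\bar P(a)\lesssim \mass a+s_0\mass\partial a\lesssim M$, $\mass\bar R(a)\lesssim s_0\mass\partial a\lesssim M$, and $\partial\bar P(a)+\partial\bar R(a)=\partial a$, where $\bar P(a)$ is supported in $\Sigma^{(n)}$ and $\bar R(a)$ is supported in a bounded--scale subcomplex of $\Sigma$.

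Next I would assemble $\gamma$. On $\supp\partial\beta$ the composition $f_\bR\circ g$ is $\lesssim 1$--Lipschitz and, by Lemma~\ref{lem:random Sigma map}(2), displaces points a bounded distance, so the CAT(0) straight--line homotopy between $\partial\beta$ and $(f_\bR\circ g)_\sharp\partial\beta$ yields a prism chain $P_H\in\CLip_n(X)$ with $\partial P_H=(f_\bR)_\sharp\partial a-\partial\beta$, with $\mass P_H\lesssim \mass\partial\beta\lesssim M$, and with $\dGamma$ bounded on $\supp P_H$. Setting $\gamma:=(f_\bR)_\sharp\bar P(a)+(f_\bR)_\sharp\bar R(a)-P_H$, a short boundary computation using $\partial\bar P(a)+\partial\bar R(a)=\partial a$ and $(f_\bR)_\sharp\partial a=(f_\bR\circ g)_\sharp\partial\beta$ shows $\partial\gamma=\partial\beta$ for every realization of $\bR$. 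Since $\Lip f_\bR\lesssim 1$ and $f_\bR$ carries the bounded--scale supports of $\bar R(a)$ and of $\partial\beta$ into bounded--$\dGamma$ regions of $X$, the contributions $\int_{(f_\bR)_\sharp\bar R(a)}e^{b\dGamma}\,dx$ and $\int_{P_H}e^{b\dGamma}\,dx$ are both $\lesssim M$, deterministically.

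The one estimate that uses randomness concerns $(f_\bR)_\sharp\bar P(a)$. Writing $\bar P(a)=\sum_\delta c_\delta[\delta]$ over $n$--cells $\delta$ and using $|J_{f_\bR}|\lesssim 1$, one has $\int_{(f_\bR)_\sharp\bar P(a)}e^{b\dGamma(x)}\,dx\lesssim \sum_\delta|c_\delta|\int_\delta e^{b\dGamma(f_\bR(y))}\,dy$; each $\delta\subset\Sigma^{(k-1)}$, so by \eqref{eq:expMomentsSigma} and Tonelli's theorem
$$\EE\left[\int_{(f_\bR)_\sharp\bar P(a)}e^{b\dGamma(x)}\,dx\right]\lesssim \sum_\delta|c_\delta|\int_\delta \EE\left[e^{b\dGamma(f_\bR(y))}\right]dy\lesssim \sum_\delta|c_\delta|\vol(\delta)=\mass\bar P(a)\lesssim M.$$
Hence $\EE[\int_\gamma e^{b\dGamma}\,dx]\lesssim M$, and since $e^{b\dGamma}\ge 1$ forces $\int_\gamma e^{b\dGamma}\,dx\ge\mass\gamma$, Markov's inequality provides a realization $\bR=\bD_0$ with $\int_\gamma e^{b\dGamma}\,dx\lesssim M$ and therefore $\mass\gamma\lesssim M$ as well; this $\gamma$ is the chain we want. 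I expect the main obstacle to be exactly this routing of the \emph{deterministic} Federer--Fleming mass bound $\mass\bar P(a)\lesssim M$ through the \emph{probabilistic} moment inequality cell by cell, while simultaneously keeping $\partial\gamma=\partial\beta$ on the nose --- the annulus $\bar R(a)$ and the homotopy prism $P_H$ are included precisely to repair the boundary after the deformation and the transfer.
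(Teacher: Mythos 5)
Your proof is correct and follows essentially the same route as the paper: push $\beta$ to the QC complex $\Sigma$ via $g$, apply the Federer--Fleming decomposition of Corollary~\ref{cor:FFnoncell}, push back with $f_\bD$, repair the boundary with a straight-line homotopy prism, and then select a good $\bD_0$ via Fubini/Tonelli and the exponential moment bound \eqref{eq:expMomentsSigma}. The only (minor) deviation is that you derive $\mass\gamma\lesssim M$ as a consequence of the moment estimate using $e^{b\dGamma}\ge 1$, whereas the paper records the mass bound directly from Corollary~\ref{cor:FFnoncell}; this is a harmless simplification.
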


\begin{proof}
  Let $\Sigma_0=\nbhd(g(\FD))$.  Since $w$ is bounded on $\FD$, $\Sigma_0$ is a subcomplex consisting of simplices with bounded diameter; let $s_0$ be such that $s(\delta)<s_0$ for every simplex $\delta\in \cF(\Sigma_0)$.  Note that for any $\bD\in \cU$ and any $x\in \Sigma_0$, we have $\dGamma(f_\bD(x))\lesssim s_0+1\lesssim 1$.

  Let $\bar{P}$ and $\bar{R}$ be as in Corollary~\ref{cor:FFnoncell}.  For $\bD\in \cU$, let $h\from \FD\times [0,1]\to X$ be the straight-line homotopy from $f_{\bD}\circ g|_{\FD}$ to $\id_{\FD}$, and let
  \begin{align*}
    \gamma_1&=\gamma_1(\bD)=(f_\bD)_\sharp(\bar{P}(g_\sharp(\beta)))\\
    \gamma_2&=\gamma_2(\bD)=(f_\bD)_\sharp(\bar{R}(g_\sharp(\beta)))\\
    \gamma_3&=\gamma_3(\bD)=h_\sharp(\partial \beta \times [0,1]).
  \end{align*}
  Let $\gamma(\bD)=\gamma_1(\bD)+\gamma_2(\bD)+\gamma_3(\bD).$  Then
  \begin{align*}
    \partial \gamma(\bD)&=(f_\bD)_\sharp(\partial \bar{P}(g_\sharp(\beta))+ \partial \bar{R}(g_\sharp(\beta)))+(\partial \beta- (f_{\bD}\circ g)_\sharp(\partial \beta))\\ 
                   &=(f_\bD)_\sharp(\partial g_\sharp(\beta))+\partial \beta- (f_{\bD}\circ g)_\sharp(\partial \beta)\\
                   &=\partial \beta.
  \end{align*}
  By Lemma~\ref{lem:random Sigma map} and Lemma~\ref{lem:AssouadNagataComplexes}, $f_\bD$, $g$, and $h$ are Lipschitz.  Corollary~\ref{cor:FFnoncell} implies that $\mass \gamma_i\lesssim \mass \beta+s_0 \mass\partial \beta\lesssim M$ for $i=1,2,3$.  We claim that there is a $\bD_0\in \cU$ such that
  \begin{equation}\label{eq:expMomentDistortionPieces}
    \int_{\gamma_i(\bD_0)} e^{b \dGamma(x)}\; dx\lesssim M.
  \end{equation}

  Let $\bR\in \cU$ be the random variable constructed in Section~\ref{sec:random cones}.  By Lemma~\ref{lem:random Sigma map}, 
  \begin{align*}
    \EE\biggl[\int_{\gamma_1} e^{b \dGamma(x)}\; dx\biggr]
    &\le \EE\biggl[(\Lip f_{\bR})^n \int_{\bar{P}(g_\sharp(\beta))}e^{b \dGamma(f_\bR(y))}\; dy\biggr]\\ 
    &\lesssim \int_{\bar{P}(g_\sharp(\beta))} \EE[e^{b \dGamma(f_\bR(y))}]\; dy]
    \lesssim M.
  \end{align*}
  Consequently, there is a $\bD_0\in \cU$ that satisfies \eqref{eq:expMomentDistortionPieces} for $i=1$.  

  For any $\bD\in \cU$, the two chains $\gamma_2$ and $\gamma_3$ are both supported on bounded neighborhoods of $\FD$.  For $\gamma_2$, this follows from the fact that $g(\supp \partial \beta)\subset g(\FD)$ and thus $\supp \bar{R}(g_\sharp(\beta))\subset \Sigma_0$.  For $\gamma_3$, this follows from the fact that the image of $h$ lies in a bounded neighborhood of $\FD$.  Thus, \eqref{eq:expMomentDistortionPieces} is satisfied for $i=2,3$ as well.  Consequently, $\gamma=\gamma(\bD_0)$ satisfies the lemma.
\end{proof}

Proposition~\ref{prop:expMomentDistortion} produces a filling $\gamma$ such that only an exponentially small fraction of $\gamma$ lies outside $X(t)$.  We use techniques similar to those of Lemma~\ref{lem:cuspReplacement} to retract those parts of $\gamma$ to $\FD$.

\begin{prop}\label{prop:bootstrapping}
  Let $m>0$ be as in Corollary~\ref{cor:polyLipExtend}.  Let $n\le k-1$ and let $\gamma$ be a Lipschitz $n$--chain in $X$ such that $\supp \partial \gamma\subset \FD$.  There is a Lipschitz $n$--chain $\psi\in \CLip_n(\FD)$ such that $\partial\psi=\partial \gamma$ and
  \begin{equation}\label{eq:bootstrapping}
    \mass \psi\lesssim \mass \partial \gamma + \int_{\gamma} (\dGamma(x)+1)^{nm^n}\; dx.
  \end{equation}
\end{prop}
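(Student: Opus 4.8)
The plan is to combine Proposition~\ref{prop:expMomentDistortion} (which replaces $\gamma$ by a chain with an exponential-moment bound on $\dGamma$) with the Federer--Fleming machinery from Section~\ref{sec:GMT tools} applied to the QC complex $\Sigma$ approximating $X$. First I would push $\gamma$ forward to a cellular chain $\bar P(g_\sharp \gamma)\in \Ccell_n(\Sigma)$ using Corollary~\ref{cor:FFnoncell}; its boundary is $P(g_\sharp(\partial\gamma))$, which (since $\supp\partial\gamma\subset\FD$) is supported in the bounded subcomplex $\Sigma_0=\nbhd(g(\FD))$. The key point is that in a QC complex, the mass of a cellular $n$-chain $a$ decomposes over cells as $\sum_{\delta\in\cF^n(\Sigma)} |a_\delta|\, s_\Sigma(\delta)^n$, and the local mass bound \eqref{eq:FedFlemMassZBound} lets me control, for each scale level, how much of $\bar P(g_\sharp\gamma)$ sits on cells of a given size. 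By \eqref{eq:wx sim sSigma}, cells of scale $\approx s$ correspond to points of $X$ at distance $\approx s$ from $\FD$, so a cell $\delta$ of scale $s_\Sigma(\delta)$ contributes to $\mass$ with weight $s_\Sigma(\delta)^n$ while the integrand $(\dGamma+1)^{nm^n}$ on the corresponding region of $\gamma$ is $\approx s_\Sigma(\delta)^{nm^n}$.

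Next I would fill $\bar P(g_\sharp\gamma)$ cell by cell in $\Sigma$, working up through skeleta and using the polynomial Lipschitz extension result, Corollary~\ref{cor:polyLipExtend}, pulled back along $f_{\bD_0}$ to $X$ — exactly as in Lemma~\ref{lem:cuspReplacement}. More precisely: the chain $\bar P(g_\sharp\gamma)$ together with a cellular approximation of a filling of its boundary inside $\Sigma_0$ gives a cellular $n$-cycle (or a cellular chain with cellular boundary) which I fill inductively. On each simplex $\delta$ of scale $s_\Sigma(\delta)=:s$, an extension across $\delta$ costs volume $\lesssim s^{n} \cdot (\text{Lipschitz constant})^n$, and iterating Corollary~\ref{cor:polyLipExtend} over the $n$ skeleta of a simplex of diameter $s$ gives a filling of volume $\lesssim s^{n m^n}$ per cell, just as in \eqref{eq:indPoly}. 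Mapping the result back to $\FD$ by $f_{\bD_0}$ (which is $\lesssim 1$-Lipschitz and sends $\Sigma_0$ to a bounded neighborhood of $\FD$, hence can be composed with the retraction $\rho$ of Theorem~\ref{thm:fundDomain} to land in $\FD$ at bounded cost there), and gluing in the annulus $\bar R(g_\sharp\gamma)$ and a straight-line homotopy between $f_{\bD_0}\circ g|_{\FD}$ and $\id_{\FD}$ exactly as in the proof of Proposition~\ref{prop:expMomentDistortion}, produces the desired $\psi\in\CLip_n(\FD)$ with $\partial\psi=\partial\gamma$.

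Finally I would assemble the mass estimate. Summing the per-cell bound $s_\Sigma(\delta)^{nm^n}$ over all cells carrying $\bar P(g_\sharp\gamma)$, and using \eqref{eq:FedFlemMassZBound} to compare $\sum_\delta |a_\delta| s_\Sigma(\delta)^{nm^n}$ with $\int_\gamma (\dGamma(x)+1)^{nm^n}\,dx$ (cells of scale $s$ correspond to the region $w^{-1}([c^{-1}s, cs])$, on which the integrand is $\approx s^{nm^n}$, and the local Federer--Fleming bound says the $a$-mass on such cells is $\lesssim$ the $\gamma$-mass there), yields \eqref{eq:bootstrapping}; the boundary-filling and annulus pieces live in bounded neighborhoods of $\FD$ and contribute only $\lesssim \mass\partial\gamma$. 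I expect the main obstacle to be the bookkeeping in this last step: one must be careful that the exponent $nm^n$ genuinely matches the cost of iterating Corollary~\ref{cor:polyLipExtend} through all $n$ skeleta on a cell of the given diameter (this is where the nested powers $m, m^2, \dots, m^n$ appear), and that the local mass bound \eqref{eq:FedFlemMassZBound} is applied at each scale so that the scale-weighted sum of cell volumes is dominated by the weighted integral over $\gamma$ rather than by $(\diam\gamma)$-type quantities — this is precisely the improvement over the crude estimate that makes the bootstrapping work.
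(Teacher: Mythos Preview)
Your core strategy --- Federer--Fleming approximation, a skeleton-by-skeleton map into $\FD$ built from Corollary~\ref{cor:polyLipExtend}, and the local mass bound \eqref{eq:FedFlemMassZBound} to compare the scale-weighted sum to the weighted integral over $\gamma$ --- is correct and matches the paper. But your exposition conflates two different maps, and this is a real gap. You propose to ``map the result back to $\FD$ by $f_{\bD_0}$ \dots\ composed with the retraction $\rho$''; that does not work. On a cell of $\Sigma$ of scale $s$, the map $f_{\bD_0}$ lands in $X$ at height up to $\approx s$ (only the vertices are guaranteed to lie in $\FD$), so postcomposing with $\rho$ costs an exponential factor $e^{cs}$, not a polynomial one. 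The map you actually need is a \emph{new} deterministic map $q$ from the complex to $\FD$, defined by sending each vertex to its nearest point in $\FD$ and then extending over skeleta \emph{directly} by Corollary~\ref{cor:polyLipExtend}; this is the ``exactly as in Lemma~\ref{lem:cuspReplacement}'' part of your sketch, and it is what the paper does. Relatedly, neither Proposition~\ref{prop:expMomentDistortion} nor any $f_\bD$ is used here: this proposition is a purely deterministic statement about an arbitrary $\gamma$, and the two propositions are combined only afterward in the proof of the main theorem.

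A harmless secondary difference: you work in the variable-scale complex $\Sigma$ (Lemma~\ref{lem:AssouadNagataComplexes} with $w=\dGamma+1$), whereas the paper switches here to a \emph{unit-scale} complex $\tau$ (the same lemma with $w\equiv 1$). Both choices give the exponent $nm^n$ --- on a unit cell at height $D$ one gets $\Lip q|_\delta\lesssim D^{m^n}$, hence image volume $\lesssim D^{nm^n}$, while on a $\Sigma$-cell of scale $s\approx D$ the corresponding map has $\Lip\lesssim s^{m^n-1}$ and the same image volume $\lesssim s^{nm^n}$ --- and the paper's layer decomposition \eqref{eq:integralSum tau} plays the role of your scale-by-scale sum. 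The unit-scale choice just makes \eqref{eq:lipP Ddelta} cleaner to state.
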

\begin{proof}
  Let $\tau$ be a finite-dimensional simplicial complex with the standard metric that is quasi-isometric to $X$ by Lipschitz maps $a\from X\to \tau$, $z\from \tau \to X$.  We construct such a $\tau$ by taking $w\equiv 1$ in Lemma~\ref{lem:AssouadNagataComplexes}.  (One can take $\tau$ to be a triangulation of $X$ and $a$ and $z$ to be the identity map, but constructing a triangulation with the necessary metric properties requires some technical sophistication; see \cite{BoissonnatDyerGhosh}.)

  We construct a map $q \from \tau \to X$ inductively.  First, we define $q$ on $\tau^{(k-1)}$.  Let $\rho\from X\to \FD$ be the closest-point projection, as in Theorem~\ref{thm:fundDomain} and define $q(v)=\rho(z(v))$ for every vertex $v\in \tau^{(0)}$.  If $0<i\le k-1$ and we have defined $q$ on $\tau^{(i-1)}$, we extend $q$ to $\tau^{(i)}$ by using Corollary~\ref{cor:polyLipExtend} to extend $q$ to each $i$--simplex.  This ensures that $q(\tau^{(k-1)})\subset \FD$.  

  The Lipschitz constant of $q$ grows polynomially with distance to $\FD$.    For any two adjacent vertices $v$ and $v'\in \tau^{(0)}$, we have 
  $$d(q(v),q(v'))\le \dGamma(z(v))+d(z(v),z(v'))+\dGamma(z(v'))\lesssim \dGamma(z(v))+1,$$
  Each time we apply Corollary~\ref{cor:polyLipExtend}, the Lipschitz constant is raised to the power of $m$, so for each simplex $\delta\in \cF^i(\tau)$ with $i\le k-1$, 
  \begin{equation}\label{eq:lipP Ddelta}
    \Lip q|_{\delta} \lesssim (\min \dGamma(z(\delta))+1)^{m^i}.
  \end{equation}
  Consequently, $\Lip q|_{\nbhd(a(X_0))^{(k-1)}}\lesssim 1$.

  For $i>k-1$, if $q$ is defined on $\tau^{(i-1)}$, we extend $q$ to $\tau^{(i)}$ by geodesic coning.  That is, for each simplex $\delta\in \cF^i(\tau)$ with barycenter $c_\delta$, we choose a vertex $v\in\delta$, then define $q$ on $\delta$ so that $q(c_\delta)=q(v)$ and so that for any $w\in \partial \delta$, $q$ sends the line segment between $c_\delta$ and $w$ to the geodesic from $q(v)$ to $q(w)$.  Note that this geodesic need not be contained in $\FD$.  

  The Lipschitz constant of $q$ is large on parts of $\tau$ that lie far from $\Gamma$, but $\Lip q|_{\nbhd(a(X_0))}\lesssim 1$ and thus $\Lip (q\circ a|_{\FD})\lesssim 1$.  In fact, $q|_{\nbhd(a(\FD))}$ is a Lipschitz quasi-inverse to $a|_{\FD}$, so if $h\from \FD\times [0,1]\to X$ is the straight-line homotopy from $q\circ a|_{\FD}$ to $\id_{\FD}$, then $\Lip h\lesssim 1$.

  As in Proposition~\ref{prop:expMomentDistortion}, let $\psi_1=q_\sharp(\bar{P}(a_\sharp(\gamma)))$, $\psi_2=q_\sharp(\bar{R}(a_\sharp(\gamma)))$, $\psi_3=h_\sharp(\partial \gamma \times [0,1])$.  Let $\psi=\rho_\sharp(\psi_1+\psi_2+\psi_3)$; as before, $\partial \psi=\partial \gamma$.

  We claim that $\psi$ satisfies \eqref{eq:bootstrapping}.  Since $a$, $z$, and $h$ are Lipschitz, we have $\mass \psi_2\lesssim \mass \partial \gamma$ and $\mass \psi_3\lesssim \mass \partial \gamma$.  Furthermore, $\supp \psi_2\cup \supp \psi_3$ is contained in a bounded neighborhood of $\FD$, so $\mass \rho_\sharp(\psi_2+\psi_3)\lesssim \mass \partial \gamma$. 

  It remains to bound $\mass \rho_\sharp(\psi_1)$.  Since $\bar{P}(a_\sharp(\gamma))$ is a cellular $n$--chain, we have $\supp \psi_1\subset q(\tau^{(k-1)})\subset \FD$, so $\rho_\sharp(\psi_1)=\psi_1$.  

  For $i\in \N$, let $Z_i=\{w\in \tau \mid \dGamma(z(w))\in [i-1,i]\}$ and let $Y_i=\nbhd(Z_i)$.  For all $y\in \tau$, let $\chi(y)=\sum_i \one_{Y_i}(y)$ be the number of $Y_i$'s that $y$ is contained in; since $z$ is Lipschitz, only boundedly many of the $Z_i$'s intersect any simplex of $\tau$, so $\chi(y)\approx 1$ for all $y$.  We have $\dGamma(z(y))+1\approx i$ for all $y\in Y_i$, so for any $n$--chain $\lambda\in \CLip_n(\tau)$ and any $C>0$, 
  \begin{multline}
    \int_{\lambda} (\dGamma(z(y))+1)^{C}\;dy \lesssim \sum_{i=1}^\infty i^{C} \mass_{Z_i}(\lambda)\le \sum_{i=1}^\infty i^{C} \mass_{Y_i}(\lambda) \\
    \approx_C \int_{\lambda} (\dGamma(z(y))+1)^{C}\chi(y)\;dy \approx_C \int_{\lambda} (\dGamma(z(y))+1)^{C}\;dy.
  \end{multline}
  That is,
  \begin{equation}\label{eq:integralSum tau}
    \int_{\lambda} (\dGamma(z(y))+1)^{C}\;dy \approx_C \sum_{i=1}^\infty i^{C} \mass_{Z_i}(\lambda)\approx_C \sum_{i=1}^\infty i^{C} \mass_{Y_i}(\lambda).
  \end{equation}
  Letting $C=nm^n$ and letting $F(y)=(\dGamma(z(y))+1)^C$, we find
  \begin{align*}
    \mass \psi_1 & \stackrel{\eqref{eq:lipP Ddelta}}{\lesssim} \int_{\bar{P}(a_\sharp(\gamma))} F(y)\;dy \\
    & \stackrel{\eqref{eq:integralSum tau}}{\approx}  \sum_{i=1}^\infty i^C \mass_{Z_i} \bar{P}(a_\sharp\gamma)\\
    & \stackrel{\eqref{eq:FedFlemMassZBound}}{\lesssim} \sum_{i=1}^\infty i^C \mass_{Y_i}(a_\sharp(\gamma)-Q(a_\sharp(\partial \gamma)))\\
    & \stackrel{\eqref{eq:integralSum tau}}{\lesssim} \int_{a_\sharp(\gamma)}F(y)\;dy+\int_{Q(a_\sharp(\partial \gamma))}F(y)\;dy.
  \end{align*}
  Then
  \begin{align*}
    \int_{a_\sharp(\gamma)} F(y) \;dy\le (\Lip a)^n \int_{\gamma} (1+\dGamma(z(a(x))))^C\;dx\lesssim\int_\gamma (1+\dGamma(x))^C\;dx.
  \end{align*}
  To bound the second term, note that $\supp Q(a_\sharp(\partial \gamma))\subset \nbhd(a(\FD))$ and 
  $$\max \dGamma(z(a(\FD))) \lesssim 1,$$
  so
  $$\int_{Q(a_\sharp(\partial \gamma))} F(y)\;dy\lesssim \mass Q(a_\sharp(\partial \gamma))\lesssim \mass \partial \gamma.$$
  Combining the estimates on $\psi_1, \psi_2$, and $\psi_3$, we obtain \eqref{eq:bootstrapping}. 
\end{proof}

\subsection{Sharp bounds: Proof of Theorem~\ref{thm:mainThmUpper} and Theorem~\ref{thm:disdim}}
Finally, we use the results in the previous section to prove sharp bounds on higher-order filling invariants of irreducible lattices.  These invariants are at least Euclidean in dimensions below the rank.
\begin{thm}\label{thm:euclower} 
  Let $\Gamma$ be an irreducible, non-uniform lattice in a semisimple Lie group $G$ acting on a symmetric space $X=G/K$ of rank $k$. Then the filling invariants of dimension less than $k$ have Euclidean lower bounds:
  $$\FV_{\Gamma}^n(V)\gtrsim V^{\frac{n}{n-1}} \ \ \textup{if}\ \ 2\leq  n < k,$$
  $$\delta_{\Gamma}^{n-1}(V)\gtrsim V^{\frac{n}{n-1}} \ \ \textup{if}\ \ 2\leq n<k.$$
 \end{thm}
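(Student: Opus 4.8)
The plan is to mirror the proof of Theorem~\ref{prop:sharpLowerBounds}, replacing the divergence estimate, which is special to the top dimension, by the elementary fact that round spheres in flats have Euclidean filling volume. It suffices to produce, for arbitrarily large $V$, a Lipschitz $(n-1)$--sphere $\kappa\from S^{n-1}\to\FD$ with $\vol^{n-1}\kappa\approx V$ and $\FV^n(\fclass{\kappa})\gtrsim V^{\frac{n}{n-1}}$. Indeed, the bound on $\FV^n_\Gamma$ then follows from the quasi-isometry invariance of $\FV^n$ together with $\FV^n_{\FD}(\fclass\kappa)\ge \FV^n_X(\fclass\kappa)$ (a filling of $\fclass\kappa$ inside $\FD\subset X$ is in particular a filling in $X$), and the bound on $\delta^{n-1}_\Gamma$ follows because a homotopical filling $D^n\to\FD$ of the sphere $\kappa$ pushes forward to a homological filling of $\fclass\kappa$ of no larger volume. (For $n=2$ the bound is in any case immediate, since a higher--rank lattice is not hyperbolic, so $\delta_\Gamma(V)\gtrsim V^2$.)

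First I would record the Euclidean lower bound: if $\alpha$ isometrically parametrizes the round $(n-1)$--sphere of radius $\ell$ inside an $n$--dimensional linear subspace $P$ of the model flat $E=[A]$ (equivalently, $P=[A']$ for an $\R$--split subtorus $A'\subset A$; such an $A'$ exists since $A\cong\R^k$), then $\FV^n_X(\fclass\alpha)\gtrsim \ell^n$. Since $X$ is CAT(0) and $P$ is convex, the closest--point projection $X\to P$ is $1$--Lipschitz; composing with the orthogonal projection onto the $n$--plane spanned by $\alpha$ gives a $1$--Lipschitz map $\pi\from X\to P\cong\R^n$ that fixes the image of $\alpha$ pointwise. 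If $\beta\in\CLip_n(X)$ satisfies $\partial\beta=\fclass\alpha$, then $\pi_\sharp\beta$ is a Lipschitz $n$--chain in $\R^n$ with $\partial(\pi_\sharp\beta)=\partial[B_\ell]$, where $B_\ell$ is the round ball of radius $\ell$; since $\pi_\sharp\beta-[B_\ell]$ is a closed $n$--current of finite mass in $\R^n$, it vanishes, so $\pi_\sharp\beta=[B_\ell]$ and $\mass\beta\ge\mass\pi_\sharp\beta=\vol B_\ell\approx\ell^n$.

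Next I would run the construction of Lemma~\ref{lem:polyLower} with one change: instead of spheres in the full model flat, use the sphere $S\subset A'$ of radius $cL$ in an $n$--dimensional $\R$--split subtorus $A'\subset A$, so that for $h_0$ with $\dGamma([h_0])=L$ and a generic $g_0\in B_e(1)$, the set $[h_0g_0S]$ is a round $(n-1)$--sphere of radius $cL$ in the $n$--flat $[h_0g_0A']$. The equidistribution input, Corollary~\ref{cor:expMomentsFlats}, applies verbatim to translates $[h_0ga]$ with $a\in A'\subset A$, so exactly as in Lemma~\ref{lem:polyLower} one obtains, for $L$ large, a parametrization $\alpha'\from S^{n-1}(L)\to X(\eta\blog L)$ of such a sphere with $\vol^{n-1}\alpha'\approx L^{n-1}$ and $\int_{S^{n-1}(L)}e^{b\dGamma(\alpha'(y))}\,dy\lesssim L^{n-1}$; we use the construction only through this step and discard the divergence conclusion \eqref{eq:polyLower filling bound}. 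Since $n-1<k$, I may then apply the cusp--replacement Lemma~\ref{lem:cuspReplacement} with $Y=S^{n-1}(L)$ to $\alpha'$, obtaining $\kappa\from S^{n-1}(L)\to\FD$ with
$$\vol^{n-1}\kappa\lesssim\int_{S^{n-1}(L)}(\dGamma(\alpha'(y))+1)^{(n-1)m^{n-1}}\,dy\lesssim\int_{S^{n-1}(L)}e^{b\dGamma(\alpha'(y))}\,dy\lesssim L^{n-1}$$
and $d(\kappa(y),\alpha'(y))\lesssim(\eta\blog L)^{m^{n-1}}$ for all $y$.

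Finally I would compare $\kappa$ with $\alpha'$ as in Theorem~\ref{prop:sharpLowerBounds}: the straight--line homotopy $H$ from $\alpha'$ to $\kappa$ has $\Lip H\lesssim(\eta\blog L)^{m^{n-1}}$, and its domain $S^{n-1}(L)\times[0,1]$ has $n$--volume $\approx L^{n-1}$, so $\vol^n H\lesssim L^{n-1}(\eta\blog L)^{nm^{n-1}}$, which is negligible compared to $L^n$ since $n\ge 2$. Hence for $L$ large, $\FV^n_X(\fclass\kappa)\ge\FV^n_X(\fclass{\alpha'})-\vol^n H\gtrsim(cL)^n-\vol^n H\gtrsim L^n$, and since $\kappa$ is a cycle in $\FD$ with $\vol^{n-1}\kappa\lesssim L^{n-1}$, taking $V\approx L^{n-1}$ gives $\FV^n_{\FD}(V)\gtrsim L^n\approx V^{\frac{n}{n-1}}$; the claimed bounds on $\FV^n_\Gamma$ and $\delta^{n-1}_\Gamma$ follow as explained above. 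The conceptual crux is the Euclidean lower bound, but it is genuinely easy via the projection/degree argument; the only real checking is that Lemma~\ref{lem:polyLower} and Lemma~\ref{lem:cuspReplacement} survive the passage to an $n$--dimensional subtorus and an $(n-1)$--sphere — they do, as neither used the ambient dimension beyond $n-1<k$ — and that the polynomial errors from the bootstrapping and the homotopy are negligible next to the target $L^n$, which holds precisely because $n\ge 2$.
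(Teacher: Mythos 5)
Your proposal is correct, but it takes a genuinely different route from the paper. The paper's proof is short: it invokes a theorem of Mostow (\cite{MoLOCSYM}, Lemmas 8.3, 8.3$'$) to produce a \emph{closed} maximal flat in $\Gamma\backslash X$, whose lift $F\subset X$ is a $k$-flat lying entirely in the thick part $X_0$. Round $(n-1)$-spheres in an $n$-plane of $F$ are then cycles in $X_0$ whose filling volume in $X_0$ dominates their filling volume in $X$ (via the $1$-Lipschitz closest-point projection to $F$, exactly the projection/degree argument you record), and the Euclidean isoperimetric inequality in $F$ finishes the proof immediately. Your proof avoids Mostow's existence theorem entirely: you fall back on the random-sphere construction of Lemma~\ref{lem:polyLower} (run in a rank-$n$ subtorus $A'$ rather than all of $A$), push the resulting sphere into $\FD$ with the cusp-replacement Lemma~\ref{lem:cuspReplacement}, and observe that the homotopy error $\lesssim L^{n-1}(\blog L)^{nm^{n-1}}$ is negligible against the Euclidean lower bound $L^n$. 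Your checks are sound — Corollary~\ref{cor:expMomentsFlats} is indeed insensitive to which direction in $A$ is used, Lemma~\ref{lem:cuspReplacement} applies with $n$ replaced by $n-1<k$, and the constancy-theorem step establishing $\FV^n_X(\fclass\alpha)\gtrsim \ell^n$ is correct. The trade-off: your argument is self-contained within the paper's equidistribution/bootstrapping machinery and, being a direct analogue of the proof of Theorem~\ref{prop:sharpLowerBounds}, unifies the below-rank and at-rank lower bounds; the paper's argument is far shorter but imports the existence of closed flats as an external input. One should flag that the paper's chosen thick part $\FD$ need not literally contain $F$, but this is harmless — enlarging $\FD$ to a $\Gamma$-cocompact set containing $F$ does not change asymptotic filling invariants; your approach sidesteps even this small normalization.
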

 \begin{proof} 
   A theorem of Mostow asserts that there are many closed maximal flats in $X\backslash \Gamma$. (See \cite{MoLOCSYM}, Lemma 8.3, 8.3$'$) Pick one such flat and its universal cover $F$ in $X_0$.  Then the restriction of the orthogonal projection $\pi:X\to F$ to $X_0$ is $1$-Lipschitz, and the claim follows from the solution of the Euclidean isoperimetric problem in $F$.
 \end{proof}

Our results prove that these bounds are sharp.
\begin{thm}
  Let $2\le n<k=\rank X$ and let $\alpha\in \CLip_{n-1}(\FD)$ be an $(n-1)$--cycle.  Then
  $$\FV_{\FD}(\alpha)\lesssim \mass \alpha + \FV_{X}(\alpha) \lesssim (\mass \alpha)^{\frac{n}{n-1}}.$$
  It follows that $\FV^{n}_\Gamma(V)\lesssim V^{\frac{n}{n-1}}$ and that $\FD$ is undistorted up to dimension $k-1$.
\end{thm}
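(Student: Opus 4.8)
The plan is to chain together the three main tools assembled in this section. Let $\alpha\in\CLip_{n-1}(\FD)$ be an $(n-1)$-cycle and write $V=\mass\alpha$; we may assume $V\ge 1$. Since $X$ is CAT(0) and $n\ge 2$, Theorem~\ref{thm:CAT0 fillings} provides a Lipschitz $n$-chain $\beta\in\CLip_n(X)$ with $\partial\beta=\alpha$ and $\mass\beta\lesssim\FV_X(\alpha)+1\lesssim V^{\frac{n}{n-1}}$; set $M=\mass\beta+\mass\partial\beta$, so that $M\lesssim\mass\alpha+\FV_X(\alpha)\lesssim V^{\frac{n}{n-1}}$.

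First I would apply Proposition~\ref{prop:expMomentDistortion} to $\beta$ (legitimate since $n\le k-1$ and $\supp\partial\beta=\supp\alpha\subset\FD$), obtaining a Lipschitz $n$-chain $\gamma\in\CLip_n(X)$ with $\partial\gamma=\alpha$, $\mass\gamma\lesssim M$, and
\[
  \int_\gamma e^{b\,\dGamma(x)}\,dx\lesssim M,
\]
where $b$ is the constant of Corollary~\ref{cor:expMomentsChambers}. Then I would feed $\gamma$ into Proposition~\ref{prop:bootstrapping} (again $n\le k-1$ and $\supp\partial\gamma\subset\FD$), producing a Lipschitz $n$-chain $\psi\in\CLip_n(\FD)$ with $\partial\psi=\alpha$ and
\[
  \mass\psi\lesssim\mass\partial\gamma+\int_\gamma(\dGamma(x)+1)^{nm^n}\,dx,
\]
where $m$ is the constant of Corollary~\ref{cor:polyLipExtend}. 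The decisive observation — the one that makes the bootstrapping close — is that $nm^n$ is a fixed constant, so there is $C'=C'(n,m,b)$ with $(t+1)^{nm^n}\le C'e^{bt}$ for all $t\ge 0$; hence the polynomial integral in the bootstrapping bound is swallowed by the exponential moment produced one step earlier,
\[
  \int_\gamma(\dGamma(x)+1)^{nm^n}\,dx\lesssim\int_\gamma e^{b\,\dGamma(x)}\,dx\lesssim M.
\]
Combining, $\FV_{\FD}(\alpha)\le\mass\psi\lesssim\mass\alpha+M\lesssim M\lesssim\mass\alpha+\FV_X(\alpha)\lesssim V^{\frac{n}{n-1}}$, which is exactly the two displayed inequalities.

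For the consequences, taking the supremum over $(n-1)$-cycles of mass at most $V$ gives $\FV^n_{\FD}(V)\lesssim V^{\frac{n}{n-1}}$ for $2\le n<k$; since $\FD$ is quasi-isometric to $\Gamma$ with $\Gamma\backslash\FD$ compact, the quasi-isometry invariance \eqref{eq:eqRel1} upgrades this to $\FV^n_\Gamma(V)\lesssim V^{\frac{n}{n-1}}$, which together with the Euclidean lower bound of Theorem~\ref{thm:euclower} proves Theorem~\ref{thm:mainThmUpper}. The first inequality $\FV_{\FD}(\alpha)\lesssim\mass\alpha+\FV_X(\alpha)$ holds for every $m$-cycle in $\FD$ with $1\le m\le k-2$ (the case $m=1$ being $n=2$), while the case $m=0$ is the Lubotzky--Mozes--Raghunathan quasi-isometric embedding reproved in Section~\ref{sec:DiscsSpheres}; this is precisely the statement that $\FD=X_0$ is undistorted up to dimension $k-1$, which is the first half of Theorem~\ref{thm:disdim} (the failure of undistortedness in dimension $k$ being Proposition~\ref{prop:sharpLowerBounds}).

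I do not expect a genuine obstacle, since the analytic heavy lifting is already packaged in Propositions~\ref{prop:expMomentDistortion} and~\ref{prop:bootstrapping}. The only points needing care are selecting the near-optimal filling $\beta$ of $\alpha$ in $X$ as an honest Lipschitz chain with boundary exactly $\alpha$, the uniform polynomial-versus-exponential comparison with the fixed exponent $nm^n$, and the bookkeeping of implicit constants when $\mass\alpha$ is small (absorbed into the constants, or handled by the additive term in the definition of undistortedness).
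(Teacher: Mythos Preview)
Your proposal is correct and follows essentially the same route as the paper: pick a near-optimal filling $\beta$ of $\alpha$ in $X$, apply Proposition~\ref{prop:expMomentDistortion} to obtain $\gamma$ with a controlled exponential moment, then apply Proposition~\ref{prop:bootstrapping} and absorb the polynomial weight $(\dGamma+1)^{nm^n}$ into $e^{b\dGamma}$. The only cosmetic difference is that the paper takes $\mass\beta\le\FV_X(\alpha)+\epsilon$ and lets $\epsilon\to 0$ rather than invoking the CAT(0) bound up front.
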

\begin{proof}
  Let $\epsilon>0$ and let $\beta\in \CLip_{n}(X)$ such that $\partial \beta=\alpha$ and $\mass \beta \le \FV_X(\alpha)+\epsilon$.  By Proposition~\ref{prop:expMomentDistortion} and Proposition~\ref{prop:bootstrapping}, there are chains $\gamma\in \CLip_n(X)$ and $\psi\in \CLip_n(\FD)$ such that $\partial \beta=\partial \gamma=\partial \psi$ and
  \begin{align*}
    \mass \psi
    &\lesssim \mass(\partial \gamma) + \int_{\gamma} (\dGamma(x)+1)^{nm^n}\; dx\\
    &\lesssim \mass(\partial \beta) + \int_{\gamma} \exp b \dGamma(x)\; dx\\
    &\lesssim \mass \alpha+ \FV_X(\alpha)+\epsilon.
  \end{align*}
  Then $\FV_{\FD}(\alpha)\le \mass\psi$.  Letting $\epsilon$ go to zero, we have $\FV_{\FD}(\alpha)\le \mass \alpha+\FV_{X}(\alpha)$, and by Theorem~\ref{thm:CAT0 fillings}, $\FV_{X}(\alpha)\lesssim (\mass \alpha)^{\frac{n}{n-1}}$.  
\end{proof}
This implies Theorem~\ref{thm:mainThmUpper}.  The only remaining thing to prove is that $X_0$ is distorted in dimension $k$, which is a direct consequence of Theorem~\ref{thm:mainThmLower}.

\bibliographystyle{amsalpha}
\bibliography{higherArith}
\end{document}